\documentclass[11pt]{amsart}

\usepackage{amscd,verbatim}
\usepackage{amsmath,amssymb}
\usepackage[bookmarks=false]{hyperref}
\usepackage{amsfonts,mathrsfs,amsthm,enumitem,xcolor,mathtools, cleveref}
\usepackage[hmargin=1.5in]{geometry}

\def\K{K\"ahler }
\def\KE{K\"ahler-Einstein }
\def\KR{K\"ahler-Ricci }
\def\MA{Monge-Amp\`{e}re }
\def\Ho{H\"older }

\def\a{{\alpha}}
\def\P{\mathbb{P}}

\newcommand{\de}{\partial}

\newcommand{\dbar}{\overline{\partial}}

\newcommand{\ddbar}{i \partial \overline{\partial}}
\newcommand{\Ric}{\mathrm{Ric}}
\newcommand{\Rc}{\mathrm{Rc}}
\newcommand{\Rm}{\mathrm{Rm}}

\newcommand{\tr}[2]{\textrm{tr}_{#1}{#2}}

\newcommand{\vp}{\varphi}

\renewcommand{\leq}{\leqslant}
\renewcommand{\geq}{\geqslant}

\renewcommand{\l}{\ell}
\newcommand{\owp}{\omega_{\mathrm{WP}}}
\newcommand{\ocan}{\omega_{\mathrm{can}}}
\newcommand{\kod}{\mathrm{kod}}
\newcommand{\ob}{\omega^\bullet}
\newcommand{\on}{\omega^\natural}
\newcommand{\gb}{g^\bullet}
\newcommand{\gn}{g^\natural}
\newcommand{\gcan}{g_{\mathrm{can}}}
\newcommand{\be}{\begin{equation}}
\newcommand{\ee}{\end{equation}}

\newcommand{\br}[1]{\left( {#1}\right)}
\newcommand{\lan}[1]{\langle {#1} \rangle}
\newcommand{\sqbr}[1]{\left[{#1}\right]}
\newcommand{\norm}[1]{\left\lVert{#1}\right\lVert}
\newcommand{\abs}[1]{\left|{#1}\right|}
\DeclarePairedDelimiter{\ceil}{\lceil}{\rceil}

\newcommand{\C}{\mathbb{C}}

\newcommand{\N}{\mathbb{N}}
\newcommand{\D}{\mathbb{D}}
\newcommand{\fD}{\mathfrak{D}}
\newcommand{\fG}{\mathfrak{G}}

\newcommand{\CP}{\mathbb{P}}
\newcommand{\B}{\mathcal{B}}
\newcommand{\cA}{\mathcal{A}}
\newcommand{\cG}{\mathcal{G}}
\newcommand{\cS}{\mathcal{S}}
\newcommand{\E}{\mathcal{E}}
\newcommand{\T}{\mathcal{T}}

\newcommand{\infint}{{[0, +\infty)}}

\DeclareMathOperator{\pr}{pr}
\DeclareMathOperator{\id}{Id}

\newcommand{\bb}{\mathbf{b}}
\newcommand{\ff}{\mathbf{f}}
\newcommand{\thalf}{\frac{t}{2}}
\newcommand{\dgamma}{\dot{\gamma}}
\newcommand{\deta}{\dot{\eta}}
\newcommand{\davg}[1]{{#1} - \underline{#1}}
\DeclareMathOperator{\can}{can}
\newcommand{\ddbarf}{i\de_{\ff}\dbar_{\ff}}
\newcommand{\base}{\mathrm{base}}

\numberwithin{equation}{section}

\newcounter{remark}
\newcounter{theor}
\newtheorem{theorem}{Theorem}[section]
\newtheorem{lemma}[theorem]{Lemma}

\newtheorem{conjecture}[theorem]{Conjecture}
\newtheorem{proposition}[theorem]{Proposition}
\newtheorem{question}[theorem]{Question}
\newtheorem{defn}[theorem]{Definition}
\theoremstyle{definition}
\newtheorem{rmk}[theorem]{Remark}

\title{Higher-order Ricci estimates along immortal K\"ahler-Ricci flows}

\author{Wenrui Kong}
\address{Courant Institute School of Mathematics, Computing, and Data Science, New York University, 251 Mercer St, New York, NY 10012}
\email{wenrui.kong@nyu.edu}

\begin{document}
\begin{abstract}
We study higher-order curvature estimates along \KR flows on compact K\"ahler manifolds of intermediate Kodaira dimension. We prove that away from singular fibers, the Ricci curvature is uniformly bounded in $C^1$, the Laplacian of the Ricci curvature in $C^0$, and the scalar curvature in $C^2$. We identify a geometric obstruction to higher-order curvature bounds, whose non-vanishing causes a specific third-order derivative of the Ricci curvature to blow up at rate $e^{t/2}$. Uniform $C^k$ bounds for every $k$ hold for the Ricci curvature in the isotrivial case, and for the full Riemann curvature in the torus-fibered case.

\end{abstract}

\maketitle

\section{Introduction}
	\subsection{Setup and Main Results}\label{subsec_setup}
	
	Let $(X, \omega_0)$ be a compact \K manifold. The (normalized) \KR flow on $X$ starting at $\omega_0$ is a family of \K metrics $\br{\omega^\bullet(t)}_{t \in [0, T)}$, satisfying
	\begin{equation}\label{eq_krf}
		\partial_t \omega^\bullet(t) = - \Ric(\omega^\bullet(t)) -\omega^\bullet(t), \quad \omega^\bullet(0) = \omega_0,
	\end{equation}
	for some $0 < T \leq + \infty$. In this paper we consider the case when the flow $\ob(t)$ is \textit{immortal} (i.e., we can take $T = +\infty$). By \cite{Ts,TiZ}, this happens if and only if the canonical bundle $K_X$ is nef. We assume in this paper the stronger condition that $K_X$ is semiample. The Abundance Conjecture in birational geometry predicts that the nefness of $K_X$ is equivalent to its semiampleness when $X$ is projective. The extension of this conjecture to compact \K $X$ has been proved when $\dim X \leq 3$, by \cite{CHP,DO,DO2,GPa}. 
	
	Since $K_X$ is semiample, the global sections of $K_X^{p}$ for $p \geq 1$ sufficiently divisible define a surjective holomorphic map $f: X \to B \subset \C \CP^N$, called the Iitaka fibration of $X$ (see e.g. \cite[Theorem 2.1.27]{Laz}). The normal projective variety $B$ (called the canonical model of $X$) has dimension $m$ equal to the Kodaira dimension of $X$ denoted by $\kod(X)$. Let $S \subset X$ be the preimage of the union of the singular locus of $B$ and the set of singular values of $f$. Then $f: X \setminus S \to B \setminus f(S)$ is a proper holomorphic submersion with $n$-dimensional connected Calabi-Yau fibers $\{X_z = f^{-1}(z) \mid z \in B \setminus f(S)\}$, where $n = \dim X - m$.

	Since the behavior of the flow in the extremal cases $\kod(X) = 0$ or $\kod(X) = \dim X$  has been completely understood by \cite{Cao,Ts,PS,TiZ,TZ}, in this paper we assume $X$ has \textit{intermediate Kodaira dimension}: $0 < \kod(X) < \dim X$, so $m, n > 0$. In this case, the limiting behavior of the flow has been understood: The foundational works \cite{ST,ST2} of Song-Tian established the existence of a closed positive $(1,1)$-current $\ocan$ on $B$ (called the canonical metric), which restricts to a smooth metric on $B \setminus f(S)$ solving the twisted \KE equation
	\begin{equation}
		\Ric(\ocan) = - \ocan + \owp,
	\end{equation}
	such that $\ob(t)$ converges to $f^*\ocan \in -2\pi c_1(X)$ as currents on $X$.	The semipositive Weil-Petersson form $\owp$ describes the variation of complex structures on $X_z$ with $z$. To simplify notation, we will use $\ocan$ to denote also $f^*\ocan$ on $X$. In \cite{HLT}, Hein-Lee-Tosatti proved the conjecture by Song-Tian (\cite{ST,ST2}) that the convergence $\ob(t) \to \ocan$ happens also in the locally smooth topology on $X \setminus S$. Recently, Lee-Tosatti-Zhang proved in \cite{LTZ} that $(X, \ob(t))$ converges to the metric completion of $(B \setminus f(S), \ocan)$ in the Gromov-Hausdorff topology.
	
	In this paper we investigate higher-order curvature estimates along the flow. In \cite{ST3}, Song-Tian proved that the scalar curvature of $\ob(t)$ is uniformly bounded on $X$. They also conjectured (see \cite[Conjecture 4.7]{Ti2}) the existence of a uniform bound on the Ricci curvature of $\ob(t)$ on any $K \Subset X \setminus S$. Hein-Lee-Tosatti confirmed this conjecture in \cite{HLT} as a consequence of the local asymptotic expansion of $\ob(t)$ developed in \cite{HT2,HT3,HLT}.	
	A natural follow-up question is whether we can extend such uniform bounds to the covariant derivatives of $\Ric(\ob(t))$ of any order. The answer may provide insight into the rate of convergence of the flow $\ob(t)$ and its parabolically rescaled flows (see \cite[Theorem 1.3]{TZ}).

	We first describe our setup. Following \cite{ST2}, we construct a closed real $(1,1)$-form $\omega_F=\omega_0+\ddbar\rho$ on $X\backslash S$ with $\rho \in C^\infty(X \setminus S)$, such that for every $z\in B\backslash f(S)$, $\omega_F|_{X_z}$ is the unique Ricci-flat \K metric on $X_z$ cohomologous to $\omega_0|_{X_z}$. Then the closed real $(1, 1)$-form
	\begin{equation}
		\omega^\natural(t):=(1-e^{-t})\omega_{\mathrm{can}}+e^{-t}\omega_F
	\end{equation}
	eventually becomes a \K metric on any $K \Subset X \setminus S$, and we can write $\omega^\bullet(t) = \omega^\natural(t) + \ddbar \varphi(t)$  on $X \setminus S$ such that the potentials $\{\varphi(t) \mid t \geq 0\}$ solve the parabolic complex \MA equation
	\begin{equation}\label{eq_krf_MA}
		\left\{
		\begin{aligned}
			&\frac{\de}{\de t}\vp(t)=\log\frac{e^{nt}(\omega^\natural(t)+\ddbar\vp(t))^{m+n}}{\binom{m+n}{n}\omega_{\rm can}^m\wedge\omega_F^n}-\vp(t),\\
			&\vp(0)=-\rho,\\
			&\omega^\natural(t)+\ddbar\vp(t)>0,
		\end{aligned}
		\right.
	\end{equation}
	as an equivalent formulation of the \KR flow \eqref{eq_krf} (see e.g. \cite[\S5.7]{To} and \cite[\S3.1]{TWY}).
	
	We are interested in estimates over $K \Subset X \setminus S$, and $f$ is differentiably a locally trivial fiber bundle over $B \setminus f(S)$ with compact fibers (by Ehresmann's lemma). Thus we can work locally and assume that our base $B$ is now simply the Euclidean unit ball in $\C^m$, and $f: B \times Y \to B$ is just the projection $\pr_B$ onto the first factor, where $Y$ is a closed manifold and $B \times Y$ is equipped with the complex structure $J$ induced from $X$ (not necessarily a product) such that $f$ is $(J, J_{\C^m})$-holomorphic. Each fiber $X_z$ now is written as $(\{z\} \times Y, J|_{\{z\} \times Y} =: J_z)$, and each Ricci-flat \K metric $\omega_F|_{X_z}$ written as a Riemannian metric $g_{Y, z}$ on $\{z\} \times Y$, which we extend trivially to $B \times Y$ and use these to define a family of shrinking Riemannian product metrics
	\begin{equation}\label{eq_g_z(t)}
		g_z(t) = g_{\C^m} + e^{-t} g_{Y, z},
	\end{equation}
	on $B \times Y$. We then define (as in \cite{HLT}) a $t$-independent connection $\D$ on $B \times Y$ by
	\begin{equation}
		\br{\D \eta}(x) := \br{\nabla^{\pr_B(x)} \eta}(x),
	\end{equation}
	where $\nabla^z$ denotes the Levi-Civita connection of $g_z(t)$ for each $z \in B$.
	The metrics $\gb(t)$, $\gn(t)$, $g_z(t)$ are uniformly equivalent on $B \times Y$ by \cite{FZ}, and we use such locally defined $\D$-derivative to mimic the covariant derivative of $\ob(t)$. For simplicity, we write $g(t):= g_0(t)$ when $z = 0 \in B$.

	Delicate applications of the above-mentioned asymptotic expansion of $\ob(t)$, which locally decomposes the potential $\varphi(t)$ into a sequence of components with increasing rate of decay, enable us to prove:

	\begin{theorem}\label{thm_nabla^1_ric}
		Let $(X,\omega_0)$ be a compact K\"ahler manifold with $K_X$ semiample and intermediate Kodaira dimension $0 < m < \dim X$, and let $\omega^\bullet(t)$ solve \eqref{eq_krf}. Given any $K \Subset X \setminus S$, there exists a constant $C_{K}$ such that 
		\begin{equation}
			\sup_{K \times [0, +\infty)} \abs{\nabla^{\omega^\bullet(t)} \Ric(\omega^\bullet (t))}_{\omega^\bullet(t)} \leq C_{K}.
		\end{equation}
	\end{theorem}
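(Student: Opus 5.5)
We work locally on $B \times Y$ as in \S\ref{subsec_setup}. By the Hein--Lee--Tosatti expansion, $\ob(t) = \on(t) + \ddbar\vp(t)$, and $\vp$ splits as a sum of terms of increasing decay. On the one hand, we have $\Ric(\ob) = -\ocan + \owp - \ddbar \dot\vp - \ddbar\vp + \text{(terms from } e^{-t}\omega_F)$. On the other hand, from \eqref{eq_krf_MA}, $\Ric(\ob) = -\ddbar \log (\ob)^{m+n} + \ddbar\log(\ocan^m\wedge\omega_F^n)$. Using this, the plan is to write
\[
\Ric(\ob(t)) = -\ocan + \owp + \eta(t),
\]
and to show that $\eta(t)$, and $\ob(t)$ itself, admit expansions whose components are controlled in the shrinking product geometry $g_z(t)$. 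Everything is measured through the connection $\D$. The reason this is enough is that the difference tensor $\nabla^{\ob} - \D$ is bounded in $g(t)$-norm uniformly in $t$. This boundedness is the key structural input from \cite{HLT}: $\ob$ is $C^{1,\alpha}$-close to $g_z(t)$ in the rescaled sense, with error $o(1)$.

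\textbf{Step 1 (reduction to $\D$-estimates).} We write $\nabla^{\ob}\Ric = \D \Ric + (\Gamma^\bullet - \Gamma^\D) * \Ric$. Since $\Ric$ is uniformly bounded by \cite{HLT}, it suffices to bound $|\D\Ric|_{g(t)}$ and $|\Gamma^\bullet-\Gamma^\D|_{g(t)}$.

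\textbf{Step 2 (Christoffel symbols).} The bound $|\Gamma^\bullet-\Gamma^\D|_{g(t)} \le C$ follows from $|\D \ob|_{g(t)}\le C$. That estimate is part of the HLT $C^k$ estimates for $\ob - \on$ with respect to $g_z(t)$ at the $k=1$ level.

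\textbf{Step 3 (the Ricci form itself).} Here we bound $|\D(\ocan - \owp)|_{g(t)}$ and $|\D\eta|_{g(t)}$. The forms $\ocan$ and $\owp$ are pulled back from the base, where $g(t)$ is nonshrinking, so the first bound is immediate.

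For $\eta$ we use the expansion. The leading fiberwise-nonconstant parts of $\vp$ have the form $e^{-t}\,\underline{G}(\cdot)$ (Green-type operators applied to base data) plus a remainder with $C^{k}_{g(t)}$ norm $O(e^{-\alpha t})$. The delicate point is that fiber derivatives measured in $g(t)$ cost a factor $e^{t/2}$ each. Thus a term of size $e^{-t}$ in $C^0$ contributes $O(1)$ after two fiber derivatives, which is what keeps $\Ric$ bounded. Taking one more derivative, which $\nabla\Ric$ requires, would naively cost another $e^{t/2}$. So one must check that the third fiber derivative of the leading $e^{-t}$ component yields something bounded.

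\textbf{Main obstacle.} The hardest step is this cancellation in Step 3. We need to show that the $O(1)$ part of $\eta$ produced by the $e^{-t}$ component has the form $\ddbar_{\ff}$ applied to a function that is fiberwise harmonic with respect to $g_{Y,z}$. Concretely, this $O(1)$ part should be built from $\owp$-type quantities: $\Ric$ restricted to fibers is Ricci-flat-corrected, and the mixed terms come from harmonic representatives of the Kodaira--Spencer classes. Such forms are $g_{Y,z}$-parallel or harmonic along Ricci-flat fibers. We would argue this via the Bochner formula: holomorphic tensors on Calabi--Yau fibers are parallel. Consequently their first fiber covariant derivative vanishes, rather than blowing up at rate $e^{t/2}$. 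The remaining, faster-decaying components of the expansion have enough decay, at rate $e^{-\alpha t}$ beyond the critical one, to absorb one extra factor $e^{t/2}$, provided the expansion is taken to sufficiently high order with the $j$-th term of order $e^{-(1+\alpha_j)t}$. Base derivatives cost nothing.

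Combining Steps 1–3 on $K$, covered by finitely many such charts, yields the stated bound $C_K$. The failure of this parallelism at third order is presumably the obstruction mentioned in the abstract.
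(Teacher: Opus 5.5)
Your Steps 1 and 2 are the paper's reduction (\Cref{prop_D_nabla_b_Ric}), with two small caveats. The difference tensor is $(\gb)^{-1}\circledast\D\gb$, where $\gb=\ob(\cdot,J\cdot)$ and $J$ is not a product. Also, $|\D\gb|$ is bounded but not $o(1)$: the piece $e^{-t}\D g_F$ has unit size in general and is controlled because $(\D g_F)_{\ff\ff\ff}=0$ (\Cref{lem_D_gF}).

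The genuine gap is Step 3, the bound $|\D\Ric|\le C$, which is where all the work lies. First, the flow gives exactly $\Ric(\ob)=-\ocan-\ddbar(\vp+\dot\vp)$, since $\on+\partial_t\on=\ocan$. There is no $\owp$ term and nothing left over from $e^{-t}\omega_F$. More seriously, your size count is wrong. The leading fiberwise non-constant part of $\vp$ is $\sum_p A_{1,p,k}\,(\Delta^{\omega_F|_{\{\cdot\}\times Y}})^{-1}G_{1,p,k}$ with $|A_{1,p,k}|\le Ce^{-2t}$ \cite[Proposition 5.3]{HLT}; it is not a term of size $e^{-t}$. Three fiber derivatives cost $e^{3t/2}$, so this gives $|\D\gamma_{1,k}|\le Ce^{-t/2}$ outright. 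The fiber-dependent part of $\Ric$ is $o(1)$ in $g(t)$-norm, so there is no $O(1)$ fiber-dependent piece whose third fiber derivative needs a cancellation.

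The mechanism you propose would fail anyway. $\ddbarf$ of a fiberwise harmonic function vanishes identically on the compact fibers. The harmonic representatives of the Kodaira--Spencer classes are not parallel in general: the Bochner argument concerns holomorphic tensor fields, not harmonic $T_{X_z}$-valued $(0,1)$-forms. The non-constancy of their total pointwise length is exactly what makes $\cS$ fiberwise non-constant, i.e.\ the third-order obstruction of \Cref{thm_D^3_Ric}.

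What is missing is quantitative control of the coefficients $A_{i,p,k}$ and their parabolic derivatives, in particular the time derivative entering through $\dot\vp$. The paper interpolates between $|A_{1,p,k}|\le Ce^{-2t}$, $|A_{2,p,k}|\le Ce^{-(4+\alpha)t/2}$ and the H\"older bounds \eqref{eq_expa_A_i_holder} (\Cref{lem_interp_fD_A_1,lem_interp_fD_A_2}). It inserts the result into the $t$-derivative of \eqref{eq_D^q_gamma_i_k} to get $|\D\dot\gamma_{i,k}|\le C$ for $i=1,2$. For instance, the leading term satisfies $e^{3t/2}|\fD^2A_{1,p,k}|\le Ce^{(-\frac12+\frac{2}{4+\alpha})t}$. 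It also uses the expansion with $j=2$ for the $\dot\vp$ part, so that $|\D\dot\eta_{2,k}|\le|\fD^3\eta_{2,k}|\le Ce^{-(1+\alpha)t/2}$; this is needed because $\fD^3\eta_{1,k}$ is not controlled by \Cref{thm_asymp_expa}.

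Note that the generic bound \eqref{eq_expa_A_i_infty} alone gives only $|A_{1,p,k}|\le Ce^{-(2+\alpha)t/2}$, which would yield merely $|\D\gamma_{1,k}|\le Ce^{(1-\alpha)t/2}$. The argument closes because of the sharp rate $e^{-2t}$, not because of any parallelism along the fibers. Hein--Lee--Tosatti extract that rate from the Monge--Amp\`ere equation, where $e^{-2t}(\cS-\underline{\cS})$ is the leading fiberwise term.
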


	\begin{theorem}\label{thm_nabla^2_scalar}
		As in \Cref{thm_nabla^1_ric}, we have
		\begin{equation}
			\sup_{K \times [0, +\infty)} \abs{\nabla^{\omega^\bullet(t), 2} R(\omega^\bullet (t))}_{\omega^\bullet(t)} \leq C_{K},
		\end{equation}
		where $R$ denotes the scalar curvature.
	\end{theorem}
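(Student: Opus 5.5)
Since the scalar curvature is the trace of the Ricci form, \Cref{thm_nabla^1_ric} already gives a uniform bound on $\nabla R$ on $K$. The task is therefore one more derivative. I would not try to control $\nabla^2\Ric$ directly: the abstract warns that certain third-order derivatives of $\Ric$ blow up. Instead I would exploit the scalar structure of $R$.

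The plan is to write everything in terms of the potential. On $X\setminus S$ we have $\Ric(\ob)=-\ob-\partial_t\ob=-\ob-\ddbar(\dot\vp)-\partial_t\on$, where $\partial_t\on=e^{-t}(\ocan-\omega_F)$. Taking the trace with respect to $\ob$ gives
\[R=-(m+n)-\Delta_{\ob}\dot\vp-e^{-t}\,\tr{\ob}{(\ocan-\omega_F)}.\]
Equivalently, $R=-(m+n)-\partial_t\log\det\ob$. The Hessian $\nabla^2R$ then involves up to four derivatives of $\dot\vp$ and fourth derivatives of $\ob$ paired with $\ob^{-1}$.

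The core step is to insert the asymptotic expansion of $\vp(t)$ from \cite{HT2,HT3,HLT}. Locally on $B\times Y$ it has the form $\vp=\underline{\vp}+\sum_j(\text{fiberwise-nonconstant pieces } \gamma_{j}(t)) + \eta(t)$. The key bounds are the following.
\begin{itemize}
\item The base part is controlled by $\ocan$ plus exponentially decaying corrections. Its $\D$-derivatives are bounded in the shrinking metrics $g(t)$.
\item Each fiber term $\gamma_j$ satisfies $|\D^k\gamma_j|_{g(t)}\leq Ce^{-t(j/2)}$ with $j$ large enough relative to $k$.
\item The remainder $\eta$ satisfies $|\D^k\eta|_{g(t)}\le C_k e^{-\alpha t}$ for any prescribed $\alpha$ after enough terms are peeled off.
\end{itemize}
Using these, I would write $\log\det\ob$, hence $R$, as a sum of three parts: a base function $R_{\base}(z,t)$ converging to $R(\ocan)-\tr{\ocan}{\owp}$ type quantities; a fiber-dependent part built from $\gamma_j$; and an error. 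I would then differentiate twice with $\D$, using that $\nabla^{\ob}-\D$ is bounded in the appropriate $g(t)$-norms (as in the proof of \Cref{thm_nabla^1_ric}).

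The expected obstacle is the fiber-direction second derivatives. In $g(t)$-norm each unit-length fiber derivative costs a factor $e^{t/2}$, so two fiber derivatives cost $e^{t}$. One must show that the fiber-dependent part of $R$ is $O(e^{-t})$ in $C^2$ with respect to the fixed metrics $g_{Y,z}$.

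For $\Ric$ itself this fails at third order, because of the obstruction term: the fiberwise nonconstant part of $\Ric$ is only $O(e^{-t/2})$ in the relevant mixed components. For the trace, however, I expect a cancellation. The leading $e^{-t}$-order fiber term in $\partial_t\log\det\ob$ comes from $\Delta_{\omega_F|_{X_z}}$ applied to the first correction $\gamma_1$. That correction is determined by a fiberwise Poisson equation whose right-hand side is essentially $\tr{\omega_F}{}$ of the base–fiber mixing; this is where $\owp$ and the obstruction enter. I would verify this cancellation explicitly.

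The result would be that the fiber-varying part of $R$ has the form $e^{-t}h(z,y)+O(e^{-3t/2})$ with $h$ smooth. Then:
\begin{itemize}
\item two fiber derivatives give $e^{-t}\cdot e^{t}=O(1)$;
\item mixed base–fiber derivatives give $e^{-t}\cdot e^{t/2}=O(e^{-t/2})$;
\item the higher-order terms are handled likewise.
\end{itemize}
The base Hessian of $R_{\base}$ is bounded by the smoothness of $\ocan$ away from $f(S)$. Finally, I would convert the $\D^2$ bounds into $\nabla^{\ob,2}$ bounds using the Christoffel difference estimates together with the first-order bound from \Cref{thm_nabla^1_ric}.
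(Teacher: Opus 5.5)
\textbf{There is a genuine gap.} You isolated the right target. Since $\nabla^{\ob,2}R=\D^2R+A\circledast\D R$ with $\abs{A},\abs{\D R}\leq C$, everything reduces to $\abs{\D^2R}_{g(t)}\leq C$. That is, the fiber-varying part of $R$ must be $O(e^{-t})$ in fiberwise $C^2(g_X)$, with mixed derivatives $O(e^{-t/2})$. But the step meant to deliver this is only announced (``I expect a cancellation \dots\ I would verify this explicitly''), and the mechanism you sketch is not the one at work.

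\emph{No $e^{-t}h$ term exists.} By \Cref{prop_davg_phi} and $R=-m-\dot\vp-\ddot\vp$, the fiber-varying part of $R$ is already $o(e^{-t})$. The first correction has size $e^{-2t}$: its fiberwise Laplacian is $-e^{-2t}(\cS-\underline{\cS})+o(e^{-2t})$. The non-constancy of $\cS$ is what drives the blow-up in \Cref{thm_D^3_Ric}, not a term the trace cancels at order $e^{-t}$.

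\emph{Where a cancellation really is needed.} It is in the formula you propose to differentiate, $R=-(m+n)-\tr{\ob}{\partial_t\ob}$. Two $\D$-derivatives of $(\gb)^{-1}$, and of $\partial_t\gb$, each produce $e^{-t}\D^2g_F$ contracted with $(\gb)^{-1}$. Since $(\D^2g_F)_{\ff\bb\ff\ff}$ need not vanish, these terms are a priori of size $e^{t/2}$ in $g(t)$. Using $\partial_t\gb=-\Rc-\gb$ and the constancy of $(\gb)^{ab}\gb_{ab}$, the dangerous part that survives is $e^{-t}S\circledast\D^2g_F$ with $S=\Rc^{\sharp\sharp}$. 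One controls it through $\abs{S}_{g_X}\leq C$, which follows from $\Ric_{\bb\ff}=O_{g_X}(e^{-3t/2})$, $\Ric_{\ff\ff}=O_{g_X}(e^{-2t})$ and $g^{\bullet\ff\ff}=O_{g_X}(e^{t})$ (\Cref{lem_Ric_base_fiber}, \Cref{prop_gb_base_fiber}). The remaining term needs the $\D^2\Ric$ bound of \Cref{thm_D^2_ric}. This is exactly the paper's argument, contracting \Cref{prop_nabla2_Ric_T} with $J$ and $(\gb)^{-1}$. Your proposal contains neither it nor a substitute.

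\emph{The expansion bounds you list are not the available ones.} The low-order corrections do not decay in higher norms: $\abs{\fD^\l\gamma_{1,k}}\leq Ce^{(\l-2)\thalf}$, and $\D^3\gamma_{1,k}$ genuinely grows. The remainder gives only $O(e^{-(2+\alpha)\thalf})$ on fibers, not $O(e^{-3\thalf})$; this is harmless, since $o(e^{-t})$ suffices.

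\textbf{How your own line can be completed.} There is a different and arguably simpler route. You wrote $R=-(m+n)-\partial_t\log\det\ob$. The parabolic \MA equation \eqref{eq_krf_MA} gives $\log\det\ob$ exactly, and the result is the scalar identity $R=-m-\dot\vp-\ddot\vp$ of \eqref{eq_scal_evol}. Then no contraction with $(\gb)^{-1}$ appears and no cancellation has to be found. It suffices to show three things:
\begin{enumerate}
\item[(i)] The base Hessian of $\underline{\dot\vp}+\underline{\ddot\vp}$ is bounded. This follows from interior Schauder on $B$, using $\ddbar\underline{\dot\vp}=\dot\gamma_0$, $\ddbar\underline{\ddot\vp}=\ddot\gamma_0$ and \Cref{lem_known_est}.
\item[(ii)] $\abs{\D^2_{\ff\ff}(u-\underline{u})}_{g_X}=o(e^{-t})$ for $u=\dot\vp,\ddot\vp$. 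This is \Cref{lem_phi-dot_phi}, obtained from the interpolated bounds on $A_{i,p,k}$ plus fiberwise Schauder for the remainder potential.
\item[(iii)] The base and mixed second derivatives of $u-\underline{u}$ are $O(1)$ and $O(e^{-t/2})$ in $g_X$. This goes by the same method, using one more term of the expansion.
\end{enumerate}
This route bypasses \Cref{prop_nabla2_Ric_T} and \Cref{thm_D^2_ric} entirely. The paper's route instead reuses its $\nabla^{\bullet,2}\Ric$ computation and shows explicitly why tracing kills the obstruction $T-T'$. As written, your proposal supplies neither argument.
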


	\begin{theorem}\label{thm_Delta_ric}
		As in \Cref{thm_nabla^1_ric}, we have
		\begin{equation}
			\sup_{K \times [0, +\infty)} \abs{\Delta^{\omega^\bullet(t)} \Ric(\omega^\bullet (t))}_{\omega^\bullet(t)} \leq C_{K},
		\end{equation}
		where $\Delta$ denotes the rough Laplacian.
	\end{theorem}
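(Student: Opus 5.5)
We will prove \Cref{thm_Delta_ric} by expressing the rough Laplacian of $\Ric$ through the evolution equation of the Ricci form along \eqref{eq_krf}. On a K\"ahler manifold, $\Ric$ is a closed, harmonic-type $(1,1)$-form, and the Lichnerowicz Laplacian satisfies
\begin{equation*}
\de_t \Ric(\ob) = \Delta_L^{\ob} \Ric(\ob) = \Delta^{\ob}\Ric(\ob) + \Rm(\ob) * \Ric(\ob),
\end{equation*}
where $\Rm * \Ric$ denotes the explicit zeroth-order curvature term $2R_{i\bar k l \bar j}R^{l\bar k} - R_{i\bar p}R^{\bar p}{}_{\bar j}$ (up to sign and normalization). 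Here the normalization term $-\ob$ in \eqref{eq_krf} contributes nothing, since $\Ric$ is invariant under scaling. The plan is therefore to bound three things on $K\times[0,\infty)$: the time derivative $\de_t \Ric$, the term $\Ric*\Ric$, and the contraction $R_{i\bar k l\bar j}R^{l\bar k}$ of the full curvature against $\Ric$.

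\textbf{Step 1: the Ricci terms.} The $\Ric*\Ric$ terms are bounded by the uniform Ricci bound of \cite{HLT}.

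\textbf{Step 2: the time derivative.} Since $\Ric(\ob) = -\ob - \de_t\ob$, it suffices to show
\begin{equation*}
\abs{\de_t^2 \ob}_{\ob} \leq C_K .
\end{equation*}
Write $\ob = \on + \ddbar\vp$. Then $\de_t\on$ and $\de_t^2\on$ equal $\pm e^{-t}(\omega_F - \ocan)$, which is bounded in $\ob$-norm because $e^{-t}\omega_F$ is comparable to the fiber part of $\ob$. The remaining piece, $\ddbar\,\de_t^2\vp$, will be handled with the asymptotic expansion of \cite{HT2,HT3,HLT}. In that expansion $\vp$ is written as a base-dependent function plus fiber-oscillating components, and each component has the form $e^{-kt}$ times a smooth function of $(z,y)$ whose $\D$-derivatives are controlled in the shrinking metrics $g_z(t)$. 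Differentiating in $t$ only multiplies each component by bounded factors and produces time derivatives of the remainder, which the parabolic estimates of \cite{HLT} also control in the weighted $C^{2}$ norms. Hence $\de_t\Ric$ is uniformly bounded. This is the same input that yields \Cref{thm_nabla^1_ric}, which we may take as already established.

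\textbf{Step 3: the full-curvature contraction (main obstacle).} $\Rm(\ob)$ is \emph{not} uniformly bounded: the fibers collapse, so fiber curvature scales like $e^{t}$ unless the fibers are flat. So we cannot simply estimate $\abs{\Rm}\abs{\Ric}$. The plan is to split $\Ric$ using the expansion. To leading order,
\begin{equation*}
\Ric(\ob) \approx -\ocan + \owp + O(\text{decaying}),
\end{equation*}
which is essentially a pullback from the base and therefore has small fiber components. We then must check that in the product $R_{i\bar k l\bar j}R^{l\bar k}$, the large fiber-fiber curvature $e^{t}\Rm(g_{Y,z})$ is only paired with the fiber-fiber block of $\Ric$. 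That block must decay like $e^{-t}$ in the $g_z(t)$-frame, i.e.\ $\Ric(\ob)|_{\text{fiber}}=O(e^{-t})$ measured in $g_{Y,z}$. We would verify this from the Ricci-flatness of $\omega_F|_{X_z}$: the fiber restriction of $\Ric$ is $\ddbar$ of the fiber-varying part of $\de_t\vp$, which carries a factor $e^{-t}$.

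Mixed base-fiber terms carry curvature of size $e^{t/2}$. They are paired with mixed Ricci components, which need to be $O(e^{-t/2})$. Establishing this cancellation precisely is the most delicate point. It is also consistent with the $e^{t/2}$ obstruction mentioned in the abstract, which appears one derivative higher. Finally, we equate $\Delta\Ric = \de_t\Ric - \Rm*\Ric$ and conclude.
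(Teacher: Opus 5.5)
Your architecture is the paper's: the evolution equation of $\Rc$ along \eqref{eq_krf} controls $\Delta\Rc$ by $\partial_t\Rc$, $\Rc\ast\Rc$ and the contraction $\T_{ij}=g^{\bullet pq}\Rc_{rq}\Rm_{pij}^r$. Steps 1 and 2 reach the right conclusions, with two corrections to Step 2. First, cite \Cref{thm_D^2_ric}, which contains $\abs{\partial_t\Ric}\leq C$; \Cref{thm_nabla^1_ric} is first order and does not. Second, the base coefficients $A_{i,p,k}(t)$ are not exponentials, so $t$-derivatives of the expansion are controlled by parabolic interpolation, not by ``bounded factors''.

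The genuine gap is Step 3, which carries all the content and which you leave as a plan. Your two formulations of the fiber-block requirement are not equivalent. For a fiber--fiber 2-tensor, $\abs{\cdot}_{g(t)}=e^t\abs{\cdot}_{g_X}$. Since $(\Rm^\D)_{\ff\ff\ff}^{\ff}=\Rm(g_{Y,z})$ is $t$-independent and ${\gb}^{\ff\ff}=O_{g_X}(e^t)$, you get $\abs{\T_{\ff\ff}}\lesssim e^{2t}\abs{\Ric_{\ff\ff}}_{g_X}+\cdots$. So you need $\abs{\Ric_{\ff\ff}}_{g_X}\leq Ce^{-2t}$, not $O(e^{-t})$ in $g_{Y,z}$. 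Your heuristic, that the fiber oscillation of $\dot\vp$ ``carries a factor $e^{-t}$'', aims only at the weaker rate, which would leave $\T_{\ff\ff}$ of size $e^t$. Also, $C^0$ smallness of the oscillation does not control $\ddbarf$, and $\Ric=-\ocan-\ddbar(\vp+\dot\vp)$, so $\vp$ enters too.

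In the paper, the sharp rates $\abs{\Ric_{\ff\ff}}_{g_X}\leq Ce^{-2t}$ and $\abs{\Ric_{\bb\ff}}_{g_X}\leq Ce^{-3t/2}$ (\Cref{lem_Ric_base_fiber}) are read off from $\Ric=-\ocan-\gamma_0-\gamma_{1,k}-\eta_{1,k}-\dot\gamma_0-\dot\gamma_{1,k}-\dot\gamma_{2,k}-\dot\eta_{2,k}$. The decisive input is $\abs{\dot\gamma_{i,k}}\leq Ce^{-t}$. This requires the new estimate $\abs{\dot A_{i,p,k}}\leq Ce^{-2t}$ of \Cref{prop_bound_dot_A_i_leq2}, which is proved as follows:
differentiate the parabolic Monge--Amp\`ere equation in $t$;
remove fiber averages using $\ddot\vp-\underline{\ddot\vp}=o(e^{-t})$;
pair with the fiberwise orthonormal $G_{i,p,k}$.
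Interpolating the estimates of \cite{HLT} alone (\Cref{lem_interp_fD_A_1} with $\ell=2$) gives only $\abs{\dot A_{1,p,k}}\leq Ce^{-(2-\frac{2}{4+\alpha})t}$, which leaves $\abs{\T_{\ff\ff}}\lesssim e^{\frac{2t}{4+\alpha}}$ and does not close. This sharp $\dot A$ decay is the missing idea, and it also settles the mixed block you call the most delicate point.

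You also lack a concrete decomposition of $\Rm(\ob)$; it is not just $e^t\Rm(g_{Y,z})$ plus mixed pieces. The paper writes $\Rm_{ijk}^{\ell}=(\Rm^\D)_{ijk}^{\ell}-\D_iA_{jk}^{\ell}+\D_jA_{ik}^{\ell}+A\ast A$, with $A=(\gb)^{-1}\circledast\D\gb$ bounded by \Cref{prop_bound_D_gb}, which uses $(\D g_F)_{\ff\ff\ff}=0$. The term $\D A$ contains $e^{-t}\D^2 g_F$. This encodes the base variation of $g_F$ and $J$ and has components of size up to $e^{t/2}$. It enters $\T$ through $e^{-t}S\circledast\D^2 g_F$ with $S=\Rc^{\sharp\sharp}$. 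Closing this term needs $\abs{S}_{g_X}\leq C$, which again rests on the sharp Ricci block rates together with the block structure of $(\gb)^{-1}$ in \Cref{prop_gb_base_fiber}. The off-diagonal block ${\gb}^{\bb\ff}=O_{g_X}(1)$ also pairs $\Ric_{\ff\bb}$ with the fiber curvature, so the fiber curvature does not meet only $\Ric_{\ff\ff}$.

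A minor point: the leading term of $\Ric(\ob)$ is $-\ocan$, not $-\ocan+\owp$.
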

	
	These follow from the local estimates on the $\D$-derivatives of the Ricci curvature:
	\begin{theorem}\label{thm_D^1_ric}
		Locally on $B \times Y$, there exists a constant $C$ such that
		\begin{equation}
			\sup_{B \times Y \times [0, +\infty)} \abs{\D \Ric(\ob(t))}_{\ob(t)} \leq C.
		\end{equation}
	\end{theorem}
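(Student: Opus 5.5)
We will bound $\D \Ric(\ob(t))$ through the local asymptotic expansion of \cite{HT2,HT3,HLT}. The starting point is the identity
\[
\Ric(\ob) = -\ddbar \log \frac{(\ob)^{m+n}}{\ocan^m\wedge\omega_F^n} + \Ric(\ocan)\ \text{(pulled back)} + (\text{fixed smooth forms}).
\]
By \eqref{eq_krf_MA}, the log-volume ratio equals $\dot\vp + \vp - nt + \text{const}$. Hence $\Ric(\ob(t))$ is, up to smooth $t$-independent forms, the sum $-\ddbar(\dot\vp+\vp)$ plus $-\Ric(\ocan)$ contributions that are pulled back from $B$. The pulled-back terms, and $\ddbar$ of fixed smooth functions, have $\D$-derivatives bounded in $g(t)$. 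The reason is that each $\D$-derivative in a fiber direction costs at most a factor $e^{t/2}$ relative to $g_{Y}$, while fiber components of $(1,1)$-forms of fixed size are measured with weight $e^{t}$ smaller. So a fixed smooth form $\eta$ on $B\times Y$ satisfies $|\D^k\eta|_{g(t)}\le C$ only if its fiber–fiber components are $O(e^{-t})$ and its mixed components are $O(e^{-t/2})$. We therefore must check, component by component, that every fixed form which appears has this scaling. It does, because the fiberwise components come from $\omega_F$ and are multiplied by $e^{-t}$ in $\on$.

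The main work concerns the potential term. We will use the expansion from \cite{HLT}, which writes
\[
\vp(t) = \underline{\vp}(t) + \sum_{i} \mathfrak{G}_{i}(t) + \psi(t).
\]
Here $\underline{\vp}$ is the fiberwise average, which is a function on the base. Each $\mathfrak{G}_i$ is built from fiberwise Green operators applied to base quantities and is explicitly of the form $e^{-jt}\times$ (smooth fixed functions). The remainder $\psi$ satisfies $|\D^k\psi|_{g(t)}\le C_k e^{-\alpha t}$ in a parabolic $C^{k,\alpha}$ sense, with decay strong enough to absorb the three derivatives needed for $\D\ddbar\psi$. The time derivative $\dot\vp$ has a corresponding expansion, obtained by differentiating in $t$ and using the parabolic estimates of \cite{HLT}.

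We then estimate each piece of $\D\ddbar(\dot\vp+\vp)$ in turn. For the base part, $\D\ddbar\underline{\vp}$ is controlled by smooth convergence on $B$, which is exactly the locally smooth convergence $\ob\to\ocan$ of \cite{HLT}, together with bounds on $\partial_t$ of the base potential. For the explicit terms $\mathfrak{G}_i$, we compute $\D\ddbar$ directly and verify the $e^{-t}$, $e^{-t/2}$ weighting described above. For the remainder $\psi$, we use its $C^{3}$-type bounds in the shrinking metrics $g_z(t)$.

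Finally, we must pass from $g(t)$-norms and $g_z(t)$-based $\D$ back to $\ob(t)$-norms. For this we use the uniform equivalence of $\gb,\gn,g_z$ from \cite{FZ}, and note that $\ob$-norms of tensors are comparable to $g(t)$-norms.

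The main obstacle is the mixed and fiber components at the level of three derivatives. Differentiating $\ddbar$ of terms such as $e^{-t}G_z(\text{base data})$ along the base produces mixed terms. These terms are a priori only $O(e^{-t/2})\cdot e^{t/2}$, so they are borderline, and naive bounds give $O(1)$ only if no further $e^{t/2}$ loss occurs. The abstract indicates that a genuine blow-up appears at third order of $\Ric$, so at first order we must verify carefully that the obstruction term does not yet enter. Our first attempt will be to track the precise $t$-weights of each component of $\D\ddbar\mathfrak{G}_1$ using the fiberwise Ricci-flat structure. In particular, we will use that the fiber–fiber part $\ddbarf G_z(\cdot)$ is harmonic-type, and that $\D$ kills $g_{Y,z}$ parallelly. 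From this we expect to show that the would-be $e^{t/2}$ term vanishes.
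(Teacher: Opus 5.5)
There is a genuine gap. Your outer structure matches the paper's: by \eqref{eq_krf} one has exactly $\Ric(\ob)=-\ocan-\ddbar(\vp+\dot\vp)$, since $\on+\partial_t\on=\ocan$. The paper then expands this, bounds each piece, and uses metric equivalence. But the step that carries the content, bounding $\D\dot\gamma_{1,k}$, is not supplied, and your plan for it rests on a wrong picture.

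\begin{itemize}
\item The Green-operator terms are not explicit $e^{-jt}\times$(fixed functions). By Lemma~\ref{lem_green} they are sums of $e^{-rt}\Phi_{\iota,r}(G_{i,p,k})\circledast\D^\iota A_{i,p,k}(t)$, with base coefficients $A_{i,p,k}(t)$ known only through decay estimates.
\item The dangerous piece is not a mixed component coming from base derivatives, which cost nothing in $g(t)$. It is the all-fiber component $\D_{\ff}(\ddbar\,\cdot)_{\ff\ff}$ of $\Phi_{0,0}(G_{1,p,k})\,\dot A_{1,p,k}$, whose $g(t)$-norm carries a factor $e^{3t/2}$.
\item So you need $|\dot A_{1,p,k}|=O(e^{-(3/2+\delta)t})$. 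The estimate \eqref{eq_expa_A_i_infty} alone gives only about $e^{-(2+\alpha)(1-\epsilon)t/2}$, which leaves growth like $e^{(1-\alpha+\epsilon)t/2}$.
\end{itemize}

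Fiberwise Ricci-flatness, harmonicity of $\ddbarf\Delta^{-1}$, and $\D g_{Y,z}=0$ will not produce this bound. The issue is the size of the base coefficients, which only the equation controls. Also, there is no term of the form $e^{-t}G_z(\cdot)$ in the expansion to begin with.

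The missing ingredient is the sharp decay $|A_{1,p,k}|\le Ce^{-2t}$ from \cite[Proposition 5.3]{HLT}. It comes from the Monge--Amp\`ere equation \eqref{eq_krf_MA}. Divide by $e^t\ocan^m\wedge\omega_F^n$ and subtract fiberwise averages; the fiber-nonconstant part gives $\sum_pA_{1,p,k}G_{1,p,k}=-e^{-2t}(\cS-\underline{\cS})+o(e^{-2t})$. So the fiber-dependent part of $\vp$ is $O(e^{-2t})$, and this is what your hoped-for ``vanishing of the would-be $e^{t/2}$ term'' really is.

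The paper then proceeds as follows:
\begin{itemize}
\item It interpolates $|A_{1,p,k}|\le Ce^{-2t}$ and $|A_{2,p,k}|\le Ce^{-(4+\alpha)t/2}$ against the H\"older bounds $[\fD^\ell A_{i,p,k}]_\alpha\le Ce^{(\ell-6)t/2}$ from \eqref{eq_expa_A_i_holder}.
\item This gives Lemmas~\ref{lem_interp_fD_A_1} and~\ref{lem_interp_fD_A_2}, for example $|\dot A_{1,p,k}|\le Ce^{-(3+\frac{\alpha}{4+\alpha})\frac t2}$.
\item The bound $|\D\dot\gamma_{i,k}|\le C$ is then pure weight counting (Proposition~\ref{prop_bound_D_dot_gamma_1_2}).
\end{itemize}
Theorem~\ref{thm_D^3_Ric} confirms that decay, not cancellation, is what decides. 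There the same term, with weight $e^{5t/2}$ against $e^{-2t}$, genuinely grows like $e^{t/2}$.

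Your remainder claim is also misstated. The estimate \eqref{eq_expa_eta_infty} controls $\fD^\iota\eta_{j,k}$ only for $\iota\le 2j$, at rate $e^{(\iota-2j-\alpha)t/2}$. Since $\D\dot\eta$ is of parabolic order $3$, the $j=1$ remainder gives no control on $\D\dot\eta_{1,k}$. You must expand $\dot\vp$ to order $j=2$; the paper uses $j=1$ for $\vp$ and $j=2$ for $\dot\vp$.
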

	
	\begin{theorem}\label{thm_D^2_ric}
		As in \Cref{thm_D^1_ric}, we have
		\begin{equation}
			\sup_{B \times Y \times [0, +\infty)} \br{\abs{\D^2 \Ric(\ob(t))}_{\ob(t)} + \abs{\partial_t \Ric(\ob(t))}_{\ob(t)}} \leq C.
		\end{equation}
	\end{theorem}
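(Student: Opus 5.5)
We prove \Cref{thm_D^2_ric} the way we prove \Cref{thm_D^1_ric}: from the local asymptotic expansion of $\vp(t)$ of Hein--Lee--Tosatti \cite{HLT}, used one derivative further. On $B\times Y$ we write
\[
\ob(t)=\on(t)+\ddbar\vp(t),\qquad \vp(t)=\underline{\vp}(t)+\sum_{j=1}^{k}\vp_j(t)+\psi_k(t).
\]
Here $\underline{\vp}$ is the fiberwise average, which is essentially a base function. Each $\vp_j$ is a fiberwise-oscillating term built from functions $G_{i,p,k}$ on $B\times Y$ times $e^{-pt}$ or $t^ie^{-pt}$ factors, with $\D$-derivatives controlled as in \cite{HLT}. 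The remainder $\psi_k$ satisfies $|\D^l\psi_k|_{g(t)}\le C e^{-\alpha_k t}$ with $\alpha_k$ as large as we like for $l\le$ some fixed number.

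\textbf{Step 1: write $\Ric$ and its derivatives as explicit expressions.} We have
\[
\Ric(\ob)=-\ddbar\log\frac{(\ob)^{m+n}}{\omega_{\rm can}^m\wedge\omega_F^n}+\Ric(\text{reference}).
\]
Equivalently, by \eqref{eq_krf_MA},
\[
\Ric(\ob)=-\ddbar(\partial_t\vp+\vp)-\ocan+\owp\text{-type terms}.
\]
In this form $\Ric$ is $\ddbar$ of the time derivative of the expansion. Schematically, $\D^2\Ric$ is $\D^4\partial_t\vp$ plus lower-order terms, plus Christoffel-difference terms $(\nabla^{\ob}-\D)*\D^j\Ric$ with $j\le 1$. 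The difference tensor $\nabla^{\ob}-\D=(\ob)^{-1}*\D\ob$ is uniformly bounded by the $C^1$ estimates of \cite{HLT}, and $\D\Ric$ is bounded by \Cref{thm_D^1_ric}. So it suffices to bound $\D^2$ of the explicit $(1,1)$-form, measured in $g(t)$.

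\textbf{Step 2: estimate term by term.}
\begin{itemize}
\item The base part $\ocan$ and the $\underline{\vp}$ contribution are smooth on $B$ and time-decaying, so all their $\D$-derivatives are bounded; shrinking the fibers costs nothing for base-pulled-back tensors.
\item The remainder $\psi_k$ contributes at most $Ce^{-\alpha_k t}e^{(\ell+2)t/2}$. Here $\ell+2$ counts the derivatives: each fiber direction costs $e^{t/2}$ in $g(t)$-norm. Choosing $k$ large kills this.
\item The delicate terms are the $\vp_j$, which decay like $e^{-t}$ times functions oscillating in the fiber. Four fiber derivatives would cost $e^{2t}$. We must use the structure of $G_{i,p,k}$: it solves fiberwise Laplace/Poisson equations whose right-hand sides come from the base variation of $g_{Y,z}$. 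Consequently the pure-fiber components of $\ddbar\vp_j$ cancel against the $e^{-t}\omega_F$ Ricci-flatness, at the top orders.
\end{itemize}
Concretely, we track mixed base/fiber type and show that every component of $\D^2\Ric$ carries enough $e^{-t}$ factors. A component with $a$ fiber indices, counted with $g(t)$-weights, must be paired with decay $e^{-at/2}$.

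\textbf{Step 3: the time derivative.} We bound $\partial_t\Ric$ separately by differentiating the same formula in $t$. Under the flow, $\partial_t\Ric=\Delta_L\Ric$-type terms plus $\Ric$, that is, $\partial_t\Ric$ is a combination of $\D^2\Ric$ and $\Rm*\Ric$. The dangerous factor is $\Rm$, which is unbounded: fiber curvature is $\sim e^{t}$. We therefore instead read $\partial_t\Ric$ off directly from the expansion: $\partial_t$ of $e^{-pt}$ or $t^ie^{-pt}$ only produces polynomial factors. This reduces the $\partial_t$ bound to the same component counting as in Step 2.

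\textbf{Main obstacle.} The main obstacle is Step 2 for the first nontrivial $\vp_j$. The abstract warns that a third derivative genuinely blows up at rate $e^{t/2}$, so at second order the estimate must be borderline. The relevant structure is that $\D$ of the fiberwise Ricci-flat metrics is harmonic-type data, corresponding to the Weil--Petersson/Kodaira--Spencer variation. I would first compute the leading term $\ddbar(e^{-t}G)$ explicitly in coordinates adapted to $f$. I would then verify that its components with two fiber indices vanish to the needed order, using the fiberwise equation $\Delta_{g_{Y,z}}G=\text{tr}(\partial_z g_{Y,z})$-type identity. The estimate for $\D^2$ should then close exactly at the borderline, and fail at $\D^3$.
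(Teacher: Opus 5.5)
Your overall framework matches the paper's: expand $\Ric(\ob)=-\ocan-\ddbar(\varphi+\dot\varphi)$ via \cite{HLT}, count fiber derivatives term by term, and read $\partial_t\Ric$ off the expansion rather than from the evolution equation. The paper writes $\Ric(\ob)=-\ocan-\gamma_0-\gamma_{1,k}-\eta_{1,k}-\dot\gamma_0-\dot\gamma_{1,k}-\dot\gamma_{2,k}-\dot\eta_{2,k}$ and bounds $\fD^2$ of each piece. No Christoffel-difference terms arise, since the statement concerns the fixed connection $\D$, not $\nabla^{\ob}$.

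The genuine gap is your treatment of time dependence. In \cite{HLT} the oscillating terms are $\ddbar\,\mathfrak{G}_{t,k}(A_{i,p,k}(t),G_{i,p,k})$, where the $A_{i,p,k}(t)$ are \emph{unknown} functions on the base, not explicit factors $e^{-pt}$ or $t^ie^{-pt}$. Only sup bounds and parabolic H\"older seminorm bounds are available, so $\dot A_{i,p,k}$ is controlled only by interpolation, which loses. This fails exactly at the borderline. The pure-fiber component of $\D^2$ of the leading part $\ddbar\big(\dot A_{1,p,k}\,\Phi_{0,0}(G_{1,p,k})\big)$ of $\dgamma_{1,k}$ has $g(t)$-norm of order $e^{2t}\abs{\dot A_{1,p,k}}$. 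Interpolating $\abs{A_{1,p,k}}\le Ce^{-2t}$ against $[\fD^4A_{1,p,k}]_\alpha\le Ce^{-t}$ gives only $\abs{\dot A_{1,p,k}}\le Ce^{-(2-\frac{2}{4+\alpha})t}$, i.e.\ growth $e^{\frac{2}{4+\alpha}t}$. The same problem occurs for $\dot A_{2,p,k}$. That term enters because $\fD^2\dot\eta_{1,k}$ is not controlled by the $j=1$ expansion, which forces $j=2$ for the time-derivative part.

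The missing idea is the sharp bound $\abs{\dot A_{i,p,k}}\le Ce^{-2t}$ for $i=1,2$ (\Cref{prop_bound_dot_A_i_leq2}). The paper proves it as follows.
\begin{enumerate}
\item Differentiate \eqref{eq_krf_MA} in $t$, divide by $e^t\ocan^m\wedge\omega_F^n$, and subtract fiberwise averages.
\item Up to a base factor, the right side reduces to $\tr{\omega_F|_{\{\cdot\}\times Y}}{(\dgamma_{1,k}+\dgamma_{2,k})_{\ff\ff}}+O(e^{-2t})$.
\item The left side is $O(e^{-2t})$ because $\varphi-\underline\varphi$, $\dot\varphi-\underline{\dot\varphi}$ and $\ddot\varphi-\underline{\ddot\varphi}$ are all $o(e^{-t})$. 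The last of these comes from fiberwise Moser iteration using the bound on $\ddot\eta_{2,k}$.
\item Pairing with the fiberwise orthonormal $G_{i,p,k}$ isolates each $\dot A_{i,p,k}$.
\end{enumerate}
Feeding this into the interpolation gives $\abs{\fD^\l\dgamma_{i,k}}\le Ce^{(\l-2)\thalf}$ for $\l\le 2$, which closes the argument.

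Relatedly, the cancellation you propose in Step 2 is neither what happens nor needed. You suggest the pure-fiber components of $\ddbar\vp_j$ cancel against fiberwise Ricci-flatness via an identity like $\Delta G=\mathrm{tr}\,\partial_z g$. In fact $(\gamma_{1,k})_{\ff\ff}$ does not vanish in general. In $g_X$ it has size exactly $e^{-2t}$, with leading behaviour governed by $e^{-2t}(\cS-\underline{\cS})$. The estimate closes only because the coefficients, and their first time derivatives, decay at precisely the rate $e^{-2t}$ that pays for four fiber derivatives.
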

	
	The transition from estimates on $\D^2 \Ric$ to $\nabla^{\bullet, 2} \Ric$, however, is hindered by the unknown variation of the semiflat form $\omega_F$ and the complex structure $J$ (viewed as a tensor) on $B \times Y$, which depend on the arbitrary initial data $(X, \omega_0)$. The complex structure $J$ is in general not a product on $B \times Y$. Nevertheless, if all the regular fibers $X_z$ are pairwise biholomorphic (we call such $f$ an \textit{isotrivial} fibration), so that $J$ can be made a product locally on $B \times Y$ by the Fischer-Grauert theorem, then by \cite{FL} we in fact have uniform bounds on the covariant derivatives of the Ricci curvature of every order away from singular fibers. By \cite{Cao,Ts,PS,TiZ,TZ}, the same estimates hold when $\kod(X) = 0$ or $\kod(X) = \dim X$, both of which can be viewed as extremal cases of an isotrivial fibration. 
	On the other hand, if the generic fibers are tori (the complex structures $J|_{X_z}$ may vary with $z$ in this case), we have in fact uniform control on the covariant derivatives of the full Riemann curvature tensor of every order on $K \Subset X \setminus S$.

	The third order $\D$-derivative of the Ricci curvature witnesses a probable blow-up:
	
	\begin{theorem}\label{thm_D^3_Ric}
		Under the assumptions above, if the function $\cS$ defined by
		\begin{equation}\label{eq_S}
			\frac{m}{n+1} \ocan^{m-1} \wedge \omega_F^{n+1} = \cS \ocan^m \wedge \omega_F^n
		\end{equation}
		is not fiberwise constant in $B \times Y$, then
		\begin{equation}\label{eq_D2_Ric}
			e^{-\thalf} \br{\sup_{B \times Y } \abs{\D^3 \Ric(\ob(t))}_{\ob(t)}} = \Theta(1).
		\end{equation}
	\end{theorem}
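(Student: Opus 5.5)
We have to prove two bounds: an upper bound $\sup|\D^3\Ric| \le Ce^{t/2}$ and a matching lower bound $\ge c\,e^{t/2}$ for all large $t$, when $\cS$ is not fiberwise constant. The plan is to use the local asymptotic expansion of $\ob(t)$ from \cite{HT2,HT3,HLT}, in the refined form behind \Cref{thm_D^1_ric} and \Cref{thm_D^2_ric}. We write the Ricci form as
\[
\Ric(\ob) = -\ocan + \owp - \ddbar \br{\partial_t\vp + \vp} + (\text{explicit terms from } \omega^\natural),
\]
using \eqref{eq_krf_MA}, so that $\Ric(\ob(t)) = -\ob(t) - \ddbar\dot\vp(t)$ up to the known background forms. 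Everything then reduces to estimating $\D^{5}\dot\vp$ (three $\D$-derivatives on top of $\ddbar$), measured in $g(t)$-norms, where fiber directions carry weight $e^{t/2}$ per derivative.

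For the upper bound, decompose $\vp$ (and $\dot\vp$) into the expansion's pieces: a base-type term $\psi$ pulled back from $B$; finitely many obstruction terms of the form $e^{-jt}$ times functions built from fiberwise Green operators; and a remainder with $\D$-derivatives bounded by $Ce^{-\alpha t}$-improved weighted Hölder norms. Each piece is bounded separately, exactly as in the proofs of \Cref{thm_D^1_ric} and \Cref{thm_D^2_ric}, but with one more derivative. A fiber derivative costs at most $e^{t/2}$, and every other piece contributes $O(1)$ by the same cancellation bookkeeping. This gives $\sup|\D^3\Ric|_{\ob} \le Ce^{t/2}$.

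For the lower bound, identify the leading non-decaying term in $\D^2\Ric$ that is not fiberwise constant. Expanding the Monge–Ampère quotient to first order in $e^{-t}$,
\[
\frac{e^{nt}(\omega^\natural)^{m+n}}{\binom{m+n}{n}\ocan^m\wedge\omega_F^n} = 1 + e^{-t}\br{\cS + \text{(terms involving } \psi)} + O(e^{-2t}),
\]
so $\cS$ enters $\dot\vp$ at order $e^{-t}$, together with its fiberwise-oscillating correction $e^{-t}\underline{\ }$-type term $\davg{\cS}$. This term is cancelled at the level of the potential by the first obstruction function $e^{-t}G_z(\davg{\cS})$, where $G_z$ is the fiberwise Green operator. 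Therefore $\Ric$ contains $\ddbar$ of a term of size $e^{-t}$ whose fiberwise part is nontrivial.

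In $g(t)$-norms, two fiber derivatives of such a term (base–base components, say $\de_{z}\dbar_{\bar z}$ of $\ddbar_{\text{fib}}$ acting) give $O(1)$, which is consistent with \Cref{thm_D^2_ric}. One further fiber derivative $\D_{\text{fib}}$ then gives exactly $e^{t/2}\,|\nabla^{Y}(\text{something built from } \davg{\cS})|_{g_{Y,z}}$. To make this precise, I would pick the component $\D_{\text{fib}}\D_{\text{fib}}\D_{\text{fib}}\Ric_{z\bar z}$ or a mixed base/fiber component. Its leading term is $e^{t/2}$ times a $t$-independent fiberwise derivative, for instance $\nabla^Y_{\text{fib}}(\davg{\cS})$ or $\nabla^{Y,3}G_z\Delta(\davg{\cS})$. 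This is nonzero at some point because $\cS$ is not fiberwise constant. The remaining terms must be shown to be $o(e^{t/2})$, using the decay of the remainder from the expansion.

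The main obstacle is this last step. We must verify that no other contribution at order $e^{t/2}$ exactly cancels the $\cS$-term: for example, fiber derivatives of $\omega_F$ variations, the non-product complex structure $J$, or the Christoffel differences between $\D$ and $\nabla^{\ob}$. I would handle this by computing at a fixed point $(z_0,y_0)$, using coordinates adapted to $g_{z_0}(t)$ and the fiberwise structure in which $J$ is a product to first order. In those coordinates the $J$-variation and $\omega_F$-variation contributions involve base derivatives, which cost nothing. Consequently they only appear at order $\le O(1)$ after one fiber derivative. That isolates the $\davg{\cS}$ contribution as the unique $e^{t/2}$ term.
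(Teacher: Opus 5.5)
Your upper-bound sketch follows the paper's route: term-by-term bounds on $\D^3$ of the pieces of $\Ric(\ob)=-\ocan-\ddbar(\vp+\dot\vp)$, which need $\abs{\dot A_{i,p,k}}\le Ce^{-2t}$ from \Cref{prop_bound_dot_A_i_leq2} and an expansion to $j=3$. The lower bound, however, has a genuine gap, and it starts with a mislocated leading term.

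The fiberwise oscillation of the potential is of order $e^{-2t}$, not $e^{-t}$. By the Hein--Lee--Tosatti estimate recalled in \Cref{section_order_3}, $\sum_p A_{1,p,k}G_{1,p,k}=-e^{-2t}(\davg{\cS})+o(e^{-2t})$. Hence $\vp-\underline{\vp}\approx-e^{-2t}\br{\Delta^{\omega_F|_{\{\cdot\}\times Y}}}^{-1}(\davg{\cS})$, consistent with $\abs{\davg{\vp}}=o(e^{-t})$. An obstruction term $e^{-t}G_z(\davg{\cS})$ ``of size $e^{-t}$'' would contradict this.

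With the correct order, $\D^3_{\ff\ff\ff}\Ric_{z\bar z}$ and the mixed components are $o(e^{\thalf})$. The growth sits in $\D^3_{\ff\ff\ff}\Ric_{\ff\ff}$, which has five fiber slots, so its $g(t)$-norm is $e^{5t/2}$ times a $g_X$-size of $e^{-2t}$. Its leading part is $-\sum_p(A_{1,p,k}+\dot A_{1,p,k})\,\D^3_{\ff\ff\ff}\ddbarf\br{\Delta^{\omega_F|_{\{\cdot\}\times Y}}}^{-1}G_{1,p,k}$. Both $\vp$ and $\dot\vp$ contribute here, so you cannot reduce to $\dot\vp$ alone. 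The $t$-independent tensor that must be nonzero somewhere is $\D^3_{\ff\ff\ff}\ddbarf\br{\Delta^{\omega_F|_{\{\cdot\}\times Y}}}^{-1}(\davg{\cS})$, not $\nabla^Y(\davg{\cS})$. Its nonvanishing needs an argument such as \Cref{lem_max_principle}: trace twice to get $\nabla\Delta F=0$.

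The decisive missing ingredient is control of the coefficient. Since $\sum_p(A_{1,p,k}+\dot A_{1,p,k})G_{1,p,k}=e^{-2t}(\davg{\cS})+\E+\dot\E$, you need $\E$ \emph{and} $\dot\E$ to be $o(e^{-2t})$ after applying $\D^3_{\ff\ff\ff}\ddbarf\br{\Delta^{\omega_F|_{\{\cdot\}\times Y}}}^{-1}$, i.e.\ in fiberwise $C^4$ (\Cref{lem_unif_schauder}).

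The known $C^0$ fact $\E=o(e^{-2t})$, even together with $\abs{\dot A_{1,p,k}}\le Ce^{-2t}$, does not give $\dot\E=o(e^{-2t})$. That would need equicontinuity in $t$ at scale $e^{-2t}$, and interpolating against the available H\"older bounds on $\fD^{2j}A_{1,p,k}$ loses a factor $e^{\epsilon t}$. Appealing to ``decay of the remainder in the expansion'' does not help either, because the $A_{1,p,k}$ are tied to $\cS$ only through the Monge--Amp\`ere equation. Without $\dot\E=o(e^{-2t})$ you only know $\sum_p(A_{1,p,k}+\dot A_{1,p,k})G_{1,p,k}=e^{-2t}(\davg{\cS})+O(e^{-2t})$, which yields no lower bound.

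The paper obtains this by differentiating the parabolic Monge--Amp\`ere equation in $t$ and expanding to third order. It then estimates each error piece $\E_1,\dots,\E_7$ and its $t$-derivative in fiberwise $C^4$ (\Cref{prop_error_o_-2t}), using fiberwise $C^4$ bounds on $\davg{\vp}$, $\davg{\dot\vp}$, $\davg{\ddot\vp}$ (\Cref{lem_phi-dot_phi}).

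Your treatment of the ``main obstacle'' does not address this. On the $\ff\ff\ff$--$\ff\ff$ component, $J$ enters only through $J_z$ on the fiber and $\D_{\ff}$ restricts to $\nabla^{g_{Y,z}}$, so the $J$- and $\omega_F$-variation terms are harmless. Christoffel differences between $\D$ and $\nabla^{\ob}$ play no role at all, because the statement concerns $\D^3\Ric$.
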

	
	The ``geodesic curvature'' $\cS$ defined in \eqref{eq_S} is constant along a fiber $(X_z, \omega_F|_{X_z})$ if and only if the harmonic representatives of the Kodaira-Spencer classes for the Iitaka fibration $f$ have constant total $\omega_F|_{X_z}$-length measured in $\ocan(z)$ (see \cite[\S 5]{HT3} and the references therein), which may well not be the case due to the asymptotically cylindrical gluing construction of K3 surfaces (\cite{He1,CC}). Therefore, we make the following conjecture:

	\begin{conjecture}\label{conj_main}
		Under the assumptions above, given $K \Subset X \setminus S$, the quantity
		\begin{equation}
			\abs{\nabla^{\omega^\bullet(t), 3} \Ric(\omega^\bullet (t))}_{\omega^\bullet(t)} 
		\end{equation}
		is in general not bounded over ${K \times [0, +\infty)}$.
	\end{conjecture}

	\begin{question}
		Under the assumptions above, given $K \Subset X \setminus S$, in which $C^{k, \alpha}$ norms between $C^1$ and $C^3$ over $(K, \ob(t))$ is the Ricci curvature of $\ob(t)$ uniformly bounded in $t \geq 0$? Does the answer depend only on the variation of complex structures $J|_{X_z}$ with $z \in B \setminus f(S)$?
	\end{question}

	The Ricci curvature bounds near the singular fibers of $f$ remain an open problem. We refer interested readers to \cite[Remark 1.4]{HLT}.

	\subsection{Paper Outline}
	In \Cref{section_prel}, we briefly present the parabolic \Ho norm and the asymptotic expansion theorem from \cite{HLT}. In \Cref{section_order_1}, we prove \Cref{thm_D^1_ric} and use it to derive \Cref{thm_nabla^1_ric}. In \Cref{section_order_2}, we prove \Cref{thm_D^2_ric}, followed by \Cref{thm_nabla^2_scalar,thm_Delta_ric}, and discuss the difficulties in obtaining second-order Ricci bound. \Cref{section_order_3} is devoted to \Cref{thm_D^3_Ric}, including how its assumption on $\cS$ can be realized. Finally, we discuss special cases in \Cref{section_special} including when the Iitaka fibration is isotrivial or torus-fibered, and when the Kodaira dimension takes the extremal values $0$ or $\dim X$.

	\subsection{Acknowledgements} 
		The author would like to thank Professor Valentino Tosatti, his Ph.D. advisor, for the motivation and helpful discussions on this paper. The author is also grateful to H.-J. Hein and M.-C. Lee for their valuable suggestions. The author thanks P. Engel and M. Mauri for discussions about Remark \ref{rmk_moduli}, as well as F.-T.-H. Fong, Y. Li, J. Zhang, and Y. Zhang for their comments.

\section{Preliminaries}\label{section_prel}
	We lay out the machinery of asymptotic expansion for the immortal \KR flow developed in \cite{HLT}, which is a parabolic adaptation of \cite{HT2,HT3}, in alignment with our setup in \Cref{subsec_setup}.

	\subsection{Known Estimates}
	We first recall some known estimates for the \KR flow \eqref{eq_krf} and its equivalence \eqref{eq_krf_MA}, for future reference. Throughout this paper, $O(\cdot)$, $o(\cdot)$, and $\Theta(\cdot)$ will always denote the asymptotic behavior as $t \to +\infty$, with estimates always uniform in space $B \times Y$ up to shrinking the Euclidean ball $B \subset \C^m$.
	\begin{lemma}\label{lem_known_est}
		On $B \times Y \times [0, +\infty)$, we have
		\begin{equation}\label{eq_unif_equiv_bullet_natural}
			C^{-1} \omega^\natural(t) \leq \omega^\bullet(t) \leq C \omega^\natural(t),
		\end{equation}
		\begin{equation}\label{eq_phi_o(1)}
			\abs{\varphi(t)} = o(1),
		\end{equation}
		\begin{equation}\label{eq_dot_phi_o(1)}
			\abs{\dot{\varphi}(t)} = o(1),
		\end{equation}
		\begin{equation}\label{eq_R_O(1)}
			\abs{R(\omega^\bullet(t))} = O(1),
		\end{equation}
		\begin{equation}\label{eq_ddot_phi_O(1)}
			\abs{\dot{\varphi}(t) + \ddot{\varphi}(t)} = O(1).
		\end{equation}
	\end{lemma}
	\begin{proof}
		\eqref{eq_unif_equiv_bullet_natural} is proved in \cite{FZ} (see also \cite{ST,To4}). \eqref{eq_phi_o(1)} and \eqref{eq_dot_phi_o(1)} are proved in \cite[Lemma 3.1]{TWY}. 
		\eqref{eq_R_O(1)} is the main theorem of \cite{ST3}, and this implies \eqref{eq_ddot_phi_O(1)} thanks to the relation \cite[p.345]{To}
		\begin{equation}\label{eq_scal_evol}
			\dot{\vp}(t)+\ddot{\vp}(t)=-R(\ob(t))-m.
		\end{equation}
	\end{proof}
	
	In our setting, $\br{g_{Y,z}}_{z\in B}$ is a smooth family of Riemannian metrics on $Y$, so (up to shrinking $B$ slightly) we can find $\Lambda>1$ such that for all $z \in B$, we have
	\begin{equation}\label{equiv-product-ref}
		\left\{
		\begin{array}{ll}
			\Lambda^{-1} g_{Y,0}\leq g_{Y,z}\leq \Lambda g_{Y,0},\\
			\Lambda^{-\frac{1}{2}} \leq \mathrm{inj}(Y,g_{Y,z}) \leq \mathrm{diam}(Y,g_{Y,z})\leq \Lambda^{\frac{1}{2}}.
		\end{array}
		\right.
	\end{equation}
	Therefore, $\Lambda^{-1} g(t) \leq g_z(t) \leq \Lambda g(t)$. Combined with \eqref{eq_unif_equiv_bullet_natural}, we derive:
	\begin{lemma}\label{lem_metric_equiv}
		On $B \times Y \times \infint$, the metrics $g^\bullet(t)$, $g^\natural(t)$, and $\{g_z(t) \mid z \in B\}$, are uniformly equivalent as $t \to +\infty$. In particular, the norms measured with respect to these metrics are uniformly comparable as $t \to +\infty$.
	\end{lemma}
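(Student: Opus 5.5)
The statement is Lemma \ref{lem_metric_equiv}, which follows by chaining three comparisons, each uniform on $B \times Y \times \infint$. First, \eqref{eq_unif_equiv_bullet_natural} from Lemma \ref{lem_known_est} gives $C^{-1} g^\natural(t) \leq g^\bullet(t) \leq C g^\natural(t)$. Second, \eqref{equiv-product-ref} gives $\Lambda^{-1} g(t) \leq g_z(t) \leq \Lambda g(t)$ for every $z \in B$, where $g(t) = g_0(t)$. Indeed, $g_z(t) = g_{\C^m} + e^{-t} g_{Y,z}$ and $g_{Y,z}$ is $\Lambda$-equivalent to $g_{Y,0}$, while $g_{\C^m}$ is common to both; since $\Lambda > 1$, adding the common term preserves the two-sided bound. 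The remaining link is a comparison between $g^\natural(t)$ and $g(t)$.

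For that link, I would observe that $\omega^\natural(t) = (1-e^{-t})\ocan + e^{-t}\omega_F$ has the same block structure as $g(t)$. Shrinking $B$, $\ocan$ is a smooth K\"ahler metric on the base, so it is uniformly equivalent to $g_{\C^m}$ on horizontal vectors. Likewise $\omega_F$ restricted to each fiber equals $g_{Y,z}$, which is $\Lambda$-equivalent to $g_{Y,0}$.

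The one point requiring care is the mixed horizontal-vertical components of $\omega_F$, which need not vanish. Write $\omega_F = \omega_F^{\mathrm{vert}} + \omega_F^{\mathrm{mix}} + \omega_F^{\mathrm{hor}}$ in a splitting adapted to $B \times Y$. On a compact subset all components are bounded with respect to $g_{\C^m} + g_{Y,0}$. By Cauchy--Schwarz, for a horizontal vector $u$ and a vertical vector $v$,
\[
e^{-t}\,\abs{\omega_F^{\mathrm{mix}}(u,v)} \leq C e^{-t} |u|\,|v| \leq \epsilon\,|u|^2 + C\epsilon^{-1} e^{-t}\, e^{-t}|v|^2 .
\]
Both terms on the right are absorbed by $g(t)$ once $t$ is large. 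For $t$ in a bounded interval, equivalence holds by compactness and smoothness, using that the flow metrics are genuine metrics there. Together these give $C^{-1} g(t) \leq g^\natural(t) \leq C g(t)$ for all $t \geq 0$. This matches the ``eventually K\"ahler on $K \Subset X\setminus S$'' statement, after shrinking $B$.

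Chaining the three comparisons gives uniform equivalence of all the metrics. Uniform comparability of tensor norms then follows because a $C$-equivalence of metrics induces a $C^{k}$-equivalence on $(p,q)$-tensors with $p+q = k$.
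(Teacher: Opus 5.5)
Your proof is correct for large $t$ and follows the paper's chain. The paper combines \eqref{eq_unif_equiv_bullet_natural} with $\Lambda^{-1}g(t)\le g_z(t)\le \Lambda g(t)$ from \eqref{equiv-product-ref}, and leaves the comparison of $g^\natural(t)$ with $g(t)$ implicit (the introduction attributes the full equivalence to \cite{FZ}). Your absorption of the mixed and horizontal components of $g_F=\omega_F^J$ into $(1-e^{-t})\gcan$ and $e^{-t}g_{Y,z}$ correctly supplies that missing link for $t\ge t_0$.

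The one incorrect step is the extension to all $t\ge 0$ ``by compactness''. In general $g^\natural(t)$ is not positive definite for small $t$: already $\omega^\natural(0)=\omega_F$ need not be positive in horizontal directions. That is why the paper only says $\omega^\natural(t)$ \emph{eventually} becomes K\"ahler, and your own closing remark about ``eventually K\"ahler'' contradicts the all-$t$ claim. Compactness can only compare genuine metrics, such as $g^\bullet(t)$ and $g(t)$ on $[0,t_0]$, not $g^\natural(t)$. Since the lemma only asserts equivalence as $t\to+\infty$, simply drop that step. The same caveat applies when you invoke \eqref{eq_unif_equiv_bullet_natural} itself.
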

	We will use \Cref{lem_metric_equiv} many times in this paper without explicit reference.

	\subsection{Parabolic \Ho Norm}\label{sec_para_holder}
	We define the local spatial $\D$-derivative and its parabolic version, denoted by $\fD$, and use these to define the parabolic \Ho norm.
	
	For each $z \in B \subset \C^m$, let $\nabla^z$ be the Levi-Civita connection of the product metric $g_z(t)$ (see \eqref{eq_g_z(t)}) on $B \times Y$, which is $t$-independent.
	\begin{defn}\label{def_D}
		Define a connection $\D$ on $B \times Y$ by
		\begin{equation}
			\br{\D \eta}(x) = \br{\nabla^{\pr_B(x)} \eta}(x),
		\end{equation}
		for all tensor fields $\eta$ on $B \times Y$ and $x \in B \times Y$.
	\end{defn}

	For a detailed discussion of the properties of $\D$, we refer readers to \cite[\S 2.1]{HT3}.  
	
	\begin{defn}
		Given a curve $\gamma$ in $B\times Y$ joining $a$ to $b$, let $\P^\gamma_{ab}$ denote the $\D$-parallel transport from $a$ to $b$ along $\gamma$. $\gamma$ is called a $\mathbb{P}$-geodesic if $\dot{\gamma}$ is $\D$-parallel along $\gamma$.  
	\end{defn}

	Two examples of $\P$-geodesics are \textit{horizontal} paths $(z(t),y_0)$ where $z(t)$ is an affine segment in $\C^m$, and \textit{vertical} paths $(z_0,y(t))$ where $y(t)$ is a geodesic in $(\{z_0\}\times Y, g_{Y,z_0})$. These are the only $\P$-geodesics that we will use in the paper, as every pair of points in $B\times Y$ can be joined by concatenating two of these $\P$-geodesics with the vertical one minimal. We may also write $\P_{ab}$ instead of $\P^\gamma_{ab}$ if the $\P$-geodesic $\gamma$ joining $a$ and $b$ is not emphasized.
	
	\begin{defn}
		Given a time-dependent tensor field $\eta$ and $k \in \N$, define
	\begin{equation}
		\fD^k\eta:=\sum_{p+2q=k}\D^p \partial_t^q \eta,
	\end{equation}
	as a formal sum of tensors of different types. Given any Riemannian product metric $g$ on $B\times Y$, define
	\begin{equation}
		|\fD^k\eta|^2_g := \sum_{p+2q=k}|\D^p \partial_t^q \eta|^2_g.
	\end{equation}
	\end{defn}

	\bigskip
	We now define the parabolic \Ho norm on $B\times Y\times [0,+\infty)$ associated to the connection $\D$.
	Given $p=(z,y)\in B\times Y$, $t\geq 0$, $0<R\leq \sqrt{t}$, and (shrinking) product metrics $g_\zeta(\tau)=g_{\mathbb{C}^m}+e^{-\tau} g_{Y,\zeta}$, we define the parabolic domain
	\begin{equation}
		Q_{g_\zeta(\tau),R}(p,t) := B_{\mathbb{C}^m}(z,R)\times  B_{e^{-\tau}g_{Y,\zeta}}(y,R)\times [t-R^2,t].
	\end{equation}
	The parabolic domain with respect to any other product metric is defined analogously. 
	
	\begin{defn}
		For any $0< \alpha <1$, $R>0$, $p\in B\times Y$,  $t\geq 0$ and smooth tensor field $\eta$ on $B\times Y\times [t-R^2,t]$, given a product metric $g$ (such as $g=g_{z}(\tau)$ for some $z \in B$ and $\tau\geq 0$),  define the parabolic \Ho seminorm by
		\begin{equation}\label{kzk}
			\sqbr{\eta}_{\a,\a/2, Q_{g,R}(p,t),g}:=\sup \left\{ \frac{|\eta(x,s)-\P_{x'x}\eta(x',s')|_g}{(d^g(x,x')+|s-s'|^{\frac{1}{2}})^\a}\right\},
		\end{equation}
		where the supremum is taken among all $(x,s)$ and $(x',s')$ in $Q_{g,R}(p,t)$ in which $x$ and $x'$ are either horizontally or vertically joined by a $\P$-geodesic. In addition, for any $k \in \N$, define the parabolic \Ho norm by
		\begin{equation}
			\norm{\eta}_{k, \alpha, Q_{g, R}(p, t), g} := \sum_{j=0}^k \norm{\fD^j \eta}_{\infty, Q_{g,R}(p,t),g} + \sqbr{\fD^k \eta}_{\a,\a/2, Q_{g,R}(p,t),g}.
		\end{equation}
	\end{defn}
	
	In this paper, we always use $g=g_z(\tau)$ for the parabolic \Ho (semi)norms, and often we take $z = 0 \in B$. Observe that for each fixed $R>0$, there exists $\tau_0 \geq 0$ such that
	\begin{equation}
		B_{\mathbb{C}^m}(z,R)\times  B_{e^{-\tau}g_{Y,z}}(y,R)=B_{\mathbb{C}^m}(z,R)\times  Y,
	\end{equation}
	for all $\tau>\tau_0$. Therefore, since we are only interested in asymptotic behaviors as $\tau \to +\infty$, we can simplify the parabolic domains to
	\begin{equation}
		Q_R(z,t):=B_{\mathbb{C}^m}(z,R)\times Y \times [t-R^2,t].
	\end{equation}

	\subsection{Asymptotic Expansion}
	We now state the asymptotic expansion for $\omega^\bullet(t)$ given in \cite[\S 4]{HLT}.
	\begin{defn}
		Given any function $u$ on $B \times Y$, let $\underline{u}$ denote its fiberwise average:
	\begin{equation}
		\underline{u}(z) := \frac{\int_{\{z\} \times Y} u(z, \cdot) dV_{g_{Y, z}}}{\int_{\{z\} \times Y}  dV_{g_{Y, z}}}, \quad z \in B.
	\end{equation}
	\end{defn}

	\begin{theorem}[Asymptotic Expansion, \cite{HLT}]\label{thm_asymp_expa}
		For all $j, k \in \N$, $0 \leq 2j \leq k$, $z \in B$, there exists $B' = B_{\C^m}(z, R) \Subset B$ such that on $B' \times Y$ we have a decomposition
		\begin{equation}\label{eq_asymp_expa}
			\omega^\bullet(t)=\omega^\natural(t)+\gamma_0(t)+\gamma_{1,k}(t)+\dots+\gamma_{j,k}(t)+\eta_{j,k}(t),
		\end{equation}
		with the terms in \eqref{eq_asymp_expa} given by
		\begin{equation}\label{eq_gamma_formula}
			\gamma_0(t)=\ddbar \underline{\varphi},\quad \gamma_{i,k}(t)=\ddbar \sum_{p=1}^{N_{i,k}}\mathfrak{G}_{t,k}\left(A_{i,p,k}(t),G_{i,p,k} \right),
		\end{equation}
		for $1 \leq i \leq j$, and $\eta_{j, k}(t)$ is hence the remainder, where
		\begin{enumerate}
			\item $\cG_{j, k} := \{G_{i, p, k} \mid 1 \leq i \leq j, 1 \leq p \leq N_{i, k}\}$ is a family of smooth functions on $B' \times Y$ which are fiberwise $L^2$ orthonormal and have fiberwise average zero;
			
			\item $\cA_{j, k} := \{A_{i, p, k}(t) \mid 1 \leq i \leq j, 1 \leq p \leq N_{i, k}\}$ is a family of smooth functions on $B' \times [0, +\infty)$, identified with the trivial extensions to $B' \times Y \times [0, +\infty)$;
			
			\item  $\{\mathfrak{G}_{t,k}\left(A_{i,p,k}(t),G_{i,p,k} \right) \mid 1 \leq i \leq j, 1 \leq p \leq N_{i, k}\}$ is a family of smooth functions on $B' \times Y \times \infint$ which have fiberwise average zero.
		\end{enumerate}
		Moreover, the following estimates hold. For all $\alpha \in (0, 1)$ and $r < R$, there is $C > 0$ such that for all $t \geq 0$, 
		\begin{equation}\label{eq_expa_gamma_0_infty}
			\|\mathfrak{D}^\iota\gamma_{0}\|_{\infty, Q_r(z,t),g_{\mathbb{C}^m}} = o(1), \quad \forall 0\leq \iota\leq 2j,
		\end{equation}
		\begin{equation}\label{eq_expa_gamma_0_holder}
			\quad [\mathfrak{D}^{2j}\gamma_{0}]_{\a,\a/2, Q_r(z,t),g_{\mathbb{C}^m}} \leq C,
		\end{equation}
		\begin{equation}\label{eq_expa_A_i_infty}
			\begin{aligned}
				\|\mathfrak{D}^\iota A_{i,p,k}\|_{\infty,Q_r(z,t),g_{\C^m}}
				& \leq Ce^{-(2i+\a)(1-\frac{\iota}{2j+2+\a})\frac{t}2}, \\ 
				& \quad \forall 0\leq \iota\leq 2j+2,1\leq i \leq j,1\leq p\leq N_{i,k},
			\end{aligned}
		\end{equation}
		\begin{equation}\label{eq_expa_A_i_infty_2}
			\begin{aligned}
				\|\mathfrak{D}^{2j+2+\iota} A_{i,p,k}\|_{\infty,Q_r(z,t),g_{\C^m}}& \leq Ce^{\left(-\frac{\a(2i+\a)}{\iota+\a}(1-\frac{2j+2}{2j+2+\a})+\frac{\iota^2}{\iota+\a}\right) \frac{t}{2}}, \\
				& \quad \forall 0\leq \iota\leq 2k,\;\, 1\leq i \leq j,\;\,1\leq p\leq N_{i,k},
			\end{aligned}
		\end{equation}
		\begin{equation}\label{eq_expa_A_i_holder}
			\sqbr{\fD^{2j+2+\iota} A_{i, p, k}}_{\alpha, \alpha/2, Q_r(z, t), g_{\C^m}} \leq C e^{\iota \frac{t}{2}}, \quad \forall -2 \leq \iota \leq 2k, 1 \leq i \leq j, 1 \leq p \leq N_{i, k}, 
		\end{equation}
		\begin{equation}\label{eq_expa_eta_infty}
			\|\mathfrak{D}^\iota \eta_{j,k}\|_{\infty,Q_r(z,t),g(t)}
			\leq Ce^{\frac{\iota-2j-\a}{2}t}, \quad \forall 0\leq \iota\leq 2j,\\
		\end{equation}
		\begin{equation}\label{eq_expa_eta_holder}
			[\mathfrak{D}^{2j}\eta_{j,k}]_{\a,\a/2, Q_r(z,t),g(t)}\leq C,
		\end{equation}
		where $Q_r(z,t)=\left(B_{\mathbb{C}^m}(z,r)\times Y\right)\times [t-r^2,t]$.
	\end{theorem}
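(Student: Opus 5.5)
This theorem is quoted from \cite[\S 4]{HLT}. To reprove it, I would follow the strategy of \cite{HT2,HT3}, adapted to the parabolic equation \eqref{eq_krf_MA}, and induct on $j$ for fixed $k$.

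\textbf{Base case $j=0$.} Set $\gamma_0=\ddbar\underline{\varphi}$ and $\eta_{0,k}=\ddbar(\varphi-\underline{\varphi})$. Then \eqref{eq_expa_gamma_0_infty} for $\iota=0$ follows from \eqref{eq_phi_o(1)}, \eqref{eq_dot_phi_o(1)} and base-direction parabolic estimates for the averaged equation. The real content is the H\"older bound \eqref{eq_expa_eta_holder} for $j=0$.

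For this I would use a blow-up/contradiction argument in the style of \cite{HT2}:
\begin{enumerate}
\item Assume the seminorm $[\eta_{0,k}]_{\alpha,\alpha/2,Q_r(z,t),g(t)}$ is unbounded along a sequence $t_i\to\infty$.
\item Pick points nearly realizing the seminorm. Rescale parabolically by the distance scale $\lambda_i\to 0$ at which the seminorm is achieved, and also stretch the fibers by $e^{t_i/2}$.
\item Pass to the universal cover of the fibers. The rescaled domains then converge to $\C^{m}\times\C^n\times(-\infty,0]$ with a flat limiting complex structure, and the rescaled potentials converge locally smoothly on compact sets to an ancient solution of a linear constant-coefficient heat equation.
\item A Liouville theorem for such ancient solutions with H\"older-controlled growth forces the limit to be a quadratic polynomial. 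Fiberwise periodicity and zero average kill its fiber-dependent part, which contradicts the normalization of the seminorm at the chosen points.
\end{enumerate}
The $L^\infty$ decay in \eqref{eq_expa_eta_infty} then follows by interpolating this H\"older bound against \eqref{eq_unif_equiv_bullet_natural}. I would use the same interpolation throughout to pass from the top-order H\"older bound to the stated intermediate-order decay rates.

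\textbf{Induction step.} Assume the expansion holds up to order $j-1$. Substitute it into \eqref{eq_krf_MA} and Taylor expand $\log\det$ around $\omega^\natural+\gamma_0$. The error is then a sum of terms of the form (base derivatives of earlier $A$'s and of $\underline{\varphi}$) $\times$ (smooth fiber functions built from $\omega_F$, $J$ and earlier $G$'s), together with a remainder decaying faster.
\begin{enumerate}
\item Project the fiber-dependent parts fiberwise onto functions of average zero.
\item Solve the fiberwise Poisson equations $\Delta_{g_{Y,z}}G=(\text{obstruction})$. After fiberwise Gram--Schmidt, this produces the $L^2$-orthonormal family $G_{j,p,k}$ with zero average. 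The coefficients $A_{j,p,k}(t)$ are the corresponding base coefficients of these obstructions, built from earlier $A$'s and $\underline{\varphi}$.
\item Define $\mathfrak{G}_{t,k}(A,G)$ as $e^{-t}AG$ corrected by finitely many terms. These terms account for $\omega^\natural$ being non-product (through $\omega_F$ and $J$) and for the time derivative. They are chosen so that $\mathfrak{G}_{t,k}$ solves the linearized equation up to error of order $e^{-(2j+\alpha)t/2}$, including $k$ extra orders of base derivatives.
\item The new remainder $\eta_{j,k}$ satisfies a linear parabolic equation whose forcing term is small in the weighted norm. Rerunning the blow-up argument at order $2j$ gives \eqref{eq_expa_eta_holder}, and interpolation gives \eqref{eq_expa_eta_infty}.
\end{enumerate}

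\textbf{Estimates on the $A$'s.} The $A$'s are defined on the base, which is not collapsing. So \eqref{eq_expa_A_i_holder} with $\iota\le 0$ comes from the $C^{2j+\alpha}$ control. The bounds \eqref{eq_expa_A_i_infty} and \eqref{eq_expa_A_i_infty_2} then come from interpolating this against the $L^\infty$ decay $e^{-(2i+\alpha)t/2}$. The growth $e^{\iota t/2}$ for the extra $k$ derivatives is the price of differentiating the rescaled objects.

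\textbf{Main obstacle.} I expect the hardest step to be the blow-up/Liouville argument for $[\mathfrak{D}^{2j}\eta_{j,k}]$ when $j\ge1$. Two things have to be controlled at once:
\begin{itemize}
\item The $\gamma_{i,k}$ terms are only bounded in weighted norms with growing higher derivatives, so their contributions must be shown to vanish in the rescaled limit. This requires the precise exponents in \eqref{eq_expa_A_i_infty_2}.
\item The non-product structure of $J$ and $\omega_F$ must be absorbed into the $\mathfrak{D}$-connection. This uses parallel transport along horizontal and vertical $\mathbb{P}$-geodesics, so that the limiting equation is genuinely constant-coefficient and the Liouville theorem applies.
\end{itemize}
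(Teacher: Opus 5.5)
The paper gives no proof of this theorem; it imports it from \cite{HLT}. As the Remark after the statement records, that argument is an induction on $j$ at fixed $k$, with the H\"older bounds coming from a parabolic version of the blow-up arguments of \cite{HT2,HT3}. Your architecture matches this, but the central blow-up step fails as you describe it.

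\textbf{The blow-up step only covers one regime.} You pass to the universal cover of the fibres to get a flat limit $\C^m\times\C^n\times(-\infty,0]$, then use fibrewise periodicity to kill the fibre-dependent part of the Liouville polynomial. This works only when the fibres are finite quotients of tori. A K3 fibre, for instance, is simply connected, so nothing is unwrapped. In general the limit depends on how the blow-up scale $\lambda_i$ compares with the $g(t_i)$-diameter $e^{-t_i/2}$ of the fibres. For $\lambda_i\ll e^{-t_i/2}$ you do get $\C^{m+n}$. For $\lambda_i\sim e^{-t_i/2}$ the limit is $\C^m\times Y\times(-\infty,0]$ with $Y$ compact, and one needs a Liouville theorem on this product, proved via the fibrewise spectral decomposition. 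For $\lambda_i\gg e^{-t_i/2}$ the fibres collapse. That is where the $\mathbb{P}$-transport seminorms, the fibrewise average-zero structure of the remainder, and the subtraction of the $\gamma_{i,k}$ are actually used.

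Your sketch omits both of these last regimes, and they carry the content. In the torus case your universal-cover argument is essentially the one behind \Cref{thm_torus}, which gives bounds of all orders with no obstruction terms. By \Cref{thm_D^3_Ric}, obstruction-free bounds of that kind must fail whenever $\cS$ is not fibrewise constant. The regime $\lambda_i\ll e^{-t_i/2}$ is standard small-scale regularity and cannot see this failure.

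\textbf{The source of the decay is misidentified.} The decay in \eqref{eq_expa_eta_infty} cannot come from interpolating the top-order H\"older bound against \eqref{eq_unif_equiv_bullet_natural}. That estimate only gives $|\eta_{j,k}|_{g(t)}\le C$, hence $|\fD^\iota\eta_{j,k}|\le C$ after interpolation, with no decay at all. The factor $e^{-(2j+\alpha)t/2}$ comes instead from writing $\eta_{j,k}=\ddbar\psi_{j,k}$, where $\psi_{j,k}$ has fibrewise average zero on fibres of $g(t)$-diameter $\sim e^{-t/2}$. A fibrewise Poincar\'e/Schauder argument then converts the bound on $[\fD^{2j}\eta_{j,k}]_{\alpha}$ into $\|\eta_{j,k}\|_\infty\le C(e^{-t/2})^{2j+\alpha}$. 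This is the same device used in the proofs of \Cref{prop_davg_phi} and \Cref{lem_phi-dot_phi}. Only after this does interpolation yield the intermediate rates. The $L^\infty$ decay of the $A_{i,p,k}$ that you feed into the interpolation for \eqref{eq_expa_A_i_infty} also rests on this mechanism.

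\textbf{Two smaller points.} There is no closed ``averaged equation'' for $\underline{\varphi}$: fibre-averaging \eqref{eq_krf_MA} leaves averages of nonlinear expressions in $\ddbar(\varphi-\underline{\varphi})$. The bound \eqref{eq_expa_gamma_0_holder} must therefore come out of the same contradiction argument, after which \eqref{eq_expa_gamma_0_infty} follows by interpolation against \eqref{eq_phi_o(1)}. Also, by \eqref{eq_Phi_00} the leading term of $\mathfrak{G}_{t,k}(A,G)$ is $A\,(\Delta^{\omega_F|_{\{\cdot\}\times Y}})^{-1}G$. So the orthonormal family should be the projected obstructions themselves, not the solutions of the fibrewise Poisson equations.
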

	
	\begin{rmk}
		We do not list in Theorem \ref{thm_asymp_expa} all the asymptotic estimates established in \cite[Theorem 4.2]{HLT}, but only what we will need. $\fG_{t, k}$ is the $t$-dependent approximate Green operator defined in \cite[\S3.2]{HT3}.
	\end{rmk}
	
	\begin{rmk}
		The proof of Theorem \ref{thm_asymp_expa} in \cite{HLT} is for each fixed $k \in \N$ by induction on $j$. The induction step from $j-1$ to $j$ achieves the following up to shrinking $R$:
		\begin{enumerate}
			\item extend $\cG_{j-1, k}$ to $\cG_{j, k}$ by adding functions $G_{j, p, k}$ for $1 \leq p \leq N_{j, k}$;
			
			\item extend $\cA_{j-1, k}$ to $\cA_{j, k}$ by adding functions $A_{j, p, k}$ for $1 \leq p \leq N_{j, k}$;
			
			\item now that $\gamma_{j-1, k}(t)$ is defined by \eqref{eq_gamma_formula}, split $\eta_{j-1, k} = \gamma_{j-1, k} + \eta_{j, k}$.
		\end{enumerate}
		Therefore, when we (as we shall below) apply Theorem \ref{thm_asymp_expa} for some fixed sufficiently large even $k \in \N$ but different $j \leq k/2$, the family of functions $\{G_{i, p, k}\}$ and $\{A_{i, p, k}\}$ are without ambiguity always those contained within $\cG_{k/2, k}$ and $\cA_{k/2, k}$, respectively, as long as we fix a sufficiently small $R > 0$ that works for all $j \leq k/2$ (hence determined only by $k$ and $z \in B$) and work on $B_{\C^m}(z, R) \times Y \times [0, +\infty)$. 
	\end{rmk}
	
	We also have the following quasi-explicit formulae for $\gamma_{i, k}$.
	\begin{lemma}\label{lem_green}
		In Theorem \ref{thm_asymp_expa}, for all $1 \leq i \leq j$, $1 \leq p \leq N_{i, k}$, we can write
		\begin{equation}\label{eq_Green_t_k}
			\mathfrak{G}_{t,k}\left(A_{i,p,k}(t),G_{i,p,k} \right)
			= \sum_{\iota = 0}^{2k} \sum_{r = \ceil{\frac{\iota}{2}}}^k
			e^{-rt} \Phi_{\iota, r}(G_{i, p, k}) \circledast \D^\iota A_{i, p, k}(t),
		\end{equation}
		where
		\begin{enumerate}
			\item $\Phi_{\iota, r}(G_{i,p,k})$ are $t$-independent smooth functions on $B' \times Y$, for all $0 \leq \iota \leq 2k$, $\ceil{\frac{\iota}{2}} \leq r \leq k$, and
			\begin{equation}\label{eq_Phi_00}
				\Phi_{0, 0} (G) = \br{\Delta^{\omega_F|_{\{\cdot\} \times Y}}}^{-1} G,
			\end{equation}
			for all smooth functions $G$ on $B' \times Y$ having fiberwise average zero;
			
			\item $\circledast$ denotes some tensorial contraction, possibly involving $t$-independent tensor fields pulled back from the base (which we omit for convenience throughout this paper: see Remark \ref{rmk_circledast} below).
		\end{enumerate}
		Thus for all $1 \leq i \leq j$,
		\begin{equation}\label{eq_gamma_i_k}
			\gamma_{i,k}(t)=\ddbar\sum_{p=1}^{N_{i,k}}\sum_{\iota=0}^{2k}\sum_{r=\lceil\frac{\iota}{2}\rceil}^{k}e^{-r t}\Phi_{\iota,r}(G_{i,p,k})\circledast\D^\iota A_{i,p,k}(t).
		\end{equation}
		Moreover, for all $0 \leq q \leq k$, 
		\begin{equation}\label{eq_D^q_gamma_i_k}
			\D^q\gamma_{i,k}=\sum_{p=1}^{N_{i,k}}\sum_{\iota=0}^{2k}\sum_{r=\lceil\frac{\iota}{2}\rceil}^{k}\sum_{s=0}^{q+1}\sum_{i_1+i_2=s+1}e^{-r t}(\D^{q+1-s}J)\circledast\D^{i_1}\Phi_{\iota,r}(G_{i,p,k})\circledast\D^{i_2+\iota} A_{i,p,k},
		\end{equation}
		\begin{equation}\label{eq_fD^q_gamma_i_k}
			\fD^q\gamma_{i,k}=\sum_{p=1}^{N_{i,k}}\sum_{\iota=0}^{2k}\sum_{r=\lceil\frac{\iota}{2}\rceil}^{k}\sum_{s=0}^{q+1}\sum_{i_1+i_2=s+1}e^{-r t}(\D^{q+1-s}J)\circledast\D^{i_1}\Phi_{\iota,r}(G_{i,p,k})\circledast\fD^{i_2+\iota} A_{i,p,k}.
		\end{equation}
		
	\end{lemma}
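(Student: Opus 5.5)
We will obtain Lemma \ref{lem_green} by unpacking the definition of the approximate Green operator $\fG_{t,k}$ from \cite[\S3.2]{HT3} (as used in \cite[\S4]{HLT}). Recall how it is built: given a base function $A$ and a fiber function $G$ with fiberwise average zero, $\fG_{t,k}(A,G)$ approximately solves
\begin{equation*}
\Delta^{g_z(t)}\fG_{t,k}(A,G) = A\,G,
\end{equation*}
up to an error $O(e^{-kt})$. We construct it by an iteration in powers of $e^{-t}$. The Laplacian of $g_z(t)=g_{\C^m}+e^{-t}g_{Y,z}$ splits as
\begin{equation*}
e^{t}\Delta^{Y,z}+\Delta^{\C^m}.
\end{equation*}
Here, however, we differentiate with $\D$, so the base Laplacian also acts on the $z$-dependence of the fiber data. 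The first step is therefore to write
\begin{equation*}
\fG_{t,k}(A,G)=\sum_{r=0}^{k}e^{-rt}u_r,
\end{equation*}
and solve recursively:
\begin{equation*}
u_0=(\Delta^{Y})^{-1}(AG), \qquad u_{r}=-(\Delta^{Y})^{-1}\bigl(\Delta^{\C^m}u_{r-1}\bigr) \ \text{(projected to fiberwise mean zero)}.
\end{equation*}
Note that $u_0=A\,(\Delta^{\omega_F|_{\{\cdot\}\times Y}})^{-1}G$, since $A$ is constant on fibers. This gives $\Phi_{0,0}(G)=(\Delta^{\omega_F|_{\{\cdot\}\times Y}})^{-1}G$, which is \eqref{eq_Phi_00}.

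Next, we expand $\Delta^{\C^m}u_{r-1}$ by the Leibniz rule for $\D$, where $u_{r-1}$ is a sum of terms (fiber function)$\circledast\D^\iota A$. Each application raises the total number of $\D$-derivatives by two, split between the fiber factor and $A$, and possibly involves $t$-independent base tensors. We also need that $(\Delta^Y)^{-1}$ commutes with multiplication by base functions. Applying it to the fiber factors then yields $t$-independent smooth functions $\Phi_{\iota,r}(G)$. An induction on $r$ shows that at level $r$ at most $2r$ derivatives fall on $A$, so $\iota\le 2r$, i.e.\ $r\ge\lceil\iota/2\rceil$, and $\iota\le 2k$ overall. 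This gives \eqref{eq_Green_t_k}. Summing over $p$ and applying $\ddbar$ then gives \eqref{eq_gamma_i_k} by \eqref{eq_gamma_formula}.

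For \eqref{eq_D^q_gamma_i_k}, we write
\begin{equation*}
\ddbar u=\tfrac12\, dJdu=\tfrac12\,(\D J)\circledast\D u+J\circledast\D^2u
\end{equation*}
schematically, using that $\D$ is torsion-free up to $t$-independent terms. Then $\ddbar$ of one summand is $\sum_{s=0}^{1}(\D^{1-s}J)\circledast\D^{s+1}(\Phi\circledast\D^\iota A)$. We apply $\D^q$ and distribute by Leibniz: $\D^{q+1-s}$ lands on $J$, and the remaining $s+1$ derivatives are split as $i_1$ on $\Phi$ and $i_2$ on $\D^\iota A$. The factor $e^{-rt}$ is untouched because $\D$ is spatial. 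The parabolic version \eqref{eq_fD^q_gamma_i_k} follows the same way. Since $J$, $\Phi_{\iota,r}$ and the contracted base tensors are $t$-independent, $\partial_t$ only hits $A$ or $e^{-rt}$. Differentiating $e^{-rt}$ produces a constant multiple of the same term, which can be absorbed into $\circledast$ or counted as a lower-order term. Each $\partial_t$ thus counts as two $\D$'s on $A$, turning $\D^{i_2+\iota}A$ into $\fD^{i_2+\iota}A$.

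The main obstacle is the bookkeeping in the second step. We must check that the precise form of $\fG_{t,k}$ in \cite{HT3}, including its cutoffs and the fiberwise projections off constants, really yields only $t$-independent coefficients $\Phi_{\iota,r}$ and the stated index constraint $r\ge\lceil\iota/2\rceil$. We also need that commuting $\D$ past $(\Delta^Y)^{-1}$ only generates further $t$-independent fiber operators. We would verify this first by an explicit induction on $r$ using the recursion above.
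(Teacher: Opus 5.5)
You treat \eqref{eq_gamma_i_k}, \eqref{eq_D^q_gamma_i_k} and \eqref{eq_fD^q_gamma_i_k} exactly as the paper does. You write $\ddbar=\frac12 dd^c$, i.e.\ schematically $\ddbar u=(\D J)\circledast\D u+J\circledast\D^2u$, and distribute $\D^q$ (or $\fD^q$) by the Leibniz rule. In fact $\D$ is exactly torsion-free, not just up to $t$-independent terms: at each point its connection coefficients are those of a Levi-Civita connection $\nabla^z$.

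The genuine difference is \eqref{eq_Green_t_k}. The paper does not prove it; it quotes \cite[Lemma 3.8]{HT3}. You instead rebuild $\fG_{t,k}$ as a truncated Neumann series and prove the expansion by induction on $r$. Your route makes the structure transparent:
\begin{itemize}
\item each power of $e^{-t}$ comes with one application of a second-order base operator with $t$-independent coefficients, so at most two new derivatives land on $A$, which gives $\iota\le 2r$;
\item the coefficients are fiberwise inverse Laplacians of $t$-independent data;
\item $u_0=A\,(\Delta^{\omega_F|_{\{\cdot\}\times Y}})^{-1}G$ gives \eqref{eq_Phi_00}.
\end{itemize}
Two of your stated worries are unfounded. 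Subtracting the fiberwise mean of a term $\Psi\circledast\D^\iota A$ merely replaces $\Psi$ by $\Psi-\underline{\Psi}$. And you never need to commute $\D$ with $(\Delta^Y)^{-1}$, because you only differentiate the already-formed smooth functions $\Phi_{\iota,r}(G)$.

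What your computation cannot supply by itself is the identification of your recursion with the operator of \cite{HT3}. The $\gamma_{i,k}$ of Theorem~\ref{thm_asymp_expa} are defined through that operator, so this identification is exactly the content of the paper's citation. Your ``main obstacle'' is therefore settled by checking that lemma and definition, not by further induction.

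There are two small corrections.

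First, a normalization slip. Your recursion $u_0=(\Delta^Y)^{-1}(AG)$, $u_r=-(\Delta^Y)^{-1}\Delta^{\C^m}u_{r-1}$ inverts $\Delta^Y+e^{-t}\Delta^{\C^m}=e^{-t}\Delta^{g_z(t)}$, not $\Delta^{g_z(t)}$ as you state. Inverting $\Delta^{g_z(t)}$ would put a factor $e^{-t}$ in front of the leading term. That contradicts $\Phi_{0,0}$ appearing with $r=0$, and also the later use $\tr{\omega_F|_{\{\cdot\}\times Y}}{(\gamma_{1,k})_{\ff\ff}}\approx\sum_pA_{1,p,k}G_{1,p,k}$.

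Second, terms where $\partial_t$ falls on $e^{-rt}$ cannot be ``absorbed into $\circledast$.'' They carry fewer derivatives on $A$ for the same number on $J$, so they do not literally fit the index pattern of \eqref{eq_fD^q_gamma_i_k}. That formula must be read schematically, with such terms counted as lower order, which is your other option. This is harmless, because in all the interpolation lemmas the bound on $\abs{\fD^\l A_{i,p,k}}$ is nondecreasing in $\l$. The paper handles it the same way, keeping both $A$ and $\dot A$ in Proposition~\ref{prop_bound_D2_dot_gamma_1_2}.
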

	
	\begin{proof}
		\eqref{eq_Green_t_k} along with its clarifications follows from \cite[Lemma 3.8]{HT3}. \eqref{eq_gamma_i_k} is a simple combination of \eqref{eq_gamma_formula} and \eqref{eq_Green_t_k}. Then \eqref{eq_D^q_gamma_i_k} and \eqref{eq_fD^q_gamma_i_k} follow from \eqref{eq_gamma_i_k} using $\ddbar = \frac{1}{2} dd^c$ (see \cite[(5.10)]{HT3} and \cite[(5.3)]{HLT}).
	\end{proof}
	
	\begin{rmk}\label{rmk_circledast}
		Since $t$-independent tensor fields pulled back from the base have constant $g_z(t)$-norms, and the space of such tensors are closed under $\D$ and $\fD$, we are justified to hide them in $\circledast$ when we derive asymptotic bounds on $\D^q \gamma_{i,k}$.
	\end{rmk}

\section{First-Order Ricci Estimates}\label{section_order_1}
As explained in the Introduction, in this paper we work locally away from the singular fibers, and study the \KR flow evolving on $B \times Y \times \infint$ for some Euclidean ball $B \subset \C^m$. Together with the discussions in \Cref{section_prel}, we are allowed to assume for simplicity that the parabolic domain is always equal to $B \times Y \times [t-1, t]$ (or $B \times [t-1, t]$ for objects that live on the base), which we will omit in the notation for parabolic \Ho (semi)norms. The ball $B$ and the interval $[t-1, t]$ will also shrink slightly every time we use parabolic interpolation. To simplify notation even further, if not explicitly declared, all norms and seminorms will be measured with respect to the shrinking product metric $g(t)$ (which is equivalent to those measured with respect to $\gb(t)$ by \Cref{lem_metric_equiv}), and we will write parabolic seminorms as $\sqbr{\cdot}_{\alpha, \dots}$ instead of $\sqbr{\cdot}_{\alpha, \alpha/2, \dots}$.

\cite[Theorem 1.3]{HLT} and Theorem \ref{thm_nabla^1_ric} translate in our local setting  to the following, respectively:
	\begin{equation}\label{eq_bound_ric}
		\sup_{B \times Y \times \infint} \abs{\Ric(\ob(t))}_{g^\bullet(t)} \leq C,
	\end{equation}
	and
	\begin{equation}\label{eq_bound_nabla_ric}
		\sup_{B \times Y \times \infint} \abs{\nabla^{\ob(t)} \Ric(\ob(t))}_{g^\bullet(t)} \leq C.
	\end{equation}
We can easily see that \eqref{eq_bound_nabla_ric}, and hence \Cref{thm_nabla^1_ric}, follows from \Cref{thm_D^1_ric} and the following estimate:
\begin{theorem}\label{prop_D_nabla_b_Ric}
	Under all the assumptions above, there exists $C > 0$ such that
	\begin{equation}
		\sup_{B \times Y \times \infint} \abs{\br{\nabla^{\ob(t)} - \D} \Ric(\ob(t))}_{g^\bullet(t)} \leq C.
	\end{equation}
\end{theorem}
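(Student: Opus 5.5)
The plan is to reduce the statement to a bound on the difference tensor of the two connections. Fix $x \in B \times Y$ and set $z = \pr_B(x)$. By \Cref{def_D}, $\D$ agrees at $x$ with the Levi-Civita connection $\nabla^z$ of $g_z(t)$, which is torsion-free. Two torsion-free connections differ by a symmetric tensor $\mathcal{A} := \nabla^{\ob(t)} - \nabla^z$, and the Koszul formula gives it explicitly:
\begin{equation*}
\mathcal{A}^k_{ij} = \tfrac{1}{2} (g^\bullet)^{kl}\br{\nabla^z_i g^\bullet_{jl} + \nabla^z_j g^\bullet_{il} - \nabla^z_l g^\bullet_{ij}}.
\end{equation*}
Evaluating at $x$, we get $(\nabla^{\ob(t)} - \D)\Ric(\ob(t)) = \mathcal{A} \circledast \Ric(\ob(t))$ with $\mathcal{A} = (g^\bullet)^{-1} \circledast \D g^\bullet$. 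We know $|\Ric(\ob(t))|_{g^\bullet(t)}$ is bounded by \eqref{eq_bound_ric}, and $(g^\bullet)^{-1}$ is uniformly bounded in $g(t)$-norm by \Cref{lem_metric_equiv}. The statement therefore reduces to a uniform bound on $|\D \ob(t)|_{g(t)}$ on $B \times Y \times \infint$; the complex structure $J$ only enters through identifying $g^\bullet$ with $\ob$.

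To bound $\D\ob(t)$, I would apply \Cref{thm_asymp_expa} with $j = 1$ and a fixed even $k \geq 2$:
\begin{equation*}
\ob = \on + \gamma_0 + \gamma_{1,k} + \eta_{1,k}.
\end{equation*}
The four pieces are handled as follows.
\begin{enumerate}
\item By \eqref{eq_expa_eta_infty} with $\iota = 1$, $|\D\eta_{1,k}|_{g(t)} \leq C e^{-\frac{1+\a}{2}t}$.
\item $\gamma_0 = \ddbar\underline{\vp}$ is built from a base function, so $\D\gamma_0$ is controlled by \eqref{eq_expa_gamma_0_infty} with $\iota = 1$ (note $2j = 2 \geq 1$), which gives $o(1)$. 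Up to the $J$-terms absorbed as in \Cref{rmk_circledast}, its $g(t)$-norm coincides with its $g_{\C^m}$-norm.
\item For $\on(t) = (1-e^{-t})\ocan + e^{-t}\omega_F$, the term $\D\ocan$ is a $t$-independent base tensor and hence bounded. For $e^{-t}\D\omega_F$ I would split into components, counting a factor $e^{t/2}$ for each fiber index in the $g(t)$-norm. The purely fiber part of $\D\omega_F$ vanishes along the fiber $\{z\}\times Y$, because $\omega_F|_{X_z}$ is K\"ahler for $g_{Y,z}$ and hence $\nabla^{g_{Y,z}}$-parallel. Every remaining component has at most two fiber indices, so the $e^{-t}$ prefactor absorbs it. This is the bookkeeping from \cite[\S 2]{HT3}, \cite{HLT}.
\end{enumerate}

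The main obstacle is $\D\gamma_{1,k}$. I would use \eqref{eq_D^q_gamma_i_k} with $q = 1$, which writes it as a sum of terms $e^{-rt}(\D^{2-s}J)\circledast\D^{i_1}\Phi_{\iota,r}\circledast\D^{i_2+\iota}A_{1,p,k}$ with $i_1 + i_2 = s+1$. In each term I would bound the $g(t)$-norm by counting at most $e^{t/2}$ per fiber index. The worst case has all $i_1 + 2 \leq 3$ indices on the fiber, contributing $e^{\frac{3}{2}t}$. The base derivatives fall on $A_{1,p,k}$, which depends only on $z$ and $t$, and are controlled by \eqref{eq_expa_A_i_infty}: $|\D^{\iota'}A_{1,p,k}| \leq C e^{-(2+\a)(1-\frac{\iota'}{4+\a})\frac t2}$. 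Summing the exponents case by case ($r \geq \ceil{\iota/2}$, $\iota' = i_2+\iota \leq 4$, with \eqref{eq_expa_A_i_infty_2} needed for higher $\iota'$) must give a nonpositive total. The dangerous term is $r = 0$, $\iota = 0$ with three fiber derivatives hitting $\Phi_{0,0}$, which has size $e^{\frac{3}{2}t}\cdot e^{-(1+\a/2)t}$; this does not close by crude counting. There I would first try to use that $\Phi_{0,0}(G) = (\Delta^{\omega_F})^{-1}G$ is differentiated only through $\ddbar$, so only two fiber indices appear and the third $\D$ is a $\nabla^z$-derivative of a fiberwise-exact form. That costs $e^{t}$, against the decay $e^{-(1+\a/2)t}$ of $A_{1,p,k}$, which is sufficient. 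If this refined counting fails, the fallback is to re-examine the expansion with $j = 1$ and absorb the offending piece into $\eta_{1,k}$ using \eqref{eq_expa_eta_holder} and parabolic interpolation.
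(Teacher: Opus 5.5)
Your reduction to the difference tensor $\mathcal{A}=(\gb)^{-1}\circledast\D\gb$, and your treatment of $\on$, $\gamma_0$ and $\eta_{1,k}$, are essentially the paper's. One small point: passing from $\D\gb$ to $\D\ob$ produces an extra term $\ob\circledast\D J$. That term is harmless only because $|\D J|_{g(t)}\le C$. This holds, since $J^{\bb}_{\ff}=0$, $J^{\bb}_{\bb}=J_{\C^m}$, and $(\D J)^{\ff}_{\ff\ff}=0$ on each fiber because $g_{Y,z}$ is K\"ahler. But it does not follow from the crude bound $|\D J|\le Ce^{t/2}$, so you should say so. The paper sidesteps this by expanding $\gb=(\ob)^J$ directly and treating $g_F$ by hand (Lemma \ref{lem_D_gF}).

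The genuine gap is the term you flag yourself, $\D\gamma_{1,k}$. Your ``refined counting'' is a miscount. The $\ff\ff\ff$-component of $\D\bigl(A_{1,p,k}\,i\de_{\ff}\dbar_{\ff}\Phi_{0,0}(G_{1,p,k})\bigr)$ equals $A_{1,p,k}$ times a nonzero $t$-independent $3$-tensor, namely the fiberwise covariant derivative of $i\de\dbar$ of a generic function. Its $g(t)$-norm is exactly $e^{3t/2}|A_{1,p,k}|$ times a $g_X$-norm. That the last two slots come from $\ddbar$ of an exact form does not remove a fiber index. So with only \eqref{eq_expa_A_i_infty}, i.e. $|A_{1,p,k}|\le Ce^{-(2+\alpha)t/2}$, this term has size $e^{(1-\alpha)t/2}$ and is genuinely unbounded. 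The fallback of ``absorbing it into $\eta_{1,k}$'' is not an argument: the estimates on $\eta_{1,k}$ come from the construction in \cite{HLT} and cannot be re-split at will.

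The missing ingredient is the sharp decay $|A_{1,p,k}|\le Ce^{-2t}$ from \cite[Proposition 5.3]{HLT}, which is the $\l=0$ case of Lemma \ref{lem_interp_fD_A_1}. It is proved from the Monge--Amp\`ere equation, not from the generic estimates of the expansion. It is also sharp in general, since $\sum_p A_{1,p,k}G_{1,p,k}=-e^{-2t}(\cS-\underline{\cS})+o(e^{-2t})$. Interpolating it against \eqref{eq_expa_A_i_holder} gives Lemma \ref{lem_interp_fD_A_1}, and then your crude counting closes. The dangerous term becomes $e^{3t/2}\cdot e^{-2t}=e^{-t/2}$, and the remaining terms are handled the same way. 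This yields $|\fD^\l\gamma_{1,k}|\le Ce^{(\l-2)t/2}$ for $\l\le 2$ (Proposition \ref{prop_bound_D_gamma_1}, i.e. \cite[Proposition 5.4]{HLT}), which is exactly what the paper cites to obtain $|\D\gb|\le C$.
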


To proceed, we apply Theorem \ref{thm_asymp_expa} with any fixed even integer $k \geq 4$ and $j = 1, 2$. Up to shrinking $B$, we can write on $B \times Y \times \infint$
\begin{equation}\label{eq_expa_j=1,2}
	\ob = \on + \gamma_0 + \gamma_{1, k} + \eta_{1, k} = \on + \gamma_0 + \gamma_{1, k} + \gamma_{2, k} + \eta_{2, k}.
\end{equation}

\subsection{Proof of \texorpdfstring{\Cref{thm_D^1_ric}}{Theorem 1.4}}

	We make the following preparations. First we improve the estimates in \eqref{eq_expa_A_i_infty} and \eqref{eq_expa_A_i_infty_2}.
	\begin{lemma}\label{lem_interp_fD_A_1}
		For all $1 \leq p \leq N_{1, k}$, 
		\begin{equation}\label{eq_interp_fD_A_1}
			\abs{\fD^\l A_{1, p, k}} \leq 
			\begin{cases}
				C e^{-\frac{t}{2} \br{4-\frac{2 \l}{4+\alpha}}}, & \forall 0 \leq \l \leq 4, \\
				C e^{-\frac{t}{2} \br{4 - \frac{\l(\l-2)}{\l+\alpha}}}, & \forall 4 \leq \l \leq 2k+ 6.
			\end{cases}
		\end{equation}
	\end{lemma}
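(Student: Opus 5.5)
The strategy is parabolic interpolation on the base ball $B$. The two endpoint inputs are a sharp $C^0$ bound $|A_{1,p,k}|\le Ce^{-2t}$ and the Hölder seminorm bounds \eqref{eq_expa_A_i_holder} taken with $j=2$ rather than $j=1$. Since $A_{1,p,k}$ lives on $B\times[0,+\infty)$, the relevant norms are Euclidean and time-independent, so the standard interpolation inequality
\[
\|\fD^\ell u\|_\infty\le C\,\|u\|_\infty^{1-\theta}\Big(\|u\|_\infty+[\fD^{N}u]_{\alpha}\Big)^{\theta},\qquad \theta=\frac{\ell}{N+\alpha},\ 0\le\ell\le N,
\]
is available after shrinking $B$ and the time interval slightly.

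For the Hölder inputs I use the remark after Theorem \ref{thm_asymp_expa}: with $k$ fixed, the functions $A_{1,p,k}$ are the same whether we apply the theorem with $j=1$ or $j=2$. So \eqref{eq_expa_A_i_holder} with $j=2$ and $i=1$ gives
\[
[\fD^{6+\iota}A_{1,p,k}]_\alpha\le Ce^{\iota t/2},\qquad -2\le\iota\le 2k.
\]
Writing $N=6+\iota$, this reads $[\fD^{N}A_{1,p,k}]_\alpha\le Ce^{(N-6)t/2}$ for $4\le N\le 2k+6$. In particular $[\fD^4A_{1,p,k}]_\alpha\le Ce^{-t}$.

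Granting $\|A_{1,p,k}\|_\infty\le Ce^{-2t}$, the two cases follow by direct computation.
\begin{itemize}
\item For $0\le \ell\le 4$, take $N=4$ and $\theta=\ell/(4+\alpha)$. The resulting exponent is $\tfrac t2\big[-4(1-\theta)-2\theta\big]=-\tfrac t2\big(4-\tfrac{2\ell}{4+\alpha}\big)$, which is the first case.
\item For $4\le\ell\le 2k+6$, take $N=\ell$ and $\theta=\ell/(\ell+\alpha)$. The exponent is $\tfrac t2\big[-4(1-\theta)+(\ell-6)\theta\big]=-\tfrac t2\big(4-\tfrac{\ell(\ell-2)}{\ell+\alpha}\big)$, which is the second case.
\end{itemize}
The two formulas agree at $\ell=4$, and in each case the seminorm term dominates the $\|u\|_\infty$ term, so the plus term is harmless.

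The main obstacle is the $C^0$ bound $|A_{1,p,k}|\le Ce^{-2t}$. This improves \eqref{eq_expa_A_i_infty}, which only gives decay $e^{-(2+\alpha)t/2}$.

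My first attempt is to read off $A_{1,p,k}$ from the expansion itself. By \eqref{eq_Green_t_k}, \eqref{eq_Phi_00} and the fiberwise $L^2$-orthonormality of the $G_{i,p,k}$, the component of $\gamma_{1,k}$ along $G_{1,p,k}$ in the fiber directions is, to leading order, the fiberwise $\ddbar$ of $A_{1,p,k}\,(\Delta^{\omega_F})^{-1}G_{1,p,k}$, up to $e^{-rt}$ corrections. I will insert the $j=2$ decomposition \eqref{eq_expa_j=1,2} into the parabolic Monge-Ampère equation \eqref{eq_krf_MA}, take the trace against the fiber metric $e^{-t}\omega_F$, and project fiberwise onto $G_{1,p,k}$. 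Because $\gamma_{2,k}$ is built from functions orthogonal to $G_{1,p,k}$, and $\eta_{2,k}$ is $O(e^{-(4+\alpha)t/2})$ by \eqref{eq_expa_eta_infty}, this should express $A_{1,p,k}$, modulo terms that are $O(e^{-2t})$, through its own derivatives. Those derivatives are already bounded by the weaker estimates \eqref{eq_expa_A_i_infty} and \eqref{eq_expa_A_i_infty_2}, with $j=2$.

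If the resulting bound only improves the decay by a fixed increment each time, I will bootstrap: feed each improved $C^0$ bound back into the interpolation above and repeat until the exponent reaches $4\cdot\tfrac t2=2t$. The rate cannot go past $2t$, because the $e^{-2t}$ contributions from $\gamma_{2,k}$ and the $t$-independent $e^{-t}$-weighted geometric terms are genuinely present.
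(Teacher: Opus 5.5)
Your proposal matches the paper's proof: you interpolate $\|A_{1,p,k}\|_\infty\le Ce^{-2t}$ against the $j=2$ seminorm bounds $[\fD^{\ell}A_{1,p,k}]_{\alpha}\le Ce^{(\ell-6)t/2}$, $4\le\ell\le 2k+6$, with $N=4$ for $\ell\le 4$ and $N=\ell$ otherwise, and your exponent arithmetic is correct. The $C^0$ bound you flag as the main obstacle is simply quoted in the paper from \cite[Proposition 5.3]{HLT}; your sketched rederivation (Monge--Amp\`ere expansion, fiberwise projection onto $G_{1,p,k}$, bootstrap) is essentially that argument, though it leaves implicit the control of the oscillation of $e^{\varphi+\dot\varphi}$ and misidentifies the sharp term, which is $e^{-2t}(\cS-\underline{\cS})$ rather than anything coming from $\gamma_{2,k}$ (whose contribution is $o(e^{-2t})$).
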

	\begin{proof}
		The case $\l = 0$, i.e. $\abs{A_{1, p, k}} \leq C e^{-2t}$, was proved in \cite[Proposition 5.3]{HLT}. By \eqref{eq_expa_A_i_holder}, $[\fD^4 A_{1, p, k}]_\alpha \leq C e^{-t}$. We can interpolate between these two estimates to get (up to shrinking $B$) the first case of \eqref{eq_interp_fD_A_1}. 
		Similarly, by \eqref{eq_expa_A_i_holder}, $[\fD^\l A_{1, p, k}]_\alpha \leq C e^{\frac{t}{2} \br{\l-6}}$ for all $4 \leq \l \leq 2k+6$, between which and $\abs{A_{1, p, k}} \leq C e^{-2t}$ we interpolate to get the second case of \eqref{eq_interp_fD_A_1}. 
	\end{proof}

	\begin{lemma}\label{lem_interp_fD_A_2}
    For all $1 \leq p \leq N_{2, k}$, 
    \begin{equation}\label{eq_interp_fD_A_2}
        \abs{\fD^\l A_{2, p, k}} \leq 
        \begin{cases}
            C e^{-\frac{t}{2} \br{4 + \alpha -\frac{(2+\alpha)\l}{4+\alpha}}}, & \forall 0 \leq \l \leq 4, \\
            C e^{-\frac{t}{2} \br{4 +  \frac{\l(2-\l) + \alpha^2}{\l+\alpha}} }, & \forall 4 \leq \l \leq 2k+ 6.
        \end{cases}
    \end{equation}
	\end{lemma}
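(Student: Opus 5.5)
The plan is to copy the proof of Lemma \ref{lem_interp_fD_A_1}: combine a sup-norm bound on $A_{2,p,k}$ itself with the Hölder bounds on its higher $\fD$-derivatives, and interpolate between them on the parabolic domains, which shrink slightly.

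First, the two inputs from Theorem \ref{thm_asymp_expa}, applied with $j=2$ and the fixed even $k\geq 4$.
\begin{itemize}
\item Taking $i=2$ and $\iota=0$ in \eqref{eq_expa_A_i_infty} gives
\begin{equation*}
\abs{A_{2,p,k}}\leq Ce^{-(4+\alpha)\frac t2}.
\end{equation*}
\item Taking $2j+2=6$ in \eqref{eq_expa_A_i_holder} and setting $\l=6+\iota$ with $-2\leq\iota\leq 2k$ gives
\begin{equation*}
[\fD^\l A_{2,p,k}]_\alpha\leq Ce^{(\l-6)\frac t2}\quad\text{for all } 4\leq \l\leq 2k+6.
\end{equation*}
In particular $[\fD^4A_{2,p,k}]_\alpha\leq Ce^{-t}$.
\end{itemize}
These are the same kinds of inputs that were used for $A_{1,p,k}$.

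Next, I will use the standard parabolic interpolation inequality on $B\times[t-1,t]$. Since $A_{2,p,k}$ lives on the base, the $\D$-derivatives are just Euclidean derivatives. For $0\leq \l\leq L$ the inequality reads
\begin{equation*}
\|\fD^\l u\|_\infty\leq C\big(\|u\|_\infty^{1-\frac{\l}{L+\alpha}}[\fD^L u]_\alpha^{\frac{\l}{L+\alpha}}+\|u\|_\infty\big),
\end{equation*}
after shrinking the ball and the time interval slightly. It holds whenever the natural scale $(\|u\|_\infty/[\fD^Lu]_\alpha)^{1/(L+\alpha)}$ is at most a fixed radius; in any case the extra term $\|u\|_\infty$ is harmless, since it decays faster than the claimed bounds.

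First case, $0\leq\l\leq 4$: interpolate with $L=4$. The exponent of $e^{t/2}$ comes out as
\begin{equation*}
-(4+\alpha)\Big(1-\frac{\l}{4+\alpha}\Big)-\frac{2\l}{4+\alpha}=-\Big(4+\alpha-\frac{(2+\alpha)\l}{4+\alpha}\Big),
\end{equation*}
which is exactly the first line of \eqref{eq_interp_fD_A_2}.

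Second case, $4\leq \l\leq 2k+6$: interpolate with $L=\l$, so that $\|\fD^\l u\|\lesssim\|u\|^{\frac{\alpha}{\l+\alpha}}[\fD^\l u]_\alpha^{\frac{\l}{\l+\alpha}}$. The exponent is
\begin{equation*}
\frac{-(4+\alpha)\alpha+\l(\l-6)}{\l+\alpha}=-\Big(4+\frac{\l(2-\l)+\alpha^2}{\l+\alpha}\Big),
\end{equation*}
which matches the second line of \eqref{eq_interp_fD_A_2}.

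The only point that needs checking is that the interpolation is legitimate uniformly in $t$. The scale ratio has exponent
\begin{equation*}
\frac{-(4+\alpha)-(L-6)}{L+\alpha}\cdot\frac t2,
\end{equation*}
which is negative because $2-\alpha-L<0$ for $L\geq 4$. So the interpolation scale tends to $0$ and fits inside the fixed domain for large $t$; bounded times are covered by smoothness. Hence the main obstacle is only bookkeeping. One must make sure a single $R$ works for all $j\leq k/2$, as in the remark after Theorem \ref{thm_asymp_expa}, so that the family $A_{2,p,k}$ is unambiguous.
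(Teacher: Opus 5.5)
Your proposal is correct and follows the paper's proof. You take the same two inputs from Theorem \ref{thm_asymp_expa} with $j=2$, namely $\abs{A_{2,p,k}}\leq Ce^{-(4+\alpha)\frac{t}{2}}$ and $[\fD^\l A_{2,p,k}]_\alpha\leq Ce^{(\l-6)\frac{t}{2}}$ for $4\leq \l\leq 2k+6$, and interpolate between them exactly as in Lemma \ref{lem_interp_fD_A_1}, with $L=4$ for $\l\leq 4$ and $L=\l$ for $\l\geq 4$. Your exponent arithmetic checks out, as does your observation that the additive $\|u\|_\infty$ term is dominated by the claimed bounds.
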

	\begin{proof}
		By \eqref{eq_expa_A_i_holder}, $[\fD^4 A_{2, p, k}]_\alpha \leq C e^{-t}$, and $[\fD^\l A_{2, p, k}]_\alpha \leq C e^{\frac{t}{2} \br{\l-6}}$ for $4 \leq \l \leq 2k+6$. Interpolate between each case and $\abs{A_{2, p, k}} \leq C e^{-(4+\alpha) \frac{t}{2}}$ by \eqref{eq_expa_A_i_infty}  as in Lemma \ref{lem_interp_fD_A_1} to derive \eqref{eq_interp_fD_A_2}.
	\end{proof}

	We can use \Cref{lem_interp_fD_A_1,lem_interp_fD_A_2} to obtain higher-order estimates on $\gamma_{i,k}$.
	
	\begin{proposition}\label{prop_bound_D_gamma_1}
		We have
		\begin{equation}
			\abs{\fD^\l \gamma_{1, k}} \leq C e^{(\l-2) \frac{t}{2}}, \quad \forall 0 \leq \l \leq 2.
		\end{equation}
	\end{proposition}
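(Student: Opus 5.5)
The plan is to expand $\fD^\l \gamma_{1,k}$ with the formula \eqref{eq_fD^q_gamma_i_k} of Lemma \ref{lem_green} (taking $i=1$, $q=\l\le 2$) and to bound each term using Lemma \ref{lem_interp_fD_A_1} together with the scaling of $g(t)$-norms. A generic term has the form
\begin{equation*}
e^{-rt}\,(\D^{q+1-s}J)\circledast\D^{i_1}\Phi_{\iota,r}(G_{1,p,k})\circledast\fD^{i_2+\iota}A_{1,p,k},
\end{equation*}
with $i_1+i_2=s+1$ and $\lceil \iota/2\rceil\le r\le k$. Each factor's $g(t)$-norm is controlled as follows.
\begin{itemize}
\item $J$, $\Phi_{\iota,r}$ and $G$ are $t$-independent smooth tensors. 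Their $g(t)$-norms grow by at most a factor $e^{t/2}$ for each fiber index, because a fiber direction has $g(t)$-length $e^{-t/2}$ times its $g_{Y,0}$-length.
\item $\ddbar$ produces two derivatives, and in the worst case both fall on the fiber function. Hence $\D^{i_1}\Phi$ contributes at most $e^{i_1 t/2}$, and $\D^{q+1-s}J$ contributes at most $e^{(q+1-s)t/2}$. In fact fiber derivatives of $J$ paired with base directions may be better, but the crude bound should suffice.
\item $A_{1,p,k}$ lives on the base, so its $\fD$-derivatives are measured in $g_{\C^m}$ with no loss. Lemma \ref{lem_interp_fD_A_1} gives the bound $e^{-\frac t2(4-\frac{2(i_2+\iota)}{4+\alpha})}$ whenever $i_2+\iota\le 4$, and the second case when $i_2+\iota>4$.
\end{itemize}

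Collecting exponents, the term is bounded by $e^{\frac t2 E}$, where
\begin{equation*}
E = -2r + (q+1-s) + i_1 - 4 + \tfrac{2(i_2+\iota)}{4+\alpha}
\end{equation*}
in the first regime. The dominant term is $\iota=r=0$ with all the derivatives landing on $\Phi$ and $J$, that is $i_2=0$ and $i_1=s+1$. This gives $E=(q+1-s)+(s+1)-4=q-2$, which is exactly the claimed rate $e^{(\l-2)t/2}$. Moving a derivative from $\Phi$ or $J$ onto $A$ trades $+1$ for $+\frac{2}{4+\alpha}<1$, so it only improves the estimate. Raising $\iota$ by one costs at most $\frac{2}{4+\alpha}$ in the exponent, while the constraint $r\ge\lceil\iota/2\rceil$ gains $-2r\le-\iota$. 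So every term has $E\le q-2$ in the first regime.

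The case $i_2+\iota>4$ needs separate care: large $\iota$ (up to $2k$) pushes us into the second case of Lemma \ref{lem_interp_fD_A_1}. There the exponent $-\frac t2\big(4-\frac{\ell(\ell-2)}{\ell+\alpha}\big)$, with $\ell=i_2+\iota$, grows roughly like $\frac t2(\ell-6)$. This must still be beaten by $e^{-rt}\le e^{-\lceil\iota/2\rceil t}$, i.e.\ by $-\iota$ in $E$. Since $i_2\le q+2\le 4$, the combination $-\iota+(\iota+i_2-6+\text{small})$ together with the remaining $J$, $\Phi$ contributions stays at most $q-2$. I expect this bookkeeping to be the main obstacle, and it is also where the range $\ell\le 2k+6$ in Lemma \ref{lem_interp_fD_A_1} is used.

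For time derivatives ($\l=2$ with $q=1$ in $\fD^2$), differentiating $e^{-rt}$ produces only bounded constants. Otherwise $\partial_t$ falls on $A$ and counts as two units of $\fD$-order, which is already built into \eqref{eq_fD^q_gamma_i_k}. Summing the finitely many terms gives the Proposition.
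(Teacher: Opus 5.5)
Your proposal is correct and is essentially the paper's argument, which defers to \cite[Proposition 5.4]{HLT}: expand $\fD^\l\gamma_{1,k}$ via \eqref{eq_fD^q_gamma_i_k} and bound the $A_{1,p,k}$ factors with Lemma~\ref{lem_interp_fD_A_1}; the worst term is $\iota=r=i_2=0$, and the second interpolation regime has room to spare since $\frac{\ell(\ell-2)}{\ell+\alpha}<\ell-2<i_2+2r$. One justification should be tightened: $\abs{\D^{a}J}_{g(t)}\leq Ce^{at/2}$ does not follow from your per-fiber-index scaling alone, since that rule allows an extra $e^{t/2}$ from the component $J^{\bb}_{\ff}$ and this would spoil the rate; it follows because $f$ is holomorphic, so $J$ preserves fiber directions and $J^{\bb}_{\ff}\equiv 0$, and the splitting-preserving connection $\D$ propagates this vanishing to every $\D^{a}J$.
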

	\begin{proof}
		See \cite[Proposition 5.4]{HLT}. The idea is to apply \eqref{eq_fD^q_gamma_i_k} and bound each term in its RHS.
	\end{proof}
	
	\begin{proposition}\label{prop_bound_D_dot_gamma_1_2}
		For both $i = 1, 2$, we have
		\begin{equation}
			\abs{\D \dot{\gamma}_{i, k}} \leq C.
		\end{equation}
	\end{proposition}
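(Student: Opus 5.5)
The plan is to write $\dot\gamma_{i,k}$ explicitly through Lemma \ref{lem_green}, differentiate once with $\D$, and bound each resulting term by combining the $g(t)$-weights of the factors with the interpolated decay of $\fD^\l A_{i,p,k}$ from Lemmas \ref{lem_interp_fD_A_1} and \ref{lem_interp_fD_A_2}. This is the same scheme as in the proof of Proposition \ref{prop_bound_D_gamma_1} (\cite[Proposition 5.4]{HLT}), with one extra time derivative on $A$.

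\textbf{Expansion.} Differentiate \eqref{eq_gamma_i_k} in $t$. Since $\Phi_{\iota,r}(G_{i,p,k})$ and $J$ are $t$-independent, $\dot\gamma_{i,k}$ is the $\ddbar$ of a sum of terms
\begin{equation*}
e^{-rt}\,\Phi_{\iota,r}(G_{i,p,k})\circledast\br{\D^\iota\partial_t A_{i,p,k}-r\,\D^\iota A_{i,p,k}}.
\end{equation*}
Applying the argument behind \eqref{eq_D^q_gamma_i_k} with $q=1$ then writes $\D\dot\gamma_{i,k}$ as a sum of terms of the form
\begin{equation*}
e^{-rt}(\D^{2-s}J)\circledast\D^{i_1}\Phi_{\iota,r}(G_{i,p,k})\circledast\D^{i_2+\iota}\partial_t A_{i,p,k},\qquad i_1+i_2=s+1,\ 0\leq s\leq 2,
\end{equation*}
together with analogous terms with $r\,\D^{i_2+\iota}A_{i,p,k}$ in place of $\D^{i_2+\iota}\partial_tA_{i,p,k}$. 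Since $\D^{i_2+\iota}\partial_t$ is a component of $\fD^{i_2+\iota+2}$, the first type is controlled by $|\fD^{\l}A_{i,p,k}|$ with $\l=\iota+i_2+2$. The second type has two fewer derivatives on $A$ and is handled the same way, with even more room.

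\textbf{Weights.} Measured in $g(t)$, each $\D$ falling on the $t$-independent function $\Phi_{\iota,r}(G)$ costs at most $e^{t/2}$, because fiber directions are shrunk by $e^{-t/2}$. The factors $\D^{2-s}J$ must not cost more than the total budget already used in \cite[Proposition 5.4]{HLT}. I will take from \cite{HT3,HLT} the standard bookkeeping that the $J$-factors and the $\Phi$-factors together contribute at most $e^{i_1 t/2}$; verifying this weight count is the main delicate point. Each term is then bounded by
\begin{equation*}
C\,e^{(i_1-2r)\frac t2}\,|\fD^{\l}A_{i,p,k}|,\qquad \l=\iota+i_2+2,\ \iota\leq 2r,\ i_1+i_2\leq 3.
\end{equation*}

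\textbf{Exponent check for $i=1$.} If $\l\geq4$, the second case of \eqref{eq_interp_fD_A_1} gives exponent $\frac t2$ times
\begin{equation*}
i_1-2r-4+\frac{\l(\l-2)}{\l+\alpha}.
\end{equation*}
Since $\frac{\l(\l-2)}{\l+\alpha}<\l-2$ and $\iota\leq2r$, this is less than $i_1-2r-4+\iota+i_2\leq i_1+i_2-4\leq -1$.

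If $\l<4$, the first case of \eqref{eq_interp_fD_A_1} gives exponent $\frac t2$ times $i_1-2r-4+\frac{2\l}{4+\alpha}$. Using $\frac{2\l}{4+\alpha}<\frac{\l}{2}$ and $\iota\leq 2r$, this is below $i_1+\tfrac{i_2}{2}-3\leq 0$. The extreme case $r=\iota=0$, $i_1=3$, $i_2=0$ gives exponent $-1+\frac{4}{4+\alpha}<0$.

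\textbf{Exponent check for $i=2$.} Lemma \ref{lem_interp_fD_A_2} gives strictly better decay than Lemma \ref{lem_interp_fD_A_1} in every range: there is an extra $-\alpha$ in the $\l\le4$ case and $\alpha^2$ in place of $0$ otherwise. So the same computation applies.

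All exponents being nonpositive, summing the finitely many terms gives $|\D\dot\gamma_{i,k}|\leq C$. The step I expect to need the most care is the weight count of the $\D^{2-s}J$ factors in the $s=0,1$ terms, which must be matched exactly with \cite[(5.10)]{HT3}.
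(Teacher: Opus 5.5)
Your route is the paper's: differentiate \eqref{eq_D^q_gamma_i_k} with $q=1$ in $t$, dominate $|\D^{i_2+\iota}A_{i,p,k}|$ and $|\D^{i_2+\iota}\dot A_{i,p,k}|$ by $|\fD^{\ell}A_{i,p,k}|$, and insert Lemmas \ref{lem_interp_fD_A_1} and \ref{lem_interp_fD_A_2}. However, the weight count you defer is false as stated, so your exponent check does not yet cover every term.

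The factors $\D^{2-s}J$ are not weight-free. In $g(t)$-norm, $J$ and $\D J$ are bounded. But $\D^2J$ (the case $s=0$) has components such as $(\D^2J)_{\ff\bb\ff}^{\ff}$, with two lower fiber indices and one upper fiber index, so their $g(t)$-size is $e^{t/2}$ times a $t$-independent quantity. By the remark at the end of \S4, this component vanishes identically only when \eqref{eq_J_parallel} holds. Consequently the term $e^{-rt}\,\D^2J\circledast\D\Phi_{\iota,r}(G_{i,p,k})\circledast\D^{\iota}\dot A_{i,p,k}$ (here $s=0$, $i_1=1$) is in general of weight $e^{(2-2r)t/2}$, not the $e^{(1-2r)t/2}$ your bookkeeping assigns. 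The paper instead uses the crude but correct bounds $|\D^\ell J|\leq Ce^{\ell t/2}$ and $|\D^\ell\Phi_{\iota,r}(G_{i,p,k})|\leq Ce^{\ell t/2}$. Using $i_1+i_2=s+1$, these give the total weight $e^{(2-s+i_1-2r)t/2}=e^{(3-i_2-2r)t/2}$.

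The estimate survives with this larger weight. Set $\ell=\iota+i_2+2$ and recall $\iota\leq 2r$.
\begin{itemize}
\item If $\ell\geq 4$, Lemma \ref{lem_interp_fD_A_1} gives the exponent $(3-i_2-2r)-4+\frac{\ell(\ell-2)}{\ell+\alpha}<\iota-2r-1\leq -1$.
\item If $\ell\leq 4$, it gives $(3-i_2-2r)-4+\frac{2\ell}{4+\alpha}<-\frac{i_2}{2}+\frac{\iota}{2}-2r\leq -\frac{i_2}{2}-r\leq 0$. The worst case is $r=\iota=i_2=0$, with exponent $-1+\frac{4}{4+\alpha}$. This is the same extreme value you found, since for $s=2$ your weight is right.
\end{itemize}
The terms with $r\,\D^{i_2+\iota}A_{i,p,k}$ carry two fewer parabolic derivatives and are better. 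The case $i=2$ follows in the same way from Lemma \ref{lem_interp_fD_A_2}. With the weight corrected to $e^{(3-i_2-2r)t/2}$, your argument coincides with the paper's proof.
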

	\begin{proof}
		Take $t$-derivative of $\D \gamma_{i, k}$ given by \eqref{eq_D^q_gamma_i_k}, and estimate using Lemma \ref{lem_interp_fD_A_1} and Lemma \ref{lem_interp_fD_A_2}:
		\begin{equation}
			\begin{aligned}
				& \abs{\D \dgamma_{i, k}} \\
				& \leq C \sum_{p=1}^{N_{i,k}}\sum_{\iota=0}^{2k}\sum_{r=\lceil\frac{\iota}{2}\rceil}^{k}\sum_{s=0}^{2}\sum_{i_1+i_2=s+1}
				e^{-rt}  \abs{(\D^{2-s}J)} \cdot \abs{\D^{i_1}\Phi_{\iota,r}(G_{1,p,k})} \br{\abs{\D^{i_2+\iota} A_{i,p,k}}
					+ \abs{\D^{i_2+\iota} \dot{A}_{i,p,k}}} \\
				& \leq C \sum_{p=1}^{N_{i,k}}\sum_{\iota=0}^{2k}\sum_{r=\lceil\frac{\iota}{2}\rceil}^{k}\sum_{i_2=0}^{3}
				e^{\thalf \br{-2r + 3-i_2}} \br{\abs{\fD^{i_2+\iota} A_{i,p,k}}
					+ \abs{\fD^{i_2+\iota+2} {A}_{i,p,k}}} \\
				& \leq C,
			\end{aligned}
		\end{equation}
		as $\abs{\D^\l J} \leq C e^{\l \frac{t}{2}}$ and $\abs{\D^\l \Phi_{\iota,r}(G_{1,p,k})} \leq C e^{\l \frac{t}{2}}$ for all $\l \in \N$.
	\end{proof}

	We are now ready to prove \Cref{thm_D^1_ric}.
	\begin{proof}[Proof of \Cref{thm_D^1_ric}]
		Combine the \KR flow equation \eqref{eq_krf} with asymptotic expansion \eqref{eq_expa_j=1,2}, to get
		\begin{equation}
			\Ric(\ob)
				= - \ocan - \gamma_0 - \gamma_{1, k} - \eta_{1, k} - \dot{\gamma}_0 - \dot{\gamma}_{1, k} -\dot{ \gamma}_{2, k} - \dot{\eta}_{2, k}.
		\end{equation}
		We bound the $\D$-derivative of each term in the RHS above:
		\begin{enumerate}
			\item $\abs{\D \ocan} \leq C$ since $\ocan$ (and hence $\D \ocan$) lives on the base.
			
			\item $\abs{\D \gamma_0} = o(1)$ by \eqref{eq_expa_gamma_0_infty}.
			
			\item $\abs{\D \gamma_{1, k}} \leq C e^{-\frac{t}{2}}$ by Proposition \ref{prop_bound_D_gamma_1}
			
			\item $\abs{\D \eta_{1, k}} \leq C e^{-(1+\alpha)\frac{t}{2}}$ by \eqref{eq_expa_eta_infty}.
			
			\item $\abs{\D \dot{\gamma}_0} \leq \abs{\fD^3 \gamma_0 } = o(1)$ by \eqref{eq_expa_gamma_0_infty}.
			
			\item $\abs{\D \dot{\gamma}_{1, k}} \leq C$, $\abs{\D \dot{\gamma}_{2, k}} \leq C$ by Proposition \ref{prop_bound_D_dot_gamma_1_2}. 
			
			\item $\abs{\D \dot{\eta}_{2, k}} \leq \abs{\fD^3 {\eta}_{2, k}} \leq C e^{-\br{1+\alpha} \frac{t}{2} }$ by \eqref{eq_expa_eta_infty}.
			
		\end{enumerate}
		Therefore, $\abs{\D \Ric(\ob(t))}_{g(t)} \leq C$. By uniform equivalence between $g(t)$ and $g^\bullet(t)$ (see Lemma \ref{lem_metric_equiv}), we deduce that $\abs{\D \Ric(\ob(t))}_{g^\bullet(t)} \leq C$, which completes the proof.
	\end{proof}

	\subsection{Proof of \texorpdfstring{\Cref{prop_D_nabla_b_Ric}}{Theorem 3.1}}
	Essentially our task is to compare $\nabla^\bullet$ with $\D$. Since the complex structure $J$ on $B \times Y$ is not necessarily a product, we are forced to analyze the symmetric 2-tensor $g^\bullet(t)$ instead of the 2-form $\ob(t)$. To streamline the process, define $T^J :=T \circ \br{\id \otimes J}$, for any covariant 2-tensor $T$ on $B \times Y$. Then we can write
	\begin{equation}\label{eq_g_bullet_j=1}
		g^\bullet = \br{\ob}^J = (1-e^{-t}) \gcan + e^{-t} g_F + \gamma_0^J + \gamma_{1, k}^J + \eta_{1, k}^J,
	\end{equation}
	where $\gcan := \ocan^J$ and $g_F := \omega_F^J$.
	Also, as in \cite{HLT}, we use $\bb$ and $\ff$ to denote the base and fiber components of a tensor on $B \times Y$, respectively, according to the product splitting $T(B \times Y) = TB \oplus TY$.
	
	First, observe the following product rule for $\D T^J$.
	\begin{lemma}\label{lem_prod_rule_DJ}
		For any covariant 2-tensor $T$ on $B \times Y$, we have in coordinates
		\begin{equation}\label{eq_prod_rule_DJ_coord}
			\br{\D T^J}_{ijk} = \br{\D T}_{ija} J_k^a + T_{ja} \br{\D J}_{ik}^a.
		\end{equation}
		Schematically,
		\begin{equation}\label{eq_prod_rule_DJ}
			\D^\l T^J = \sum_{m=0}^\l \D^m T \circledast \D^{\l-m} J, \quad \forall \l \geq 0.
		\end{equation}
	\end{lemma}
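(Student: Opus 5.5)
The plan is to view $T^J$ as a tensor contraction and use only the fact that $\D$ is a linear connection on $T(B \times Y)$, extended to all tensor bundles in the usual way. For each fixed $x$, $\D$ agrees at $x$ with the Levi-Civita connection $\nabla^{\pr_B(x)}$ (Definition \ref{def_D}). Consequently $\D$ satisfies the Leibniz rule with respect to tensor products and commutes with contractions. No metric compatibility or torsion-freeness of $\D$ will be needed, only these two algebraic properties.

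Concretely, write $T^J = T \circ (\id \otimes J)$ in coordinates as $(T^J)_{jk} = T_{ja} J^a_k$. This is the contraction of the $(0,2)$-tensor $T$ with the $(1,1)$-tensor $J$ over the second slot of $T$ and the upper index of $J$. Applying $\D$ and using the Leibniz rule together with commutation with contraction gives
\[
(\D T^J)_{ijk} = (\D T)_{ija} J^a_k + T_{ja} (\D J)_{ik}^a,
\]
which is \eqref{eq_prod_rule_DJ_coord}. If one prefers a hands-on verification, it suffices to check this at a point $x$ using the Christoffel symbols of $\nabla^{\pr_B(x)}$. The terms involving the Christoffel symbols on the contracted index $a$ cancel in pairs, exactly as for any connection.

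For the schematic formula \eqref{eq_prod_rule_DJ}, I will induct on $\l$. The case $\l=0$ is trivial, and $\l=1$ is the identity above. Assuming $\D^\l T^J$ is a finite sum of contractions $\D^m T \circledast \D^{\l-m} J$ with $0 \leq m \leq \l$, apply $\D$ once more. By the same Leibniz and contraction argument, each term $\D^m T \circledast \D^{\l-m}J$ produces $\D^{m+1}T \circledast \D^{\l-m}J + \D^m T \circledast \D^{\l+1-m}J$, where the contraction pattern is unchanged up to index permutations. These are absorbed into $\circledast$. Collecting terms gives the formula for $\l+1$, with combinatorial coefficients hidden in $\circledast$.

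I see no genuine obstacle here. The only point requiring care is that $\D$ is defined pointwise through a $z$-dependent family of connections. Since each $\D\eta(x)$ is computed by a single honest connection $\nabla^{\pr_B(x)}$, the Leibniz rule holds pointwise and hence globally. For higher derivatives, $\D^{\l}$ is the iterate of the connection $\D$ itself, not of the individual $\nabla^z$, so the induction applies to $\D$ directly.
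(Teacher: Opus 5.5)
Your proposal is correct and takes the same approach as the paper. Both write $T^J$ as the contraction $(T^J)_{jk}=T_{ja}J^a_k$ and apply the Leibniz rule and commutation with contractions for the connection $\D$, iterating to get the schematic formula for $\D^\ell T^J$. Your extra observation, that $\D^\ell$ is the iterate of the single connection $\D$ rather than of the individual $\nabla^z$, is a correct clarification that the paper leaves implicit.
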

	\begin{proof}
		By definition,
		\begin{equation}
			T^J = T \circledast J,
		\end{equation}
		written in coordinates as
		\begin{equation}
			\br{T^J}_{jk} = T_{ja} J_{k}^a.
		\end{equation}
		Thus the product rule for $\D$ applies to yield \eqref{eq_prod_rule_DJ_coord} and \eqref{eq_prod_rule_DJ}.
	\end{proof}
	
	We can then show the following estimates.
	
	\begin{lemma}\label{lem_D_gcan}
		We have
		\begin{equation}\label{eq_D_gcan}
			\abs{\D^\l \gcan} \leq C, \quad \forall \l \geq 0.
		\end{equation}
	\end{lemma}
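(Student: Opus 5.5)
The plan is to avoid the product rule of \Cref{lem_prod_rule_DJ} altogether. Expanding $\D^\l \gcan = \sum_m \D^m \ocan \circledast \D^{\l-m} J$ would lose powers $e^{(\l-m)\frac t2}$ from the derivatives of $J$, and no such growth is expected here. Instead, I will show that $\gcan$ is itself a $t$-independent tensor pulled back from the base. Its $\D$-derivatives are then pulled back from the base as well, and their $g(t)$-norms do not depend on $t$.

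\emph{Step 1: $\gcan = f^*\hat g$ for a smooth metric $\hat g$ on $B$.} Since $f = \pr_B$ is $(J, J_{\C^m})$-holomorphic, we have $df \circ J = J_{\C^m} \circ df$. Since $\ocan = f^*\ocan$, for all tangent vectors $X, Z$ on $B \times Y$,
\[
\gcan(X,Z) = \ocan(df X, df(JZ)) = \ocan(df X, J_{\C^m} df Z) = \hat g(df X, df Z),
\]
where $\hat g := \ocan^{J_{\C^m}}$ is the Riemannian metric on the base associated with the smooth \K form $\ocan|_B$ (recall $B \Subset B\setminus f(S)$). Hence $\gcan = \pr_B^* \hat g$ has only $\bb\bb$ components, and they are independent of $y \in Y$ and of $t$. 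The one point to check carefully is that only the base projection of $JZ$ enters. This holds because $J$ preserves the vertical distribution (the fibers are complex submanifolds) and $df$ intertwines the two complex structures, so the non-product nature of $J$ never appears.

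\emph{Step 2: $\D$-derivatives of base tensors.} For each $z$, $\nabla^z$ is the Levi-Civita connection of the product $g_{\C^m} + e^{-t} g_{Y,z}$, so it splits as the flat Euclidean connection on $TB$ plus the Levi-Civita connection of $g_{Y,z}$ on $TY$. Applied to a pullback $\pr_B^*\hat g$, differentiation in vertical directions gives zero, and differentiation in base directions gives the Euclidean derivatives of the coefficients of $\hat g$. This is independent of $z$, so the same holds for $\D$. By induction, $\D^\l \gcan = \pr_B^*(\partial^\l \hat g)$, a $t$-independent base tensor. By Remark \ref{rmk_circledast}, its $g(t)$-norm equals the $g_{\C^m}$-norm of $\partial^\l \hat g$, which is bounded on $B$ (up to shrinking $B$) by smoothness of $\ocan$ on $B \setminus f(S)$. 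This gives \eqref{eq_D_gcan} with $C$ depending on $\l$.

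I expect no serious obstacle. The only step that needs care is Step 1, namely the identity $df\circ J = J_{\C^m}\circ df$, which prevents the growth of $\D^\l J$ from entering. Once this is recorded, Step 2 is the same observation already used in Remark \ref{rmk_circledast}.
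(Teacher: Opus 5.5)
Your proposal is correct and follows the same route as the paper. The paper likewise uses the $(J,J_{\C^m})$-holomorphicity of $f$ to write $\gcan = f^*\br{\ocan\circ(\id\otimes J_{\C^m})}$ as a $t$-independent tensor living on the base, and concludes that every $\D^\l \gcan$ is again such a tensor with bounded $g(t)$-norm; your Step 2 only spells out the product splitting of the connections $\nabla^z$, which the paper leaves implicit.
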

	\begin{proof}
		Recall that $f: B \times Y \to B$  is $(J, J_{\C^m})$-holomorphic, so
		\begin{equation}
			\gcan = (f^*\ocan) \circ \br{\id \otimes J} = f^* \br{\ocan \circ (\id \otimes J_{\C^m})}
		\end{equation} 
		lives on the base. Hence $\D^\l \gcan$ is a $t$-independent tensor on the base for all $\l \geq 0$, and \eqref{eq_D_gcan} follows.
	\end{proof}

	\begin{lemma}\label{lem_D_gamma_0_J}
		We have
		\begin{equation}\label{eq_D_gamma_0_J}
			\abs{\D^\l \gamma_0^J} = o(1), \quad \forall 0 \leq \l \leq 4.
		\end{equation}
	\end{lemma}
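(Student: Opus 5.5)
The plan is to argue exactly as in Lemma \ref{lem_D_gcan}: I would show that $\gamma_0^J$ is a tensor living on the base. Its $\D$-derivatives are then Euclidean derivatives on $B$, and its norms do not see the shrinking fiber metric. This matters because the naive route through Lemma \ref{lem_prod_rule_DJ} fails. Writing $\D^\l \gamma_0^J = \sum_{m} \D^m \gamma_0 \circledast \D^{\l-m} J$ and using only $\abs{\D^q J} \leq C e^{q\thalf}$ would lose powers of $e^{\thalf}$. So the main point is to avoid ever differentiating the non-product complex structure $J$ in fiber directions.

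First I would note that $\gamma_0 = \ddbar \underline{\varphi}$, where $\underline{\varphi}(z,t)$ is the fiberwise average and hence a function on $B$. Since $f = \pr_B$ is $(J, J_{\C^m})$-holomorphic, pullback commutes with $\partial$ and $\dbar$. Hence $\gamma_0 = f^*\br{\ddbar_{\C^m} \underline{\varphi}}$ is the pullback of a $t$-dependent real $(1,1)$-form on $B$. Then, exactly as for $\gcan$, holomorphicity of $f$ gives $df \circ J = J_{\C^m} \circ df$. Therefore
\begin{equation*}
\gamma_0^J = \gamma_0 \circ (\id \otimes J) = f^*\br{ (\ddbar_{\C^m}\underline{\varphi}) \circ (\id \otimes J_{\C^m})}.
\end{equation*}
This is again a pullback from the base, now of a symmetric $2$-tensor built from the constant tensor $J_{\C^m}$.

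Next, for tensors pulled back from the base, $\D$ acts as the flat Euclidean connection of $g_{\C^m}$. This follows from the definition of $\D$ via the product metrics $g_z(t)$, whose base factor is always $g_{\C^m}$; it is also how the space of such tensors is closed under $\D$, as in Remark \ref{rmk_circledast}. Moreover their $g(t)$-norms coincide with their $g_{\C^m}$-norms for every $t$. Since $J_{\C^m}$ is Euclidean-parallel, we get
\begin{equation*}
\D^\l \gamma_0^J = (\D^\l \gamma_0) \circ (\id \otimes J_{\C^m}),
\qquad
\abs{\D^\l \gamma_0^J}_{g(t)} = \abs{\D^\l \gamma_0}_{g_{\C^m}}.
\end{equation*}

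Finally, I would invoke \eqref{eq_expa_gamma_0_infty} from Theorem \ref{thm_asymp_expa} with $j=2$, which is the case already applied in \eqref{eq_expa_j=1,2}. It gives $\norm{\fD^\iota \gamma_0}_{\infty, g_{\C^m}} = o(1)$ for all $0 \leq \iota \leq 4$. Since $\abs{\D^\l \gamma_0} \leq \abs{\fD^\l \gamma_0}$, this yields \eqref{eq_D_gamma_0_J} for $0 \leq \l \leq 4$, up to the usual shrinking of $B$.

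The only step requiring care is the identification of $\gamma_0^J$ as a base tensor and the compatibility of $\D$ with base tensors. Everything else is bookkeeping.
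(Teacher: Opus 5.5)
Your proposal is correct and follows the paper's proof. Both identify $\gamma_0^J = f^*\bigl(\ddbar \underline{\varphi} \circ (\id \otimes J_{\C^m})\bigr)$ as a tensor living on the base, so that only the base block $J^{\bb}_{\bb} = J_{\C^m}$ enters, and then conclude from \eqref{eq_expa_gamma_0_infty} with $j=2$. The only cosmetic difference is in the middle step. The paper goes through Lemma~\ref{lem_prod_rule_DJ} restricted to the components $J^{\bb}_{\bb}$ and $(\D J)^{\bb}_{\bb\bb}$, obtaining $\abs{\D^\l \gamma_0^J} \leq C \sum_{m=0}^{\l} \abs{\D^m \gamma_0}$, whereas you use directly that $J_{\C^m}$ is parallel and get equality of norms.
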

	\begin{proof}
		Recall $\gamma_0 = \ddbar \br{f^*\underline{\varphi}}$ from \eqref{eq_gamma_formula}, so
		\begin{equation}
			\gamma_{0}^J = f^*\br{\ddbar \underline{\varphi} \circ (\id \otimes J_{\C^m})},
		\end{equation}
		and $\D^\l \gamma_0$, $\D^\l \gamma_0^J$ live on the base for all $\l \geq 0$. Therefore, by Lemma \ref{lem_prod_rule_DJ},
		\begin{equation}
			\begin{aligned}
				\abs{\D \gamma_0^J}
				& = \abs{\br{\D \gamma_0} \circledast J_{\bb}^{\bb} + {\gamma_0} \circledast \br{\D J}_{\bb \bb}^{\bb}} \\
				& \leq C \br{\abs{\D \gamma_0} + \abs{\gamma_0}},
			\end{aligned}
		\end{equation}
		and more generally,
		\begin{equation}
			\abs{\D^\l \gamma_0^J} \leq C \sum_{m=0}^\l \abs{\D^m \gamma_0}.
		\end{equation}
		We then use \eqref{eq_expa_gamma_0_infty} to conclude \eqref{eq_D_gamma_0_J}.
	\end{proof}
	
	\begin{lemma}\label{lem_D_gamma_1_J}
		We have
		\begin{equation}
			\abs{\D^\l \gamma_{1, k}^J} \leq C e^{(\l-2)\thalf}, \quad \forall 0 \leq \l \leq 2.
		\end{equation}
	\end{lemma}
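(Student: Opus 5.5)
The plan is to apply the product rule of \Cref{lem_prod_rule_DJ} to $T = \gamma_{1,k}$ and then bound each factor separately. That gives
\begin{equation*}
	\D^\l \gamma_{1,k}^J = \sum_{m=0}^{\l} \D^m \gamma_{1,k} \circledast \D^{\l-m} J, \qquad 0 \leq \l \leq 2.
\end{equation*}
The two ingredients are as follows.
\begin{enumerate}
	\item \Cref{prop_bound_D_gamma_1} gives $\abs{\D^m \gamma_{1,k}} \leq \abs{\fD^m \gamma_{1,k}} \leq C e^{(m-2)\thalf}$ for $0 \leq m \leq 2$.
	\item The bound $\abs{\D^{q} J}_{g(t)} \leq C e^{q\thalf}$, already used in the proof of \Cref{prop_bound_D_dot_gamma_1_2}, controls the $J$-factor.
\end{enumerate}
Multiplying these, each term is bounded by $C e^{(m-2)\thalf} e^{(\l-m)\thalf} = C e^{(\l-2)\thalf}$. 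Summing over $m$ gives the claim.

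The one point needing care is that the contraction $\circledast$ in $T^J = T \circledast J$ must not introduce extra factors of $e^{t/2}$ when measured in $g(t)$. $J$ is a $(1,1)$-tensor, and $\abs{J}_{g(t)}$ is uniformly bounded because $J$ is $g^\bullet(t)$-orthogonal and $g^\bullet(t)$ is uniformly equivalent to $g(t)$ (\Cref{lem_metric_equiv}). So the $q=0$ case holds, and contractions with $J$ are bounded in $g(t)$-norm.

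For $\D^q J$ with $q \geq 1$, I would justify the growth rate as follows. $J$ is a smooth $t$-independent tensor on $B\times Y$, and $\D$ is $t$-independent. Measuring $\D^q J$ in $g(t)$ then costs at most one factor of $e^{t/2}$ per covariant slot along the fiber, since each fiber-direction unit vector has $g(t)$-length $e^{-t/2}$ times its $g_{Y}$-length. Any potentially worse contribution from components of $J$ mixing base and fiber is ruled out by the holomorphicity of $f$: $J$ maps fiber directions into fiber directions, which prevents the $(\cdot)^{\bb}_{\ff}$ component that would scale badly.

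The main obstacle is this norm bookkeeping for $\D^q J$ on the non-product complex structure. If needed, I would check it component-wise in the $\bb/\ff$ splitting, using that $J^{\bb}_{\ff} = 0$.
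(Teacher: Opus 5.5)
Your proposal is correct and is the same argument as the paper's one-line proof: the product rule of \Cref{lem_prod_rule_DJ}, combined with \Cref{prop_bound_D_gamma_1} and the bound $\abs{\D^q J} \leq C e^{q\thalf}$, which the paper takes for granted. One step in your component bookkeeping needs stating: you need $(\D^q J)^{\bb}_{\cdots\ff} = 0$, not just $J^{\bb}_{\ff}=0$. This holds because $\D$ is pointwise the Levi-Civita connection of a product metric and therefore preserves the base--fiber splitting, and it is exactly what yields $e^{q\thalf}$ rather than $e^{(q+1)\thalf}$.
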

	\begin{proof}
		This follows immediately from Proposition \ref{prop_bound_D_gamma_1} and Lemma \ref{lem_prod_rule_DJ}.
	\end{proof}
	
	\begin{lemma}\label{lem_D_eta_1_J}
		We have
		\begin{equation}
			\abs{\D^\l \eta_{1, k}^J} \leq C e^{(\l-2-\alpha)\thalf}, \quad \forall 0 \leq \l \leq 2.
		\end{equation}
	\end{lemma}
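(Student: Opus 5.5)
The bound follows the same pattern as Lemma \ref{lem_D_gamma_1_J}: we combine the product rule of Lemma \ref{lem_prod_rule_DJ} with the decay estimate \eqref{eq_expa_eta_infty} for $\eta_{1,k}$. By \eqref{eq_prod_rule_DJ} we have, for $0 \leq \l \leq 2$,
\begin{equation*}
	\abs{\D^\l \eta_{1,k}^J} \leq C \sum_{m=0}^{\l} \abs{\D^m \eta_{1,k}} \cdot \abs{\D^{\l-m} J},
\end{equation*}
where all norms are taken with respect to $g(t)$. We then need two ingredients.

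First, we use the bound $\abs{\D^{q} J}_{g(t)} \leq C e^{q\thalf}$ for all $q \geq 0$, already used in the proof of Proposition \ref{prop_bound_D_dot_gamma_1_2}. Each $\D$-derivative of the $t$-independent tensor $J$ can cost at most one factor $e^{\thalf}$ when measured in the shrinking metric $g(t)$. For $q=0$ the bound $\abs{J}_{g(t)} \leq C$ requires some care, since $J$ need not be a product and a component $J_{\ff}^{\bb}$ could a priori be large in $g(t)$. However, $f$ is $(J,J_{\C^m})$-holomorphic, so $J$ maps vertical vectors to vertical vectors. Hence the component $J_{\ff}^{\bb}$ vanishes, and the remaining mixed component $J^{\ff}_{\bb}$ only gains a factor $e^{-\thalf}$. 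Thus $\abs{J}_{g(t)} \leq C$.

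Second, \eqref{eq_expa_eta_infty} with $j=1$ gives $\abs{\D^m \eta_{1,k}} \leq \abs{\fD^m \eta_{1,k}} \leq C e^{(m-2-\alpha)\thalf}$ for $0 \leq m \leq 2$. Multiplying the two bounds, each term in the sum is at most
\begin{equation*}
	C e^{(m-2-\alpha)\thalf} e^{(\l-m)\thalf} = C e^{(\l-2-\alpha)\thalf},
\end{equation*}
uniformly in $m$. Summing over $0 \leq m \leq \l$ gives the claim.

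The only point needing care is the uniform bound on $\abs{\D^q J}_{g(t)}$ with the sharp rate $e^{q\thalf}$. I expect to justify it as follows. In local product coordinates, $\D$ differs from the flat connection only through $t$-independent Christoffel symbols of $g_{Y,z}$ and their base derivatives. The $g(t)$-norm of a tensor component with $a$ lower and $b$ upper fiber indices scales as $e^{(a-b)\thalf}$. For the nonvanishing components of $J$ the upper fiber indices are at least as many as the lower ones, which gives $\abs{J}_{g(t)} \leq C$. Each additional derivative adds at most one lower fiber index, which gives the factor $e^{\thalf}$ per derivative.
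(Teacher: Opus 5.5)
Your proof is correct and is the paper's argument. The paper proves this in one line by combining the product rule of Lemma~\ref{lem_prod_rule_DJ} with \eqref{eq_expa_eta_infty} for $j=1$, implicitly using $\abs{\D^q J}\leq Ce^{q\thalf}$ as in Proposition~\ref{prop_bound_D_dot_gamma_1_2}; your added justification of that bound is sound, though you should state explicitly that $\D$ preserves the splitting $TB\oplus TY$ (its Christoffel symbols are purely vertical), so the vanishing of $J^{\bb}_{\ff}$ carries over to the corresponding components of every $\D^q J$, which is what makes your index count work.
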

	\begin{proof}
		This follows immediately from \eqref{eq_expa_eta_infty} and Lemma \ref{lem_prod_rule_DJ}.
	\end{proof}
	
	\begin{lemma}\label{lem_D_gF}
		We have
		\begin{equation}
			\abs{\D g_F} \leq C e^t.
		\end{equation}
	\end{lemma}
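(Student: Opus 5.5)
We need to show $|\D g_F|_{g(t)} \leq C e^t$. The plan is to write $g_F = \omega_F^J = \omega_F \circledast J$ and apply the product rule of Lemma \ref{lem_prod_rule_DJ} with $\l = 1$:
\begin{equation*}
\D g_F = \D\omega_F \circledast J + \omega_F \circledast \D J .
\end{equation*}
The task then reduces to bounding the $g(t)$-norms of $\omega_F$, $\D\omega_F$, $J$ and $\D J$, keeping careful track of how many fiber slots each tensor has. Since $g(t) = g_{\C^m} + e^{-t} g_{Y,0}$, a covariant tensor with $a$ fiber slots, all of whose components are bounded in the fixed metric $g(0)$, has $g(t)$-norm at most $C e^{a t/2}$.

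First I bound the individual factors, in the following order.

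\begin{enumerate}
\item $J$ is a $(1,1)$-tensor. With respect to $g(t)$ its norm is $O(1)$ on the block components $J_\bb^\bb$ and $J_\ff^\ff$, and $O(e^{-t/2})$ on $J_\bb^\ff$.
\item The remaining block $J_\ff^\bb$ vanishes, because the fibers $\{z\}\times Y$ are complex submanifolds, so $J$ preserves $TY$. Hence $|J|_{g(t)} \leq C$.
\item For $\D J$, the derivative direction adds one more covariant slot. I therefore expect $|\D J|_{g(t)} \leq C e^{t/2}$, in line with the bound $|\D^\l J| \leq C e^{\l t/2}$ already used in the proof of Proposition \ref{prop_bound_D_dot_gamma_1_2}.
\item $\omega_F$ is a smooth $t$-independent 2-form with at most two fiber slots, so $|\omega_F|_{g(t)} \leq C e^{t}$.
\item Similarly, $\D\omega_F$ has at most three covariant fiber slots, so $|\D\omega_F|_{g(t)} \leq C e^{3t/2}$.
\end{enumerate}

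Multiplying these crude bounds gives $|\D g_F| \leq C e^{3t/2}$, which loses $e^{t/2}$. This is the main obstacle: we must exploit structure to remove the worst components.

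The fully fibered component $(\D_\ff \omega_F)_{\ff\ff}$ vanishes. Indeed, on each fiber $\omega_F|_{X_z}$ is Kähler with respect to $J_z$ and $g_{Y,z}$ is its own metric, so by the definition of $\D$ via $\nabla^z$ the restriction is parallel. The same reasoning gives $(\D_\ff J)_\ff^\ff = 0$, since $J_z$ is $\nabla^{g_{Y,z}}$-parallel.

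Consequently, every surviving component of $\D\omega_F\circledast J$ and $\omega_F\circledast\D J$ has at most two fiber slots in total, after accounting for the contraction through $J$. Here the $J_\bb^\ff$ piece contributes an extra $e^{-t/2}$, compensating for any fiber slot it creates. I will carry out this component bookkeeping in $\bb/\ff$ splitting to confirm that each term is $O(e^{t})$.

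Concretely, in $(\D g_F)_{ijk} = (\D\omega_F)_{ija}J^a_k + (\omega_F)_{ja}(\D J)^a_{ik}$ I consider the cases $i,j,k \in \{\bb,\ff\}$. Whenever all three of $i,j,k$ are $\ff$, the first term vanishes by the parallelism above, because $a$ must then be $\ff$. The second term in that case involves $(\D_\ff J)^a_\ff$, which is zero when $a=\ff$. When $a=\bb$ it is $(\D_\ff J)_\ff^\bb$, and this is zero as well, since $J_\ff^\bb \equiv 0$ identically and differentiation in the $\ff$ direction preserves this: $\nabla^z$ is a product connection, so it preserves the splitting. The other index patterns have at most two $\ff$ slots, and thus at most $C e^{t}$ in norm.

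This yields $|\D g_F|_{g(t)} \leq C e^{t}$.
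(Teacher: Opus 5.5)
Your proof is correct and essentially the paper's argument. Both proofs reduce to the vanishing of the purely fibered component $(\D g_F)_{\ff\ff\ff}$ and then use the fact that each fiber slot of a $t$-independent tensor costs at most $e^{t/2}$ in $g(t)$-norm. The only difference is that the paper gets this vanishing in one line from $g_F|_{\{z\}\times Y} = g_{Y,z}$ and $\nabla^{g_{Y,z}} g_{Y,z} = 0$, so your product-rule detour through $\omega_F$ and $J$, and your crude bounds on the individual factors, are not needed.
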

	\begin{proof}
		Observe that 
		\begin{equation}\label{eq_D_gF_fff}
			\br{\D g_F}_{\ff \ff \ff} = 0.
		\end{equation}
		To see this, recall that for all $z \in B$, the biholomorphism between $X_z$ and  $\{z\} \times Y$ (equipped with complex structure $J_z = J|_{\{z\} \times Y}$) identifies the Ricci-flat metric $\omega_F|_{X_z}$ with $g_{Y, z}$. Thus
		\begin{equation}
			g_F|_{\{z\} \times Y} = g_{Y, z}.
		\end{equation}
		On the fiber $\{z\} \times Y$, we then have
		\begin{equation}\label{eq_D_gF_nabla}
			\br{\D g_F}_{\ff \ff \ff}(z, y) = \br{\nabla^z g_F}_{\ff \ff \ff}(z, y) = \br{\nabla^{g_{Y, z}} g_{Y, z}} (y) = 0, \quad \forall y \in Y.
		\end{equation}
		Thus \eqref{eq_D_gF_fff} holds.
		
		Since $g(t) = g_{\C^m} + e^{-t} g_{Y, 0}$ is static in base and shrinks the size of fiber at rate $e^{-\frac{t}{2}}$, the $t$-independent covariant 3-tensor $\D g_F$ with \eqref{eq_D_gF_fff} must satisfy $\abs{\D g_F}_{g(t)} \leq C e^t$.
	\end{proof}

	\begin{proposition}\label{prop_bound_D_gb}
		We have
		\begin{equation}\label{eq_bound_D_gb}
			\abs{\D \gb} \leq C,
		\end{equation}
		\begin{equation}\label{eq_bound_D_gb-1}
			\abs{\D \br{\gb}^{-1}} \leq C.
		\end{equation}
	\end{proposition}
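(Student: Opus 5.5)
Differentiate the decomposition \eqref{eq_g_bullet_j=1} term by term with $\D$:
\begin{equation*}
\D \gb = (1-e^{-t})\D\gcan + e^{-t}\D g_F + \D\gamma_0^J + \D\gamma_{1,k}^J + \D\eta_{1,k}^J .
\end{equation*}
We then bound each summand in the $g(t)$-norm using the lemmas just proved.
\begin{itemize}
\item Lemma \ref{lem_D_gcan} gives $|\D\gcan|\le C$.
\item Lemma \ref{lem_D_gF} gives $e^{-t}|\D g_F|\le C$.
\item Lemma \ref{lem_D_gamma_0_J} gives $|\D\gamma_0^J|=o(1)$.
\item Lemma \ref{lem_D_gamma_1_J} with $\l=1$ gives $|\D\gamma_{1,k}^J|\le Ce^{-t/2}$.
\item Lemma \ref{lem_D_eta_1_J} with $\l=1$ gives $|\D\eta_{1,k}^J|\le Ce^{-(1+\alpha)t/2}$.
\end{itemize}
Summing these yields \eqref{eq_bound_D_gb} with respect to $g(t)$. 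By Lemma \ref{lem_metric_equiv} the same bound holds with respect to $\gb(t)$.

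For \eqref{eq_bound_D_gb-1}, use the identity for the inverse metric, valid for any connection:
\begin{equation*}
\D(\gb)^{-1} = -(\gb)^{-1}\circledast(\gb)^{-1}\circledast\D\gb .
\end{equation*}
In coordinates, $\D_i g^{ab} = -g^{ac}g^{bd}\D_i g_{cd}$. This follows from $\D$ applied to $g^{ab}g_{bc}=\delta^a_c$, since $\D\delta=0$. It then suffices to show that $(\gb)^{-1}$ has bounded $g(t)$-norm, as a contravariant 2-tensor measured with $g(t)$. This is immediate from the uniform equivalence $C^{-1}g(t)\le\gb(t)\le Cg(t)$ of Lemma \ref{lem_metric_equiv}, which gives $|(\gb)^{-1}|_{g(t)}\le C$. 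Hence $|\D(\gb)^{-1}|\le C|\D\gb|\le C$.

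The only delicate point is the $g_F$ term, where the weight $e^{-t}$ must exactly compensate the $e^{t}$ growth from Lemma \ref{lem_D_gF}. This is already handled there via the vanishing of the purely fiber component $(\D g_F)_{\ff\ff\ff}$. We should also check that the $\D J$ contributions hidden in Lemmas \ref{lem_D_gamma_1_J}--\ref{lem_D_eta_1_J} cost at most a factor $e^{t/2}$ per derivative. With that factor, the exponents $(\l-2)t/2$ and $(\l-2-\alpha)t/2$ remain nonpositive at $\l=1$.
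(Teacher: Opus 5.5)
Your proposal is correct and is essentially the paper's own proof. You differentiate the decomposition \eqref{eq_g_bullet_j=1} term by term, bound the pieces by \Cref{lem_D_gcan,lem_D_gF,lem_D_gamma_0_J,lem_D_gamma_1_J,lem_D_eta_1_J}, and deduce the inverse bound from $\D(\gb)^{-1} = -(\gb)^{-1}\circledast(\gb)^{-1}\circledast\D\gb$ together with the uniform equivalence of \Cref{lem_metric_equiv}. Your closing check on the $\D J$ contributions is harmless but unnecessary, since that bookkeeping is already built into the cited lemmas.
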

	\begin{proof}
		Decompose $g^\bullet$ as in \eqref{eq_g_bullet_j=1} and use \Cref{lem_D_gcan,lem_D_gF,lem_D_gamma_0_J,lem_D_gamma_1_J,lem_D_eta_1_J} to conclude \eqref{eq_bound_D_gb}. Note that $\D \br{\gb}^{-1} = \br{\gb}^{-1} \circledast \br{\gb}^{-1} \circledast \D \gb$ and hence \eqref{eq_bound_D_gb-1} follows.
	\end{proof}

	We are now ready to prove \Cref{prop_D_nabla_b_Ric}.
	\begin{proof}[Proof of \Cref{prop_D_nabla_b_Ric}]
		Let $A$ denote the difference tensor between $\nabla^\bullet$ and $\D$. Then $A(x) = A^{\pr_B(x)}(x)$, where $A^z$ for $z \in B$ denote the difference tensor between $\nabla^\bullet$ and $\nabla^z$. In coordinates, we have
		\begin{equation}\label{eq_A_z_formula}
			\br{A^z}_{ij}^k = \br{\Gamma^z}_{ij}^k - \br{\Gamma^\bullet}_{ij}^k = -\frac{1}{2} g^{\bullet k\l} \br{\nabla_i^{z} g^\bullet_{j\l} + \nabla_j^{z} g^\bullet_{i\l} -\nabla_\l^{z} g^\bullet_{ij} },
		\end{equation}
		so that schematically
		\begin{equation}\label{eq_nabla_b-D_formula}
			A = \br{\gb}^{-1} \circledast \D g^\bullet, \quad \br{\nabla^\bullet - \D} T = {A \circledast T}, 			
		\end{equation}
		for all tensors $T$ on $B \times Y$.
		
		Then by Lemma \ref{lem_metric_equiv} and Proposition \ref{prop_bound_D_gb}, 
		\begin{equation}\label{eq_bound_A_g}
			\abs{A} \leq C \abs{\br{\gb}^{-1}} \cdot \abs{\D \gb} \leq C,
		\end{equation}
		and equivalently
		\begin{equation}\label{eq_bound_A_gb}
			\abs{A}_{\gb} \leq C.
		\end{equation}
		Finally, combine \eqref{eq_bound_ric}, \eqref{eq_nabla_b-D_formula}, and \eqref{eq_bound_A_gb}, to get
		\begin{equation}
				\abs{\br{\nabla^\bullet - \D} \Ric(\omega^\bullet)}_{\gb}
				\leq C \abs{A}_{\gb} \cdot \abs{\Ric(\ob)}_{\gb} 
				 \leq C.
		\end{equation}
		This completes the proof of \Cref{prop_D_nabla_b_Ric}.
	\end{proof}

\section{Second-Order Curvature Estimates}\label{section_order_2}

In this section we prove \Cref{thm_D^2_ric}, which is a second-order estimate on the Ricci curvature in the parabolic sense, and use it to derive \Cref{thm_nabla^2_scalar,thm_Delta_ric}. We work in the same local framework on the product space $B \times Y$ with the same simplification of notations as in \Cref{section_order_1}. 

Similar to \Cref{section_order_1}, after proving \Cref{thm_D^2_ric}, we will calculate the difference tensor $\br{\nabla^{\ob(t), 2} - \D^2} \Ric(\ob(t))$, in which we will identify the obstruction to bounding $\nabla^{\ob(t), 2} \Ric(\ob(t))$ uniformly. Nevertheless, if we trace the difference 4-tensor with respect to the flow metric $\ob(t)$, our estimates on each component tensor of $\Ric(\ob(t))$ and $\gb(t)^{-1}$ in their base-fiber decomposition will enable us to conclude \Cref{thm_nabla^2_scalar}. The same idea of base-fiber analysis on tensors, combined with the evolution equation for the Ricci curvature along the Ricci flow, leads to \Cref{thm_Delta_ric}.

	\subsection{Proof of \texorpdfstring{\Cref{thm_D^2_ric}}{Theorem 1.5}}
	We apply Theorem \ref{thm_asymp_expa} with any fixed even integer $k \geq 4$ and $j = 1, 2$. Up to shrinking $B$, we can write on $B \times Y \times \infint$
	\begin{equation}\label{eq_expa_j=2}
		\ob = \on + \gamma_0 + \gamma_{1, k} +\eta_{1, k} = \on + \gamma_0 + \gamma_{1, k} + \gamma_{2, k} + \eta_{2, k}.
	\end{equation}
	
	Let $g_X = g(0)$ denote a $t$-independent product metric on $B \times Y$.
	We first establish the following estimates.
	\begin{lemma}\label{lem_bound_gamma_i_k_g_X}
		For both $i = 1, 2$, we have 
		\begin{equation}\label{eq_bound_gamma_i_k_g_X}
			\abs{\gamma_{i, k}}_{g_X} \leq C e^{-3\frac{t}{2}},  
		\end{equation}
		\begin{equation}\label{eq_bound_dot_gamma_i_k_g_X}
			\abs{\dot{\gamma}_{i, k}}_{g_X} \leq C e^{-t},  
		\end{equation}
		\begin{equation}\label{eq_bound_gamma_i_k_ff_g_X}
			\abs{\br{\gamma_{i, k}}_{\ff \ff}}_{g_X} \leq C e^{-2t},
		\end{equation}
		\begin{equation}\label{eq_bound_dot_gamma_i_k_ff_g_X}
			\abs{\br{\dot{\gamma}_{i, k}}_{\ff \ff}}_{g_X} \leq C e^{-3\thalf},
		\end{equation}
		\begin{equation}\label{eq_bound_dot_gamma_i_k_ff_g_X_dom}
			\abs{\br{\dot{\gamma}_{i, k}}_{\ff \ff} - \sum_{p=1}^{N_{i, k}} \dot{A}_{i, p, k} i \partial_{\ff} \overline{\partial}_{\ff} \br{\Delta^{\omega_F|_{\{\cdot\} \times Y}}}^{-1} G_{i, p, k}}_{g_X} \leq C {e^{-2t}}.
		\end{equation}
	\end{lemma}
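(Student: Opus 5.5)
We will work directly from the quasi-explicit formula \eqref{eq_gamma_i_k} of Lemma \ref{lem_green}, now measuring norms in the fixed metric $g_X=g(0)$. The reason for this choice is that $g_X$-norms of $t$-independent tensors are uniformly bounded, so no powers of $e^{t/2}$ are produced by fiber directions. Hence the only decay comes from the explicit factors $e^{-rt}$ and from the bounds on $\D^\iota A_{i,p,k}$ in Lemmas \ref{lem_interp_fD_A_1} and \ref{lem_interp_fD_A_2}.

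Since $\ddbar=\frac12 dd^c$ and $J$ is $t$-independent, each term $\ddbar\big(e^{-rt}\Phi_{\iota,r}(G)\circledast\D^\iota A\big)$ is a sum of contractions of two kinds. The first kind is $e^{-rt}\,(\D^{\le 2}\Phi)\circledast\D^{\iota}A$, where all derivatives fall on $\Phi$. The second kind is $e^{-rt}\,(\D^{\le1}\Phi)\circledast \D^{\iota+1}A$ or $e^{-rt}\,\Phi\circledast\D^{\iota+2}A$, where at least one derivative falls on $A$.

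The key structural point concerns the second kind. Because $A$ lives on the base, every derivative landing on $A$ is a base derivative. Consequently, such terms have at least one base leg, and they vanish in the $\ff\ff$ component. Moreover, $\D$-derivatives of base tensors stay on the base.

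Along the fibers, all $\D$-derivatives of $\Phi,J$ are $O(1)$ in $g_X$. Thus each term is bounded by $Ce^{-rt}\abs{\fD^{\iota+s}A_{i,p,k}}$ with $0\le s\le 2$ and $r\ge\lceil\iota/2\rceil$, where the $\fD$-norm is measured in $g(t)$. For base tensors this norm dominates the $g_X$-norm, since $g(t)\le g_X$ on base-type tensors.

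We then check exponents. For \eqref{eq_bound_gamma_i_k_g_X}, the worst case is $\iota=0$, $r=0$, $s=2$. Lemma \ref{lem_interp_fD_A_1} gives $\abs{\fD^2A_{1}}\le Ce^{-\frac t2(4-\frac4{4+\alpha})}\le Ce^{-3t/2}$. For $\iota\ge1$, each extra derivative costs at most $e^{t/2}$ up to the interpolation defect, while $e^{-rt}$ gains $e^{-\lceil\iota/2\rceil t}$, so these terms are at least as good. For $i=2$ the bounds of Lemma \ref{lem_interp_fD_A_2} are better still.

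For \eqref{eq_bound_gamma_i_k_ff_g_X}, only terms of the first kind contribute, so we only need $e^{-rt}\abs{\D^\iota A}$. At $\iota=0$ this is $e^{-2t}$ by the $\l=0$ case. For $\iota\ge1$ we have $r\ge1$, giving $e^{-t}\cdot e^{-\frac t2(4-\frac{2\iota}{4+\alpha})}$ and similar bounds, all of which are $\le e^{-2t}$.

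The time-derivative estimates follow by differentiating the same formula in $t$. Each $\partial_t$ either hits $e^{-rt}$, which gives a harmless factor $r$, or replaces $A$ by $\dot A$, which costs two parabolic orders, $\fD^{\l}\to\fD^{\l+2}$. This yields \eqref{eq_bound_dot_gamma_i_k_g_X}: the worst case is $\fD^4A_1\le Ce^{-\frac t2(4-\frac8{4+\alpha})}\le Ce^{-t}$. It also yields \eqref{eq_bound_dot_gamma_i_k_ff_g_X} via $\abs{\dot A}\le\abs{\fD^2A}\le Ce^{-3t/2}$.

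For \eqref{eq_bound_dot_gamma_i_k_ff_g_X_dom}, the $\ff\ff$ part of the $r=0$, $\iota=0$ term is exactly $\dot A_{i,p,k}\,i\de_\ff\dbar_\ff\Phi_{0,0}(G_{i,p,k})$, using \eqref{eq_Phi_00}. Every remaining $\ff\ff$ contribution carries $e^{-rt}$ with $r\ge1$. These remaining terms are bounded by $e^{-t}\big(\abs{\fD^{\iota}A}+\abs{\fD^{\iota+2}A}\big)$ with $\iota\le 2r$. The most dangerous case is $r=1$, $\iota=2$, which needs $\abs{\fD^4A}\le e^{-t}$. This holds since $4-\frac8{4+\alpha}>2$, so the total is $\le Ce^{-2t}$.

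The main obstacle will be the bookkeeping at borderline exponents, namely large $\iota$ where the second case of Lemma \ref{lem_interp_fD_A_1} applies. There I would first verify the general inequality $r+\frac12\big(4-\frac{\l(\l-2)}{\l+\alpha}\big)\ge$ target, with $\l\le\iota+4$ and $r\ge\iota/2$, using $\frac{\l(\l-2)}{\l+\alpha}<\l-2$. This inequality reduces each case to a linear check.
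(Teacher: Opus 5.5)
Your proposal is correct and is essentially the paper's proof. You bound each term of \eqref{eq_D^q_gamma_i_k} and its $t$-derivative in the fixed metric $g_X$ by $Ce^{-rt}\abs{\fD^{\iota+s}A_{i,p,k}}$, do the same for its $\ff\ff$-part \eqref{eq_gamma_i_k_ff} (where only terms with no derivative on $A_{i,p,k}$ survive), and then insert the interpolation bounds of Lemmas \ref{lem_interp_fD_A_1} and \ref{lem_interp_fD_A_2}; your justification of why the mixed terms vanish in the $\ff\ff$ component and your exponent bookkeeping via $r\geq\lceil\iota/2\rceil$ simply make explicit what the paper leaves implicit.
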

	\begin{proof}
		Using \eqref{eq_D^q_gamma_i_k} and its $t$-derivative, we have
		\begin{equation}
			\begin{aligned}
				\abs{\gamma_{i,k}}_{g_X}
				& \leq C \sum_{p=1}^{N_{i,k}}\sum_{\iota=0}^{2k}\sum_{r=\lceil\frac{\iota}{2}\rceil}^{k}\sum_{i_2=0}^{2} e^{-r t} \abs{\D^{i_2+\iota} A_{i,p,k}},
			\end{aligned}
		\end{equation}
		\begin{equation}
			\begin{aligned}
				\abs{\dgamma_{i,k}}_{g_X}
				& \leq C \sum_{p=1}^{N_{i,k}}\sum_{\iota=0}^{2k}\sum_{r=\lceil\frac{\iota}{2}\rceil}^{k}\sum_{i_2=0}^{2} e^{-r t} \br{\abs{\D^{i_2+\iota} A_{i,p,k}} + \abs{\D^{i_2+\iota} \dot{A}_{i,p,k}}}.
			\end{aligned}
		\end{equation}
		We can then apply Lemma \ref{lem_interp_fD_A_1} when $i = 1$ and Lemma \ref{lem_interp_fD_A_2} when $i = 2$ to get \eqref{eq_bound_gamma_i_k_g_X} and \eqref{eq_bound_dot_gamma_i_k_g_X}. Similarly, using \eqref{eq_gamma_i_k}, we have
		\begin{equation}\label{eq_gamma_i_k_ff}
			\br{\gamma_{i,k}}_{\ff \ff} = \sum_{p=1}^{N_{i,k}}\sum_{\iota=0}^{2k}\sum_{r=\lceil\frac{\iota}{2}\rceil}^{k}e^{-r t} i\de_{\ff} \dbar_{\ff} \Phi_{\iota,r}(G_{i,p,k})\circledast\D^\iota A_{i,p,k},
		\end{equation}
		so
		\begin{equation}
			\abs{\br{\gamma_{i,k}}_{\ff \ff}}_{g_X} \leq C  \sum_{p=1}^{N_{i,k}}\sum_{\iota=0}^{2k}\sum_{r=\lceil\frac{\iota}{2}\rceil}^{k}e^{-r t} \abs{\D^\iota A_{i,p,k}} \leq C e^{-2t}.
		\end{equation}
		By \eqref{eq_Phi_00},
		\begin{equation}
			\begin{aligned}
				\br{\dgamma_{i,k}}_{\ff \ff} 
				& = \sum_{p=1}^{N_{i, k}} \dot{A}_{i, p, k}  i\partial_{\ff} \overline{\partial}_{\ff} \br{\Delta^{\omega_F|_{\{\cdot\} \times Y}}}^{-1} G_{i, p, k} + \sum_{p=1}^{N_{i,k}}\sum_{(\iota, r) \neq (0, 0)}e^{-r t} \de_{\ff} \dbar_{\ff} \Phi_{\iota,r}(G_{i,p,k})\circledast\D^\iota \dot{A}_{i,p,k} \\
				& \quad + \sum_{p=1}^{N_{i,k}}\sum_{\iota=0}^{2k}\sum_{r=\lceil\frac{\iota}{2}\rceil}^{k} e^{-r t} i\de_{\ff} \dbar_{\ff} \Phi_{\iota,r}(G_{i,p,k})\circledast\D^\iota A_{i,p,k},
			\end{aligned}
		\end{equation}
		and we can estimate as above:
		\begin{equation}
			\begin{aligned}
				&\abs{\br{\dot{\gamma}_{i,k}}_{\ff \ff} - \sum_{p=1}^{N_{i, k}} \dot{A}_{i, p, k}  i\partial_{\ff} \overline{\partial}_{\ff} \br{\Delta^{\omega_F|_{\{\cdot\} \times Y}}}^{-1} G_{i, p, k}}_{g_X} \\
				& \leq C  \sum_{p=1}^{N_{i,k}}\sum_{(\iota, r) \neq (0, 0)}e^{-r t} {\abs{\D^\iota \dot{A}_{i,p,k}}} 
				+ C\sum_{p=1}^{N_{i,k}}\sum_{\iota=0}^{2k}\sum_{r=\lceil\frac{\iota}{2}\rceil}^{k}e^{-r t} { \abs{\D^\iota {A}_{i,p,k}}} \\
				& \leq C e^{-2t},			
			\end{aligned}
		\end{equation}
		and
		\begin{equation}
			\abs{\dot{A}_{i, p, k}  i\partial_{\ff} \overline{\partial}_{\ff} \br{\Delta^{\omega_F|_{\{\cdot\} \times Y}}}^{-1} G_{i, p, k}}_{g_X} \leq C \abs{\fD^2 A_{i, p, k}} \leq C e^{-3 \thalf}.
		\end{equation}
		The proof is thus complete.
	\end{proof}
	
	\begin{lemma}\label{lem_bound_eta_2}
		We have 
		\begin{equation}\label{eq_bound_eta_2_g_X}
			\abs{\eta_{2, k}}_{g_X} \leq C e^{-(4+\alpha)\frac{t}{2}},  
		\end{equation}
		\begin{equation}\label{eq_bound_dot_eta_2_g_X}
			\abs{\deta_{2, k}}_{g_X} \leq C e^{-(2+\alpha) \thalf},  
		\end{equation}
		\begin{equation}\label{eq_bound_eta_2_ff_g_X}
			\abs{\br{\eta_{2, k}}_{\ff \ff}}_{g_X} \leq C e^{-(6+\alpha) \thalf}.
		\end{equation}
		\begin{equation}\label{eq_bound_dot_eta_2_ff_g_X}
			\abs{\br{\deta_{2, k}}_{\ff \ff}}_{g_X} \leq C e^{-(4+\alpha)\thalf}.
		\end{equation}
	\end{lemma}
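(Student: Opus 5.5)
The plan is to derive all four bounds from the remainder estimates \eqref{eq_expa_eta_infty} with $j=2$, which are measured in the shrinking metric $g(t)$, and convert them to the fixed metric $g_X = g(0)$. The only tool needed is the elementary comparison between the two metrics for tensors of a given base-fiber type.

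For a covariant 2-tensor $T$ we have $g(t) = g_{\C^m} + e^{-t} g_{Y,0}$ while $g_X = g_{\C^m} + g_{Y,0}$. Each fiber index therefore costs a factor $e^{-t/2}$ when passing from the $g(t)$-norm to the $g_X$-norm. Concretely,
\[
\abs{T_{\bb\bb}}_{g_X} = \abs{T_{\bb\bb}}_{g(t)}, \qquad \abs{T_{\bb\ff}}_{g_X} = e^{-\thalf}\abs{T_{\bb\ff}}_{g(t)}, \qquad \abs{T_{\ff\ff}}_{g_X} = e^{-t}\abs{T_{\ff\ff}}_{g(t)} .
\]
It follows that
\[
\abs{T}_{g_X} \leq C\abs{T}_{g(t)} \quad\text{and}\quad \abs{T_{\ff\ff}}_{g_X} \leq C e^{-t}\abs{T}_{g(t)} .
\]

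The steps are then as follows.

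\begin{enumerate}
\item Apply \eqref{eq_expa_eta_infty} with $j=2$ and $\iota=0$. This gives $\abs{\eta_{2,k}}_{g(t)} \leq C e^{-(4+\alpha)\thalf}$, and the first comparison yields \eqref{eq_bound_eta_2_g_X}.
\item The second comparison gains $e^{-t}$ on the $\ff\ff$ component, which gives \eqref{eq_bound_eta_2_ff_g_X} with exponent $-(6+\alpha)\thalf$.
\item For the time derivative, note that $\deta_{2,k} = \partial_t \eta_{2,k}$ is one of the terms in $\fD^2\eta_{2,k}$ (the case $p=0$, $q=1$). Since $2 \leq 2j = 4$, \eqref{eq_expa_eta_infty} with $\iota=2$ gives $\abs{\deta_{2,k}}_{g(t)} \leq C e^{(2-4-\alpha)\thalf} = C e^{-(2+\alpha)\thalf}$. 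Converting to $g_X$ as above gives \eqref{eq_bound_dot_eta_2_g_X}.
\item Taking the $\ff\ff$ part of $\deta_{2,k}$ and gaining $e^{-t}$ gives \eqref{eq_bound_dot_eta_2_ff_g_X}.
\end{enumerate}

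Two points need care. First, the comparisons are applied pointwise to the tensor itself, not to its derivatives. Taking the $\ff\ff$ component is a pointwise projection that commutes with $\partial_t$, because the splitting $T(B\times Y) = TB \oplus TY$ is $t$-independent. Second, the norms in \eqref{eq_expa_eta_infty} are sup-norms on $Q_r(z,t)$ with respect to $g(t)$. This matches the convention fixed at the start of \Cref{section_order_1} once $B$ is shrunk.

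I expect no real obstacle here. The only bookkeeping point is to make sure the $\D$- and $\partial_t$-derivatives involved are of order at most $2j = 4$, which is the range in which \eqref{eq_expa_eta_infty} applies; this holds since we only use $\iota \in \{0,2\}$.
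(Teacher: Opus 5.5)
Your proposal is correct and follows essentially the same route as the paper: read off the $g(t)$-bounds from \eqref{eq_expa_eta_infty} with $j=2$, $\iota\in\{0,2\}$. Then pass to $g_X$ using $\abs{T}_{g_X}\leq\abs{T}_{g(t)}$ and $\abs{T_{\ff\ff}}_{g_X}=e^{-t}\abs{T_{\ff\ff}}_{g(t)}\leq e^{-t}\abs{T}_{g(t)}$, which is exactly the one-line comparison the paper invokes.
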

	\begin{proof}
		Note that for any covariant $\l$-tensor $T$ on $B \times Y$,
		\begin{equation}
			\abs{T_{\ff \dots \ff}}_{g_X} = e^{-\l \thalf} \abs{T_{\ff \dots \ff}}_{g(t)} \leq e^{-\l \thalf} \abs{T}_{g(t)}.
		\end{equation} 
		Hence all the estimates above follow from \eqref{eq_expa_eta_infty}.
	\end{proof}

	\begin{lemma}\label{prop_davg_phi}
		We have
		\begin{equation}\label{eq_davg_phi}
			\abs{\davg{{\varphi}}} = o\br{e^{-t}},
		\end{equation}
		\begin{equation}\label{eq_davg_dot_phi}
			\abs{\davg{\dot{\varphi}}} = o\br{e^{-t}},
		\end{equation}
		\begin{equation}\label{eq_davg_ddot_phi}
			\abs{\davg{\ddot{\varphi}}} = o\br{e^{-t}}.
		\end{equation}
	\end{lemma}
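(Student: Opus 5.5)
The plan is to read off the fiberwise-oscillating part of $\varphi$ from the fiber-fiber components of the asymptotic expansion \eqref{eq_expa_j=2}. Since $\gamma_0 = \ddbar \underline{\varphi}$ lives on the base, restricting $\ob - \on = \ddbar \varphi$ to a fiber $\{z\}\times Y$ gives
\begin{equation}
	i\de_{\ff}\dbar_{\ff}\br{\davg{\varphi}} = \br{\gamma_{1,k}}_{\ff\ff} + \br{\gamma_{2,k}}_{\ff\ff} + \br{\eta_{2,k}}_{\ff\ff}.
\end{equation}
Similarly, taking $\partial_t$ of \eqref{eq_expa_j=2}, and noting that $\partial_t \on = e^{-t}(\ocan - \omega_F)$ has fiber-fiber part $-e^{-t}\omega_F|_{\ff\ff}$, we get
\begin{equation}
	i\de_{\ff}\dbar_{\ff}\br{\davg{\dot\varphi}} = \br{\dgamma_{1,k}}_{\ff\ff} + \br{\dgamma_{2,k}}_{\ff\ff} + \br{\deta_{2,k}}_{\ff\ff}.
\end{equation}
Here the $-e^{-t}\omega_F$ term is matched on the left-hand side by $\partial_t\on$, so it does not contribute to $\ddbar \dot\varphi$. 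I would double-check this identification carefully; the point is only that $\ddbar\dot\varphi = \partial_t(\ob - \on)$.

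Taking $\omega_F|_{X_z}$-traces turns each identity into a fiberwise Poisson equation for a function of fiberwise average zero. Since the fibers $(Y, g_{Y,z})$ have uniformly controlled geometry by \eqref{equiv-product-ref}, a uniform fiberwise Green's function bound (or simply the maximum principle combined with Moser iteration on compact fibers) gives
\begin{equation}
	\abs{\davg{u}} \leq C \sup_Y \abs{\Delta^{\omega_F|_{X_z}} u}.
\end{equation}
This reduces \eqref{eq_davg_phi} and \eqref{eq_davg_dot_phi} to bounding the $g_X$-norms of the fiber-fiber components listed above, and Lemmas \ref{lem_bound_gamma_i_k_g_X} and \ref{lem_bound_eta_2} supply these bounds:
\begin{itemize}
	\item For $\davg{\varphi}$, we have $\abs{(\gamma_{i,k})_{\ff\ff}}_{g_X} \leq Ce^{-2t}$ and $\abs{(\eta_{2,k})_{\ff\ff}}_{g_X}\leq Ce^{-(6+\alpha)t/2}$, giving $O(e^{-2t}) = o(e^{-t})$.
	\item For $\davg{\dot\varphi}$, the bounds $Ce^{-3t/2}$ from \eqref{eq_bound_dot_gamma_i_k_ff_g_X} and $Ce^{-(4+\alpha)t/2}$ from \eqref{eq_bound_dot_eta_2_ff_g_X} give $O(e^{-3t/2}) = o(e^{-t})$.
\end{itemize}

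The main obstacle is \eqref{eq_davg_ddot_phi}, because it needs one more time derivative than Lemma \ref{lem_bound_gamma_i_k_g_X} provides. I see two routes.
\begin{itemize}
	\item \textbf{Direct route.} Differentiate the fiber-fiber formula \eqref{eq_gamma_i_k_ff} once more in $t$. The dominant term is $\ddot A_{i,p,k}\, i\de_\ff\dbar_\ff(\Delta^{\omega_F})^{-1}G_{i,p,k}$, bounded by $\abs{\fD^4 A_{i,p,k}}$. By Lemmas \ref{lem_interp_fD_A_1} and \ref{lem_interp_fD_A_2} with $\l=4$, this is at most $Ce^{-\frac{t}{2}(4-\frac{8}{4+\alpha})}$, whose exponent strictly exceeds $t$ since $\frac{8}{4+\alpha}<2$, so it is $o(e^{-t})$. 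The remaining terms carry extra factors $e^{-rt}$ and are handled in the same way. For the remainder, $\abs{(\ddot\eta_{2,k})_{\ff\ff}}_{g_X}\le e^{-t}\abs{\fD^4\eta_{2,k}}_{g(t)}\le Ce^{-(2+\alpha)t/2}$ by \eqref{eq_expa_eta_infty} with $\iota=4=2j$, which is $o(e^{-t})$.
	\item \textbf{Alternative route.} If the $\fD^4$ remainder bound is not available in exactly this form, I would instead use \eqref{eq_scal_evol}: $\davg{\ddot\varphi} = -\br{\davg{\dot\varphi}} - \br{\davg{R}}$. This reduces the claim to showing $\davg{R(\ob)} = o(e^{-t})$, which could be attacked through the trace formula for $R$ in terms of the expansion. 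I expect the direct route to be the cleaner one.
\end{itemize}
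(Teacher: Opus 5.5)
Your direct route for \eqref{eq_davg_ddot_phi} is essentially the paper's proof. The explicit part is controlled by $e^{-rt}\bigl(|\D^\iota A_{i,p,k}|+|\D^\iota \dot A_{i,p,k}|+|\D^\iota \ddot A_{i,p,k}|\bigr)$ through \Cref{lem_interp_fD_A_1,lem_interp_fD_A_2}, the key case being $|\ddot A_{i,p,k}|\le|\fD^4A_{i,p,k}|=o(e^{-t})$. The remainder is controlled through $\fD^4\eta_{2,k}$ from \eqref{eq_expa_eta_infty} with $\iota=2j=4$, which gives the same $e^{-(2+\alpha)t/2}$ after restriction to the fibers. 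The difference is only bookkeeping. The paper bounds $\partial_t^2\fG_{t,k}(A_{i,p,k},G_{i,p,k})$ directly at the level of potentials via \eqref{eq_Green_t_k}, and runs fiberwise Moser iteration only on $\ddot\psi_{2,k}$. You instead trace the fiber--fiber part of the form-level identity and apply one uniform fiberwise Poisson estimate to all of $\ddot\varphi-\underline{\ddot\varphi}$. That is equally valid and avoids the potential decomposition \eqref{eq_phi_fiber_order_2}, and the fallback via \eqref{eq_scal_evol} is unnecessary.

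One caveat concerns \eqref{eq_davg_phi} and \eqref{eq_davg_dot_phi}, which the paper does not reprove but quotes from \cite[Proposition 5.1]{HLT}. Your derivation runs through \Cref{lem_bound_gamma_i_k_g_X}. Its $e^{-2t}$ and $e^{-3t/2}$ bounds rest on the $\l=0$ case $|A_{1,p,k}|\le Ce^{-2t}$ of \Cref{lem_interp_fD_A_1}, i.e.\ on \cite[Proposition 5.3]{HLT}. That decay is obtained from the Monge--Amp\`ere equation by the same mechanism as \Cref{prop_bound_dot_A_i_leq2} (compare also \Cref{lem_E_7}). The step there that removes the fiberwise oscillation of $e^{\varphi+\dot\varphi}$ uses exactly \eqref{eq_davg_phi}--\eqref{eq_davg_dot_phi}. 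So your argument is consistent with the paper's ordering, but it derives these two estimates from a result whose proof depends on them; it is not an independent proof.

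This is easy to repair: use the a priori bounds \eqref{eq_expa_A_i_infty}--\eqref{eq_expa_A_i_infty_2} with $j$ large instead of \Cref{lem_bound_gamma_i_k_g_X}.
\begin{itemize}
\item They give $|A_{1,p,k}|\le Ce^{-(2+\alpha)t/2}$.
\item They give $|\dot A_{1,p,k}|\le|\fD^2A_{1,p,k}|=o(e^{-t})$ once $2j\alpha+\alpha^2>4$.
\item All other terms, as well as $(\eta_{j,k})_{\ff\ff}$ and $(\dot\eta_{j,k})_{\ff\ff}$, are then $o(e^{-t})$ in $g_X$-norm.
\end{itemize}
Your Poisson argument then goes through unchanged. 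No such issue affects \eqref{eq_davg_ddot_phi}, which is not an input to $|A_{1,p,k}|\le Ce^{-2t}$. There your use of \Cref{lem_interp_fD_A_1} is legitimate, exactly as in the paper.
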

	\begin{proof}
		\eqref{eq_davg_phi} and \eqref{eq_davg_dot_phi} were proved in \cite[Proposition 5.1]{HLT}. We adapt the argument to show \eqref{eq_davg_ddot_phi}. By Theorem \ref{thm_asymp_expa} we can write
		\begin{equation}\label{eq_phi_fiber_order_2}
			\varphi - \underline{\varphi}
			= \sum_{p=1}^{N_{1, k}} \mathfrak{G}_{t, k}(A_{1, p, k}, G_{1, p, k})
			+ \sum_{p=1}^{N_{2, k}} \mathfrak{G}_{t, k}(A_{2, p, k}, G_{2, p, k}) + \psi_{2, k}
		\end{equation}	
		where taking $\ddbar$ of terms on the RHS yields $\gamma_{1, k}$, $\gamma_{2, k}$, $\eta_{2, k}$, respectively. For $i = 1, 2$, $1 \leq p \leq N_{i, k}$, by \eqref{eq_Green_t_k},
		\begin{equation}
			\abs{\partial_t^2 \br{\mathfrak{G}_{t, k}(A_{i, p, k}, G_{i, p, k})}} \leq C \sum_{\iota = 0}^{2k} \sum_{r = \ceil{\frac{\iota}{2}}}^k e^{-rt} \br{\abs{\D^\iota A_{i, p, k}} + \abs{\D^\iota \dot{A}_{i, p, k}} + \abs{\D^\iota \ddot{A}_{i, p, k}}}.
		\end{equation}
		We can then apply Lemma \ref{lem_interp_fD_A_1} when $i = 1$ and Lemma \ref{lem_interp_fD_A_2} when $i = 2$ to get 
		\begin{equation}\label{eq_dt_2_Green}
			\abs{\partial_t^2 \br{\mathfrak{G}_{t, k}(A_{i, p, k}, G_{i, p, k})}} = o\br{e^{-t}}.
		\end{equation}

		To handle $\psi_{2, k}$, recall from \eqref{eq_expa_eta_infty} that
		\begin{equation}
			\abs{ \ddbar \ddot{\psi}_{2, k}}_{g(t)} = \abs{\ddot{\eta}_{2, k}}_{g(t)} \leq C e^{-\alpha \frac{t}{2}}.
		\end{equation}
		Restricting $\ddot{\psi}_{2, k}$ to each fiber $\{z\} \times Y$, we see
		\begin{equation}
			\abs{\ddbar \ddot{\psi}_{2, k}|_{\{z\} \times Y}}_{g_{Y, z}} \leq C e^{-(2+\alpha) \thalf}.
		\end{equation}
		Since $\ddot{\psi}_{2, k}$ has fiberwise average zero, we can apply Moser's iteration on each fiber $\{z\} \times Y$, with metric $g_{Y, z}$ varying smoothly along $z \in B$, to get
		\begin{equation}\label{eq_ddot_psi_2}
			\abs{\ddot{\psi}_{2, k}} \leq C e^{-(2+\alpha) \frac{t}{2}},
		\end{equation}
		where the constant $C$ is uniform in $B \times Y$, up to shrinking $B$. We can thus derive \eqref{eq_davg_ddot_phi} using \eqref{eq_phi_fiber_order_2}, \eqref{eq_dt_2_Green}, and \eqref{eq_ddot_psi_2}.
	\end{proof}
	
	\begin{proposition}\label{prop_bound_dot_A_i_leq2}
		We have
		\begin{equation}\label{eq_bound_dot_A_i}
			\abs{\dot{A}_{i, p, k}} \leq C e^{-2t}, \quad \forall i = 1, 2, \quad 1 \leq p \leq N_{i, k}.
		\end{equation}
	\end{proposition}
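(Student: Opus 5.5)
The plan is to read $\dot A_{i,p,k}$ off the flow equation itself rather than by interpolation. Interpolating via \Cref{lem_interp_fD_A_1} with $\l=2$ only gives $\abs{\dot A_{1,p,k}}\leq Ce^{-(3-\frac{4}{4+\alpha})\frac t2}\cdot e^{-\frac t2}$, which is strictly worse than $e^{-2t}$. So the improvement has to come from an identity that expresses $A_{i,p,k}$, and then $\dot A_{i,p,k}$, as a fiberwise $L^2$ pairing of quantities we already control.

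\textbf{Step 1: a formula for $A_{i,p,k}$.} Restrict the expansion \eqref{eq_expa_j=2} to a fiber $\{z\}\times Y$. The terms $\omega^\natural$ and $\gamma_0$ contribute $e^{-t}\omega_F|_{\{z\}\times Y}$ and $0$. Take $\tr{\omega_F}{}$ of the $\ff\ff$-part and pair it in fiberwise $L^2$ against $G_{i,p,k}$. By \eqref{eq_gamma_i_k_ff} and \eqref{eq_Phi_00}, the leading $(\iota,r)=(0,0)$ term of $\tr{\omega_F}{(\gamma_{i',k})_{\ff\ff}}$ is exactly $\sum_{p'}A_{i',p',k}G_{i',p',k}$, since $\Delta\Delta^{-1}G=G$. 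Orthonormality of $\cG_{2,k}$ then isolates
\begin{equation*}
A_{i,p,k}=\int_{\{z\}\times Y}\tr{\omega_F}{\br{\ob-e^{-t}\omega_F}_{\ff\ff}}\,G_{i,p,k}\,dV_{g_{Y,z}}+E_{i,p}(t),
\end{equation*}
where $E_{i,p}$ collects the pairings of the $(\iota,r)\neq(0,0)$ terms and of $(\eta_{2,k})_{\ff\ff}$. Equivalently, since $(\ob-e^{-t}\omega_F)_{\ff\ff}=\ddbarf(\vp-\underline\vp)$ on the fiber, the main term is $\int(\vp-\underline\vp)\,\Delta^{\omega_F}G_{i,p,k}$ after integrating by parts.

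\textbf{Step 2: differentiate in $t$.} The term $E_{i,p}$ involves factors $e^{-rt}\D^\iota A$ with $r\geq 1$ or $\iota\geq 1$, together with $\eta_{2,k}$. Its $t$-derivative is $O(e^{-2t})$, using \Cref{lem_interp_fD_A_1}, \Cref{lem_interp_fD_A_2}, and the $\ff\ff$-bounds \eqref{eq_bound_dot_eta_2_ff_g_X} and \eqref{eq_bound_dot_gamma_i_k_ff_g_X_dom} of \Cref{lem_bound_gamma_i_k_g_X} and \Cref{lem_bound_eta_2}, measured in $g_X$ and then rescaled to the fiber metric. This step is bookkeeping of the same type as the proof of \Cref{lem_bound_gamma_i_k_g_X}, tracking which $(\iota,r)$ pairs land at $e^{-2t}$ or better. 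The main term becomes $\int(\dot\vp-\underline{\dot\vp})\,\Delta^{\omega_F}G_{i,p,k}$ up to the rescaling built into the normalization of $A$.

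\textbf{Step 3: the main obstacle.} The difficulty is that \Cref{prop_davg_phi} only gives $\dot\vp-\underline{\dot\vp}=o(e^{-t})$. That is not enough here, because the normalization of $A$ carries a factor $e^{t}$: $(\ob)_{\ff\ff}$ is of size $e^{-t}$ in $g_X$, while $A$ is of order one in that scale.

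To get past this I would linearize the parabolic \MA equation \eqref{eq_krf_MA} fiberwise. Its fiber-oscillating part reads
\begin{equation*}
\dot\vp-\underline{\dot\vp}=e^{t}\Delta^{\omega_F}(\vp-\underline\vp)-(\vp-\underline\vp)+Q,
\end{equation*}
where $Q$ consists of quadratic terms in $\gamma_{i,k},\eta_{2,k}$ and base–fiber mixing terms, and can be bounded by Step 1 type estimates. Pairing this with $G_{i,p,k}$ turns the main term into $A_{i,p,k}$ plus $O(e^{-2t})$ errors. Together with the known bound $\abs{A_{i,p,k}}\leq Ce^{-2t}$, from \cite[Proposition 5.3]{HLT} for $i=1$ and \eqref{eq_expa_A_i_infty} for $i=2$, this closes the estimate.

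If the quadratic term $Q$ turns out to be too large, I would instead use the second-derivative information: pair the scalar curvature relation \eqref{eq_scal_evol}, $\dot\vp+\ddot\vp=-R-m$, with $G_{i,p,k}$. This gives a first-order ODE of the form $\dot a+a=b$ for the pairing $a$ of $\dot\vp-\underline{\dot\vp}$, with source $b$ built from $\ddot\vp-\underline{\ddot\vp}$, controlled by \eqref{eq_davg_ddot_phi}, and from the fiber oscillation of $R$. Integrating this ODE yields the $e^{-2t}$ rate.
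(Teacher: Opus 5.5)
There is a genuine gap in Step 3, which is the heart of the argument. Your Steps 1--2 are sound. Writing $u:=\davg{\vp}$, they show (essentially reproducing \eqref{eq_bound_dot_gamma_i_k_ff_g_X_dom}) that, up to $O(e^{-2t})$, $\dot A_{i,p,k}$ is the fiberwise pairing of $\dot u$ with $\Delta^{\omega_F}G_{i,p,k}$. So everything hinges on showing that this projection of $\dot u$ is $O(e^{-2t})$ rather than the known $o(e^{-t})$. Contrary to your remark, there is no hidden factor $e^t$ in the normalization of $A$: your own Step 1 traces with the $t$-independent $\omega_F|_{\{z\}\times Y}$, and $A_{i,p,k}\approx\int u\,\Delta^{\omega_F}G_{i,p,k}$.

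Step 3 does not supply the needed bound. In $\dot u+u=e^t\Delta u+Q$, the term $e^t\Delta u$ pairs with $G_{i,p,k}$ to $e^tA_{i,p,k}$ plus errors, which is of size $e^{-t}$, not $A_{i,p,k}$. Moreover $Q$ contains the base--fiber mixing term $e^{-t}(\cS-\underline{\cS})$, also of size $e^{-t}$. These two cancel (this is exactly $\E=o(e^{-2t})$ from \cite{HLT}), but only up to $o(e^{-t})$. Worse, the undifferentiated equation is an exact identity for $\dot u+u$, and $\int u\,G_{i,p,k}=O(e^{-2t})$. Hence showing that its right-hand side pairs to $O(e^{-2t})$ is equivalent to the claim itself, and this route is circular.

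The fallback ODE is circular for the same reason. With $a=\int\dot u\,G_{i,p,k}$ one has exactly $\dot a+a=-\int(R-\underline R)\,G_{i,p,k}$, so the ``source'' is $\dot a+a$ itself. The only available control on it ($\davg{\dot\vp},\davg{\ddot\vp}=o(e^{-t})$, $R=O(1)$) is $o(e^{-t})$, and $\dot a+a=b$ with such $b$ cannot yield $e^{-2t}$ decay. Even for $b=O(e^{-2t})$ you would also need $a=o(e^{-t})$ to kill the homogeneous mode $Le^{-t}$.

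The missing idea is to differentiate \eqref{eq_krf_MA} in $t$ \emph{before} taking fiber oscillations. The left side becomes $(\dot\vp+\ddot\vp-n)e^{\vp+\dot\vp}\ocan^m\wedge\omega_F^n$, and dividing by $e^t\ocan^m\wedge\omega_F^n$ gives
\begin{equation*}
e^{-t}(\dot\vp+\ddot\vp-n)e^{\vp+\dot\vp}=\bigl(1+o(1)_{\base}\bigr)\Bigl(-ne^{-t}+\tr{\omega_F|_{\{\cdot\}\times Y}}{(\dot\gamma_{1,k}+\dot\gamma_{2,k})_{\ff\ff}}\Bigr)+o(1)_{\base}+O_{g_X}(e^{-2t}).
\end{equation*}
At this normalization the $\cS$-mixing term, the undotted $\gamma_{i,k}$ terms, and all quadratic terms are already $O(e^{-2t})$. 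The unknown now carries a factor $e^{t}$ relative to $\dot\vp+\ddot\vp$. Hence the $o(e^{-t})$ oscillation bounds of Lemma~\ref{prop_davg_phi}, in particular the new estimate \eqref{eq_davg_ddot_phi} combined via \eqref{eq_prod_avg_diff}, become $o(e^{-2t})$. This gives $\tr{\omega_F|_{\{\cdot\}\times Y}}{(\dot\gamma_{1,k}+\dot\gamma_{2,k})_{\ff\ff}}=O(e^{-2t})$. Then \eqref{eq_bound_dot_gamma_i_k_ff_g_X_dom} and fiberwise orthonormality of the $G_{i,p,k}$ finish the proof.

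In your linearized language, differentiate $\dot u+u=e^t\Delta u+Q$ to get $e^t\Delta(u+\dot u)=(\dot u+\ddot u)-\dot Q$. Here $\dot u+\ddot u=o(e^{-t})$ and $\dot Q=O(e^{-t})$. Since $\Delta u=O(e^{-2t})$ by \eqref{eq_bound_gamma_i_k_ff_g_X} and \eqref{eq_bound_eta_2_ff_g_X}, it follows that $\Delta\dot u=O(e^{-2t})$.
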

	
	\begin{proof}

		Using \eqref{eq_expa_j=2}, we write the parabolic \MA equation \eqref{eq_krf_MA} for the \KR flow as
		\begin{equation}
			e^{\varphi + \dot{\varphi}} \omega_{\can}^m \wedge \omega_F^n
			= \frac{e^{nt}}{\binom{m+n}{n}} \br{(1-e^{-t}) \omega_{\can} + e^{-t} \omega_F + \gamma_0 + \gamma_{1, k} + \gamma_{2, k} + \eta_{2, k}}^{m+n}.
		\end{equation}
		We take its $t$-derivative and use \eqref{eq_expa_gamma_0_infty}, Lemma \ref{lem_bound_gamma_i_k_g_X}, Lemma \ref{lem_bound_eta_2} to get
		\begin{equation}
			\begin{aligned}
				& \br{\dot{\varphi} + \ddot{\varphi} - n} e^{\varphi + \dot{\varphi}} \omega_{\can}^m \wedge \omega_F^n \\
				& = \frac{(m+n) e^{nt}}{\binom{m+n}{n}} \br{(1-e^{-t}) \omega_{\can} + \gamma_0 + e^{-t} \br{\omega_F + e^t \gamma_{1, k} + e^t \gamma_{2, k} + e^t \eta_{2, k}}}^{m+n-1} \\
				& \quad \wedge \br{e^{-t} \omega_{\can} + \dot{\gamma}_0 + e^{-t} \br{-\omega_F + e^t \dot{\gamma}_{1, k} + e^t \dot{\gamma}_{2, k} + e^t \dot{\eta}_{2, k}}} \\
				& =  n \br{(1-e^{-t}) \omega_{\can} + \gamma_0}^m \br{\omega_F}_{\ff \ff}^{n-1} 
				\br{-\omega_F + e^t \dot{\gamma}_{1, k} + e^t \dot{\gamma}_{2, k} }_{\ff \ff} \\
				& \quad + m \br{(1-e^{-t}) \omega_{\can} + \gamma_0}^{m-1}
				\dot{\gamma}_0
				\br{\omega_F}_{\ff \ff}^{n} \\
				& \quad + O_{g_X} \br{e^{-t}}. 
			\end{aligned}
		\end{equation}
		Divide both sides of the equality above by $e^t \omega_{\can}^m \wedge \omega_F^n $ to get
		\begin{equation}\label{eq_flow_dot}
			\begin{aligned}
				&\br{\dot{\varphi} + \ddot{\varphi} - n}  e^{-t} e^{\varphi + \dot{\varphi}} \\
				&= \br{1+o(1)_{\text{base}}} \br{-n e^{-t} + \tr{\omega_F|_{\{\cdot\} \times Y}}{\br{ \dot{\gamma}_{1, k} + \dot{\gamma}_{2, k}}_{\ff \ff}} } +o(1)_{\text{base}} + O_{g_X}(e^{-2t}).
			\end{aligned}
		\end{equation}
		
		Next we subtract from each side of \eqref{eq_flow_dot} their fiberwise average. For RHS, this will indeed remove all terms that live on the base, and $\tr{\omega_F|_{\{\cdot\} \times Y}}{\br{ \dot{\gamma}_{1, k} + \dot{\gamma}_{2, k}}_{\ff \ff}}$ has fiberwise average zero since $\dot{\gamma}_{i, k}$ are $\de\dbar$-exact.  For LHS, note that for arbitrary functions $f, g$ on $B \times Y$, we have
		\begin{equation}\label{eq_prod_avg_diff}
			fg - \underline{fg} = \br{f - \underline{f}} g + \underline{f} \br{g-\underline{g}} - \underline{\br{f-\underline{f}} g}.
		\end{equation}
		Plug in $f = \dot{\varphi} + \ddot{\varphi} - n$ and $g = e^{\varphi + \dot{\varphi}}$. Since $\varphi, \dot{\varphi}, \ddot{\varphi}$ are uniformly bounded (thanks to Lemma \ref{lem_known_est}), we can use \Cref{prop_davg_phi} and the Taylor expansion of the exponential to get
		\begin{align*}
			e^{-t}\abs{\br{\dot{\varphi} + \ddot{\varphi} - n} e^{\varphi + \dot{\varphi}} - \underline{\br{\dot{\varphi} + \ddot{\varphi} - n} e^{\varphi + \dot{\varphi}}}}
			& \leq C e^{-2t}.
		\end{align*}
		Therefore, \eqref{eq_flow_dot} yields
		\begin{equation}
			\tr{\omega_F|_{\{\cdot\} \times Y}}{\br{ \dot{\gamma}_{1, k} + \dot{\gamma}_{2, k}}_{\ff \ff}} = O \br{e^{-2t}}.
		\end{equation}
		Combined with \eqref{eq_bound_dot_gamma_i_k_ff_g_X_dom}, we have
		\begin{equation}\label{eq_bound_AG}
			\sum_{p=1}^{N_{1, k}} \dot{A}_{1, p, k} G_{1, p, k} + \sum_{p=1}^{N_{2, k}} \dot{A}_{2, p, k} G_{2, p, k} = O \br{e^{-2t}}.
		\end{equation}
		Recall from Theorem \ref{thm_asymp_expa} that $\{G_{i, p, k} \mid {1 \leq i \leq 2, 1 \leq p \leq N_{i,k}}\}$ are fiberwise $L^2$ orthonormal, and $\{A_{i, p, k} \mid {1 \leq i \leq 2, 1 \leq p \leq N_{i,k}}\}$ are functions on the base. We can thus take fiberwise inner product of \eqref{eq_bound_AG} with each $G_{i, p, k}$ to get \eqref{eq_bound_dot_A_i}.
	\end{proof}

	We can therefore improve \Cref{lem_interp_fD_A_1,lem_interp_fD_A_2}.
	\begin{lemma}\label{lem_interp_fD_dot_A_1_2}
		For all $i = 1, 2$, $1 \leq p \leq N_{i, k}$, 
		\begin{equation}\label{eq_interp_fD_dot_A_1_2}
			\abs{\fD^\l \dot{A}_{i, p, k}} \leq 
			\begin{cases}
				C e^{-\frac{t}{2} \br{4-\frac{2\l}{2+\alpha}}}, & \forall 0 \leq \l \leq 2, \\
				C e^{-\frac{t}{2} \br{4 - \frac{\l^2}{\l+\alpha}}}, & \forall 2 \leq \l \leq 2k+ 4.
			\end{cases}
		\end{equation}
	\end{lemma}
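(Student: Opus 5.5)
The plan is to repeat the interpolation argument of \Cref{lem_interp_fD_A_1,lem_interp_fD_A_2}, now applied to $\dot{A}_{i,p,k}$. The new $C^0$ bound $\abs{\dot{A}_{i,p,k}} \leq C e^{-2t}$ from \Cref{prop_bound_dot_A_i_leq2} replaces the old $C^0$ bound on $A_{i,p,k}$.

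The first step is to record the H\"older seminorm bounds on the parabolic derivatives of $\dot{A}_{i,p,k}$. Since $\partial_t$ counts as two parabolic derivatives, $\fD^\l \dot{A}_{i,p,k}$ is a sub-collection of the terms in $\fD^{\l+2} A_{i,p,k}$. Hence \eqref{eq_expa_A_i_holder} gives
\begin{equation*}
\sqbr{\fD^\l \dot{A}_{i,p,k}}_\alpha \leq \sqbr{\fD^{\l+2} A_{i,p,k}}_\alpha \leq C e^{\frac{t}{2}(\l-4)}, \quad 2 \leq \l+2 \leq 2k+6,
\end{equation*}
with $j$ chosen so that $2j+2 = 4$. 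In particular $\sqbr{\fD^2 \dot{A}_{i,p,k}}_\alpha \leq C e^{-t}$, and $\sqbr{\dot{A}_{i,p,k}}_\alpha \leq C e^{-2t}$ for $\l = 0$.

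The second step is the interpolation. I will use the standard parabolic interpolation inequality on shrinking cylinders, the same one used in \Cref{lem_interp_fD_A_1}. For a function $u$ on $B \times [t-1,t]$ and $0 \leq \l \leq L$, it reads
\begin{equation*}
\norm{\fD^\l u}_\infty \leq C \norm{u}_\infty^{1-\frac{\l}{L+\alpha}} \sqbr{\fD^L u}_\alpha^{\frac{\l}{L+\alpha}} + C\norm{u}_\infty.
\end{equation*}
The extra lower-order term $C\norm{u}_\infty$ is harmless here because the seminorm bound is larger than the $C^0$ bound. This inequality holds up to shrinking $B$ and the time interval. Applying it with $L = 2$, $\norm{u}_\infty \leq Ce^{-2t}$ and $\sqbr{\fD^2 u}_\alpha \leq Ce^{-t}$, the exponent is
\begin{equation*}
-2\br{1-\tfrac{\l}{2+\alpha}} - \tfrac{\l}{2+\alpha} = -\tfrac{1}{2}\br{4 - \tfrac{2\l}{2+\alpha}},
\end{equation*}
which is the first case. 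For $2 \leq \l \leq 2k+4$, I take $L = \l$ and use $\sqbr{\fD^\l u}_\alpha \leq Ce^{\frac{t}{2}(\l-4)}$. The exponent is then
\begin{equation*}
\tfrac{1}{2}\br{-4\br{1-\tfrac{\l}{\l+\alpha}} + (\l-4)\tfrac{\l}{\l+\alpha}} = -\tfrac{1}{2}\br{4 - \tfrac{\l^2}{\l+\alpha}},
\end{equation*}
which matches the second case. The two formulas agree at $\l = 2$, as a consistency check.

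The only genuinely delicate point is in the first step. I need to justify that the H\"older seminorm of $\fD^\l \dot{A}$ is controlled by that of $\fD^{\l+2} A$, i.e.\ that $\partial_t$ commutes with $\D$ on functions pulled back from the base. This holds because $\D$ is $t$-independent and $A_{i,p,k}$ lives on $B \times [0,+\infty)$. I also need the range of $\iota$ in \eqref{eq_expa_A_i_holder} to cover $\l+2 \leq 2k+6$, which it does since $-2 \leq \l+2-4 \leq 2k$. The rest is the arithmetic above, carried out uniformly for both $i = 1, 2$, since their $C^0$ bounds now coincide.
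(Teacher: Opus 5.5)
Your argument is the paper's proof. It uses the $C^0$ input $\abs{\dot{A}_{i,p,k}}\leq Ce^{-2t}$ from \Cref{prop_bound_dot_A_i_leq2}, the seminorm input $\sqbr{\fD^\l\dot{A}_{i,p,k}}_\alpha\leq\sqbr{\fD^{\l+2}A_{i,p,k}}_\alpha\leq Ce^{(\l-4)\frac{t}{2}}$ for $2\leq\l\leq 2k+4$, and parabolic interpolation with $L=2$ (resp.\ $L=\l$), and your exponent arithmetic is correct.

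One bookkeeping fix is needed. That seminorm bound is \eqref{eq_expa_A_i_holder} with $2j+2=6$, i.e.\ $j=2$ and $\iota=\l-4$, so the range check reads $-2\leq \l-4\leq 2k$. Your choice $2j+2=4$ would only give $Ce^{(\l-2)\frac{t}{2}}$, and $j\geq 2$ is needed anyway to cover $i=2$. Likewise, your aside $\sqbr{\dot{A}_{i,p,k}}_\alpha\leq Ce^{-2t}$ would need $\iota=-4$, so \eqref{eq_expa_A_i_holder} does not supply it; you never use it, though.
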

	\begin{proof}
		The case $\l = 0$ is exactly Proposition \ref{prop_bound_dot_A_i_leq2}. By \eqref{eq_expa_A_i_holder}, 
		\begin{equation}
			\sqbr{\fD^\l \dot{A}_{i, p, k}}_\alpha \leq \sqbr{\fD^{\l+2} {A}_{i, p, k}}_\alpha \leq Ce^{\frac{t}{2}(\l-4)}, \quad \forall 2 \leq \l \leq 2k+4,
		\end{equation}
		between which and $\abs{\dot{A}_{i,p,k}} \leq C e^{-2t}$ we can interpolate to derive \eqref{eq_interp_fD_dot_A_1_2}.
	\end{proof}
	We can then improve \Cref{prop_bound_D_dot_gamma_1_2}.
	
	\begin{proposition}\label{prop_bound_D2_dot_gamma_1_2}
		For both $i = 1, 2$,
		\begin{equation}
			\abs{\fD^\l \dgamma_{i, k}} \leq C e^{(\l-2)\thalf}, \quad \forall 0 \leq \l \leq 2.
		\end{equation}
	\end{proposition}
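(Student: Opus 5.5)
The plan is to expand $\fD^\l \dgamma_{i,k}$ term by term using the quasi-explicit formula \eqref{eq_fD^q_gamma_i_k}, and to check that each term obeys the claimed exponent. This is the same scheme as in Propositions \ref{prop_bound_D_gamma_1} and \ref{prop_bound_D_dot_gamma_1_2}, but now with the improved decay of $\dot A_{i,p,k}$ from Lemma \ref{lem_interp_fD_dot_A_1_2}.

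\textbf{Expanding the formula.} Differentiate \eqref{eq_fD^q_gamma_i_k} (with $q=\l\le 2$) in $t$. The tensors $J$ and $\Phi_{\iota,r}(G_{i,p,k})$ are $t$-independent, so the time derivative falls either on $e^{-rt}$ or on $\fD^{i_2+\iota}A_{i,p,k}$. Hence $\fD^\l\dgamma_{i,k}$ is a finite sum of terms of the two forms
\[
e^{-rt}(\D^{\l+1-s}J)\circledast\D^{i_1}\Phi_{\iota,r}(G_{i,p,k})\circledast \fD^{i_2+\iota}A_{i,p,k},
\qquad
e^{-rt}(\D^{\l+1-s}J)\circledast\D^{i_1}\Phi_{\iota,r}(G_{i,p,k})\circledast \fD^{i_2+\iota}\dot A_{i,p,k},
\]
where $0\le s\le \l+1$, $i_1+i_2=s+1$, $\lceil\iota/2\rceil\le r\le k$ and $0\le\iota\le 2k$. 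The coefficient $r$ produced by differentiating $e^{-rt}$ is absorbed into the constant.

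\textbf{Bounding the factors.} As in the proof of Proposition \ref{prop_bound_D_dot_gamma_1_2}, $|\D^a J|\le Ce^{a\thalf}$ and $|\D^a\Phi_{\iota,r}|\le Ce^{a\thalf}$. The two geometric factors therefore contribute $e^{(\l+1-s+i_1)\thalf}=e^{(\l+2-i_2)\thalf}$. Set $\mu:=i_2+\iota$, so that $0\le i_2\le \l+2\le 4$ and $\mu\le 2k+4$. The required bound $Ce^{(\l-2)\thalf}$ then reduces to showing
\[
|\fD^{\mu}A_{i,p,k}|+|\fD^{\mu}\dot A_{i,p,k}| \le C e^{(2r+i_2-4)\thalf}.
\]
Since $2r\ge\iota$, it suffices to prove $|\fD^\mu \dot A_{i,p,k}|,\ |\fD^\mu A_{i,p,k}|\le Ce^{(\mu-4)\thalf}$ for $0\le\mu\le 2k+4$.

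\textbf{Checking the exponents.} This is the only real content, and I expect it to be a short verification rather than an obstacle.
\begin{itemize}
\item For $\dot A$ with $\mu\le2$, Lemma \ref{lem_interp_fD_dot_A_1_2} gives exponent $-(4-\tfrac{2\mu}{2+\alpha})\le-(4-\mu)$.
\item For $\dot A$ with $\mu\ge2$, the same lemma gives exponent $-(4-\tfrac{\mu^2}{\mu+\alpha})\le -(4-\mu)$, because $\mu^2/(\mu+\alpha)\le\mu$.
\item For $A_{1,p,k}$, Lemma \ref{lem_interp_fD_A_1} gives exponent $-(4-\tfrac{2\mu}{4+\alpha})$ when $\mu\le4$, and $-(4-\tfrac{\mu(\mu-2)}{\mu+\alpha})$ when $\mu\ge4$; both are at most $-(4-\mu)$.
\item For $A_{2,p,k}$, Lemma \ref{lem_interp_fD_A_2} gives even faster decay.
\end{itemize}
All required orders stay within the ranges of these lemmas, namely $\mu\le 2k+4$ for $\dot A$ and $\mu\le 2k+6$ for $A$.

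The only point needing care is the bookkeeping of which $\D$-derivatives land on $J$ and $\Phi$ versus on $A$. Each such derivative costs at most $e^{\thalf}$, and the loss is exactly compensated: every unit of $i_2$ gained is paid by the $e^{(\mu-4)\thalf}$ growth allowance, while $e^{-rt}$ compensates the $\iota$ extra derivatives on $A$. Summing the finitely many terms yields $|\fD^\l\dgamma_{i,k}|\le Ce^{(\l-2)\thalf}$ for $0\le\l\le2$ and $i=1,2$.
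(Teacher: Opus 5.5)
Your proposal is correct and is essentially the paper's proof. Like the paper, you differentiate \eqref{eq_fD^q_gamma_i_k} in $t$, bound the $J$ and $\Phi_{\iota,r}$ factors by $e^{(\l+2-i_2)\thalf}$, and close with Lemmas \ref{lem_interp_fD_A_1}, \ref{lem_interp_fD_A_2}, \ref{lem_interp_fD_dot_A_1_2}; your reduction to $|\fD^\mu A_{i,p,k}|+|\fD^\mu \dot A_{i,p,k}|\le Ce^{(\mu-4)\thalf}$ just makes explicit the exponent check the paper leaves to the reader.
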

	\begin{proof}
		Take $t$-derivative of $\fD^\l \gamma_{i, k}$ given by \eqref{eq_fD^q_gamma_i_k}, and estimate using Lemma \ref{lem_interp_fD_A_1}, Lemma \ref{lem_interp_fD_A_2}, Lemma \ref{lem_interp_fD_dot_A_1_2}:
		\begin{equation}
			\begin{aligned}
				\abs{\fD^\l \dgamma_{i, k}}
				& \leq C \sum_{p=1}^{N_{i,k}}\sum_{\iota=0}^{2k}\sum_{r=\lceil\frac{\iota}{2}\rceil}^{k}\sum_{s=0}^{\l+1}\sum_{i_1+i_2=s+1}
				e^{\thalf \br{-2r + \l+1-s + i_1}} \br{\abs{\fD^{i_2+\iota} A_{i,p,k}}
				+ \abs{\fD^{i_2+\iota} \dot{A}_{i,p,k}}} \\
				& \leq C \sum_{p=1}^{N_{i,k}}\sum_{\iota=0}^{2k}\sum_{r=\lceil\frac{\iota}{2}\rceil}^{k}\sum_{i_2=0}^{\l+2}
				e^{\thalf \br{-2r + 2+\l-i_2}} \br{\abs{\fD^{i_2+\iota} A_{i,p,k}}
					+ \abs{\fD^{i_2+\iota} \dot{A}_{i,p,k}}} \\
				& \leq C e^{(\l-2)\thalf}.
			\end{aligned}
		\end{equation}
	\end{proof}

	We are now ready to prove \Cref{thm_D^2_ric}.
	\begin{proof}[Proof of \Cref{thm_D^2_ric}]
		Combine the \KR flow equation \eqref{eq_krf} with the expansion \eqref{eq_expa_j=2}, to get
		\begin{equation}
			\Ric(\ob)
			= - \ocan - \gamma_0 - \gamma_{1, k} - \eta_{1, k} - \dot{\gamma}_0 - \dot{\gamma}_{1, k} -\dot{ \gamma}_{2, k} - \dot{\eta}_{2, k}.
		\end{equation}
		We bound the $\fD^2$-derivative of each term in the RHS above:
		\begin{enumerate}
			\item $\abs{\fD^2 \ocan} \leq C$ since $\ocan$ lives on the base and is $t$-independent.
			
			\item $\abs{\fD^2 \gamma_0} = o(1)$ by \eqref{eq_expa_gamma_0_infty}.
			
			\item $\abs{\fD^2 \gamma_{1, k}} \leq C $ by Proposition \ref{prop_bound_D_gamma_1}
			
			\item $\abs{\fD^2 \eta_{1, k}} \leq C e^{-\alpha\frac{t}{2}}$ by \eqref{eq_expa_eta_infty}.
			
			\item $\abs{\fD^2 \dot{\gamma}_0} \leq \abs{\fD^4 \gamma_0 } = o(1)$ by \eqref{eq_expa_gamma_0_infty}.
			
			\item $\abs{\fD^2 \dot{\gamma}_{1, k}} \leq C$, $\abs{\fD^2 \dot{\gamma}_{2, k}} \leq C$ by Proposition \ref{prop_bound_D2_dot_gamma_1_2}. 
			
			\item $\abs{\fD^2 \dot{\eta}_{2, k}} \leq \abs{\fD^4 {\eta}_{2, k}} \leq C e^{-\alpha \frac{t}{2} }$ by \eqref{eq_expa_eta_infty}.
			
		\end{enumerate}
		Therefore, $\abs{\fD^2 \Ric(\ob(t))}_{g(t)} \leq C$. By uniform equivalence between $g(t)$ and $g^\bullet(t)$ (see Lemma \ref{lem_metric_equiv}), the proof is complete.		
	\end{proof}

	\begin{rmk}
		By the \KR flow \eqref{eq_krf},  we have on $B \times Y$
		\begin{equation}
			\Ric(\ob(t)) = - \ocan - \ddbar \br{\varphi(t) + \dot{\varphi}(t)}.
		\end{equation}
		Combined with \eqref{eq_scal_evol}, the estimate on $\abs{\partial_t \Ric}$ in \Cref{thm_D^2_ric} implies in particular
		\begin{equation}
			\sup_{B \times Y \times \infint} \abs{\ddbar R(\ob(t))}_{\gb(t)} \leq C,
		\end{equation}
		which is weaker than the estimate on full real Hessian of the scalar curvature in \Cref{thm_nabla^2_scalar} proved below.
	\end{rmk}

	\subsection{\texorpdfstring{Proof of \Cref{thm_nabla^2_scalar}}{Proof of Theorem 1.2}}

	We calculate the difference  $\nabla^{\bullet, 2} \Ric - \D^2 \Ric$ by building upon the proof of \Cref{prop_D_nabla_b_Ric}. Using the difference tensor $A$ described in \eqref{eq_A_z_formula}, \eqref{eq_nabla_b-D_formula}, we have
	\begin{equation}
		\begin{aligned}
			\nabla^{\bullet, 2} \Ric - \D^2 \Ric
			& = \br{\nabla^\bullet - \D} \br{\nabla^\bullet \Ric} + \D \br{ \br{\nabla^\bullet - \D} \Ric} \\
			& = A \circledast \br{\nabla^\bullet \Ric} + \D \br{ A \circledast \Ric},
		\end{aligned}
	\end{equation}
	where
	\begin{equation}
		\br{A \circledast \Ric}_{jk\l} = {A}_{jk}^a \Ric_{a\l} + {A}_{j\l}^a \Ric_{ka}.
	\end{equation}
	By \eqref{eq_A_z_formula},
	\begin{equation}\label{eq_D_A}
		\begin{aligned}
			\br{\D A}_{ijk}^a
			 = & - \frac{1}{2} \br{\D_i g^{\bullet ab}} \br{\D_j g^\bullet_{kb} + \D_k g^\bullet_{jb} -\D_b g^\bullet_{jk} } \\
			&  - \frac{1}{2} {g^{\bullet ab}} \br{\br{\D^2 g^\bullet}_{ijkb} + \br{\D^2 g^\bullet}_{ikjb} -\br{\D^2 g^\bullet}_{ibjk} }.
		\end{aligned}
	\end{equation}
	Let us define the covariant 4-tensors $T, T'$ by
	\begin{equation}\label{eq_T_def}
		T_{ijk\l} = e^{-t}\Ric_{a\l} g^{\bullet ab} \sqbr{\br{\D^2 g_F}_{ijkb} + \br{\D^2 g_F}_{ikjb} - \br{\D^2 g_F}_{ibjk}}, \quad T'_{ijk\l} = T_{ij\l k}.
	\end{equation}
	Then combining all calculations above, we have 
	\begin{equation}\label{eq_nabla2-D2_Ric}
		\begin{aligned}
			{\nabla^{\bullet, 2} \Ric - \D^2 \Ric}
			= & A \circledast \br{\nabla^\bullet \Ric} + 
			\br{\D \br{\gb}^{-1}} \circledast \br{\D \gb} \circledast \Ric \\
			& + \br{\gb}^{-1} \circledast \br{\D^2 \br{\gb - e^{-t} g_F}} \circledast \Ric  - \frac{1}{2}(T-T') \\
			& + A \circledast \br{\D \Ric}.
		\end{aligned}
	\end{equation}
	Note the sign change between $T$ and $T'$ as $\Ric$ is a real $(1,1)$-form.

	\begin{proposition}\label{prop_nabla2_Ric_T}
		With tensors $T, T'$ defined in \eqref{eq_T_def}, we have
		\begin{equation}\label{eq_bound_nabla2_Ric_T}
			\abs{\nabla^{\bullet, 2} \Ric(\ob) + \frac{1}{2} \br{T-T'}} \leq C.
		\end{equation}
	\end{proposition}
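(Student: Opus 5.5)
The plan is to start from the identity \eqref{eq_nabla2-D2_Ric}, move $\D^2\Ric$ to the right-hand side, and bound every remaining term by a constant. By \Cref{thm_D^2_ric} we already have $\abs{\D^2 \Ric} \leq C$. It then suffices to show that each of the four groups
\[
A \circledast \nabla^\bullet \Ric,\quad \D(\gb)^{-1}\circledast\D\gb\circledast\Ric,\quad (\gb)^{-1}\circledast\D^2(\gb - e^{-t}g_F)\circledast\Ric,\quad A\circledast\D\Ric
\]
is uniformly bounded in $g(t)$-norm. By \Cref{lem_metric_equiv}, this is equivalent to boundedness in $\gb$-norm.

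The first, second and fourth groups follow directly from the first-order section. We have $\abs{A}\leq C$ by \eqref{eq_bound_A_g}, and $\abs{\D\gb},\abs{\D(\gb)^{-1}}\leq C$ by \Cref{prop_bound_D_gb}. Also $\abs{\Ric}\leq C$ by \eqref{eq_bound_ric}, $\abs{\nabla^\bullet\Ric}\leq C$ by \Cref{thm_nabla^1_ric} (i.e. \eqref{eq_bound_nabla_ric}), and $\abs{\D\Ric}\leq C$ by \Cref{thm_D^1_ric}. Finally, $\abs{(\gb)^{-1}}\leq C$ by metric equivalence. Products of bounded tensors under $\circledast$ contractions with $g(t)$ are bounded, so these three groups are $O(1)$.

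The main work is the third group, namely bounding $\abs{\D^2(\gb - e^{-t}g_F)}$. Using \eqref{eq_g_bullet_j=1},
\[
\gb - e^{-t}g_F = (1-e^{-t})\gcan + \gamma_0^J + \gamma_{1,k}^J + \eta_{1,k}^J.
\]
For each piece:
\begin{itemize}
\item $\abs{\D^2\gcan}\leq C$ by \Cref{lem_D_gcan}.
\item $\abs{\D^2\gamma_0^J}=o(1)$ by \Cref{lem_D_gamma_0_J}.
\item $\abs{\D^2\gamma_{1,k}^J}\leq C$ by \Cref{lem_D_gamma_1_J} with $\l=2$.
\item $\abs{\D^2\eta_{1,k}^J}\leq Ce^{-\alpha t/2}$ by \Cref{lem_D_eta_1_J}.
\end{itemize}
Hence $\abs{\D^2(\gb-e^{-t}g_F)}\leq C$, and the third group is bounded as well.

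The point of subtracting $e^{-t}g_F$ is this. The analogue of \Cref{lem_D_gF} at second order gives only $\abs{\D^2 g_F}\lesssim e^{3t/2}$, so $e^{-t}\D^2 g_F$ can be as large as $e^{t/2}$. This piece is exactly what was isolated into $-\tfrac12(T-T')$ when expanding $\D A$ via \eqref{eq_D_A}.

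The step I expect to require care is verifying the bookkeeping in \eqref{eq_nabla2-D2_Ric} itself. Expanding $\D(A\circledast\Ric) = \D A\circledast\Ric + A\circledast\D\Ric$ and substituting \eqref{eq_D_A} produces several pieces: $\D(\gb)^{-1}\circledast\D\gb$, $(\gb)^{-1}\circledast\D^2\gb$, and then the split $\D^2\gb = \D^2(\gb-e^{-t}g_F) + e^{-t}\D^2 g_F$. Contracting the last piece with $\Ric$ in both slots, $(A\circledast\Ric)_{jk\l}={A}_{jk}^a\Ric_{a\l}+{A}_{j\l}^a\Ric_{ka}$, produces exactly $-\tfrac12 T$ and $-\tfrac12 T'$. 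The relative sign between them comes from the antisymmetry of $\Ric$ as a real $(1,1)$-form. Since the paper has already displayed this identity, I take it as given. Combining the bounds above then yields \eqref{eq_bound_nabla2_Ric_T}.
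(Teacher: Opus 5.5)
Your proposal is correct and follows the paper's proof essentially line by line. Like the paper, you rearrange \eqref{eq_nabla2-D2_Ric}, bound the remaining terms with the same first- and second-order estimates ($|A|$, $|\D\gb|$, $|\D(\gb)^{-1}|$, $|\Ric|$, $|\D\Ric|$, $|\nabla^\bullet\Ric|$, $|\D^2\Ric|$), and control $\D^2(\gb-e^{-t}g_F)$ through the decomposition \eqref{eq_g_bullet_j=1} together with Lemmas \ref{lem_D_gcan}, \ref{lem_D_gamma_0_J}, \ref{lem_D_gamma_1_J} and \ref{lem_D_eta_1_J}.
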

	\begin{proof}
		Recall we have the following estimates.
		\begin{enumerate}
			\item $\abs{\br{\gb}^{-1}} \leq C$ by Lemma \ref{lem_metric_equiv}.
			
			\item $\abs{\D \gb} \leq C$, $\abs{\D \br{\gb}^{-1}} \leq C$ by Proposition \ref{prop_bound_D_gb}.
			
			\item $\abs{\Ric} \leq C$ by \eqref{eq_bound_ric}, which is \cite[Theorem 1.3]{HLT}.
			
			\item $\abs{\D \Ric} \leq C$ by \Cref{thm_D^1_ric}.
			
			\item $\abs{\nabla^{\bullet} \Ric} \leq C$ by \eqref{eq_bound_nabla_ric} or Theorem \ref{thm_nabla^1_ric}.
			
			\item $\abs{\D^2 \Ric} \leq C$ by \Cref{thm_D^2_ric}.
			
			\item $\abs{A} \leq C$ by \eqref{eq_bound_A_g}.
			
			\item $\abs{\D^2 \gcan} \leq C$ by Lemma \ref{lem_D_gcan}.
			
			\item $\abs{\D^2 \gamma_0^J} =o(1)$ by Lemma \ref{lem_D_gamma_0_J}.
			
			\item $\abs{\D^2 \gamma_{1, k}^J} \leq C$ by Lemma \ref{lem_D_gamma_1_J}.
			
			\item $\abs{\D^2 \eta_{1, k}^J} \leq Ce^{-\alpha\thalf}$ by Lemma \ref{lem_D_eta_1_J}.
			
		\end{enumerate}
		
		Therefore, writing $\gb - e^{-t} g_F = (1-e^{-t}) \gcan + \gamma_0^J + \gamma_{1, k}^J + \eta_{1, k}^J$ by \eqref{eq_g_bullet_j=1}, and plugging into \eqref{eq_nabla2-D2_Ric} the estimates above, we immediately conclude \eqref{eq_bound_nabla2_Ric_T}.
	\end{proof}

	The components of the base-fiber decomposition of $\Ric(\ob(t))$, $\gb(t)^{-1}$, satisfy the following estimates: 
	\begin{lemma}\label{lem_Ric_base_fiber}
		We have
		\begin{equation}\label{eq_Ric_bb}
			\abs{\Ric_{\bb \bb}}_{g_X} \leq C,
		\end{equation}
		\begin{equation}\label{eq_Ric_bf}
			\abs{\Ric_{\bb \ff}}_{g_X} \leq C e^{-3 \thalf},
		\end{equation}
		\begin{equation}\label{eq_Ric_ff}
			\abs{\Ric_{\ff \ff}}_{g_X} \leq C e^{-2t}.
		\end{equation}
	\end{lemma}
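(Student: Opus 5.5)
The plan is to start from the same decomposition of the Ricci form used in the proofs of \Cref{thm_D^1_ric} and \Cref{thm_D^2_ric}:
\begin{equation*}
\Ric(\ob) = - \ocan - \gamma_0 - \gamma_{1, k} - \eta_{1, k} - \dot{\gamma}_0 - \dot{\gamma}_{1, k} -\dot{ \gamma}_{2, k} - \dot{\eta}_{2, k},
\end{equation*}
and to estimate the $\bb\bb$, $\bb\ff$ and $\ff\ff$ components of each term separately in the fixed metric $g_X$. The guiding scaling fact, already used in Lemma \ref{lem_bound_eta_2}, is that for a covariant 2-tensor $T$ we have $\abs{T_{\bb\ff}}_{g_X} = e^{-\thalf}\abs{T_{\bb\ff}}_{g(t)}$ and $\abs{T_{\ff\ff}}_{g_X} = e^{-t}\abs{T_{\ff\ff}}_{g(t)}$. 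From this, \eqref{eq_Ric_bb} follows at once from \eqref{eq_bound_ric}, because $g(t)$ and $g_X$ agree on base directions.

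For the mixed and fiber components, the terms $\ocan$, $\gamma_0$ and $\dot\gamma_0$ live on the base by \eqref{eq_gamma_formula}, so their $\bb\ff$ and $\ff\ff$ components vanish. Lemma \ref{lem_bound_eta_2} gives $\abs{(\dot\eta_{2,k})_{\ff\ff}}_{g_X}\leq Ce^{-(4+\alpha)\thalf}$ and $\abs{(\dot\eta_{2,k})_{\bb\ff}}_{g_X}\leq Ce^{-\thalf}\abs{\dot\eta_{2,k}}_{g(t)}\leq Ce^{-(3+\alpha)\thalf}$, using \eqref{eq_expa_eta_infty} with $\iota=2$. The same scaling applied to $\eta_{1,k}$, via \eqref{eq_expa_eta_infty} with $j=1$, gives $\bb\ff$ size $e^{-(3+\alpha)\thalf}$ and $\ff\ff$ size $e^{-(4+\alpha)\thalf}$. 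For $\gamma_{i,k}$, the bounds \eqref{eq_bound_gamma_i_k_g_X} and \eqref{eq_bound_gamma_i_k_ff_g_X} already give $e^{-3\thalf}$ and $e^{-2t}$.

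The main obstacle is the $\dot\gamma_{i,k}$ terms. Lemma \ref{lem_bound_gamma_i_k_g_X} only gives $\abs{\dot\gamma_{i,k}}_{g_X}\leq Ce^{-t}$ and $\abs{(\dot\gamma_{i,k})_{\ff\ff}}_{g_X}\leq Ce^{-3\thalf}$, which are too weak. I would redo those estimates with the improved bound $\abs{\fD^\l\dot A_{i,p,k}}$ of Lemma \ref{lem_interp_fD_dot_A_1_2}. The $\ff\ff$ component is given by the formula in \eqref{eq_gamma_i_k_ff} differentiated in $t$. Each of its terms is $e^{-rt}$ times $\D^\iota A$ or $\D^\iota\dot A$ with $r\geq\iota/2$, and by Lemmas \ref{lem_interp_fD_A_1}, \ref{lem_interp_fD_A_2} and \ref{lem_interp_fD_dot_A_1_2} each such product is $O(e^{-2t})$; in particular $\dot A_{i,p,k}=O(e^{-2t})$ by Proposition \ref{prop_bound_dot_A_i_leq2}.

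For the $\bb\ff$ component, I would write $\ddbar$ of $e^{-rt}\Phi_{\iota,r}(G)\circledast\D^\iota \dot A$ and take its mixed part. At least one derivative must fall on the base function $\dot A$ (or on $J$ or $\Phi$ in a mixed way), so the terms are bounded by $e^{-rt}(\abs{\D^{\iota}\dot A}+\abs{\D^{\iota+1}\dot A}+\abs{\D^\iota A}+\abs{\D^{\iota+1}A})$ in $g_X$-norm. I expect Lemma \ref{lem_interp_fD_dot_A_1_2}, with $\abs{\fD\dot A}\leq Ce^{-(4-\frac{2}{2+\alpha})\thalf}\leq Ce^{-3\thalf}$ for $\l=1$, to give $O(e^{-3\thalf})$, with the worst case $r=0,\iota=0$. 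The care needed is in checking that, across all pairs $(\iota,r)$ with $r\geq\lceil\iota/2\rceil$, the interpolation exponents stay at least $3/2$ (respectively $2$ for the $\ff\ff$ component); the large-$\l$ branches of the interpolation lemmas need the $e^{-rt}$ prefactor to compensate. Summing all components then yields \eqref{eq_Ric_bf} and \eqref{eq_Ric_ff}.
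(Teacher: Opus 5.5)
Your proposal is correct and follows the paper's proof. It uses the same decomposition of $\Ric(\ob)$, the same observation that $\ocan,\gamma_0,\dot\gamma_0$ have no mixed or fiber components, and the same rescaling $\abs{T_{\bb\ff}}_{g_X}=e^{-t/2}\abs{T_{\bb\ff}}_{g(t)}$, $\abs{T_{\ff\ff}}_{g_X}=e^{-t}\abs{T_{\ff\ff}}_{g(t)}$, applied to $g(t)$-norm bounds of order $e^{-t}$ on the remaining terms.

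The one difference is that the obstacle you isolate for $\dot\gamma_{i,k}$ is already handled by Proposition~\ref{prop_bound_D2_dot_gamma_1_2} with $\l=0$. That gives $\abs{\dot\gamma_{i,k}}_{g(t)}\leq Ce^{-t}$, and hence both component bounds at once after rescaling. Your componentwise redo via Lemma~\ref{lem_interp_fD_dot_A_1_2} is valid but simply reproduces that computation. One wording fix: the reason for your $\bb\ff$ bound is that at most one of the two derivatives can land on $A$ or $\dot A$, since $\ddbar$ of a base function is purely of type $\bb\bb$; "at least one derivative must fall on the base function" is not what your stated bound actually uses.
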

	\begin{proof}
		\eqref{eq_Ric_bb} is trivial as $\abs{\Ric} \leq C$ by \eqref{eq_bound_ric}. For the rest, recall that by the asymptotic expansion of $\ob$ in \eqref{eq_expa_j=2}, we can write
		\begin{equation}
			\Ric
			= - \ocan - \gamma_0 - \gamma_{1, k} - \eta_{1, k} - \dot{\gamma}_0 - \dot{\gamma}_{1, k} -\dot{ \gamma}_{2, k} - \dot{\eta}_{2, k}.
		\end{equation}
		Hence
		\begin{equation}
			\Ric_{\bb \ff} = \br{-\gamma_{1, k} - \eta_{1, k} - \dot{\gamma}_{1, k} -\dot{ \gamma}_{2, k} - \dot{\eta}_{2, k}}_{\bb \ff},
		\end{equation}
		and in $g(t)$-norm,
		\begin{equation}
			\abs{\Ric_{\bb \ff}} \leq \abs{\gamma_{1, k}} + \abs{\eta_{1, k}} + \abs{\dot{\gamma}_{1, k}} +\abs{\dot{ \gamma}_{2, k}} + \abs{\dot{\eta}_{2, k}} \leq C e^{-t},
		\end{equation}
		by Proposition \ref{prop_bound_D_gamma_1}, Proposition \ref{prop_bound_D2_dot_gamma_1_2}, and \eqref{eq_expa_eta_infty}. Thus \eqref{eq_Ric_bf} holds. Similarly, $\abs{\Ric_{\ff \ff}} \leq C e^{-t}$, and hence \eqref{eq_Ric_ff}.
	\end{proof}
	
	\begin{proposition}\label{prop_gb_base_fiber}
		We have
		\begin{equation}
			\abs{{\gb}^{\bb \bb}}_{g_X} \leq C,
		\end{equation}
		\begin{equation}
			\abs{{\gb}^{\bb \ff}}_{g_X} \leq C,
		\end{equation}
		\begin{equation}
			\abs{{\gb}^{\ff \ff}}_{g_X} \leq Ce^t.
		\end{equation}
	\end{proposition}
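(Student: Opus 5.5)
The plan is to use only the uniform equivalence of Lemma \ref{lem_metric_equiv} and the scaling relation between $g(t)$ and $g_X = g(0)$; no information from the asymptotic expansion should be needed.

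The key observation is that a contravariant 2-tensor $S$ on $B \times Y$ has block components whose norms scale as follows:
\begin{equation*}
\abs{S^{\bb\bb}}_{g_X} = \abs{S^{\bb\bb}}_{g(t)}, \quad \abs{S^{\bb\ff}}_{g_X} = e^{\thalf}\abs{S^{\bb\ff}}_{g(t)}, \quad \abs{S^{\ff\ff}}_{g_X} = e^{t}\abs{S^{\ff\ff}}_{g(t)}.
\end{equation*}
This holds because $g(t) = g_{\C^m} + e^{-t} g_{Y,0}$, so raising a fiber index with $g(t)$ rather than $g_X$ costs a factor $e^{t}$ in the metric. Equivalently, a $g(t)$-unit vector in the fiber direction has $g_X$-length $e^{-\thalf}$, and a contravariant index pairs with covectors, which have dual scaling. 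One needs to check these factors carefully.

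Next, I would bound the full tensor. By Lemma \ref{lem_metric_equiv}, $C^{-1} g(t) \leq \gb(t) \leq C g(t)$, hence $C^{-1} g(t)^{-1} \leq \gb(t)^{-1} \leq C g(t)^{-1}$ as positive definite forms on $T^*(B\times Y)$. It follows that $\abs{\gb(t)^{-1}}_{g(t)} \leq C$. Since the block projections are $g(t)$-orthogonal (the product splitting is orthogonal for $g(t)$), each block satisfies $\abs{(\gb)^{\cdot\cdot}}_{g(t)} \leq \abs{\gb(t)^{-1}}_{g(t)} \leq C$.

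Finally, I would combine this block bound with the scaling relation. This gives $\abs{\gb^{\bb\bb}}_{g_X} \leq C$ and $\abs{\gb^{\ff\ff}}_{g_X} \leq C e^{t}$ directly. For the mixed block, the same argument gives only $\abs{\gb^{\bb\ff}}_{g_X} \leq C e^{\thalf}$, which is weaker than the claimed bound of $C$. This is the main obstacle.

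To obtain the claimed bound, I would invert the block structure explicitly. Working in $g_X$-orthonormal frames, write
\begin{equation*}
\gb = \begin{pmatrix} P & Q \\ Q^T & e^{-t} R \end{pmatrix}.
\end{equation*}
Here $P = \gb_{\bb\bb}$ is uniformly positive and bounded, and $R = e^{t}\gb_{\ff\ff}$ is uniformly equivalent to the identity. The off-diagonal block satisfies $Q = \gb_{\bb\ff} = O(e^{-t})$ in $g_X$-norm, by \eqref{eq_g_bullet_j=1}: $\gcan$ and $\gamma_0^J$ have no mixed part, $e^{-t} g_F$ contributes $O(e^{-t})$, and $\gamma_{1,k}^J$ and $\eta_{1,k}^J$ are $O(e^{-3t/2})$ in $g_X$ by Lemma \ref{lem_bound_gamma_i_k_g_X} and Lemma \ref{lem_bound_eta_2}. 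These bounds transfer across $J$, whose mixed blocks are bounded in $g_X$.

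Using the Schur complement formula, $\gb^{\bb\ff} = -P^{-1} Q\, (e^{-t} R - Q^T P^{-1} Q)^{-1}$. Here $Q^T P^{-1} Q = O(e^{-2t})$, so the Schur complement is $e^{-t}(R + O(e^{-t}))$, and its inverse is $O(e^{t})$. The mixed block is therefore $O(e^{-t}) \cdot O(e^{t}) = O(1)$. The same formula recovers $\gb^{\ff\ff} = O(e^{t})$ and $\gb^{\bb\bb} = P^{-1} + O(1) = O(1)$, which confirms all three estimates.
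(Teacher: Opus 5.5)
Your argument is correct and is essentially the paper's: both read off $\gb_{\bb\ff}=O_{g_X}(e^{-t})$ and $\gb_{\ff\ff}\approx e^{-t}(g_F)_{\ff\ff}$ from \eqref{eq_g_bullet_j=1} and then invert by Schur complements. You take the positivity of $P$ and $R$ from \Cref{lem_metric_equiv} rather than from the explicit leading terms $\gcan$ and $(g_F)_{\ff\ff}$; this loses only the sharper asymptotics $\gcan^{-1}+o(1)$ and $e^t\big((g_F)_{\ff\ff}^{-1}+O(e^{-t})\big)$. One small citation fix: \Cref{lem_bound_eta_2} concerns $\eta_{2,k}$, so for $\eta_{1,k}$ either use \eqref{eq_expa_eta_infty} directly or write $\eta_{1,k}=\gamma_{2,k}+\eta_{2,k}$.
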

	\begin{proof}
		Recall from \eqref{eq_g_bullet_j=1} that
		\begin{equation}
			g^\bullet =  (1-e^{-t}) \gcan + e^{-t} g_F + \gamma_0^J + \gamma_{1, k}^J + \eta_{1, k}^J.
		\end{equation}
		We claim that
		\begin{equation}\label{eq_gb_bb}
			\gb_{\bb \bb} = \gcan + o_{g_X}(1),
		\end{equation}
		\begin{equation}\label{eq_gb_bf}
			\gb_{\bb \ff} = e^{-t} \br{g_F}_{\bb\ff} + O_{g_X}(e^{-3 \thalf}) = O_{g_X}(e^{-t}),
		\end{equation}
		\begin{equation}\label{eq_gb_ff}
			\gb_{\ff \ff} = e^{-t} \sqbr{\br{g_F}_{\ff\ff} + O_{g_X}(e^{-t})}.
		\end{equation}
		
		To see these, first recall that $\abs{J} = O(1)$, so $\abs{\gamma_0^J} \leq C\abs{\gamma_0} = o(1)$ by \eqref{eq_expa_gamma_0_infty}, $\abs{\gamma_{1, k}^J} \leq C\abs{\gamma_{1, k}} = O(e^{-t})$ by Proposition \ref{prop_bound_D_gamma_1}, and $\abs{\eta_{1, k}^J} \leq C\abs{\eta_{1, k}} = O(e^{-(2+\alpha) \thalf})$ by \eqref{eq_expa_eta_infty}. From these estimates we immediately deduce \eqref{eq_gb_bb}. To see \eqref{eq_gb_bf} and \eqref{eq_gb_ff}, observe that
		\begin{equation}
			\gb_{\bb\ff} = \br{e^{-t} g_F + \gamma_{1, k}^J + \eta_{1, k}^J}_{\bb\ff} = e^{-t} \br{g_F}_{\bb\ff} + O_{g_X}(e^{-3 \thalf}),
		\end{equation}
		and similarly,
		\begin{equation}
			\gb_{\ff\ff} = \br{e^{-t} g_F + \gamma_{1, k}^J + \eta_{1, k}^J}_{\ff\ff} = e^{-t} \br{g_F}_{\ff\ff} + O_{g_X}(e^{-2t}).
		\end{equation}
		Since $\gcan + \br{g_F}_{\ff \ff}$ defines a metric on $B \times Y$, we can compute $\br{g^\bullet}^{-1}$, in a product coordinate for all $t$ sufficiently large, using Schur complements of the block matrix
		\begin{equation}
			\gb = 
			\begin{bmatrix}
				\gb_{\bb \bb} & \gb_{\bb \ff} \\
				\gb_{\ff \bb} & \gb_{\ff \ff}
			\end{bmatrix}.
		\end{equation}
		In this process, \eqref{eq_gb_bb}, \eqref{eq_gb_bf}, \eqref{eq_gb_ff}  imply that as $t \to +\infty$, 
		\begin{equation}
			{\gb}^{\bb \bb} = \gcan^{-1} + o_{g_X}(1), 
		\end{equation}
		\begin{equation}
			{\gb}^{\bb \ff} = O_{g_X}(1), 
		\end{equation}
		\begin{equation}
			{\gb}^{\ff \ff} = e^t \br{\br{g_F}_{\ff\ff}^{-1} + O_{g_X}(e^{-t})}.
		\end{equation}
		We can cover $B \times Y$ by finitely many product coordinate neighborhoods up to shrinking $B$, and hence the proof is complete.
	\end{proof}
	
	\begin{proof}[Proof of \Cref{thm_nabla^2_scalar}]
		The scalar curvature of $\ob$ is given by $R = \tr{\gb}{\br{\Ric^J}} = \Ric \circledast J \circledast \br{\gb}^{-1}$. We apply this tensor contraction $\br{\cdot} \circledast J \circledast \br{\gb}^{-1}$ to \eqref{eq_bound_nabla2_Ric_T} to derive
		\begin{equation}\label{eq_nabla2_R_diff}
			\abs{\nabla^{\bullet, 2} R + \frac{1}{2} \br{T-T'} \circledast J \circledast \br{\gb}^{-1}} \leq C,
		\end{equation}
		where we use $\abs{J} \leq C$, $\abs{\br{\gb}^{-1}} \leq C$, $\nabla^\bullet J = 0$. By definition of $T$ in \eqref{eq_T_def}, we can write the tensor contraction in \eqref{eq_nabla2_R_diff} explicitly by
		\begin{equation}\label{eq_T_J_gb}
			\begin{aligned}
				\br{T \circledast J \circledast \br{\gb}^{-1}}_{ij} 
				& =  T_{ijkc} J_{\l}^c g^{\bullet k\l} \\
				& = e^{-t} S^{bk} \sqbr{\br{\D^2 g_F}_{ijkb} + \br{\D^2 g_F}_{ikjb} - \br{\D^2 g_F}_{ibjk}},
			\end{aligned}
		\end{equation}
		where $S$ is a contravariant 2-tensor defined by
		\begin{equation}\label{eq_S_bk}
			S^{bk} := \Ric_{ac} J_{\l}^c g^{\bullet ab}   g^{\bullet k\l},
		\end{equation}
		which is exactly the double sharp of the symmetric Ricci tensor $\Rc = \Ric^J$.

		We claim that
		\begin{equation}\label{eq_S_g_X}
			\abs{S}_{g_X} \leq C.
		\end{equation}
		To see this, decompose the tensor contraction
		\begin{equation}
			S = \Ric \circledast J \circledast \br{\gb}^{-1} \circledast \br{\gb}^{-1}
		\end{equation}
		using $\Ric = \Ric_{\bb \bb} + \Ric_{\bb \ff} + \Ric_{ \ff\bb} + \Ric_{\ff \ff}$ and analogous ones for $J$ and $\br{\gb}^{-1}$, and apply our estimates for these base-fiber specific components derived in Lemma \ref{lem_Ric_base_fiber} and Proposition \ref{prop_gb_base_fiber}. Consider the following cases, where we label the indices as in \eqref{eq_S_bk}:
		\begin{enumerate}
			\item $a, c\in \bb$. Then we can assume $\l \in \bb$ as $J_{\ff}^{\bb} = 0$. In this case we bound
			\begin{equation}
				\abs{\Ric_{\bb \bb}}_{g_X} \abs{J_{\bb}^{\bb}}_{g_X} \abs{g^{\bullet \bb *}}_{g_X} \abs{g^{\bullet *\bb}}_{g_X} \leq C,
			\end{equation}
			where $*$ denotes arbitrary $\bb$ or $\ff$.
			
			\item $a \in \ff$, $c \in \bb$. As above we assume $\l \in \bb$. Then we bound
			\begin{equation}
				\abs{\Ric_{\ff \bb}}_{g_X} \abs{J_{\bb}^{\bb}}_{g_X} \abs{g^{\bullet \ff *}}_{g_X} \abs{g^{\bullet *\bb}}_{g_X} \leq Ce^{-\thalf}.
			\end{equation}
			
			\item $a \in \bb$, $c \in \ff$. Then
			\begin{equation}
				\abs{\Ric_{\bb \ff}}_{g_X} \abs{J_{*}^{\ff}}_{g_X} \abs{g^{\bullet \bb *}}_{g_X} \abs{g^{\bullet **}}_{g_X} \leq C e^{-\thalf}.
			\end{equation}
			
			\item $a,c \in \ff$. Then 
			\begin{equation}
				\abs{\Ric_{\ff \ff}}_{g_X} \abs{J_{*}^{\ff}}_{g_X} \abs{g^{\bullet \ff *}}_{g_X} \abs{g^{\bullet **}}_{g_X} \leq C.
			\end{equation}
		\end{enumerate}
		Therefore, \eqref{eq_S_g_X} holds.
		
		We then deduce that the component $T \circledast J \circledast \br{\gb}^{-1}$ in \eqref{eq_nabla2_R_diff}, defined in \eqref{eq_T_J_gb} as a covariant 2-tensor, satisfies
		\begin{equation}\label{eq_T_J_gb_bound}
			\begin{aligned}
				\abs{T \circledast J \circledast \br{\gb}^{-1}} 
				& \leq C e^t \abs{T \circledast J \circledast \br{\gb}^{-1}}_{g_X} \\
				& \leq C \abs{S}_{g_X} \abs{\D^2 g_F}_{g_X} \\
				& \leq C,
			\end{aligned}
		\end{equation}
		by \eqref{eq_S_g_X} and $t$-independence of $\D^2 g_F$.
		
		The other component $T' \circledast J \circledast \br{\gb}^{-1}$ in \eqref{eq_nabla2_R_diff} can be bounded analogously, by estimating $S'$ instead of $S$ given by
		\begin{equation}
			\br{S'}^{cb} := \Ric_{ak} J_{\l}^c g^{\bullet ab} g^{\bullet k\l}.
		\end{equation}
		Again $\abs{S'}_{g_X} \leq C$ by Lemma \ref{lem_Ric_base_fiber} and Proposition \ref{prop_gb_base_fiber}, so that 
		\begin{equation}\label{eq_T'_J_gb_bound}
			\abs{T' \circledast J \circledast \br{\gb}^{-1}} \leq C \abs{S'}_{g_X} \abs{\D^2 g_F}_{g_X} \leq C.
		\end{equation}
		
		With \eqref{eq_T_J_gb_bound}, \eqref{eq_T'_J_gb_bound}, and uniform equivalence between $g(t)$ and $\gb(t)$ by Lemma \ref{lem_metric_equiv}, we can conclude from \eqref{eq_nabla2_R_diff} that
		\begin{equation}
			\abs{\nabla^{\ob(t), 2} R(\ob(t))}_{\gb(t)} \leq C.
		\end{equation}
		This completes the proof.
	\end{proof}

\subsection{\texorpdfstring{$\nabla^{\bullet, 2} \Ric$}{Hessian of Ricci} and Proof of \texorpdfstring{\Cref{thm_Delta_ric}}{Theorem 1.3}} We need the following facts on the base-fiber components of the curvature tensor of $\D$.

\begin{lemma}\label{lem_Rm_D}
	The curvature $\Rm^\D$ of $\D$, defined by 
	\begin{equation}
		\Rm^\D \br{X,Y,Z} := \D_X \D_Y Z - \D_Y \D_X Z - \D_{[X,Y]}Z,
	\end{equation}
	satisfies the following:
	\begin{equation}\label{eq_Rm_D_0}
		\br{\Rm^\D}_{\bb \bb \ff}^*, \br{\Rm^\D}_{\ff\ff\bb}^*, \br{\Rm^\D}_{\bb \bb \bb}^*, \br{\Rm^\D}_{\bb \ff \bb}^*, \br{\Rm^\D}_{\ff \bb \bb}^* \equiv 0, 
	\end{equation}
	\begin{equation}\label{eq_Rm_D_f}
		\br{\Rm^\D}_{\ff\ff\ff}^* = \br{\Rm^\D}_{\ff\ff\ff}^{\ff} = \Rm(g_{Y, z}), \quad \text{ on } \{z\} \times Y,
	\end{equation}
	\begin{equation}\label{eq_Rm_D_bf}
		\br{\Rm^\D}_{\bb \ff \ff}^* = \br{\Rm^\D}_{\bb \ff \ff}^{\ff}, \quad \br{\Rm^\D}_{ \ff \bb\ff}^* = \br{\Rm^\D}_{\ff \bb\ff}^{\ff}.
	\end{equation}
\end{lemma}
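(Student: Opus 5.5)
We work in product coordinates $(z^\alpha, y^\mu)$ on $B \times Y$, with $z^\alpha$ the real Euclidean coordinates on $B \subset \C^m$ and $y^\mu$ coordinates on $Y$. The plan is to compute the Christoffel symbols of $\D$ directly. For a fixed $z_0$, the Levi-Civita connection $\nabla^{z_0}$ of the product metric $g_{\C^m} + e^{-t} g_{Y,z_0}$ has only the symbols $(\Gamma^{z_0})_{\mu\nu}^\lambda(y) = \Gamma(g_{Y,z_0})_{\mu\nu}^\lambda(y)$. These are $t$-independent, since rescaling a metric does not change its Christoffel symbols; all mixed and base symbols vanish because $g_{\C^m}$ is flat and the metric is a product. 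Evaluating at $x = (z,y)$ with $z_0 = z$, the only nonzero Christoffel symbols of $\D$ are
\begin{equation*}
	\Gamma_{\mu\nu}^\lambda(z,y) = \Gamma(g_{Y,z})_{\mu\nu}^\lambda(y),
\end{equation*}
which now depend smoothly on both $z$ and $y$. Every symbol with a base index, whether lower or upper, is zero. In particular, $\D$ preserves the splitting $TB \oplus TY$, and $\D_{\de_\alpha}$ acts on coordinate vector fields trivially.

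Next, plug these symbols into the coordinate formula
\begin{equation*}
	R_{ijk}^{\ l} = \de_i \Gamma_{jk}^l - \de_j \Gamma_{ik}^l + \Gamma_{ia}^l \Gamma_{jk}^a - \Gamma_{ja}^l \Gamma_{ik}^a,
\end{equation*}
which is associated with $\Rm^\D(\de_i,\de_j,\de_k)$ (the coordinate fields commute). We then run through the cases.

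\textbf{Third slot in $\bb$.} Every term contains a symbol $\Gamma_{\cdot k}^\cdot$ with $k$ a base index, so it vanishes. This yields the cases $\bb\bb\bb$, $\ff\ff\bb$, $\bb\ff\bb$ and $\ff\bb\bb$ in \eqref{eq_Rm_D_0}.

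\textbf{Case $\bb\bb\ff$.} Here $i,j$ are base indices. The quadratic terms contain $\Gamma_{i a}$ with $i$ a base index, so they vanish. The derivative terms are $\de_\alpha \Gamma_{\beta\mu}^l - \de_\beta \Gamma_{\alpha\mu}^l$, and $\Gamma_{\beta\mu}^l = 0$. Hence this component is also zero.

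\textbf{Upper index.} For third slot $k \in \ff$, the upper index $l$ must be a fiber index, because $\Gamma_{\cdot\cdot}^\alpha \equiv 0$ and all the products reduce to symbols with upper fiber index. This gives \eqref{eq_Rm_D_bf}; for instance, $(\Rm^\D)_{\alpha\mu\nu}^{\ \lambda} = \de_\alpha \Gamma(g_{Y,z})_{\mu\nu}^\lambda$, which is generally nonzero and has an upper fiber index.

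\textbf{Case $\ff\ff\ff$.} Here $i,j,k,l$ are all fiber indices, and $\de_\mu$ only differentiates in $y$ at fixed $z$. The formula is then exactly the curvature of $g_{Y,z}$ on $\{z\}\times Y$, which gives \eqref{eq_Rm_D_f}.

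There is no serious obstacle. The only point needing care is to justify that $\D$ really is a well-defined affine connection whose Christoffel symbols are obtained by evaluating those of $\nabla^z$ at $z = \pr_B(x)$; this is pointwise and tensorial. One should also make sure not to conflate $\Rm^\D$ with the curvature of any single $\nabla^{z}$: the $z$-dependence of the symbols is precisely what creates the mixed components in \eqref{eq_Rm_D_bf}. Note finally that the statement is $t$-independent, consistent with the symbols being scale-invariant.
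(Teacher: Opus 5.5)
Your proof is correct and rests on the same observation as the paper's one-line argument: $\D$ at $(z,y)$ is the Levi-Civita connection of the product metric $g_{\C^m}+g_{Y,z}$, whose base factor is flat. Your explicit Christoffel-symbol computation usefully sharpens that argument. Pointwise agreement of $\D$ with $\nabla^z$ does not by itself determine the curvature, which involves derivatives of the symbols, and your calculation shows that the $z$-dependence of $\Gamma(g_{Y,z})$ is exactly what produces the possibly nonzero components $(\Rm^\D)_{\bb\ff\ff}^{\ff}$ allowed in \eqref{eq_Rm_D_bf}.
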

\begin{proof}
	By Definition \ref{def_D}, the connection $\D$ at $x = (z, y)$ concides with the Levi-Civita connection $\nabla^z$ for the product metric $g_z(0) = g_{\C^m} + g_{Y, z}$, where $g_{\C^m}$ is flat on $B$. We can thus easily conclude \eqref{eq_Rm_D_0}, \eqref{eq_Rm_D_f}, and \eqref{eq_Rm_D_bf}.
\end{proof}

\begin{proof}[Proof of \Cref{thm_Delta_ric}]

The evolution equation for the Ricci curvature along the normalized Ricci flow \eqref{eq_krf} is (see \cite[Corollary 7.3]{Ha})
\begin{equation}\label{eq_Ric_evol}
	\partial_t \Rc_{ij} = \frac{1}{2} \Delta \Rc_{ij} - g^{\bullet pq} \Rc_{rq} \Rm_{pij}^r - g^{\bullet pq} \Rc_{ip} \Rc_{jq},
\end{equation}
where $\Rc = \Ric^J$ denotes the symmetric Ricci tensor, and $\Rm$ the Riemann curvature $(1,3)$-tensor of $\gb$. 

Let $\T$ denote the covariant 2-tensor
\begin{equation}
	\T_{ij} := g^{\bullet pq} \Rc_{rq} \Rm_{pij}^r  = g^{\bullet pq} \Ric_{rc} J_q^c \Rm_{pij}^r ,
\end{equation}
the second term on the RHS of \eqref{eq_Ric_evol}. 
We claim that on $B \times Y \times \infint$,
\begin{equation}\label{eq_T_bound}
	\abs{\T} \leq C.
\end{equation}
To see this, we can write $\Rm$ in terms of $\Rm^\D$ and the difference tensor $A$ (see \eqref{eq_A_z_formula}, \eqref{eq_nabla_b-D_formula}). A standard calculation yields
\begin{equation}\label{eq_Rm_D}
	\Rm_{ijk}^\l = \br{\Rm^\D}_{ijk}^\l - \D_i A_{jk}^\l + \D_j A_{ik}^\l + A_{im}^\l A_{jk}^m - A_{jm}^\l A_{ik}^m,
\end{equation}
and accordingly we decompose $\T = \T_1 + \T_2 + \T_3$, where
\begin{equation}\label{eq_T_1}
	\T_{1,ij} = g^{\bullet pq} \Ric_{rc} J_q^c\br{\Rm^\D}_{pij}^r ,
\end{equation}
\begin{equation}
	\T_{2,ij} = g^{\bullet pq} \Ric_{rc} J_q^c \br{-\D_p A_{ij}^r + \D_i A_{pj}^r},
\end{equation}
\begin{equation}
	\T_{3,ij} = g^{\bullet pq} \Ric_{rc} J_q^c \br{A_{pm}^r A_{ij}^m -A_{im}^r A_{pj}^m}.
\end{equation}
We use base-fiber decomposition to bound these tensor contractions, with the help of \Cref{lem_Ric_base_fiber} and \Cref{prop_gb_base_fiber}, as in the proof of \Cref{thm_nabla^2_scalar}.

First, we establish
\begin{equation}\label{eq_T_1_bound}
	\abs{\T_1} \leq C, 
\end{equation}
by considering the following cases, with the same label of indices as in \eqref{eq_T_1}:
\begin{enumerate}
	\item $i, j \in \ff$. In this case we may assume $r \in \ff$ by \Cref{lem_Rm_D}. Since $J_{\ff}^{\bb} \equiv 0$, it suffices to bound
	\begin{equation}
		\abs{g^{\bullet **}}_{g_X} \abs{\Ric_{\ff \ff}}_{g_X} \abs{J_{*}^{\ff}}_{g_X} \abs{\br{\Rm^\D}_{*\ff\ff}^{\ff}}_{g_X} \leq C e^{-t},
	\end{equation}
	and
	\begin{equation}
		\abs{g^{\bullet *\bb}}_{g_X} \abs{\Ric_{\ff \bb}}_{g_X} \abs{J_{\bb}^{\bb}}_{g_X} \abs{\br{\Rm^\D}_{*\ff\ff}^{\ff}}_{g_X} \leq C e^{-3\thalf},
	\end{equation}
	using $t$-independence of $\Rm^\D$, so that
	\begin{equation}
		\abs{\T_{1,\ff\ff}} = e^t \abs{\T_{1,\ff\ff}}_{g_X} \leq C.
	\end{equation}

	\item $i \in \ff$, $j \in \bb$. Then, by \Cref{lem_Rm_D}, $\T_{1,\ff\bb} \equiv 0$.
	
	\item $i \in \bb$, $j \in \ff$. In this case we may assume $p, r \in \ff$ by \Cref{lem_Rm_D}. It suffices to bound
	\begin{equation}
		\abs{g^{\bullet \ff*}}_{g_X} \abs{\Ric_{\ff *}}_{g_X} \abs{J_{*}^{*}}_{g_X} \abs{\br{\Rm^\D}_{\ff\bb\ff}^{\ff}}_{g_X} \leq C e^{-\thalf},
	\end{equation}
	so that
	\begin{equation}
		\abs{\T_{1,\bb\ff}} = e^{\thalf} \abs{\T_{1,\bb\ff}}_{g_X} \leq C.
	\end{equation}

	\item $i, j \in \bb$. Then by \Cref{lem_Rm_D}, $\T_{1,\bb\bb} \equiv 0$.
\end{enumerate}
In conclusion, we have \eqref{eq_T_1_bound}.

Next, we prove
\begin{equation}\label{eq_T_2_bound}
	\abs{\T_2} \leq C.
\end{equation}
By \eqref{eq_D_A},
\begin{equation}
	\begin{aligned}
		\T_{2} 
		& = \sqbr{\br{\gb}^{-1} \circledast \Ric \circledast J \circledast \D \br{\gb}^{-1} \circledast \D \gb} \\
		& \quad + \sqbr{\br{\gb}^{-1} \circledast \Ric \circledast J \circledast \br{\gb}^{-1} \circledast \br{\D^2 \br{\gb - e^{-t}g_F}}}\\
		& \quad + e^{-t} \br{S \circledast \D^2 g_F},
	\end{aligned}
\end{equation}
where the contravariant 2-tensor $S$ is the one defined in \eqref{eq_S_bk}. We can now combine \Cref{lem_metric_equiv}, \Cref{lem_D_gcan,lem_D_gamma_0_J,lem_D_gamma_1_J,lem_D_eta_1_J}, \Cref{prop_bound_D_gb}, \eqref{eq_bound_ric}, \eqref{eq_g_bullet_j=1}, and \eqref{eq_S_g_X}, to conclude \eqref{eq_T_2_bound}. 

Finally, since $\abs{A} \leq C$ by \eqref{eq_bound_A_g}, we immediately have
\begin{equation}\label{eq_T_3_bound}
	\abs{\T_3} \leq C.
\end{equation}
Therefore, we see \eqref{eq_T_bound} by combining \eqref{eq_T_1_bound}, \eqref{eq_T_2_bound}, \eqref{eq_T_3_bound}.

Now it follows from \eqref{eq_Ric_evol} that
\begin{equation}
	\abs{\Delta \Rc} \leq C \br{\abs{\partial_t \Rc} + \abs{\T} + \abs{\Rc}^2} \leq C,
\end{equation}
using \Cref{thm_D^2_ric}, \eqref{eq_bound_ric}, and \eqref{eq_T_bound}. We can freely transition between the symmetric tensor $\Rc$ and the $(1,1)$-form $\Ric$ via the $t$-independent complex structure $J$, with $\abs{J} \leq C$. The proof is thus complete.
\end{proof}

\begin{rmk}
		According to Proposition \ref{prop_nabla2_Ric_T}, $T$ and $T'$ are the obstructions to the desired estimate $\abs{\nabla^{\bullet, 2} \Ric(\ob)}_{\gb} \leq C$. Although we have $\br{\D g_F}_{\ff \ff \ff} = 0$ from \eqref{eq_D_gF_fff}, and hence $\br{\D^2 g_F}_{* \ff \ff \ff} = 0$, it could happen that $\br{\D^2 g_F}_{\ff \bb \ff \ff}, \br{\D^2 g_F}_{\ff \ff \bb \ff}, \br{\D^2 g_F}_{\ff \ff \ff\bb }$ does not vanish. In that case we can only bound 
		\begin{equation}
			\abs{T_{\ff \ff \ff \bb}} \leq C e^{\frac{t}{2}}, \quad \abs{T_{***\ff}} \leq C,
		\end{equation}
		using Lemma \ref{lem_Ric_base_fiber} and Proposition \ref{prop_gb_base_fiber}, and hence $\abs{\nabla^{\bullet, 2} \Ric(\ob)}_{\gb} \leq Ce^{\thalf}$. One may wish to use $d$-closedness of $\omega_F$ to simplify $T$ and $T'$, but we still have to consider $\D^\ell J$, which describes the variation of the complex structure $J$ and may not vanish. As we shall see in \Cref{section_special}, when the Iitaka fibration is isotrivial or the generic fibers are tori, we do have uniform bounds on the covariant derivatives of $\Ric(\ob)$ of every order. In both cases, we have locally on $B \times Y$
		\begin{equation}\label{eq_J_parallel}
			\nabla^{g_{Y,z'}} J_{z} \equiv 0 \quad \text{ on } Y, \quad  \forall z, z' \in B.
		\end{equation}
		The read may check that \eqref{eq_J_parallel} is equivalent to the identical vanishing of $\br{\D^2 J}_{\ff\bb\ff}^{\ff}$ on $B \times Y$, which may be a quantity of interest in solving Conjecture \ref{conj_main}.
	\end{rmk}

\newpage
\section{Third-Order Ricci Estimates}\label{section_order_3}

In this section we prove \Cref{thm_D^3_Ric}. We work in the same local framework with the same simplification of notations as above.

Define functions $\cS$, $\E$ on $B \times Y$ by
\begin{equation}\label{eq_S_def}
	\frac{m}{n+1} \ocan^{m-1} \wedge \omega_F^{n+1} = \cS \ocan^m \wedge \omega_F^n,
\end{equation}
\begin{equation}\label{eq_error_full}
	\E := \sum_{p=1}^{N_{1, k}} A_{1, p, k} G_{1, p, k} + e^{-2t} \br{\cS - \underline{\cS}}.
\end{equation}
In \cite[\S 5.2]{HLT}, it was proved that
\begin{equation}
	\E = o(e^{-2t}),
\end{equation}
which suggests that $\davg{\cS}$, if not vanishing, would dominate the asymptotic behavior of certain objects related to $\gamma_{1, k}$. We exploit this idea and derive the following:
\begin{proposition}\label{prop_D3_gamma_1_Theta(1)}
	Suppose that $\davg{\cS}$ does not vanish identically. Then there exists $x \in B \times Y$ such that at $x$ and as $t \to +\infty$,
	\begin{equation}\label{eq_D^3_gamma_1_k}
		e^{-\frac{t}{2}} \abs{\D_{\ff \ff \ff }^3 (\gamma_{1, k} + \dot{\gamma}_{1, k})_{\ff \ff}}_{g(t)}(x, t) = \Theta(1).
	\end{equation} 
\end{proposition}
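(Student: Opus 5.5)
We reduce everything to the purely fiberwise part of $\gamma_{1,k}+\dot\gamma_{1,k}$ and then read off its leading term from the relation $\E = o(e^{-2t})$ of \cite[\S 5.2]{HLT}.

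\emph{Step 1: reduce to the $(\iota,r)=(0,0)$ term.} By \eqref{eq_gamma_i_k} and \eqref{eq_Phi_00}, the $\ff\ff$-component of $\gamma_{1,k}+\dot\gamma_{1,k}$ is
\begin{equation*}
\sum_{p=1}^{N_{1,k}} (A_{1,p,k}+\dot A_{1,p,k})\, \ddbarf \br{\Delta^{\omega_F|_{\{\cdot\}\times Y}}}^{-1} G_{1,p,k}
\end{equation*}
plus terms carrying a factor $e^{-rt}\D^\iota A_{1,p,k}$ or $e^{-rt}\D^\iota \dot A_{1,p,k}$ with $(\iota,r)\neq(0,0)$. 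Applying $\D^3_{\ff\ff\ff}$ only differentiates along the fiber. The base coefficients $A_{1,p,k}$ are constant on fibers, so they pass through and only the $t$-independent fiber functions are hit. A $g(t)$-norm of a purely fiber $5$-tensor costs $e^{5t/2}$ relative to $g_X$. With \Cref{lem_interp_fD_A_1} and \Cref{lem_interp_fD_dot_A_1_2}, the error terms are then $O(e^{5t/2}e^{-t}e^{-2t+\epsilon t})=o(e^{t/2})$ after multiplying by $e^{-t/2}$. Each has an extra $e^{-rt}$ with $r\geq 1$, or, when $r=0$, we have $\iota=0$ anyway. So it suffices to show that at some $x$,
\begin{equation*}
e^{2t}\abs{\D^3_{\ff\ff\ff}\,\ddbarf \Big(\Delta^{-1}\sum_p (A_{1,p,k}+\dot A_{1,p,k})G_{1,p,k}\Big)}_{g_X} = \Theta(1).
\end{equation*}

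\emph{Step 2: identify the leading term.} From $\E=o(e^{-2t})$ we get $\sum_p A_{1,p,k}G_{1,p,k} = -e^{-2t}(\cS-\underline{\cS}) + o(e^{-2t})$. Taking the fiberwise $L^2$ inner product with $G_{1,p,k}$ gives $A_{1,p,k}=-e^{-2t}c_p+o(e^{-2t})$, where $c_p:=\langle \cS-\underline\cS, G_{1,p,k}\rangle$ are functions on the base. For $\dot A_{1,p,k}$ we need the sharper statement $\dot A_{1,p,k} = 2e^{-2t}c_p + o(e^{-2t})$. The bound of \Cref{prop_bound_dot_A_i_leq2} is only $O(e^{-2t})$, so I would obtain this by interpolation: the second derivative satisfies $\abs{\ddot A}\leq\abs{\fD^4A}\leq Ce^{-t(2-\epsilon')}$, only slightly worse than $e^{-2t}$. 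Interpolating $\dot u$ between $u=o(e^{-2t})$ and this bound on $\ddot u$, for $u=A_{1,p,k}+e^{-2t}c_p$, gives $\dot u = o(e^{-2t})$ provided the gain from $o(\cdot)$ beats the loss $e^{\epsilon' t}$. This is the step I expect to be the main obstacle. If the interpolation is too lossy, I would instead redo the proof of \Cref{prop_bound_dot_A_i_leq2} keeping track of the $e^{-2t}$-order terms in \eqref{eq_flow_dot}, namely the $t$-derivative of the \cite[\S 5.2]{HLT} computation that produced $\cS$. That should yield $\sum_p\dot A_{1,p,k}G_{1,p,k}=2e^{-2t}(\cS-\underline\cS)+o(e^{-2t})$ directly.

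Granting Step 2, $\sum_p(A_{1,p,k}+\dot A_{1,p,k})G_{1,p,k} = e^{-2t}\Pi(\cS-\underline\cS)+o(e^{-2t})$, where $\Pi$ is the fiberwise projection onto the span of the $G_{1,p,k}$. As in \cite{HLT}, the construction of the $G_{1,p,k}$ is designed so that $\cS-\underline\cS$ lies in their span, which gives $\Pi(\cS-\underline\cS)=\cS-\underline\cS$. The $o(e^{-2t})$ lives in a finite-dimensional space spanned by the $G$'s with base coefficients, so it remains $o(e^{-2t})$ after applying the smooth fiber operator $\D^3_{\ff\ff\ff}\ddbarf\Delta^{-1}$.

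\emph{Step 3: nonvanishing.} It remains to pick $x=(z,y)$ where $\D^3_{\ff\ff\ff}\ddbarf\Delta^{-1}_{\omega_F|_{X_z}}(\cS-\underline\cS)\neq 0$. Choose $z$ with $\cS|_{X_z}$ nonconstant. Then $u:=\Delta^{-1}(\cS-\underline\cS)$ is a nonconstant smooth function on the compact fiber, so its fourth fiberwise covariant derivative cannot vanish identically: otherwise $\nabla^2u$ would be a polynomial-type parallel-ish object along geodesics, forcing $\nabla u\equiv 0$ on a compact manifold. We need this specifically for the component pattern $\nabla^3\de\dbar u$. If $\nabla^3\ddbarf u\equiv0$, then $\ddbarf u$ is parallel, so $\Delta u$ is constant, contradicting $\Delta u=\cS-\underline\cS\neq\mathrm{const}$. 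Hence $\nabla^2\ddbarf u$ is nonzero somewhere; if that is the nonvanishing fact, a similar compactness argument upgrades it to $\nabla^3$. The upper bound $O(1)$ is immediate from the same estimates, which gives $\Theta(1)$.
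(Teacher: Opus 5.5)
\textbf{The gap is in Step 2.} The asymptotic $\dot A_{1,p,k}=2e^{-2t}c_p+o(e^{-2t})$, equivalently $\dot{\E}=o(e^{-2t})$, is essentially the whole analytic content of the proposition, and neither of your routes establishes it.

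\emph{Interpolation cannot work.} The relation $\E=o(e^{-2t})$ carries no rate. It comes, for example, from $(\B-1)\sum_p A_{1,p,k}G_{1,p,k}$, where $\B-1=O(e^{-t}+|\gamma_0|)$ and only $|\gamma_0|=o(1)$ is known, and from $e^{-t}(e^{\vp+\dot\vp}-\underline{e^{\vp+\dot\vp}})$. Meanwhile, every available bound on $\ddot A_{1,p,k}$ loses a fixed exponential rate:
\Cref{lem_interp_fD_A_1} gives $e^{-t(2-\frac{4}{4+\alpha})}$, \Cref{lem_interp_fD_dot_A_1_2} gives $e^{-t(2-\frac{2}{2+\alpha})}$, and even the $j=5$ bound of \Cref{lem_interp_fD_dot_A_new} gives $e^{-t(2-\frac{2}{8+\alpha})}$.
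So $\epsilon'$ is not small. In any case, $\|u\|^{1/2}\|\ddot u\|^{1/2}$ only yields $\dot u=o(e^{-(2-\epsilon'/2)t})$, never $o(e^{-2t})$.

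\emph{The fallback is the paper's proof, not yet carried out.} Differentiating the Monge--Amp\`ere expansion in $t$ while keeping the $e^{-2t}$-order terms is exactly what the paper does in \Cref{prop_error_o_-2t} for $\dot{\E}$. It is not a routine variant of \Cref{prop_bound_dot_A_i_leq2}, whose proof absorbs the $\cS$-term and several others into $O_{g_X}(e^{-t})$. To reach $o(e^{-2t})$ after dividing by $e^t\ocan^m\wedge\omega_F^n$, you must treat each line of the expansion separately. Two inputs you would need:
\begin{enumerate}
\item The $t$-derivative of the left-hand side requires $\ddot\vp-\underline{\ddot\vp}=o(e^{-t})$. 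This is \eqref{eq_davg_ddot_phi}, obtained from a fiberwise Moser iteration on $\ddot\psi_{2,k}$ together with $\partial_t^2$ of the approximate Green terms.
\item The mixed terms from the $\frac{m}{n+1}\beta^{m-1}e^{-t}(\cdots)^{n+1}$ line require $|\dot\gamma_{1,k}|_{g_X}=o(e^{-t})$, not just the $O(e^{-t})$ of \Cref{lem_bound_gamma_i_k_g_X}.
\end{enumerate}
As written, Step 2 is ``granted'' rather than proved.

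\textbf{Your reduction is sharper than the paper's once this input exists.} Both $\sum_p(A_{1,p,k}+\dot A_{1,p,k})G_{1,p,k}$ and $\cS-\underline{\cS}$ lie fiberwise in the span of the $G_{1,p,k}$, hence so does $\E+\dot{\E}$. The membership of $\cS-\underline{\cS}$ follows directly from $\E=o(e^{-2t})$: the component of $\E$ orthogonal to that span is $e^{-2t}$ times a $t$-independent function. So you need not appeal to the construction in \cite{HLT}. Consequently a $C^0$ bound $\dot{\E}=o(e^{-2t})$, or even bounds on $\langle\dot{\E},G_{1,p,k}\rangle$, already suffices. This has two payoffs:
\begin{itemize}
\item \Cref{lem_unif_schauder} and all the fiber-derivative estimates of order $\le 4$ on $\E_1,\dots,\E_7$ become unnecessary.
\item The pieces $\B\,\dot A_{2,p,k}G_{2,p,k}$ drop out by orthogonality, so the paper's requirement $\dot A_{2,p,k}=o(e^{-2t})$, which is what forces $\alpha>\frac45$, is not needed.
\end{itemize}

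\textbf{Step 3 is muddled.} You assert ``$\nabla^3\ddbarf u\equiv0\Rightarrow\ddbarf u$ parallel'' without proof; it is true on a compact manifold but needs an argument. The clean fix is \Cref{lem_max_principle}. Trace the last two slots of $\nabla^3$ applied to $(\ddbarf u)\circ(\id\otimes J)$ to get $\nabla^3F=0$ with $F=\cS-\underline{\cS}$. Trace again to get $\nabla\Delta F=0$. Then $\Delta F$ is constant, hence zero, so $F$ is constant, a contradiction.
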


On the other hand, we can show that
\begin{proposition}\label{prop_D3_Ric_gamma_dot}
	We have
	\begin{equation}\label{eq_D3_Ric_gamma_dot}
		\abs{\D^3 \br{\Ric + \gamma_{1, k} + \dgamma_{1, k}}} = o\br{e^{\thalf}},
	\end{equation}
	and both $\gamma_{1, k}$ and $\dgamma_{1,k}$ satisfy
	\begin{equation}
		\abs{\D^3 \br{\cdot}} = O \br{e^{\thalf}}.
	\end{equation}
\end{proposition}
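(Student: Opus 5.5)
The plan is to start from the identity used in the proofs of \Cref{thm_D^1_ric} and \Cref{thm_D^2_ric}, obtained from \eqref{eq_krf} and the expansion \eqref{eq_expa_j=2}:
\begin{equation*}
	\Ric + \gamma_{1,k} + \dgamma_{1,k} = - \ocan - \gamma_0 - \eta_{1,k} - \dot{\gamma}_0 - \dot{\gamma}_{2,k} - \dot{\eta}_{2,k}.
\end{equation*}
We then bound $\D^3$ of each term on the right separately. The term $\D^3\ocan$ is $t$-independent and lives on the base, so it is $O(1)$. For $\gamma_0$ and $\dot\gamma_0$ we would apply \Cref{thm_asymp_expa} with $j=2$, which gives $\abs{\fD^\iota\gamma_0}=o(1)$ for $\iota\le 4$. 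Note that $\abs{\D^3\dot\gamma_0}\le\abs{\fD^5\gamma_0}$ needs $\iota=5$, so we would instead take $j=3$ (allowed since $k\ge 6$ can be fixed even and large) and get $o(1)$. For the remainders, \eqref{eq_expa_eta_infty} with $j=1$ gives nothing directly at order $3$. We will therefore rewrite $\eta_{1,k}=\gamma_{2,k}+\eta_{2,k}$ and use $\abs{\D^3\eta_{2,k}}\le Ce^{(3-4-\alpha)t/2}$. Similarly $\abs{\D^3\dot\eta_{2,k}}\le\abs{\fD^5\eta_{2,k}}$ is not covered with $j=2$, so we split $\eta_{2,k}=\gamma_{3,k}+\eta_{3,k}$ and use $\abs{\fD^5\eta_{3,k}}\le Ce^{(5-6-\alpha)t/2}=o(1)$.

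This reduces \eqref{eq_D3_Ric_gamma_dot} to showing $\abs{\D^3\gamma_{i,k}}+\abs{\D^3\dgamma_{i,k}}=o(e^{t/2})$ for $i=2,3$, together with the claimed $O(e^{t/2})$ bounds for $i=1$. For these we will use \eqref{eq_D^q_gamma_i_k} with $q=3$ and its $t$-derivative, bounding as in \Cref{prop_bound_D2_dot_gamma_1_2}:
\begin{equation*}
	\abs{\D^3\gamma_{i,k}}+\abs{\D^3\dgamma_{i,k}}\le C\sum e^{\frac t2(-2r+5-i_2)}\br{\abs{\fD^{i_2+\iota}A_{i,p,k}}+\abs{\fD^{i_2+\iota}\dot A_{i,p,k}}},
\end{equation*}
with $0\le i_2\le 5$. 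For $i=1$ we insert \Cref{lem_interp_fD_A_1} and \Cref{lem_interp_fD_dot_A_1_2}. The worst terms are $r=0,\iota=0$, for instance $i_2=0$ with $\abs{\dot A}\le Ce^{-2t}$, giving $e^{t/2}$; one checks that every other term is at most $Ce^{t/2}$. This yields the $O(e^{t/2})$ statement for $\gamma_{1,k}$ and $\dgamma_{1,k}$.

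For $i=2$ (and $i=3$) we need strictly better decay. The key input is an improvement of \Cref{prop_bound_dot_A_i_leq2} for $i\ge 2$: we expect $\abs{A_{2,p,k}}\le Ce^{-(4+\alpha)t/2}$ from \eqref{eq_expa_A_i_infty}, and $\abs{\dot A_{2,p,k}}=O(e^{-(4+\alpha')t/2})$ for some $\alpha'>0$. We would try to obtain the latter by repeating the proof of \Cref{prop_bound_dot_A_i_leq2}, pairing with $G_{2,p,k}$ and using that the $\dot A_{1,p,k}$ contribution is orthogonal. Alternatively, we can interpolate $\abs{A_{2,p,k}}$ against $[\fD^{2+\iota}A]_\alpha\le Ce^{\iota t/2}$, as in \Cref{lem_interp_fD_A_2}, which gives an $\alpha$-dependent gain in all the relevant $\fD^\ell A_{2,p,k}$. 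Plugging these into the same sum then gives $o(e^{t/2})$.

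I expect the main obstacle to be exactly this gain for $\dot A_{2,p,k}$, and for the borderline high-$\iota$ terms, in the $\ell=5$ range of the interpolation. The exponents $4-\ell(\ell-2)/(\ell+\alpha)$ lie very close to the threshold there, so each $(\iota,r,i_2)$ combination must be checked to be strictly below $e^{t/2}$ in the cases $i\ge 2$. Shrinking $\alpha$ or increasing $j$ gives the flexibility needed to do this.
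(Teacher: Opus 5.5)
Your decomposition (after splitting $\eta_{1,k}=\gamma_{2,k}+\eta_{2,k}$ and $\eta_{2,k}=\gamma_{3,k}+\eta_{3,k}$) is exactly the paper's. Your treatment of $\ocan$, $\gamma_0$, $\dot\gamma_0$, $\eta_{2,k}$, $\dot\eta_{3,k}$, and of the $O(e^{\thalf})$ bounds for $\gamma_{1,k},\dgamma_{1,k}$ via \eqref{eq_D^q_gamma_i_k}, is essentially sound. The gap is the step you flag yourself: $\abs{\D^3\dgamma_{2,k}}=o(e^{\thalf})$.

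The $(\iota,r,i_2)=(0,0,0)$ term there is $e^{5\thalf}\abs{\dot A_{2,p,k}}$. It is the main part of the all-fiber component $\D^3_{\ff\ff\ff}(\dgamma_{2,k})_{\ff\ff}$, so there is no way around proving $\abs{\dot A_{2,p,k}}=o(e^{-2t})$. Neither mechanism you offer gives this.
\begin{itemize}
\item Interpolating $\abs{A_{2,p,k}}\leq Ce^{-(4+\alpha)\thalf}$ against the seminorm bounds available at level $j=2$, namely $[\fD^{\l}A_{2,p,k}]_\alpha\leq Ce^{(\l-6)\thalf}$ for $\l\geq 4$, yields at best $\abs{\fD^2A_{2,p,k}}\leq Ce^{-(2+\alpha+\frac{4}{\l+\alpha})\thalf}$. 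Since $2+\alpha+\frac{4}{4+\alpha}<4$ for every $\alpha\in(0,1)$, this is weaker than $e^{-2t}$; it is just the first case of \Cref{lem_interp_fD_A_2} at $\l=2$.
\item Redoing \Cref{prop_bound_dot_A_i_leq2} and pairing with $G_{2,p,k}$ does not help without substantial extra work. The error in \eqref{eq_bound_AG} is genuinely of size $e^{-2t}$ (it contains the $e^{-2t}(\cS-\underline{\cS})$ contribution), and you would have to show its $G_{2,p,k}$-components are $o(e^{-2t})$.
\end{itemize}
Of the two knobs in your last sentence, shrinking $\alpha$ goes the wrong way. The entire advantage of $A_{2,p,k}$ over $A_{1,p,k}$ is the extra factor $e^{-\alpha\thalf}$, so $\alpha$ must be taken close to $1$.

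The missing idea is to invoke \Cref{thm_asymp_expa} at a higher level $j$. This sharpens the estimates on the very same functions $A_{2,p,k}$, which do not change with $j$ once $k$ is fixed. The paper takes $k\geq 10$, $j=5$ and $\alpha>\frac45$. Then \eqref{eq_expa_A_i_infty} gives $\abs{\dot A_{2,p,k}}\leq\abs{\fD^2A_{2,p,k}}\leq Ce^{-(4+\alpha)(1-\frac{2}{12+\alpha})\thalf}$, which is $o(e^{-2t})$ exactly when $\alpha^2+10\alpha-8>0$. (Already $j=3$ suffices for this single bound if you interpolate against $[\fD^{6}A_{2,p,k}]_\alpha\leq Ce^{-t}$ and take $\alpha>2\sqrt2-2$.)

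Next, interpolate this bound, and $\abs{\dot A_{1,p,k}}\leq Ce^{-2t}$, against the level-$5$ bounds $[\fD^{\l+2}A_{i,p,k}]_\alpha\leq Ce^{(\l-10)\thalf}$ for $8\leq\l\leq 2k+10$. This produces every $\fD^\l\dot A_{i,p,k}$ estimate your sum needs, including the easy ones for $A_{3,p,k}$ and $\dot A_{3,p,k}$ (\Cref{lem_interp_fD_dot_A_new}). It also covers the index $\l=2k+5$ in your $i=1$ sum, which lies outside the range of \Cref{lem_interp_fD_dot_A_1_2}. With that input your summation closes as in \Cref{lem_D3_gamma_i_leq3}.
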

Combined, we conclude immediately \Cref{thm_D^3_Ric}: $\norm{\D^3 \Ric}$ would blow up at rate exactly ${e^{\thalf}}$ when $\cS$ defined by \eqref{eq_S_def} is not fiberwise constant. Therefore, we should in general not expect $t$-independent uniform bounds on the derivatives of $\Ric(\ob)$ of order $\geq 3$.

To prove \Cref{prop_D3_gamma_1_Theta(1),prop_D3_Ric_gamma_dot}, we shall apply Theorem \ref{thm_asymp_expa} with any fixed even integer $k \geq 10$ and $j \leq 5$. Note that all of the estimates in our previous sections, which only require $k \geq 4$, remain to hold on $B \times Y$ up to shrinking $B$. First,  we locate the dominant term in $\D^3_{\ff\ff\ff}\br{\gamma_{1, k} + \dgamma_{1, k}}_{\ff\ff}$. 

\begin{lemma}\label{lem_D^3_gamma_dot_gamma}
	We have
	\begin{equation}
		\abs{\D^3_{\ff \ff \ff} \br{\gamma_{1, k} + \dot{\gamma}_{1, k}}_{\ff \ff} - 
		\sum_{p=1}^{N_{1, k}} \br{A_{1, p, k} + \dot{A}_{1, p, k}} \D^3_{\ff \ff \ff} i\partial_{\ff} \overline{\partial}_{\ff} \br{\Delta^{\omega_F|_{\{\cdot\} \times Y}}}^{-1} G_{1, p, k}} 
		\leq C {e^{\br{1-\frac{\alpha}{3}}\frac{t}{2}} }.
	\end{equation}
\end{lemma}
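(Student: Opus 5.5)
We plan to start from the fiber–fiber component formula \eqref{eq_gamma_i_k_ff} and its $t$-derivative. For $i=1$ this gives
\begin{equation*}
	\br{\gamma_{1,k} + \dgamma_{1,k}}_{\ff\ff} = \sum_{p=1}^{N_{1,k}}\sum_{\iota=0}^{2k}\sum_{r=\lceil\frac{\iota}{2}\rceil}^{k} e^{-rt}\, i\de_{\ff}\dbar_{\ff}\Phi_{\iota,r}(G_{1,p,k}) \circledast \br{(1-r)\D^\iota A_{1,p,k} + \D^\iota \dot A_{1,p,k}}.
\end{equation*}
We then apply $\D^3_{\ff\ff\ff}$. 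Since $A_{1,p,k}$ and its derivatives live on the base, and the hidden base tensors in $\circledast$ are $\D$-parallel along fibers, the fiber derivatives fall only on the $t$-independent factor $i\de_\ff\dbar_\ff\Phi_{\iota,r}(G_{1,p,k})$. That factor has bounded $g_X$-norm, so its $\D^3_{\ff\ff\ff}$ (a $5$-tensor with all fiber slots) has $g(t)$-norm at most $Ce^{5\thalf}$. The tensors pulled back from the base contracted in $\circledast$ contribute at most bounded factors, or factors absorbed as in Remark \ref{rmk_circledast}.

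The term $(\iota,r)=(0,0)$ is exactly the dominant term subtracted in the statement, by \eqref{eq_Phi_00}. It therefore remains to bound every term with $(\iota,r)\neq(0,0)$ by $Ce^{(1-\frac{\alpha}{3})\thalf}$. Each such term is bounded by
\begin{equation*}
	Ce^{5\thalf}e^{-rt}\br{\abs{\fD^\iota A_{1,p,k}} + \abs{\fD^{\iota+2}A_{1,p,k}}}.
\end{equation*}

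The main obstacle is that the crude bounds of Lemma \ref{lem_interp_fD_A_1} are not quite enough at low orders: for $r=1$ and $\iota\le 2$ they give only $e^{(3-4+\frac{2(\iota+2)}{4+\alpha})\thalf}$, i.e.\ roughly $e^{\thalf}$. We plan to use two sharper inputs instead.

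\begin{enumerate}
	\item The improved bound $\abs{\fD^\l \dot A_{1,p,k}}\le Ce^{-\thalf(4-\frac{2\l}{2+\alpha})}$ from Lemma \ref{lem_interp_fD_dot_A_1_2}. It handles the $\dot A$ contributions with $\iota \leq 2$.
	\item A refined interpolation for $\abs{\fD^\iota A_{1,p,k}}$ with $\iota\le 2$. We interpolate between $\abs{A}\le Ce^{-2t}$ and the H\"older bound $[\fD^{3}A]$ (or $[\fD^{2}\dot A]$), using \eqref{eq_expa_A_i_holder} with $\iota=-2+1$ and Lemma \ref{lem_interp_fD_dot_A_1_2}. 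This should give a decay rate of the form $e^{-(2-\frac{\iota}{3+\alpha})t}$.
\end{enumerate}

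With these, for $r=1$, $\iota=2$ the exponent becomes at most $\br{5-2-4+\frac{2\cdot 2}{3+\alpha}}\thalf$. The loss relative to $1$ is then of order $\alpha/3$, which is where we expect the $e^{(1-\frac{\alpha}{3})\thalf}$ rate to come from. We will check that the exponents match exactly. If they do not, we will choose the interpolation endpoint (third versus fourth parabolic derivative) so that the worst case, $\iota=2$, $r=1$, yields exactly $1-\frac{\alpha}{3}$.

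For $r\ge 2$ the factor $e^{-2t}$ makes the bound $e^{5\thalf-4\thalf}$ times a decaying quantity. For large $\iota$ we use the second case of Lemma \ref{lem_interp_fD_A_1} together with $r\ge\iota/2$. This gives exponents $\thalf\br{5-2r-4+\frac{\l(\l-2)}{\l+\alpha}}$ with $\l\le\iota+2\le 2r+2$, which is below $1-\frac{\alpha}{3}$ once $r\ge2$. Summing the finitely many terms then yields the lemma.
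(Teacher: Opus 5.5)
Your plan follows the paper's proof: you expand $(\gamma_{1,k}+\dot{\gamma}_{1,k})_{\ff\ff}$ by \eqref{eq_gamma_i_k_ff}, let $\D^3_{\ff\ff\ff}$ fall on $\ddbarf\Phi_{\iota,r}(G_{1,p,k})$ at cost $Ce^{5\thalf}$, split off $(\iota,r)=(0,0)$ via \eqref{eq_Phi_00}, and bound the remaining finitely many terms with Lemma \ref{lem_interp_fD_A_1} and Lemma \ref{lem_interp_fD_dot_A_1_2}. Your treatment of $r\ge 2$ is correct, and together with your item (1) the argument closes. The exponent bookkeeping in the middle of the proposal is wrong, however, and should be discarded.

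\textbf{The ``main obstacle'' does not exist.} At $(\iota,r)=(2,1)$ the crude bound gives $e^{\frac{4-\alpha}{4+\alpha}\thalf}$. Since $\frac{4-\alpha}{4+\alpha}=1-\frac{2\alpha}{4+\alpha}<1-\frac{\alpha}{3}$ (because $4+\alpha<6$), Lemma \ref{lem_interp_fD_A_1} alone already proves the lemma.

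\textbf{The ``refined interpolation'' is miscomputed and is not a refinement.} Interpolating $\abs{A_{1,p,k}}\le Ce^{-2t}$ against $[\fD^3A_{1,p,k}]_\alpha\le Ce^{-\thalf}$ gives $\abs{\fD^\iota A_{1,p,k}}\le Ce^{-(2-\frac{3\iota}{2(3+\alpha)})t}$, not $e^{-(2-\frac{\iota}{3+\alpha})t}$. Both rates are weaker than the rate $e^{-(2-\frac{\iota}{4+\alpha})t}$ in Lemma \ref{lem_interp_fD_A_1}, which comes from interpolating against $[\fD^4A_{1,p,k}]_\alpha\le Ce^{-t}$.

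\textbf{The term you compute vanishes.} In your own formula the $A$-terms with $r=1$ carry the factor $1-r=0$. So the exponent $(5-2-4+\frac{4}{3+\alpha})\thalf$ belongs to a vanishing term and is not where the rate comes from.

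\textbf{The actual worst term is already controlled.} It is the $\dot{A}$-term at $(\iota,r)=(2,1)$, which item (1) bounds by $e^{\frac{2-\alpha}{2+\alpha}\thalf}\le e^{(1-\frac{2\alpha}{3})\thalf}$. The paper's estimate $e^{-rt}\abs{\D^\iota\dot{A}_{1,p,k}}\le Ce^{-(2+\frac{\alpha}{3})t}$ for $r\ge 1$ gives the same $1-\frac{2\alpha}{3}$. The stated exponent $1-\frac{\alpha}{3}$ is therefore not sharp, and there is nothing to ``match exactly'' by choosing between a third- and fourth-derivative interpolation endpoint.
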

\begin{proof}
	By \eqref{eq_gamma_i_k_ff}, we have
	\begin{equation}\label{eq_D^3_fff_gamma}
		\begin{aligned}
			& \D^3_{\ff \ff \ff} \br{\gamma_{1, k} + \dot{\gamma}_{1, k}}_{\ff \ff} 
			- \sum_{p=1}^{N_{1, k}} \br{A_{1, p, k} + \dot{A}_{1, p, k}} \D^3_{\ff \ff \ff}  \ddbarf \br{\Delta^{\omega_F|_{\{\cdot\} \times Y}}}^{-1} G_{1, p, k} \\
			& = \sum_{p=1}^{N_{1, k}} \sum_{(\iota, r) \neq (0, 0)} (1-r) e^{-rt} \br{\D^3_{\ff \ff \ff} \ddbarf \Phi_{\iota, r} \br{G_{1, p, k}}} \circledast \D^\iota A_{1, p, k} \\
			& \quad + \sum_{p=1}^{N_{1, k}} \sum_{ (\iota, r) \neq (0, 0)} e^{-rt} \br{\D^3_{\ff \ff \ff}  \ddbarf \Phi_{\iota, r} \br{G_{1, p, k}}} \circledast \D^\iota \dot{A}_{1, p, k}.
		\end{aligned}
	\end{equation}
	We estimate each summand in the RHS of \eqref{eq_D^3_fff_gamma}, where $0 \leq \iota \leq 2k$, $\ceil{\frac{\iota}{2}} \leq r \leq k$, and $(\iota, r) \neq (0, 0)$. Note first that
	\begin{equation}
		\abs{\D^3_{\ff \ff \ff}  \partial_{\ff} \overline{\partial}_{\ff} \Phi_{\iota, r} \br{G_{1, p, k}}} \leq C e^{5 \frac{t}{2}}.
	\end{equation}
	By Lemma \ref{lem_interp_fD_A_1},
	\begin{equation}\label{eq_bound_-rt_D_A_1}
		e^{-rt} \abs{\D^\iota A_{1, p, k}} \leq 
		\begin{cases}
			C e^{-2t}, & \iota = r = 0, \\
			C e^{-\br{5+\frac{\alpha}{5}} \thalf}, & 0 \leq \iota \leq 2, r = 1, \\
			C e^{-3t}, & r \geq 2.
		\end{cases}		
	\end{equation}
	By Lemma \ref{lem_interp_fD_dot_A_1_2},
	\begin{equation}\label{eq_bound_-rt_D_dot_A_1}
		e^{-rt} \abs{\D^\iota \dot{A}_{1, p, k}} \leq 
		\begin{cases}
			C e^{-2t}, & \iota = r = 0,\\
			C e^{-\br{2+\frac{\alpha}{3}}t}, & r \geq 1.
		\end{cases}
	\end{equation}
	We can then plug these estimates into \eqref{eq_D^3_fff_gamma}  to complete the proof.
\end{proof}

Thanks to Lemma \ref{lem_D^3_gamma_dot_gamma}, our next task is to estimate
\begin{equation}
	\abs{\sum_{p=1}^{N_{1, k}} \br{A_{1, p, k} + \dot{A}_{1, p, k}} \D^3_{\ff \ff \ff} i\partial_{\ff} \overline{\partial}_{\ff} \br{\Delta^{\omega_F|_{\{\cdot\} \times Y}}}^{-1} G_{1, p, k}}.
\end{equation} 
We approach this by applying the operator $\D^3_{\ff \ff \ff} i\partial_{\ff} \overline{\partial}_{\ff} \br{\Delta^{\omega_F|_{\{\cdot\} \times Y}}}^{-1}$ to 
\begin{equation}\label{eq_error_dot}
	\sum_{p=1}^{N_{1, k}} \br{A_{1, p, k} + \dot{A}_{1,p,k}} G_{1, p, k} = \E + \dot{\E} + e^{-2t} \br{\cS - \underline{\cS}},
\end{equation}
which follows from \eqref{eq_error_full}, and estimating the RHS of the equality thus derived from \eqref{eq_error_dot}.	Note that $\E$ and $\dot{\E}$ indeed have fiberwise average zero by definition \eqref{eq_error_full}. We aim to show the following.

\begin{proposition}\label{prop_error_o_-2t}
	$\E$ and $\dot{\E}$ satisfy
	\begin{equation}\label{eq_o_-2t_fffff}
		\abs{\D^3_{\ff \ff \ff} \ddbarf \br{\Delta^{\omega_F|_{\{\cdot\} \times Y}}}^{-1}\br{\cdot}}_{g_X} = o\br{e^{-2t}}.
	\end{equation}
\end{proposition}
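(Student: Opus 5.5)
Since the operator $L:=\D^3_{\ff\ff\ff}\,\ddbarf\,\br{\Delta^{\omega_F|_{\{\cdot\}\times Y}}}^{-1}$ acts purely fiberwise, with coefficients depending smoothly on $z\in B$, and is $t$-independent, I would reduce \eqref{eq_o_-2t_fffff} to a fiberwise estimate on $\E$ and $\dot\E$ in a fixed Sobolev or \Ho norm. On each fiber $(\{z\}\times Y, g_{Y,z})$, elliptic regularity for the Laplacian gives
\begin{equation*}
\abs{L u}_{g_X} \leq C \norm{u}_{C^{3,\alpha}(\{z\}\times Y,\, g_{Y,z})}
\end{equation*}
for functions $u$ of fiberwise average zero, uniformly in $z$ after shrinking $B$. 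So it suffices to prove that $\E$ and $\dot\E$ are $o(e^{-2t})$ in fiberwise $C^{3,\alpha}$, measured in the fixed metric $g_X$ (not in $g(t)$). Interpolating on each fiber between a sup bound $o(e^{-2t})$ and a bound $O(e^{-2t})$ for some higher fiberwise $C^{\ell}$ norm ($\ell>3+\alpha$) would give this.

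The first step is to revisit how \cite[\S 5.2]{HLT} derives $\E=o(e^{-2t})$ and run it with fiber derivatives. As in the proof of Proposition \ref{prop_bound_dot_A_i_leq2}, I would expand the parabolic \MA equation using the expansion \eqref{eq_asymp_expa} with $j$ up to $5$ and $k\geq 10$. Take the part of order $e^{-2t}$ (after dividing by $e^{t}\ocan^m\wedge\omega_F^n$) and subtract fiberwise averages. This should express $\E$ as a fiber trace of $\eta$-remainders and $\gamma_{i,k}$-terms, together with $\davg{(\varphi+\dot\varphi)}$-type terms controlled by Lemma \ref{prop_davg_phi}. Each of these is $o(e^{-2t})$ pointwise, and the same expression should also be $O(e^{-2t})$ after applying $\D_{\ff}^\ell$ in $g_X$-norm. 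The reason is that each fiber derivative costs only $e^{t/2}$ in $g(t)$-norm, while in $g_X$-norm fiber components gain compensating factors, as in Lemma \ref{lem_bound_eta_2}. The needed inputs are the higher $\fD^\iota$ bounds for $\eta_{j,k}$ from \eqref{eq_expa_eta_infty} and \eqref{eq_expa_eta_holder}, and for $A_{i,p,k}$ from Lemmas \ref{lem_interp_fD_A_1}, \ref{lem_interp_fD_A_2} and \ref{lem_interp_fD_dot_A_1_2}.

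For $\dot\E$, I would differentiate the same identity in $t$, exactly as was done to get \eqref{eq_davg_ddot_phi}. The new inputs are Proposition \ref{prop_bound_dot_A_i_leq2}, Proposition \ref{prop_bound_D2_dot_gamma_1_2} and $\abs{\fD^{\iota+2}\eta_{j,k}}$. A cleaner alternative is available because $\E$ has the explicit form \eqref{eq_error_full}: $\dot\E=\sum_p \dot A_{1,p,k}G_{1,p,k}-2e^{-2t}\br{\cS-\underline{\cS}}$. The $G_{1,p,k}$ and $\cS$ are smooth and $t$-independent, so here fiber derivatives in $g_X$ are free. Thus the fiberwise $C^{3,\alpha}$ norm of $\dot\E$ is comparable to $\sum_p\abs{\dot A_{1,p,k}+2A_{1,p,k}}$ plus $o(e^{-2t})$. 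By the same finite-dimensional reasoning applied to $\E$ itself, $\E=o(e^{-2t})$ in sup norm already gives $\sum_p A_{1,p,k}G_{1,p,k}=-e^{-2t}\davg{\cS}+o(e^{-2t})$ in every fiberwise $C^\ell$ norm. The reason is that on the finite-dimensional span of $\{G_{1,p,k}\}\cup\{\davg{\cS}\}$ all norms are equivalent, with constants uniform in $z$.

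With this observation, $\E$ is trivial, and the problem for $\dot\E$ reduces to showing $\dot A_{1,p,k}=-2A_{1,p,k}+o(e^{-2t})$ componentwise. I expect this to be the main obstacle. I would get it from the $t$-differentiated \MA identity, refining the argument of Proposition \ref{prop_bound_dot_A_i_leq2} by keeping the $e^{-2t}$-order terms instead of discarding them into $O(e^{-2t})$. This requires verifying that every other contribution is genuinely $o(e^{-2t})$ after pairing with $G_{1,p,k}$. The $A_{2,p,k}$ contributions are handled by orthogonality together with Lemma \ref{lem_interp_fD_A_2}, and the remainder terms by \eqref{eq_expa_eta_infty} with $j$ large.
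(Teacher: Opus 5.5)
Your argument is correct, and it takes a genuinely different route from the paper's. The paper writes $\E=\E_7-(\E_1+\dots+\E_6)$ using the $j=3$ expansion of \eqref{eq_flow_MA}. It then proves that every $\E_i$ and $\dot\E_i$ has fiber derivatives of order $\le 4$ of size $o(e^{-2t})$ in $g_X$, and applies \Cref{lem_unif_schauder}. This costs three things:
\begin{itemize}
\item \Cref{lem_phi-dot_phi}, which needs fiberwise Schauder for $\psi_{3,k}$ and $\ddot\psi_{3,k}$;
\item \Cref{lem_bound_D^l_eta_3};
\item $\dot A_{2,p,k}=o(e^{-2t})$ from \Cref{lem_interp_fD_dot_A_new}, hence $j=5$ and $\alpha>\frac45$.
\end{itemize}
You instead use that $\E$ and $\dot\E$ are combinations of the $t$-independent fiber functions $G_{1,p,k}$ and $\davg{\cS}$ with base coefficients. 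So $L$ acts only on the fiber functions, and everything is controlled by the coefficients. The estimate for $\E$ then follows at once from the HLT sup bound. Via $\dot\E=\sum_p(\dot A_{1,p,k}+2A_{1,p,k})G_{1,p,k}-2\E$, the $\dot\E$ case becomes a fiberwise $L^2$ statement about the $t$-differentiated \MA identity. Projecting that identity onto $\mathrm{span}\{G_{1,p,k}\}$ kills the leading piece $\B\sum_p\dot A_{2,p,k}G_{2,p,k}$ of the $\dot\gamma_{2,k}$ contribution. Hence $\dot A_{2,p,k}=O(e^{-2t})$ from \Cref{prop_bound_dot_A_i_leq2} is enough, and the $j=2$ expansion suffices. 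The paper's route is self-contained, since it re-derives the HLT bound, and it builds estimates reused in \Cref{lem_D3_gamma_i_leq3}. Yours needs only $C^0$ control of the error terms and no fiber-derivative bookkeeping.

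Two steps need a sentence each.

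First, uniform-in-$z$ equivalence of norms on a span whose dimension could jump is not automatic. The fix is to write $\davg{\cS}=\sum_p s_pG_{1,p,k}+h^\perp$ fiberwise $L^2$-orthogonally. Pythagoras then gives $e^{-2t}\|h^\perp\|_{L^2}\le\|\E\|_{L^2}=o(e^{-2t})$, so the $t$-independent $h^\perp$ vanishes identically. On the span of the orthonormal $G_{1,p,k}$, the coefficients are bounded by the fiberwise $L^2$ norm uniformly in $z$.

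Second, when refining \Cref{prop_bound_dot_A_i_leq2}, the mixed base--fiber terms have size $e^{-t}|\dot\gamma_{i,k}|_{g_X}$. You therefore need $|\dot\gamma_{i,k}|_{g_X}=o(e^{-t})$, which is sharper than \Cref{lem_bound_gamma_i_k_g_X}. This does hold: it is the case $\ell=0$ of \Cref{lem_bound_D^l_gamma_i_full}, and it can be checked directly from \Cref{lem_interp_fD_A_1,lem_interp_fD_A_2}. The left-hand side additionally requires \eqref{eq_davg_ddot_phi}.
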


The following facts about the operator $\D^3_{\ff \ff \ff} \ddbarf \br{\Delta^{\omega_F|_{\{\cdot\} \times Y}}}^{-1}$ will be useful.

\begin{lemma}[Uniform Fiberwise Schauder]\label{lem_unif_schauder}
	There exists $C > 0$ such that for any smooth function $F$ on $B \times Y$ with fiberwise average zero, 
	\begin{equation}\label{eq_schauder_fiber}
		\norm{\D^3_{\ff \ff \ff} \ddbarf \br{\Delta^{\omega_F|_{\{\cdot\} \times Y}}}^{-1} F }_{C^0(B \times Y, g_X)} 
		\leq C {\sum_{\l = 0}^4 \norm{\D^\l_{\ff \dots \ff} F}_{C^0(B \times Y, g_X)}  }.
	\end{equation}
	Therefore, \eqref{eq_o_-2t_fffff} follows from
	\begin{equation}\label{eq_o_-2t_l_leq_4}
		\abs{\D^\l_{\ff \dots \ff}(\cdot)}_{g_X} = o(e^{-2t}), \quad \forall 0 \leq \l \leq 4.
	\end{equation}
\end{lemma}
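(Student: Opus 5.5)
The plan is to reduce the estimate to a fiberwise Schauder estimate on each fiber $(\{z\}\times Y, g_{Y,z})$, made uniform in $z$ by the smooth dependence of $(g_{Y,z}, J_z)$ on $z$ and by \eqref{equiv-product-ref}. Fix $z \in B$ and set $u_z := \br{\Delta^{\omega_F|_{\{z\}\times Y}}}^{-1}\br{F(z,\cdot)}$, the unique function with fiberwise average zero solving $\Delta_{g_{Y,z}} u_z = F(z,\cdot)$ on $Y$.

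First I would note that the left-hand side of \eqref{eq_schauder_fiber} involves only fiber directions. By \Cref{def_D}, on $\{z\}\times Y$ the operator $\D$ coincides with $\nabla^z$, the Levi-Civita connection of $g_{\C^m} + g_{Y,z}$. Restricted to purely fiber components, $\nabla^z$ is just $\nabla^{g_{Y,z}}$; this is the same observation used in \eqref{eq_D_gF_nabla} and in \Cref{lem_Rm_D}. Also, $\ddbarf$ applied to $u_z$ equals $\ddbar$ on $(Y, J_z)$. Hence
\[
\D^3_{\ff\ff\ff}\ddbarf u = \br{\nabla^{g_{Y,z}}}^3 \ddbar_{J_z} u_z \quad \text{on } \{z\}\times Y.
\]
The $g_X$-norm of such a purely fiber tensor is its $g_{Y,z}$-norm, up to the uniform constant $\Lambda$.

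Next, I would bound this by $C\norm{u_z}_{C^5(Y,g_{Y,z})}$ and then apply elliptic estimates for $\Delta_{g_{Y,z}}$ on the closed manifold $Y$. Schauder theory gives
\[
\norm{u_z}_{C^{5,\beta}} \le C\br{\norm{F}_{C^{3,\beta}} + \norm{u_z}_{C^0}}.
\]
Since $u_z$ has average zero, the spectral gap gives $\norm{u_z}_{L^2} \le \lambda_1^{-1}\norm{F}_{L^2}$, and Moser iteration (as in the proof of \Cref{prop_davg_phi}) upgrades this to $\norm{u_z}_{C^0} \le C\norm{F}_{C^0}$. Finally, $\norm{F}_{C^{3,\beta}} \le C\norm{F}_{C^4}$, and the $C^4$ norm of $F(z,\cdot)$ on the fiber is controlled by $\sum_{\l\le 4}\abs{\D^\l_{\ff\dots\ff}F}_{g_X}$, again because $\D$ restricts to $\nabla^{g_{Y,z}}$ on fibers. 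This loses one derivative relative to the sharp statement, which is harmless here since four fiber derivatives are allowed.

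The main obstacle is making every constant independent of $z \in B$. The Schauder constant depends only on finitely many derivatives of $g_{Y,z}$ and on $\mathrm{inj}$ and $\mathrm{diam}$, all of which are uniformly controlled by \eqref{equiv-product-ref} and smoothness in $z$ after shrinking $B$. The first eigenvalue $\lambda_1(g_{Y,z})$ is continuous in $z$, hence bounded below on a compact subball. Likewise, $J_z$ varies smoothly, so converting $\ddbar_{J_z}$ into second covariant derivatives of $g_{Y,z}$ has uniform coefficients.

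For the second assertion, I would apply \eqref{eq_schauder_fiber} with $F = \E$ and $F = \dot\E$ at each time $t$. Both have fiberwise average zero, and the constant $C$ does not depend on $t$ because the estimate is $t$-independent and measured in $g_X$. Therefore \eqref{eq_o_-2t_l_leq_4} implies \eqref{eq_o_-2t_fffff} directly.
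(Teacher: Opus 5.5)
Your proposal is correct and follows the same route as the paper. Both arguments identify $\mathbb{D}$ in fiber directions with $\nabla^{g_{Y,z}}$, then apply the fiberwise Schauder chain (third derivatives of $i\partial\bar\partial u_z$ bounded by $\|u_z\|_{C^{5,\alpha}}$, then by $\|F\|_{C^{3,\alpha}}$, then by $\|F\|_{C^4}$), and get uniformity in $z$ from the smooth dependence of $(g_{Y,z},J_z)$ after shrinking $B$. Your spectral-gap and Moser step for $\|u_z\|_{C^0}$ just makes explicit the invertibility of the fiber Laplacian on average-zero functions, which the paper uses implicitly.
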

\begin{proof}
	Observe that the operator $\D_{\ff}$, when restricted to a fiber $\{z\} \times Y$, coincides with the Levi-Civita connection of $g_{Y, z}$ (cf. \eqref{eq_D_gF_nabla}). Apply Schauder estimates to $F|_{\{z\} \times Y}$ on each fiber to get
	\begin{equation}
		\begin{aligned}
			\norm{\D^3_{\ff \ff \ff} \ddbarf \br{\Delta^{\omega_F|_{\{\cdot\} \times Y}}}^{-1} F }_{C^0(\{z\} \times Y, g_{Y, z})}
			& \leq C \norm{\br{\Delta^{\omega_F|_{\{\cdot\} \times Y}}}^{-1} F }_{C^{5, \alpha}(\{z\} \times Y, g_{Y, z})} \\
			& \leq C \norm{F}_{C^{3, \alpha}(\{z\} \times Y, g_{Y, z})} \\
			& \leq C \norm{F}_{C^{4}(\{z\} \times Y, g_{Y, z})}, 
		\end{aligned}
	\end{equation}
	for some $C > 0$ depending on $J_z$ and $g_{Y, z}$. Since $J$ is smooth on $B \times Y$ and $\{g_{Y, z} \mid z \in B\}$ is a smooth family of metrics, we can piece together these fiberwise inequalities to derive \eqref{eq_schauder_fiber} up to shrinking $B$. 
\end{proof}

\begin{lemma}[Maximum Principle]\label{lem_max_principle}
	Let $(Y, g, \omega)$ be a compact Kähler manifold. Let $\nabla$ denote the Levi-Civita connection of $g$. If $F \in C^\infty(Y)$ has average zero and $\nabla^3(\ddbar \Delta^{-1} F) \equiv 0$, then $F \equiv 0$.
\end{lemma}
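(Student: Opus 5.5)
We set $u := \Delta^{-1} F$ (with $\Delta = \Delta^{\omega}$), the unique average-zero solution of $\Delta u = F$, and $\beta := \ddbar u$. The plan is to descend from $\nabla^3 \beta \equiv 0$ to $\beta \equiv 0$ using compactness of $Y$, and then conclude $F = \tr{\omega}{\beta} \equiv 0$ (up to the normalizing constant in the convention for $\Delta$).

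\textbf{Step 1: from $\nabla^3\beta = 0$ to a parallel form.} First we show $\nabla^2 \beta \equiv 0$. Since $\nabla^3\beta = \nabla(\nabla^2\beta) = 0$, the tensor $\nabla^2\beta$ is parallel, so its pointwise norm $|\nabla^2\beta|^2$ is constant on $Y$. To show this constant vanishes, I would use integration by parts:
\begin{equation*}
\int_Y |\nabla^2\beta|^2 = -\int_Y \langle \nabla\beta, \nabla^*\nabla^2\beta\rangle ,
\end{equation*}
and $\nabla^*\nabla^2\beta$ is a trace of $\nabla^3\beta$, hence identically zero. Therefore $\nabla^2\beta \equiv 0$. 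The same argument one order lower gives $\nabla\beta \equiv 0$: $\int_Y |\nabla\beta|^2 = -\int_Y \langle \beta, \nabla^*\nabla\beta\rangle = 0$, because $\nabla^*\nabla\beta$ is a trace of $\nabla^2\beta = 0$. Thus $\beta$ is parallel.

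\textbf{Step 2: from a parallel form to $F \equiv 0$.} Since $\beta$ is parallel and the metric is parallel, the trace $\tr{\omega}{\beta} = c\,\Delta u = c F$ is constant on $Y$. Because $F$ has average zero, this constant is zero, so $F \equiv 0$.

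An alternative route for Step 2 would be to note that the parallel $\beta$ is harmonic and $\ddbar$-exact, hence zero by Hodge theory. The trace argument is simpler, and it is the version I would write.

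\textbf{Main obstacle.} The key point is the integration by parts at the top order, specifically verifying that $\nabla^*\nabla^2\beta$ is indeed a contraction of $\nabla^3\beta$ with no curvature terms appearing. This holds because we only use the divergence on the first slot, with no commutation of derivatives required. Everything else is immediate from compactness of $Y$ and the average-zero normalization of $F$ and of $\Delta^{-1}F$.
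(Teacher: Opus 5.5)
Your proof is correct, but it works in the opposite order from the paper's.

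\textbf{The paper's route.} It traces first. Setting $\eta = (\ddbar\Delta^{-1}F)\circ(\id\otimes J)$, it uses $\nabla g = 0$ and $\nabla J = 0$ to move the trace over the last two slots past $\nabla^3$. This gives the scalar identity $\nabla^3 F = 0$. A second trace gives $\nabla(\Delta F) = 0$, so $\Delta F$ is a constant with zero integral. Hence $F$ is harmonic, hence constant, hence zero by the average-zero assumption.

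\textbf{Your route.} You lower the derivative order on the tensor $\beta$ itself, using $\int_Y |\nabla^{j+1}\beta|^2 = \int_Y \langle \nabla^j\beta, \nabla^*\nabla^{j+1}\beta\rangle$, and trace only once at the end. The sign in your display depends on the convention for $\nabla^*$, but the right-hand side vanishes either way. Your point that $\nabla^*\nabla^{j+1}\beta$ is a trace of $\nabla^{j+2}\beta$, with no commutators and so no curvature terms, is exactly right. The remark that a parallel tensor has constant norm is not needed, since the integral already vanishes.

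\textbf{What each buys.} Your argument rests on a general fact that does not use the K\"ahler structure: on a closed Riemannian manifold, $\nabla^k T = 0$ forces $\nabla T = 0$ for any tensor $T$. It also yields the stronger intermediate conclusion that $\ddbar\Delta^{-1}F$ is parallel. The K\"ahler condition enters only in the last step, $\nabla(\tr{\omega}{\beta}) = \tr{\omega}{\nabla\beta}$. The paper's argument needs no tensor integration by parts. It reduces at once to scalar facts about $F$: the divergence theorem and the maximum principle for harmonic functions, which is where the lemma's name comes from. The price is using $\nabla J = 0$ and $\nabla g = 0$ from the start.
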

\begin{proof}
	Let $\eta := (\ddbar \Delta^{-1} F) \circ (\id \otimes J)$. Since $\nabla J = 0$, we have $\nabla^3 \eta = 0$. Tracing the last two entries of $\nabla^3 \eta$ by $g$, we see $0 = \nabla^3 (\tr{g}{\eta}) = \nabla^3 F$. Tracing again, we see $0 = \nabla \br{\tr{g}{\nabla^2 F}} = \nabla \br{\Delta F}$. Then $\Delta F$ is constant, so $\Delta F = 0$. $F$ has average zero, so $F \equiv 0$. 
\end{proof}

To prove Proposition \ref{prop_error_o_-2t}, we make the following preparations.

\begin{lemma}\label{lem_interp_fD_dot_A_new}
	For all $1 \leq p \leq N_{1, k}$,
	\begin{equation}\label{eq_interp_fD_dot_A_1}
		\abs{\fD^\l \dot{A}_{1, p, k}} \leq 
		\begin{cases}
			C e^{- \frac{t}{2} \br{4 -\frac{2\l}{8+\alpha}}}, 	& 0 \leq \l \leq 8, \\
			C e^{- \frac{t}{2} \br{4 -\frac{\l(\l-6)}{\l+\alpha}}}, & 8 \leq \l \leq 2k+10.
		\end{cases}
	\end{equation}
	
	For all $1 \leq p \leq N_{2, k}$,
	\begin{equation}\label{eq_interp_fD_dot_A_2}
		\abs{\fD^\l \dot{A}_{2, p, k}} \leq 
		\begin{cases}
			C e^{- \frac{t}{2} \br{4 + \beta -\frac{(2+\beta)\l}{8+\alpha}}}, 	& 0 \leq \l \leq 8, \\
			C e^{- \frac{t}{2} \br{4  -\frac{\l(\l-6) - \alpha \beta}{\l+\alpha}}}, & 8 \leq \l \leq 2k+10,
		\end{cases}
	\end{equation}
	for some $\beta = \beta(\alpha) \in (0, \alpha)$ if $\alpha > \frac{4}{5}$.
	
	For all $1 \leq p \leq N_{3, k}$,
	\begin{equation}\label{eq_interp_fD_A_3}
		\abs{\fD^\l {A}_{3, p, k}} \leq 
		\begin{cases}
			C e^{- \frac{t}{2} \br{6+\alpha -\frac{(4+\alpha)\l}{10+\alpha}}}, 	& 0 \leq \l \leq 10, \\
			C e^{- \frac{t}{2} \br{6+\alpha -\frac{\l(\l-6+\alpha)}{\l+\alpha}}}, & 10 \leq \l \leq 2k+12,
		\end{cases}
	\end{equation}
	\begin{equation}\label{eq_interp_fD_dot_A_3}
		\abs{\fD^\l \dot{A}_{3, p, k}} \leq 
		\begin{cases}
			C e^{- \frac{t}{2} \br{5 + \beta -\frac{(3+\beta)\l}{8+\alpha}}}, 	& 0 \leq \l \leq 8, \\
			C e^{- \frac{t}{2} \br{5-\frac{\l(\l-5)-\alpha \beta}{\l+\alpha}}}, & 8 \leq \l \leq 2k+10,
		\end{cases}
	\end{equation}
	for some $\beta = \beta(\alpha) \in (0, \alpha)$.
\end{lemma}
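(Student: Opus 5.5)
The plan is the same interpolation scheme as in \Cref{lem_interp_fD_A_1,lem_interp_fD_A_2,lem_interp_fD_dot_A_1_2}. Each estimate comes from interpolating, on shrinking parabolic domains, between a $C^0$ bound $\abs{u}\leq Ce^{-a\thalf}$ and a Hölder bound $[\fD^{L}u]_\alpha\leq Ce^{s\thalf}$. With $\theta=\l/(L+\alpha)$ this gives
\begin{equation*}
\abs{\fD^\l u}\leq C e^{\thalf\br{-a(1-\theta)+s\theta}}, \qquad 0\leq \l\leq L.
\end{equation*}
The Hölder inputs all come from \eqref{eq_expa_A_i_holder}. Since $k\geq 10$, we may take $j$ up to $5$, so $[\fD^{2j+2+\iota}A_{i,p,k}]_\alpha\leq Ce^{\iota\thalf}$ holds for $-2\leq\iota\leq 2k$. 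We also use $[\fD^\l\dot A]_\alpha\leq[\fD^{\l+2}A]_\alpha$. For each of the four estimates I would first pin down $a$ and $s$ by matching exponents, and then verify the inputs.

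\textbf{The $\dot A_{1,p,k}$ estimate.} Take $a=4$ from Proposition \ref{prop_bound_dot_A_i_leq2}. For $0\leq\l\leq 8$ use $L=8$ and $[\fD^8\dot A_{1,p,k}]_\alpha\leq[\fD^{10}A_{1,p,k}]_\alpha\leq Ce^{-t}$, which is the case $j=5$, $\iota=-2$; here $s=-2$ gives exactly $-4+2\theta$. For $8\leq\l\leq 2k+10$ use $L=\l$ and $[\fD^{\l+2}A_{1,p,k}]_\alpha\leq Ce^{(\l-10)\thalf}$, again with $j=5$. This gives $-4+\l(\l-6)/(\l+\alpha)$, and the constraint $\iota\leq 2k$ is precisely $\l\leq 2k+10$.

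\textbf{The $A_{3,p,k}$ estimate.} Take $a=6+\alpha$ from \eqref{eq_expa_A_i_infty} with $i=3$, $\iota=0$. For $\l\leq 10$ use $[\fD^{10}A_{3,p,k}]_\alpha\leq Ce^{-t}$ (that is, $j=4$, $\iota=-2$); this yields $-(6+\alpha)+(4+\alpha)\theta$. For $\l\geq 10$ use the seminorm of $\fD^\l A_{3,p,k}$ with $j=5$, namely $e^{(\l-12)\thalf}$. This yields $-(6+\alpha)+\l(\l-6+\alpha)/(\l+\alpha)$.

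\textbf{The $\dot A_{2,p,k}$ and $\dot A_{3,p,k}$ estimates.} The new ingredient here is an improved $C^0$ bound on $\dot A$, beyond $e^{-2t}$, obtained from $\abs{\dot A}\leq\abs{\fD^2 A}$ and the interpolation just described. For $A_{3,p,k}$, the case $\l=2$ above gives
\begin{equation*}
\abs{\dot A_{3,p,k}}\leq Ce^{-\thalf\br{6+\alpha-\frac{2(4+\alpha)}{10+\alpha}}}.
\end{equation*}
The exponent exceeds $5$ for every $\alpha\in(0,1)$, so we may choose some $\beta\in(0,\alpha)$ with $\abs{\dot A_{3,p,k}}\leq Ce^{-(5+\beta)\thalf}$, noting that shrinking $\beta$ only weakens the bound. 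For $A_{2,p,k}$ we interpolate $\abs{A_{2,p,k}}\leq Ce^{-(4+\alpha)\thalf}$ against $[\fD^{L}A_{2,p,k}]_\alpha\leq Ce^{-t}$ with $L=2j$, $j$ chosen as in the statement. This gives
\begin{equation*}
\abs{\fD^2A_{2,p,k}}\leq Ce^{-\thalf\br{4+\alpha-\frac{2(2+\alpha)}{L+\alpha}}},
\end{equation*}
whose exponent exceeds $4$ exactly when $\alpha$ is above a threshold depending on $L$. This is where the hypothesis $\alpha>\frac45$ enters, and it defines $\beta(\alpha)\in(0,\alpha)$.

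With these $C^0$ bounds in hand ($a=4+\beta$ and $a=5+\beta$ respectively), the remaining steps repeat the $\dot A_{1,p,k}$ argument. For $\l\leq8$ interpolate against $[\fD^8\dot A]_\alpha\leq[\fD^{10}A]_\alpha\leq Ce^{-t}$, i.e. $s=-2$; this produces the coefficients $(2+\beta)$ and $(3+\beta)$. For $\l\geq8$ interpolate against $[\fD^{\l+2}A]_\alpha\leq Ce^{(\l-10)\thalf}$. The algebra gives $-4+\frac{\l(\l-6)-\alpha\beta}{\l+\alpha}$ for $A_{2,p,k}$ and $-5+\frac{\l(\l-5)-\alpha\beta}{\l+\alpha}$ for $A_{3,p,k}$.

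I expect the main obstacle to be the bookkeeping for the $\dot A_{2,p,k}$ case. One has to choose the interpolation level $L$ consistently with the admissible range of $j$ in Theorem \ref{thm_asymp_expa}, which forces a lower bound on $\alpha$. One must also check that the resulting $\beta$ can be taken below $\alpha$, and small enough that all the stated exponents are attained, the constant $C$ absorbing any slack. Apart from this, the argument only requires matching exponents.
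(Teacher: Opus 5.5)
Your proposal is correct and is essentially the paper's argument: the same $C^0$ inputs, the same Hölder inputs from \eqref{eq_expa_A_i_holder} with $j=5$, and parabolic interpolation. (Your label ``$j=4,\ \iota=-2$'' for $[\fD^{10}A_{3,p,k}]_\alpha\le Ce^{-t}$ should read $j=5$.)

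The only real deviation is in the improved $C^0$ bounds on $\dot A_{2,p,k}$ and $\dot A_{3,p,k}$. The paper reads them off \eqref{eq_expa_A_i_infty} with $\iota=2$, $j=5$, i.e.\ exponent $(2i+\alpha)(1-\frac{2}{12+\alpha})$, whereas you interpolate against $[\fD^{10}A]_\alpha\le Ce^{-t}$. Your route also works, provided you fix $L=10$ (i.e.\ $j=5$): the condition for $\dot A_{2,p,k}$ then becomes $\alpha^2+8\alpha>4$, which $\alpha>\frac45$ implies, whereas with $L=6$ it would not.
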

\begin{proof}
	We apply Theorem \ref{thm_asymp_expa} with $j = 5$. By \Cref{prop_bound_dot_A_i_leq2}, 
	\begin{equation}
		\abs{\dot{A}_{1,p,k}} \leq C e^{-2t}.
	\end{equation}
	By \eqref{eq_expa_A_i_infty}, 
	\begin{equation}
		\abs{\dot{A}_{2, p, k}} \leq \abs{\fD^2 A_{2,p,k}} \leq C e^{-(4+\alpha)\br{1-\frac{2}{12+\alpha}} \frac{t}{2}} \leq C e^{-(4+\beta) \frac{t}{2}},
	\end{equation}
	for some $\beta = \beta(\alpha) \in (0, \alpha)$ if $\alpha > \frac{4}{5}$, and
	\begin{equation}
		\abs{{A}_{3, p, k}} \leq  C e^{-(6+\alpha) \frac{t}{2}},
	\end{equation}
	\begin{equation}
		\abs{\dot{A}_{3, p, k}} \leq  C e^{-(6+\alpha)\br{1-\frac{2}{12+\alpha}} \frac{t}{2}} \leq C e^{-(5+\beta) \frac{t}{2}},
	\end{equation}
	for some $\beta = \beta(\alpha) \in (0, \alpha)$. By \eqref{eq_expa_A_i_holder}, $\sqbr{\fD^8 \dot{A}_{i,p,k}}_\alpha \leq \sqbr{\fD^{10} {A}_{i,p,k}}_\alpha \leq C e^{-t}$, and for all $8 \leq l \leq 2k+10$, $\sqbr{\fD^l \dot{A}_{i,p,k}}_\alpha \leq \sqbr{\fD^{l+2} {A}_{i,p,k}}_\alpha \leq C e^{(l-10)\thalf}$. Parabolic interpolation then completes the proof.
\end{proof}

From now on we choose $\alpha > \frac{4}{5}$ for parabolic \Ho (semi)norms such that \eqref{eq_interp_fD_dot_A_2} holds. Then we have the following estimates on $\gamma_{i,k}$.

\begin{lemma}\label{lem_bound_D^l_gamma_i_ff}
	$\br{\gamma_{1, k}}_{\ff \ff}$ and $\br{\dot{\gamma}_{1, k}}_{\ff \ff}$ satisfy
	\begin{equation}\label{eq_O_-2t}
		\abs{\D^\l_{\ff \dots \ff} \br{\cdot}}_{g_X} = O \br{e^{-2t}}, \quad 
		\forall \l \geq 0.
	\end{equation}
	$\br{\gamma_{2, k}}_{\ff \ff}$ and $\br{\dot{\gamma}_{2, k}}_{\ff \ff}$ satisfy 
	\begin{equation}
		\abs{\D^\l_{\ff \dots \ff} \br{\cdot}}_{g_X} = o \br{e^{-2t}},  \quad \forall \l \geq 0.
	\end{equation}
	$\br{\gamma_{3, k}}_{\ff \ff}$ and $\br{\dot{\gamma}_{3, k}}_{\ff \ff}$,  satisfy
	\begin{equation}
		\abs{\D^\l_{\ff \dots \ff} \br{\cdot}}_{g_X} = o \br{e^{-5 \frac{t}{2}}}, \quad \forall \l \geq 0.
	\end{equation}
\end{lemma}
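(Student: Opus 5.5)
The plan is to work directly from the fiber--fiber formula \eqref{eq_gamma_i_k_ff} and its $t$-derivative. Differentiating $\br{\gamma_{i,k}}_{\ff\ff}$ in fiber directions only ever hits the $t$-independent functions $\Phi_{\iota,r}(G_{i,p,k})$. The coefficients $\D^\iota A_{i,p,k}$ and $\D^\iota \dot A_{i,p,k}$ are pulled back from the base, and $\D_\ff$ of a base tensor vanishes. Hence, schematically,
\begin{equation*}
\D^\l_{\ff\dots\ff}\br{\gamma_{i,k}}_{\ff\ff}=\sum_{p}\sum_{\iota,r}e^{-rt}\,\D^\l_{\ff\dots\ff}\,\ddbarf\Phi_{\iota,r}(G_{i,p,k})\circledast\D^\iota A_{i,p,k},
\end{equation*}
and the $t$-derivative adds the terms $-re^{-rt}(\cdots)\circledast\D^\iota A_{i,p,k}$ and $e^{-rt}(\cdots)\circledast\D^\iota\dot A_{i,p,k}$. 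Measured in the fixed metric $g_X$, the factors $\D^\l_{\ff\dots\ff}\ddbarf\Phi_{\iota,r}(G_{i,p,k})$ are $t$-independent smooth tensors, so they are bounded by $C_\l$. This is the key gain over the $g(t)$-norm estimates used earlier: no powers of $e^{t/2}$ appear. The contraction with base tensors in $\circledast$ is also harmless by Remark \ref{rmk_circledast}. Everything therefore reduces to bounding $\sum_{\iota,r}e^{-rt}\br{\abs{\D^\iota A_{i,p,k}}+\abs{\D^\iota\dot A_{i,p,k}}}$, where $\abs{\D^\iota\cdot}\leq\abs{\fD^\iota\cdot}$.

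For $i=1$, the bounds from Lemma \ref{lem_interp_fD_A_1} and Lemma \ref{lem_interp_fD_dot_A_new}, in the form \eqref{eq_bound_-rt_D_A_1} and \eqref{eq_bound_-rt_D_dot_A_1}, give $e^{-rt}\abs{\D^\iota A_{1,p,k}}\leq Ce^{-2t}$ and the same for $\dot A$. One must check the range $\iota\leq 2r$ with $r\geq1$: there the exponent is $-2r+(2-\frac{\iota(\iota-2)}{\iota+\alpha})$ or better, which stays below $-2$ for large $\iota$ because $\frac{\iota(\iota-2)}{\iota+\alpha}<\iota\leq 2r$. This yields the $O(e^{-2t})$ statement.

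For $i=2$, I use Lemma \ref{lem_interp_fD_A_2} and \eqref{eq_interp_fD_dot_A_2}. The $(\iota,r)=(0,0)$ terms are $\abs{A_{2,p,k}}\leq Ce^{-(4+\alpha)t/2}$ and $\abs{\dot A_{2,p,k}}\leq Ce^{-(4+\beta)t/2}$, both $o(e^{-2t})$. For $r\geq1$, $\iota\leq 2r$, the decay is strictly better than $e^{-2t}$, by the same exponent bookkeeping with the extra $\alpha^2$ or $\alpha\beta$ margin. This is where the choice $\alpha>\frac45$ enters, since it makes $\beta>0$ available.

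For $i=3$, the analogous computation uses \eqref{eq_interp_fD_A_3} and \eqref{eq_interp_fD_dot_A_3}. The leading terms give $e^{-(6+\alpha)t/2}$ and $e^{-(5+\beta)t/2}$, both $o(e^{-5t/2})$. For $r\geq 1$ the extra factor $e^{-rt}$ absorbs the growth $\frac{\iota(\iota-6+\alpha)}{\iota+\alpha}$ or $\frac{\iota(\iota-5)-\alpha\beta}{\iota+\alpha}\leq\iota-5+\cdots$, given $\iota\leq 2r$.

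The only real obstacle is this exponent bookkeeping in the intermediate range. One has to verify, uniformly over $0\leq\iota\leq 2k$ and $\ceil{\iota/2}\leq r\leq k$, that $-2r$ plus the interpolation exponent stays strictly below the target. This means checking both branches of each interpolation bound, namely $\iota$ below and above the threshold $4$, $8$, or $10$. In particular the small-$r$ cases $r=1$, $\iota\in\{1,2\}$ must be checked by hand, as in \eqref{eq_bound_-rt_D_A_1}. Note that $\l$ enters nowhere in the decay, only in the constant; this is why the estimate holds for all $\l\geq0$.
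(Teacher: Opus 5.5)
Your proposal is correct and is essentially the paper's proof: expand via \eqref{eq_gamma_i_k_ff}, note that fiber derivatives fall only on the $t$-independent $\Phi_{\iota,r}(G_{i,p,k})$, so the $g_X$-norm is bounded by $C\sum e^{-rt}\br{\abs{\D^\iota A_{i,p,k}}+\abs{\D^\iota \dot A_{i,p,k}}}$, and then apply \eqref{eq_bound_-rt_D_A_1}, \eqref{eq_bound_-rt_D_dot_A_1} for $i=1$ and \Cref{lem_interp_fD_A_2,lem_interp_fD_dot_A_new} for $i=2,3$. One cosmetic slip: for $\iota\geq 4$ the exponent (in units of $t/2$) is $-2r-4+\frac{\iota(\iota-2)}{\iota+\alpha}$ rather than $-2r+\br{2-\frac{\iota(\iota-2)}{\iota+\alpha}}$, but your conclusion via $\frac{\iota(\iota-2)}{\iota+\alpha}<\iota\leq 2r$ is right.
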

\begin{proof}
	By \eqref{eq_gamma_i_k_ff},
	\begin{equation}\label{eq_D^l_gamma_1_ff}
		\D^\l_{\ff \dots \ff} \br{\gamma_{i, k}}_{\ff \ff} = 
		\sum_{p=1}^{N_{i, k}} \sum_{\iota = 0}^{2k} \sum_{r = \ceil{\frac{\iota}{2}}}^k 
		e^{-rt} \D^\l_{\ff \dots \ff} i\partial_{\ff} \overline{\partial}_{\ff} \Phi_{\iota, r} \br{G_{i, p, k}} \circledast \D^\iota A_{i, p, k},
	\end{equation}
	so
	\begin{equation}
		\abs{\D^\l_{\ff \dots \ff} \br{\gamma_{i, k}}_{\ff \ff}}_{g_X} \leq
		C \sum_{p=1}^{N_{i, k}} \sum_{\iota = 0}^{2k} \sum_{r = \ceil{\frac{\iota}{2}}}^k 
		e^{-rt} \abs{\D^\iota A_{i, p, k}},
	\end{equation}
	\begin{equation}
		\abs{\D^\l_{\ff \dots \ff} \br{\dgamma_{i, k}}_{\ff \ff}}_{g_X} \leq
		C \sum_{p=1}^{N_{i, k}} \sum_{\iota = 0}^{2k} \sum_{r = \ceil{\frac{\iota}{2}}}^k 
		e^{-rt} \br{\abs{\D^\iota A_{i, p, k}} + \abs{\D^\iota \dot{A}_{i, p, k}}}.
	\end{equation}
	We can then use \eqref{eq_bound_-rt_D_A_1}, \eqref{eq_bound_-rt_D_dot_A_1} for $i = 1$, Lemma \ref{lem_interp_fD_A_2}, Lemma \ref{lem_interp_fD_dot_A_new} for $i = 2, 3$, to complete the proof.
\end{proof}

\begin{lemma}\label{lem_bound_D^l_gamma_i_full}
	${\gamma_{1, k}}$, ${\dot{\gamma}_{1, k}}$ satisfy
	\begin{equation}\label{eq_o_-t}
		\abs{\D^\l_{\ff \dots \ff} \br{\cdot}}_{g_X} = o \br{e^{-t}},  \quad \forall \l \geq 0.
	\end{equation}
	${\gamma_{2, k}}$, ${\dot{\gamma}_{2, k}}$ satisfy
	\begin{equation}\label{eq_o_-3t/2}
		\abs{\D^\l_{\ff \dots \ff} \br{\cdot}}_{g_X} = o \br{e^{-3\thalf}},  \quad \forall \l \geq 0.
	\end{equation}
	${\gamma_{3, k}}$, ${\dot{\gamma}_{3, k}}$ satisfy
	\begin{equation}\label{eq_o_-2t}
		\abs{\D^\l_{\ff \dots \ff} \br{\cdot}}_{g_X} = o \br{e^{-2t}},  \quad \forall \l \geq 0.
	\end{equation}
\end{lemma}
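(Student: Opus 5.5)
The proof follows the one for Lemma \ref{lem_bound_D^l_gamma_i_ff}, except that all components of $\gamma_{i,k}$ are kept, not only the $\ff\ff$ block. The starting point is \eqref{eq_D^q_gamma_i_k} with the pure fiber derivative $\D^\l_{\ff\dots\ff}$ in place of $\D^q$, together with its $t$-derivative. In every summand the fiber derivatives land either on $J$ or on $\Phi_{\iota,r}(G_{i,p,k})$. These are $t$-independent smooth tensors, so in the fixed metric $g_X$ their norms are bounded by constants depending only on $\l$. The one point to check here is that $\D_{\ff}$ does not act on $A_{i,p,k}$, which lives on the base, so it does not raise the order of derivatives falling on $A_{i,p,k}$. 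The derivatives on $A$ come only from $\ddbar=\tfrac12 dd^c$ and from $\mathfrak{G}_{t,k}$: at most two from $dd^c$, namely $\D^{i_2}$ with $i_2\le 2$, and $\iota$ from the Green operator. This gives
\begin{equation*}
\abs{\D^\l_{\ff\dots\ff}\gamma_{i,k}}_{g_X}+\abs{\D^\l_{\ff\dots\ff}\dgamma_{i,k}}_{g_X}\leq C_\l\sum_{p}\sum_{\iota=0}^{2k}\sum_{r=\ceil{\iota/2}}^{k}\sum_{i_2=0}^{2}e^{-rt}\br{\abs{\D^{i_2+\iota}A_{i,p,k}}+\abs{\D^{i_2+\iota}\dot A_{i,p,k}}}.
\end{equation*}
Since $g_X$ is $t$-independent and $A$ lives on the base, these base norms coincide with the $g(t)$-norms used in the interpolation lemmas.

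Next I bound each summand separately for $i=1,2,3$, using Lemma \ref{lem_interp_fD_A_1}, Lemma \ref{lem_interp_fD_A_2} and Lemma \ref{lem_interp_fD_dot_A_new}, together with $\abs{\D^q\dot A}\le\abs{\fD^{q}\dot A}$ and the bounds on $\fD^\l A_{3,p,k}$ from \eqref{eq_interp_fD_A_3}.

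For $i=1$ the worst summand is $r=0$, $\iota=0$, $i_2=2$. It is bounded by $\abs{\fD^2A_{1,p,k}}\le Ce^{-\frac t2(4-\frac{4}{4+\alpha})}$ and by $\abs{\fD^2\dot A_{1,p,k}}\le Ce^{-\frac t2(4-\frac{4}{8+\alpha})}$. Both are $o(e^{-t})$, since $4-\frac{4}{4+\alpha}>2$.

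For summands with $r\ge 1$, the total order is $q=i_2+\iota\le 2r+2$. The interpolation bounds grow at most like $e^{\frac t2\cdot\frac{q(q-2)}{q+\alpha}}$ relative to $e^{-2t}$. Combined with the factor $e^{-rt}$, this stays below $e^{-t}$, because $\frac{q(q-2)}{q+\alpha}<q-2\le 2r$. The same bookkeeping should give $o(e^{-3t/2})$ for $i=2$. There the base decay is $e^{-(4+\alpha)t/2}$ for $A$ and $e^{-(4+\beta)t/2}$ for $\dot A$, and two derivatives cost less than $e^{t/2}$ because the second-case exponents lose strictly less than $q-2$. For $i=3$ the base decay is $e^{-(6+\alpha)t/2}$ for $A$ and $e^{-(5+\beta)t/2}$ for $\dot A$, which should give $o(e^{-2t})$.

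The main obstacle is the exponent arithmetic for $\dot A_{2,p,k}$ and $\dot A_{3,p,k}$ when $r=0$ and $i_2=2$. For $\dot A_{3}$ I need
\begin{equation*}
5+\beta-\frac{2(3+\beta)}{8+\alpha}>4 .
\end{equation*}
This holds because $\frac{2(3+\beta)}{8+\alpha}<1$. For $\dot A_{2}$ I need
\begin{equation*}
4+\beta-\frac{2(2+\beta)}{8+\alpha}>3 ,
\end{equation*}
which also holds. Here the assumption $\alpha>\frac45$ is used, through the existence of $\beta$. The remaining high-$\iota$ summands with large $r$ are controlled by the second case of each interpolation bound. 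I expect the key inequality to be $\frac{q(q-c)}{q+\alpha}<q-c$ for the relevant shift $c$, which makes the powers gained from $e^{-rt}$ dominate. Once these inequalities are checked, the little-$o$ statements follow for each fixed $\l$, with constants depending on $\l$.
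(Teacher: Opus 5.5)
Your proposal is correct and is essentially the paper's proof. The paper applies $\D^\l_{\ff\dots\ff}$ to \eqref{eq_D^q_gamma_i_k} with $q=0$, notes that the fiber derivatives land only on the $t$-independent factors $\D^{1-s}J$ and $\D^{i_1}\Phi_{\iota,r}(G_{i,p,k})$, reduces to sums of $e^{-rt}\br{\abs{\D^{i_2+\iota}A_{i,p,k}}+\abs{\D^{i_2+\iota}\dot A_{i,p,k}}}$ with $i_2\le 2$, and concludes from \Cref{lem_interp_fD_A_1,lem_interp_fD_A_2,lem_interp_fD_dot_A_new}; your explicit exponent checks, which the paper leaves implicit, are accurate (one small slip: for $r\ge1$ and $q\le 4$ the relevant bound is the first-case interpolation estimate rather than the $\frac{q(q-2)}{q+\alpha}$ one, but the factor $e^{-rt}$ absorbs it just the same).
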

\begin{proof}
	Apply operator $\D^\l_{\ff \dots \ff}$ to $\gamma_{i,k}$ given explicitly by \eqref{eq_D^q_gamma_i_k}, to get
	\begin{equation}
		\D^\l_{\ff \dots \ff} \gamma_{i, k} = \sum_{p=1}^{N_{i, k}} \sum_{\iota = 0}^{2k} \sum_{r = \ceil{\frac{\iota}{2}}}^k \sum_{s = 0}^{1} \sum_{i_1 + i_2 = s+1}
		e^{-rt} \D^\l_{\ff \dots \ff}  \sqbr{\br{\D^{1-s} J} \circledast \D^{i_1} \Phi_{\iota, r} \br{G_{i, p, k}}} \circledast \D^{i_2 + \iota} {A}_{i, p, k},
	\end{equation}
	so
	\begin{equation}
		\abs{\D^\l_{\ff \dots \ff} \gamma_{i, k}}_{g_X}
		 \leq C \sum_{p=1}^{N_{i, k}} \sum_{\iota = 0}^{2k} \sum_{r = \ceil{\frac{\iota}{2}}}^k \sum_{i_2 = 0}^{2} e^{-rt} \abs{\D^{i_2 + \iota}A_{i, p, k}},
	\end{equation}
	and similarly
	\begin{equation}
		\abs{\D^\l_{\ff \dots \ff} \dgamma_{i, k}}_{g_X}
		 \leq C \sum_{p=1}^{N_{i, k}} \sum_{\iota = 0}^{2k} \sum_{r = \ceil{\frac{\iota}{2}}}^k \sum_{i_2 = 0}^{2} e^{-rt} \br{\abs{\D^{i_2 + \iota}A_{i, p, k}}+\abs{\D^{i_2 + \iota}\dot{A}_{i, p, k}}  }.
	\end{equation}
	We can then use \Cref{lem_interp_fD_A_1,lem_interp_fD_A_2,lem_interp_fD_dot_A_new} to complete the proof.
\end{proof}

\begin{lemma}\label{lem_bound_D^l_eta_3}
	$\br{\eta_{3, k}}_{\ff \ff}$, $\br{\dot{\eta}_{3, k}}_{\ff \ff}$ satisfy 
	\begin{equation}
		\abs{\D^\l_{\ff \dots \ff} \br{\cdot}}_{g_X} = o \br{e^{-3t}}, \quad \forall 0 \leq \l \leq 4.
	\end{equation}
	${\eta_{3, k}}$, ${\dot{\eta}_{3, k}}$ satisfy 
	\begin{equation}
		\abs{\D^\l_{\ff \dots \ff} \br{\cdot}}_{g_X} = o \br{e^{-2t}}, \quad \forall 0 \leq \l \leq 4.
	\end{equation}
\end{lemma}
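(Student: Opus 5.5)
The plan is to read everything off the estimate \eqref{eq_expa_eta_infty} for $j=3$ (available since $k\geq 10$), using the comparison between $g(t)$ and $g_X$ norms exactly as in Lemma \ref{lem_bound_eta_2}. The first step is the conversion rule. For a covariant tensor $T$ of order $q$, a component with $a$ fiber slots satisfies
\begin{equation*}
\abs{T_{\ff\cdots\ff}}_{g_X} = e^{-a\thalf}\abs{T_{\ff\cdots\ff}}_{g(t)} \leq e^{-a\thalf}\abs{T}_{g(t)}.
\end{equation*}
This holds because $g(t)=g_{\C^m}+e^{-t}g_{Y,0}$ scales fiber directions by $e^{-t/2}$ and base directions by $1$, and $g_X$ is comparable to $g_{\C^m}+g_{Y,0}$.

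Next I apply this to $\D^\l_{\ff\cdots\ff}(\eta_{3,k})_{\ff\ff}$, which has $\l+2$ fiber slots. With $j=3$, \eqref{eq_expa_eta_infty} gives
\begin{equation*}
\abs{\D^\l\eta_{3,k}}_{g(t)}\leq Ce^{(\l-6-\alpha)\thalf}, \qquad 0\leq\l\leq 6.
\end{equation*}
Hence $\abs{\D^\l_{\ff\cdots\ff}(\eta_{3,k})_{\ff\ff}}_{g_X}\leq Ce^{-(\l+2)\thalf}e^{(\l-6-\alpha)\thalf}=Ce^{-(8+\alpha)\thalf}=o(e^{-3t})$. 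For the full tensor $\eta_{3,k}$ only the $\l$ derivative slots are guaranteed to be fiber slots, and the two form slots may be base. This gives $\abs{\D^\l_{\ff\cdots\ff}\eta_{3,k}}_{g_X}\leq Ce^{-\l\thalf}e^{(\l-6-\alpha)\thalf}=Ce^{-(6+\alpha)\thalf}=o(e^{-3t})$, which is stronger than the claimed $o(e^{-2t})$.

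For $\dot\eta_{3,k}$ I use $\abs{\D^\l\dot\eta_{3,k}}_{g(t)}\leq\abs{\fD^{\l+2}\eta_{3,k}}_{g(t)}\leq Ce^{(\l-4-\alpha)\thalf}$, valid for $\l+2\leq 6$, i.e.\ $\l\leq 4$. This is precisely why the lemma is restricted to $0\leq\l\leq4$ with $j=3$. The time derivative costs a factor $e^{t}$ compared with the undifferentiated case. The $\ff\ff$ components then satisfy $\abs{\cdot}_{g_X}\leq Ce^{-(\l+2)\thalf}e^{(\l-4-\alpha)\thalf}=Ce^{-(6+\alpha)\thalf}=o(e^{-3t})$. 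The full tensor satisfies $\abs{\cdot}_{g_X}\leq Ce^{-(4+\alpha)\thalf}=o(e^{-2t})$.

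The main thing to check is bookkeeping. First, $\partial_t$ commutes with $\D$, since $\D$ is $t$-independent, so $\D^\l\dot\eta=\partial_t\D^\l\eta$ is a summand of $\fD^{\l+2}\eta$. Second, the shrinking of $B$ and of the parabolic domain needed for $j=3$ is compatible with the earlier choices, which the remark after Theorem \ref{thm_asymp_expa} guarantees for fixed $k$. No interpolation is needed, since the range of \eqref{eq_expa_eta_infty} for $j=3$ covers all required orders.
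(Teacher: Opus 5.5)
Your proposal is correct and is essentially the paper's own proof. Both read everything off \eqref{eq_expa_eta_infty} with $j=3$, convert $g(t)$-norms to $g_X$-norms at a cost of $e^{-t/2}$ per fiber slot ($\l+2$ slots for the $\ff\ff$ components, $\l$ slots for the full tensors), and bound $\D^\l\dot\eta_{3,k}$ by $\fD^{\l+2}\eta_{3,k}$, obtaining the same exponents $-(8+\alpha)\thalf$, $-(6+\alpha)\thalf$, $-(6+\alpha)\thalf$, $-(4+\alpha)\thalf$.
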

\begin{proof}
	By \eqref{eq_expa_eta_infty},
	\begin{equation}
		\begin{aligned}
			\abs{\D^\l_{\ff \dots \ff} \br{\eta_{3, k}}_{\ff \ff}}_{g_X}
			& \leq C e^{-(\l+2) \frac{t}{2}} \cdot \abs{\D^\l_{\ff \dots \ff} \br{\eta_{3, k}}_{\ff \ff}} \\
			& \leq C e^{-(\l+2) \frac{t}{2}} \cdot \abs{\fD^\l \eta_{3, k}} \\
			& \leq C e^{-(8+\alpha) \frac{t}{2}},
		\end{aligned}
	\end{equation}
	and similarly,
	\begin{equation}
		\abs{\D^\l_{\ff \dots \ff} \br{\dot{\eta}_{3, k}}_{\ff \ff}}_{g_X}
		 \leq C e^{-(\l+2) \frac{t}{2}} \cdot \abs{\fD^{\l+2} {\eta}_{3, k}} \leq C e^{-(6+\alpha) \frac{t}{2}},
	\end{equation}
	\begin{equation}
		\abs{\D^\l_{\ff \dots \ff} {\eta_{3, k}}}_{g_X}
		 \leq C e^{-\l \frac{t}{2}} \cdot \abs{\fD^\l \eta_{3, k}}
		 \leq C e^{-(6+\alpha) \frac{t}{2}},
	\end{equation}
	\begin{equation}
		\abs{\D^\l_{\ff \dots \ff} {\dot{\eta}_{3, k}}}_{g_X}
		 \leq C e^{-\l \frac{t}{2}} \cdot \abs{\fD^{\l+2} {\eta}_{3, k}}
		 \leq C e^{-(4+\alpha) \frac{t}{2}}.
	\end{equation}
\end{proof}

We have the following extension of \Cref{prop_davg_phi}:
\begin{lemma}\label{lem_phi-dot_phi}
	$\varphi - \underline{\varphi}$, $\dot{\varphi} - \underline{\dot{\varphi}}$, and $\ddot{\varphi} - \underline{\ddot{\varphi}}$ all satisfy 
	\begin{equation}\label{eq_D^l_f_o_-t}
		\abs{\D^\l_{\ff \dots \ff} \br{\cdot}}_{g_X} = o\br{e^{-t}}, \quad \forall 0 \leq \l \leq 4.
	\end{equation}
\end{lemma}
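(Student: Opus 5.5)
The plan is to work with the decomposition \eqref{eq_phi_fiber_order_2}, extended to $j=3$ via Theorem \ref{thm_asymp_expa} with the fixed even $k\geq 10$:
\begin{equation*}
\varphi-\underline{\varphi}=\sum_{i=1}^{3}\sum_{p=1}^{N_{i,k}}\fG_{t,k}(A_{i,p,k},G_{i,p,k})+\psi_{3,k},
\end{equation*}
where $\ddbar\psi_{3,k}=\eta_{3,k}$ and $\psi_{3,k}$ has fiberwise average zero. Differentiating once and twice in $t$ gives the analogous decompositions for $\dot\varphi-\underline{\dot\varphi}$ and $\ddot\varphi-\underline{\ddot\varphi}$. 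Both $\D_{\ff}$ and $\partial_t$ commute with taking fiberwise averages: the averaging weight $dV_{g_{Y,z}}$ is $t$-independent, and fiber derivatives of base functions vanish. Hence it suffices to bound $\D^\l_{\ff\dots\ff}$ of each term separately, in $g_X$-norm and for $0\leq\l\leq 4$, by $o(e^{-t})$.

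\textbf{The Green operator terms.} I will use formula \eqref{eq_Green_t_k}. Since the $\Phi_{\iota,r}(G_{i,p,k})$ are $t$-independent smooth functions on $B\times Y$, their fiber derivatives are bounded in $g_X$ to every order. Also, $A_{i,p,k}$ lives on the base, so $\D_{\ff}$ does not touch $\D^\iota A_{i,p,k}$ apart from contractions with base tensors, which Remark \ref{rmk_circledast} hides. This gives
\begin{equation*}
\abs{\D^\l_{\ff\dots\ff}\partial_t^q\fG_{t,k}(A_{i,p,k},G_{i,p,k})}_{g_X}\leq C\sum_{\iota,r}e^{-rt}\sum_{s\leq q}\abs{\D^\iota\partial_t^sA_{i,p,k}},\qquad q=0,1,2.
\end{equation*}
For the $q\leq 1$ terms I then plug in Lemma \ref{lem_interp_fD_A_1}, Lemma \ref{lem_interp_fD_A_2}, \eqref{eq_interp_fD_A_3}, \eqref{eq_bound_-rt_D_A_1}, \eqref{eq_bound_-rt_D_dot_A_1} and Lemma \ref{lem_interp_fD_dot_A_new}, which give decay at least $e^{-(2+\delta)t/2}$ or better. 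These are in fact the same bounds already used in Lemma \ref{lem_bound_D^l_gamma_i_ff}.

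For $q=2$ I write $\ddot A_{i,p,k}=\partial_t\dot A_{i,p,k}$ and bound $\abs{\D^\iota\ddot A_{i,p,k}}\leq\abs{\fD^{\iota+2}\dot A_{i,p,k}}$. For $\iota=0$, Lemma \ref{lem_interp_fD_dot_A_new} with $\l=2$ gives $e^{-\frac t2(4-\frac{4}{8+\alpha})}=o(e^{-t})$. For $r\geq 1$ the factor $e^{-rt}$ absorbs the growth, since the worst exponent for $\l=\iota+2\leq 2r+2$ is still well below $e^{t}$ after multiplying by $e^{-rt}$. This matches the estimate \eqref{eq_dt_2_Green} proven in Lemma \ref{prop_davg_phi}, now with fiber derivatives attached harmlessly.

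\textbf{The remainder $\psi_{3,k}$.} This is the main obstacle, since we only control $\ddbar\psi_{3,k}=\eta_{3,k}$, and we need four fiber derivatives of $\psi_{3,k}$, $\dot\psi_{3,k}$, $\ddot\psi_{3,k}$. By \eqref{eq_expa_eta_infty} with $j=3$ one has $\abs{\fD^\iota\eta_{3,k}}\leq Ce^{\frac{\iota-6-\alpha}{2}t}$ for $\iota\leq 6$. Restricting to fibers as in Lemma \ref{lem_bound_D^l_eta_3} converts this into bounds on the fiber Laplacians:
\begin{equation*}
\abs{\D^\l_{\ff\dots\ff}(\eta_{3,k})_{\ff\ff}}_{g_X}\leq Ce^{-(\l+2)\frac t2}\abs{\fD^{\l}\eta_{3,k}}\leq Ce^{-(8+\alpha)\frac t2}.
\end{equation*}
The same computation with $\fD^{\l+2}$ and $\fD^{\l+4}$ (for $\l\leq 2$) gives $e^{-(6+\alpha)t/2}$ and $e^{-(4+\alpha)t/2}$ for $\dot\eta_{3,k}$ and $\ddot\eta_{3,k}$ respectively. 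Tracing with $\omega_F|_{\{z\}\times Y}$ gives $\Delta_{\ff}$ of $\psi_{3,k},\dot\psi_{3,k},\ddot\psi_{3,k}$ in $C^{\l}(g_{Y,z})$ with these rates. Fiberwise average zero, uniform elliptic (Schauder) estimates on $(Y,g_{Y,z})$ as in Lemma \ref{lem_unif_schauder}, and interpolation with Hölder control from \eqref{eq_expa_eta_holder} then bound $\D^\l_{\ff\dots\ff}$ of the potentials for $\l\leq 4$ by $Ce^{-(4+\alpha)t/2}\cdot e^{\epsilon t}=o(e^{-t})$.

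Two points need care. First, for $\ddot\psi_{3,k}$ and $\l=4$, I need $\fD^6\eta_{3,k}$ together with Hölder seminorms; if $\fD^{\l+4}$ exceeds $2j=6$, I will instead apply the expansion with $j=4$ or $5$, which is permitted since $k\geq 10$. Second, the derivative losses from Schauder estimates must be compensated by the surplus decay.
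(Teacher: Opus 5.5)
Your proposal is correct and follows the paper's proof essentially line by line. It uses the same $j=3$ splitting $\varphi-\underline{\varphi}=G_{1,k}+G_{2,k}+G_{3,k}+\psi_{3,k}$, bounds the Green-operator terms and their first two $t$-derivatives via \Cref{lem_interp_fD_A_1,lem_interp_fD_A_2,lem_interp_fD_dot_A_new}, and handles $\psi_{3,k},\dot\psi_{3,k},\ddot\psi_{3,k}$ by fiberwise Schauder with the $e^{-(p+2)t/2}$ rescaling of \eqref{eq_expa_eta_infty}, together with \eqref{eq_expa_eta_holder} on $\fD^6\eta_{3,k}$ for $\ddot\psi_{3,k}$. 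Your fallback to $j=4,5$ and the $e^{\epsilon t}$ loss are unnecessary: $C^{2,\alpha}$ control of $(\ddot\eta_{3,k})_{\ff\ff}$ is exactly what Schauder needs for four fiber derivatives, so $j=3$ already gives $Ce^{-(4+\alpha)t/2}$ with no loss.
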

\begin{proof}
	Recall from \Cref{thm_asymp_expa}, \Cref{lem_green} that we can write
	\begin{equation}
		\varphi - \underline{\varphi} = G_{1, k} + G_{2, k} + G_{3, k} + \psi_{3, k},
	\end{equation}
	where for $1 \leq i \leq 3$,
	\begin{equation}
		G_{i, k} = \sum_{p=1}^{N_{i, k}}  \sum_{\iota = 0}^{2k} \sum_{r = \ceil{\frac{\iota}{2}}}^k e^{-rt} {\Phi_{\iota, r} (G_{i, p, k})} \circledast \D^\iota A_{i, p, k}
	\end{equation}
	has fiberwise average zero, and $\ddbar \psi_{3, k} = \eta_{3, k}$. We then have for all $\l \geq 0$,
	\begin{equation}
		\abs{\D^\l_{\ff \dots \ff} G_{i, k}}_{g_X} \leq C \sum_{p=1}^{N_{i, k}}  \sum_{\iota = 0}^{2k} \sum_{r = \ceil{\frac{\iota}{2}}}^k e^{-rt}  \abs{\D^\iota A_{i, p, k}},
	\end{equation}	
	\begin{equation}
		\abs{\D^\l_{\ff \dots \ff} \dot{G}_{i, k}}_{g_X} \leq C \sum_{p=1}^{N_{i, k}}  \sum_{\iota = 0}^{2k} \sum_{r = \ceil{\frac{\iota}{2}}}^k e^{-rt}  \br{\abs{\D^\iota A_{i, p, k}} + \abs{\D^\iota \dot{A}_{i, p, k}}},
	\end{equation}	
	\begin{equation}
		\abs{\D^\l_{\ff \dots \ff} \ddot{G}_{i, k}}_{g_X} \leq C \sum_{p=1}^{N_{i, k}}  \sum_{\iota = 0}^{2k} \sum_{r = \ceil{\frac{\iota}{2}}}^k e^{-rt}  \br{\abs{\D^\iota A_{i, p, k}} + \abs{\D^\iota \dot{A}_{i, p, k}} + \abs{\D^\iota \ddot{A}_{i, p, k}}}.
	\end{equation}	
	By \eqref{eq_bound_-rt_D_A_1}, \eqref{eq_bound_-rt_D_dot_A_1}, \Cref{lem_interp_fD_A_1}, \Cref{lem_interp_fD_A_2},  \Cref{lem_interp_fD_dot_A_new}, we see that $G_{i, k}, \dot{G}_{i, k}, \ddot{G}_{i, k}$ satisfy \eqref{eq_D^l_f_o_-t} for all $1 \leq i \leq 3$.

	To deal with $\psi_{3, k}$ (which has fiberwise average zero), apply Schauder estimates on each fiber $(\{z\} \times Y, g_{Y, z})$ to get for each $0 \leq \l \leq 4$,
	\begin{equation}
		\begin{aligned}
			& \norm{\D^\l_{\ff \dots \ff} \psi_{3, k}}_{C^0(\{z\} \times Y, g_{Y, z})} \\
			& \leq C \norm{\Delta^{\omega_F|_{\{z\} \times Y}} \psi_{3, k}}_{C^{2, \alpha}(\{z\} \times Y, g_{Y, z})} \\
			& \leq C \norm{\br{\eta_{3, k}}_{\ff \ff}}_{C^{2, \alpha}(\{z\} \times Y, g_{Y, z})} \\
			& \leq C \br{\sum_{p=0}^3 e^{-(p+2)\thalf}\norm{ \D^p_{\ff\dots\ff}\br{\eta_{3, k}}_{\ff\ff}}_{\infty, B \times Y \times [t-1, t], g(t)} }.
		\end{aligned}
	\end{equation}
	Since $\{g_{Y, z} \mid z \in B\}$ is a smooth family of metrics, we can piece together these fiberwise estimates to derive
	\begin{equation}
		\abs{\D^\l_{\ff\dots\ff} \psi_{3,k}}_{g_X} \leq C e^{-(8 + \alpha)\thalf}, \quad \forall 0 \leq \l \leq 4,
	\end{equation}
	thanks to \eqref{eq_expa_eta_infty}. We can replace $\psi_{3, k}$ by $\dot{\psi}_{3, k}$ in the argument above to derive
	\begin{equation}
		\abs{\D^\l_{\ff\dots\ff} \dot{\psi}_{3,k}}_{g_X} \leq C e^{-(6 + \alpha)\thalf}, \quad \forall 0 \leq \l \leq 4.
	\end{equation}
	The case of $\ddot{\psi}_{3, k}$ is slightly different and uses in addition \eqref{eq_expa_eta_holder}:
	\begin{equation}
		\begin{aligned}
			& \norm{\D^\l_{\ff \dots \ff} \ddot{\psi}_{3, k}}_{C^0(\{z\} \times Y, g_{Y, z})} \\
			& \leq C \sum_{p=0}^2 e^{-(p+2)\thalf}\norm{ \fD^{p+4} \eta_{3, k}}_{\infty, B \times Y \times [t-1, t], g(t)}  \\
			& \quad + C e^{-(4+\alpha)\thalf} \sqbr{\fD^{6} \eta_{3, k}}_{\alpha, \alpha/2, B \times Y \times [t-1, t], g(t)},
		\end{aligned}
	\end{equation}
	so
	\begin{equation}
		\abs{\D^\l_{\ff\dots\ff} \ddot{\psi}_{3,k}}_{g_X} \leq C e^{-(4 + \alpha)\thalf}, \quad \forall 0 \leq \l \leq 4.
	\end{equation}
	This completes the proof.
\end{proof}

We are now ready to prove Proposition \ref{prop_error_o_-2t}. Using the asymptotic expansion Theorem \ref{thm_asymp_expa}, we write the parabolic \MA equation \eqref{eq_krf_MA}  for the \KR flow as
\begin{equation}\label{eq_flow_MA}
	e^{\varphi + \dot{\varphi}} \omega_{\can}^m \wedge \omega_F^n
	= \frac{e^{nt}}{\binom{m+n}{n}} \sqbr{(1-e^{-t}) \omega_{\can} + e^{-t} \omega_F + \gamma_0 + \gamma_{1, k} + \gamma_{2, k} + \gamma_{3,k} + \eta_{3, k}}^{m+n}.
\end{equation}
Define 
\begin{equation}
	\beta:= (1-e^{-t}) \omega_{\can} + \gamma_0,
\end{equation}
as a form on the base, and expand \eqref{eq_flow_MA} as
\begin{align}
	& e^{\varphi + \dot{\varphi}} \omega_{\can}^m \wedge \omega_F^n \label{eq_error_left} \\
	& = \beta^m \sqbr{\omega_F + e^t \br{\gamma_{1, k} + \gamma_{2, k} + \gamma_{3, k} + \eta_{3, k}}}_{\ff \ff}^n \notag\\
	& \quad + \sum_{q=1}^m \frac{m! n!}{(m-q)! (n+q)!} 
	\beta^{m-q} 
	e^{-qt} \sqbr{\omega_F + e^t \br{\gamma_{1, k} + \gamma_{2, k} + \gamma_{3, k} + \eta_{3, k}}}^{n+q} \notag \\
	& = \beta^m \sqbr{\br{\omega_F}_{\ff \ff}^n + e^t n \br{\omega_F}_{\ff \ff}^{n-1} \br{\gamma_{1, k}}_{\ff \ff}} \label{eq_error_1} \\
	& \quad + \beta^m \sqbr{e^t n \br{\omega_F}_{\ff \ff}^{n-1} \br{\gamma_{2, k} + \gamma_{3, k} + \eta_{3, k}}_{\ff \ff}} \label{eq_error_2} \\
	& \quad + \beta^m \sum_{r=2}^n \binom{n}{r} \br{\omega_F}_{\ff \ff}^{n-r} \sqbr{e^{t}  \br{\gamma_{1, k} + \gamma_{2, k} + \gamma_{3, k} + \eta_{3, k}}_{\ff \ff}}^r \label{eq_error_3} \\
	& \quad + \frac{m}{n + 1} \beta^{m-1} e^{-t} \omega_F^{n+1} \label{eq_error_4} \\
	& \quad + \frac{m}{n + 1} \beta^{m-1} e^{-t} \sum_{r=1}^{n+1} \binom{n+1}{r} \omega_F^{n+1-r} \sqbr{e^{t}   \br{\gamma_{1, k} + \gamma_{2, k} + \gamma_{3, k} + \eta_{3, k}}}^r  \label{eq_error_5} \\
	& \quad + \sum_{q=2}^m \frac{m! n!}{(m-q)! (n+q)!} 
	\beta^{m-q} 
	e^{-qt} \sqbr{\omega_F + e^t \br{\gamma_{1, k} + \gamma_{2, k} + \gamma_{3, k} + \eta_{3, k}}}^{n+q}	\label{eq_error_6}.
\end{align}
Define an operator $\T$ on top-degree forms on $B \times Y$ by: 1. dividing by $e^t \omega_{\can}^m \wedge \omega_F^n$; 2. subtracting from the resulting function its fiberwise average to get a function on $B \times Y$. We will apply $\T$ to the expansion above and analyze line-by-line. As an overview, $\T\eqref{eq_error_1}$ contains $\sum_{p=1}^{N_{1, k}} A_{1, p, k} G_{1, p, k}$, $\T\eqref{eq_error_4}$ contains $e^{-2t} \br{\cS - \underline{\cS}}$, and the remaining terms constitute the error $\E$ defined in \eqref{eq_error_full}.

First, by $\de\dbar$-exactness of $\gamma_{1,k}$,
\begin{equation}
	\T\eqref{eq_error_1} 
	= \B \tr{\omega_F|_{\{\cdot\} \times Y}}{ \br{\gamma_{1, k}}_{\ff \ff}},
\end{equation}
where
\begin{equation}
	  \B := \frac{\beta^m}{\ocan^m}.
\end{equation}
Thus by \eqref{eq_gamma_i_k_ff}, \eqref{eq_error_1} contributes to $\E$ by $\E_1 := \E_{1, 1} + \E_{1,2}$, where
\begin{equation}
	\E_{1, 1} := \br{\B - 1} \sum_{p=1}^{N_{1, k}} A_{1, p, k} G_{1, p, k},
\end{equation}
\begin{equation}
	\E_{1, 2} := \B  \sum_{p=1}^{N_{1, k}} \sum_{(\iota, r) \neq (0, 0)} e^{-rt} \Delta_{\omega_F|_{\{\cdot\} \times Y}}\br{\Phi_{\iota, r}(G_{1, p, k})} \circledast \D^\iota A_{1, p, k}.
\end{equation}

\begin{lemma}\label{lem_E_1}
	$\E_1$ and $\dot{\E}_1$ satisfy \eqref{eq_o_-2t_fffff}.
\end{lemma}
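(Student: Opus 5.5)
By \Cref{lem_unif_schauder}, it suffices to prove that $\E_{1,1}$, $\E_{1,2}$ and their $t$-derivatives satisfy \eqref{eq_o_-2t_l_leq_4}. That is, we show $\abs{\D^\l_{\ff\dots\ff}(\cdot)}_{g_X} = o(e^{-2t})$ for $0 \leq \l \leq 4$. Both terms are already of the form (function on the base) $\times$ (fiberwise average-zero function), so fiber derivatives fall only on the $t$-independent fiber factors. Those factors are $G_{1,p,k}$ and $\Delta_{\omega_F|_{\{\cdot\}\times Y}}\Phi_{\iota,r}(G_{1,p,k})$, both smooth on $B\times Y$. Hence $\abs{\D^\l_{\ff\dots\ff}(\cdot)}_{g_X}$ of each of them is bounded uniformly for every $\l$, since $g_X$ is $t$-independent. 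The problem therefore reduces to decay estimates on the base coefficients.

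For $\E_{1,1}$, the coefficient is $(\B-1)A_{1,p,k}$. Here $\B = \beta^m/\ocan^m = \br{(1-e^{-t})\ocan+\gamma_0}^m/\ocan^m$ lives on the base, and $\B - 1 = o(1)$ by \eqref{eq_expa_gamma_0_infty}. Combined with $\abs{A_{1,p,k}} \leq Ce^{-2t}$ from \Cref{lem_interp_fD_A_1}, this gives $o(e^{-2t})$. For $\dot{\E}_{1,1}$, we write $\dot{\E}_{1,1} = \dot{\B}\,A_{1,p,k}G_{1,p,k} + (\B-1)\dot A_{1,p,k}G_{1,p,k}$. The first term is handled by $\dot\B = O(e^{-t}) + O(\abs{\dot\gamma_0}) = o(1)$, using $\abs{\fD^2\gamma_0} = o(1)$ from \eqref{eq_expa_gamma_0_infty}. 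The second is handled by $\abs{\dot A_{1,p,k}} \leq Ce^{-2t}$ from \Cref{prop_bound_dot_A_i_leq2}. Both are therefore $o(e^{-2t})$.

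For $\E_{1,2}$, $\B$ is bounded, so we need $e^{-rt}\abs{\D^\iota A_{1,p,k}} = o(e^{-2t})$ and $e^{-rt}\abs{\D^\iota \dot A_{1,p,k}} = o(e^{-2t})$ for all $(\iota,r)\neq(0,0)$ with $\ceil{\iota/2}\leq r\leq k$. We also need the term in which $\partial_t$ hits $e^{-rt}$, which carries an extra factor $r$ and is harmless. These are exactly \eqref{eq_bound_-rt_D_A_1} and \eqref{eq_bound_-rt_D_dot_A_1}: they give $Ce^{-(5+\alpha/5)t/2}$, $Ce^{-3t}$, and $Ce^{-(2+\alpha/3)t}$, all of which are $o(e^{-2t})$. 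The $\dot\B$ contribution multiplies the $A$-terms by $o(1)$ and is also fine.

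The main point requiring care is that \eqref{eq_bound_-rt_D_A_1} and \eqref{eq_bound_-rt_D_dot_A_1} must cover the whole range $0\leq\iota\leq 2k$, not just small $\iota$. I would recheck this from \Cref{lem_interp_fD_A_1} and \Cref{lem_interp_fD_dot_A_1_2}. The worry is large $\iota$ with $r = \ceil{\iota/2}$: the interpolation bound $e^{-\frac t2(4-\frac{\iota(\iota-2)}{\iota+\alpha})}$ grows roughly like $e^{(\iota-6)t/2}$, but the factor $e^{-rt} \leq e^{-\iota t/2}$ compensates. The product is at most $e^{-\frac t2(4+\iota-\frac{\iota(\iota-2)}{\iota+\alpha})}$, and the exponent $\iota - \frac{\iota(\iota-2)}{\iota+\alpha} = \frac{\iota(2+\alpha)}{\iota+\alpha} > 0$ for $\iota\geq1$. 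So the decay is strictly faster than $e^{-2t}$ uniformly in $\iota$. The analogous check for $\dot A$, using the exponent $\frac{\iota^2}{\iota+\alpha}$, gives $\iota-\frac{\iota^2}{\iota+\alpha} = \frac{\alpha\iota}{\iota+\alpha} > 0$. The remaining case $\iota=0$, $r\geq1$ is immediate. Once this bookkeeping is verified, \Cref{lem_unif_schauder} yields \eqref{eq_o_-2t_fffff} for $\E_1$ and $\dot\E_1$.
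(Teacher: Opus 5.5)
Your proposal is correct and follows essentially the same route as the paper: reduce via \Cref{lem_unif_schauder} to \eqref{eq_o_-2t_l_leq_4}, note that fiber derivatives land only on the $t$-independent factors built from $G_{1,p,k}$, and conclude from $\B = 1+o(1)$, $\dot{\B} = o(1)$ together with \eqref{eq_bound_-rt_D_A_1} and \eqref{eq_bound_-rt_D_dot_A_1}. Your extra bookkeeping over the full range $0 \leq \iota \leq 2k$ checks what the paper states without detail; note only that your large-$\iota$ formula applies for $\iota \geq 4$, while for $1 \leq \iota \leq 3$ you must use the first case of \Cref{lem_interp_fD_A_1}, which gives the same conclusion.
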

\begin{proof}
	By \eqref{eq_expa_gamma_0_infty},
	\begin{equation}\label{eq_B}
		\B = 1 + o(1)_{\base}, \quad \dot{\B} = o(1)_{\base}.
	\end{equation}
	Note that since $A_{1,p,k}$ live on the base, the derivatives $\D^\l_{\ff \dots \ff}$, when applied to $\E_{1, i}$, will only land on the functions involving $G_{1, p, k}$ to form some $t$-independent tensor. Therefore, by \eqref{eq_bound_-rt_D_A_1}, \eqref{eq_bound_-rt_D_dot_A_1}, we see that $\E_{1, 1}$, $\dot{\E}_{1, 1}$, $\E_{1, 2}$, $\dot{\E}_{1, 2}$ all satisfy \eqref{eq_o_-2t_l_leq_4}, and hence \eqref{eq_o_-2t_fffff} by Lemma~\ref{lem_unif_schauder}. 
\end{proof}

Next, by  $\de\dbar$-exactness of $\gamma_{i, k}$ and $\eta_{i, k}$,
\begin{align}
	\T \eqref{eq_error_2} 
	& = \B \tr{\omega_F|_{\{\cdot\} \times Y}}{ \br{\gamma_{2, k} + \gamma_{3, k} + \eta_{3, k}}_{\ff \ff}} =: \E_2.
\end{align}
Thus \eqref{eq_error_2} contributes to $\E$ by $\E_2$.
\begin{lemma}\label{lem_E_2}
	$\E_2$ and $\dot{\E}_2$ satisfy \eqref{eq_o_-2t_fffff}.
\end{lemma}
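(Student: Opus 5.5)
By Lemma \ref{lem_unif_schauder}, it suffices to verify \eqref{eq_o_-2t_l_leq_4} for $\E_2$ and $\dot{\E}_2$, namely
\begin{equation*}
	\abs{\D^\l_{\ff \dots \ff} \E_2}_{g_X} + \abs{\D^\l_{\ff \dots \ff} \dot{\E}_2}_{g_X} = o(e^{-2t}), \quad \forall 0 \leq \l \leq 4.
\end{equation*}
I would write $\E_2 = \B \cdot \tr{\omega_F|_{\{\cdot\} \times Y}}{\br{\gamma_{2,k} + \gamma_{3,k} + \eta_{3,k}}_{\ff\ff}}$. The factor $\B$ lives on the base and satisfies \eqref{eq_B}, so fiber derivatives do not hit it. For $\dot{\E}_2$ the product rule gives
\begin{equation*}
	\dot{\E}_2 = \dot{\B}\,\tr{\omega_F}{(\cdot)_{\ff\ff}} + \B\,\tr{\omega_F}{(\dot{\cdot})_{\ff\ff}}.
\end{equation*}
Because $\omega_F$ is $t$-independent, the fiberwise trace with respect to $\omega_F|_{\{z\}\times Y}$ commutes with $\partial_t$.

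The fiber derivatives $\D^\l_{\ff\dots\ff}$ applied to $\tr{\omega_F}{T_{\ff\ff}}$ distribute, by the product rule, onto $\D^{\l'}_{\ff\dots\ff} T_{\ff\ff}$ (with $\l' \leq \l$) contracted against $t$-independent smooth tensors built from $\omega_F$ and $J$. These tensors are bounded in $g_X$-norm. Hence it suffices to show that each of $\br{\gamma_{2,k}}_{\ff\ff}$, $\br{\gamma_{3,k}}_{\ff\ff}$, $\br{\eta_{3,k}}_{\ff\ff}$ and their $t$-derivatives satisfies
\begin{equation*}
	\abs{\D^{\l}_{\ff\dots\ff}(\cdot)}_{g_X} = o(e^{-2t}), \quad 0 \leq \l \leq 4.
\end{equation*}
This is exactly what the earlier lemmas give:
\begin{enumerate}
	\item for $\gamma_{2,k}$ and $\dot\gamma_{2,k}$, Lemma \ref{lem_bound_D^l_gamma_i_ff} gives $o(e^{-2t})$;
	\item for $\gamma_{3,k}$ and $\dot\gamma_{3,k}$, the same lemma gives the stronger bound $o(e^{-5t/2})$;
	\item for $\eta_{3,k}$ and $\dot\eta_{3,k}$, Lemma \ref{lem_bound_D^l_eta_3} gives $o(e^{-3t})$ for $\l \leq 4$.
\end{enumerate}
Multiplying by $\B = O(1)$ or $\dot{\B} = o(1)$ preserves these rates, so \eqref{eq_o_-2t_l_leq_4} holds for both $\E_2$ and $\dot{\E}_2$. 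Lemma \ref{lem_unif_schauder} then yields \eqref{eq_o_-2t_fffff}.

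The only point requiring care, and the one I expect to be the main obstacle, is the restriction $\l \leq 4$ for $\eta_{3,k}$. The remainder only has finitely many controlled derivatives, by \eqref{eq_expa_eta_infty} with $j = 3$, which allows up to $2j = 6$ parabolic derivatives. For $\dot{\eta}_{3,k}$, one time derivative costs two parabolic orders, so $\l \leq 4$ fiber derivatives of $\dot\eta_{3,k}$ use $\fD^{\l+2}$ with $\l + 2 \leq 6$. This is exactly within range, and the bookkeeping was already carried out in Lemma \ref{lem_bound_D^l_eta_3}. I would also check that trace and fiber differentiation interact harmlessly. On each fiber $\D_{\ff}$ is the Levi-Civita connection of $g_{Y,z}$, which is parallel for $\omega_F|_{\{z\}\times Y}$. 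Hence $\D_{\ff}$ commutes with the fiberwise trace, and no extra terms appear at all.
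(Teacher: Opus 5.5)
Your proposal is correct and follows the paper's proof: you split $\E_2$ into the $\gamma_{2,k}$, $\gamma_{3,k}$ and $\eta_{3,k}$ contributions, use \eqref{eq_B} for $\B$, and feed the fiber-derivative bounds into Lemma \ref{lem_unif_schauder}. The one local difference is the remainder piece: the paper skips Schauder there, writing $\eta_{3,k}=\ddbar\psi_{3,k}$ so that $(\Delta^{\omega_F|_{\{\cdot\}\times Y}})^{-1}$ of that piece is $\B\psi_{3,k}$ and the operator reduces to $\B\,\D^3_{\ff\ff\ff}(\eta_{3,k})_{\ff\ff}$, which needs only $\fD^5\eta_{3,k}$; your route via Lemma \ref{lem_bound_D^l_eta_3} uses $\fD^6\eta_{3,k}$, which is still within the range $2j=6$, as you checked.
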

\begin{proof}
	Write $\E_2 = \E_{2, 2} + \E_{2, 3} + \E_{2, 4}$, where for $i = 2, 3$,
	\begin{equation}
		\E_{2, i} := \B \tr{\omega_F|_{\{\cdot\} \times Y}}{ \br{\gamma_{i, k}}_{\ff \ff}} = \B \sum_{p=1}^{N_{i, k}} \sum_{\iota = 0}^{2k} \sum_{r = \ceil{\frac{\iota}{2}}}^k  
		e^{-rt} \Delta_{\omega_F|_{\{\cdot\} \times Y}}\br{\Phi_{\iota, r}(G_{i, p, k})} \circledast \D^\iota A_{i, p, k},
	\end{equation}
	and
	\begin{equation}
		\E_{2, 4} := \B \tr{\omega_F|_{\{\cdot\} \times Y}}{ \br{\eta_{3, k}}_{\ff \ff}}.
	\end{equation}
	We can then use \eqref{eq_B}, \Cref{lem_interp_fD_A_2,lem_interp_fD_dot_A_new}, to conclude that $\E_{2, i}$ and $\dot{\E}_{2,i}$ satisfy \eqref{eq_o_-2t_l_leq_4} for $i = 2, 3$, and hence \eqref{eq_o_-2t_fffff}.

	As for $\E_{2, 4}$, recall we can write $\eta_{3, k} = \ddbar \psi_{3, k}$ for some function $\psi_{3, k}$ having fiberwise average zero. Then
	\begin{equation}
		\begin{aligned}
			\abs{\D^3_{\ff \ff \ff} i\partial_{\ff} \overline{\partial}_{ \ff} \br{\Delta^{\omega_F|_{\{\cdot\} \times Y}}}^{-1} \E_{2, 4}}_{g_X}
			& \leq C e^{-5 \frac{t}{2}} \abs{\D^3 \eta_{3, k}} \\
			& \leq C {e^{-(8+\alpha) \frac{t}{2}}}, 
		\end{aligned}
	\end{equation} 
	by \eqref{eq_expa_eta_infty}. Similarly,
	\begin{align*}
		\abs{\D^3_{\ff \ff \ff} i\partial_{\ff} \overline{\partial}_{ \ff} \br{\Delta^{\omega_F|_{\{\cdot\} \times Y}}}^{-1} \dot{\E}_{2, 4}}_{g_X} \leq C {e^{-(6+\alpha) \frac{t}{2}}} .
	\end{align*}
	Thus $\E_{2, 4}$ and $\dot{\E}_{2, 4}$ satisfies \eqref{eq_o_-2t_fffff}. This completes the proof.
\end{proof}

Next, by  $\de\dbar$-exactness of $\gamma_{i, k}$ and $\eta_{i, k}$,
\begin{equation}
	\T \eqref{eq_error_3} = e^{-t} \B \sum_{r=2}^n \binom{n}{r} \frac{ \br{\omega_F}_{\ff \ff}^{n-r} \sqbr{e^{t}  \br{\gamma_{1, k} + \gamma_{2, k} + \gamma_{3, k} + \eta_{3, k}}_{\ff \ff}}^r}{\br{\omega_F}_{\ff \ff}^n} =: \E_3.
\end{equation}
Thus \eqref{eq_error_3} contributes to $\E$ by $\E_3$.
\begin{lemma}\label{lem_E_3}
	$\E_3$ and $\dot{\E}_3$ satisfy \eqref{eq_o_-2t_fffff}.
\end{lemma}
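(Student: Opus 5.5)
By Lemma~\ref{lem_unif_schauder}, it suffices to show that $\E_3$ and $\dot{\E}_3$ satisfy \eqref{eq_o_-2t_l_leq_4}, i.e.\ that $\abs{\D^\l_{\ff\dots\ff}(\cdot)}_{g_X} = o(e^{-2t})$ for $0 \leq \l \leq 4$.

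\textbf{Rewriting $\E_3$.} Set $\Gamma := e^t\br{\gamma_{1,k}+\gamma_{2,k}+\gamma_{3,k}+\eta_{3,k}}_{\ff\ff}$. Then
\begin{equation*}
\E_3 = e^{-t}\B\sum_{r=2}^n\binom{n}{r}\frac{(\omega_F)_{\ff\ff}^{n-r}\wedge\Gamma^r}{(\omega_F)_{\ff\ff}^n}.
\end{equation*}
Each summand is a fiberwise polynomial contraction of degree $r\geq 2$ in $\Gamma$. Its coefficients are $t$-independent tensors built from $(\omega_F)_{\ff\ff}$ and its inverse, and these have bounded $\D_{\ff}$-derivatives of all orders in $g_X$. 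The factor $\B$ lives on the base, so $\D_{\ff}$ never hits it. By \eqref{eq_B}, $\B$ and $\dot\B$ are bounded.

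\textbf{Bounding $\D_\ff^\l\Gamma$.} Lemma~\ref{lem_bound_D^l_gamma_i_ff} gives $\D^\l_{\ff\dots\ff}(\gamma_{1,k})_{\ff\ff}=O_{g_X}(e^{-2t})$. The same lemma gives $o(e^{-2t})$ for $\gamma_{2,k}$ and $o(e^{-5t/2})$ for $\gamma_{3,k}$. Lemma~\ref{lem_bound_D^l_eta_3} gives $o(e^{-3t})$ for $\eta_{3,k}$ when $\l\le4$. The same bounds hold for the $t$-derivatives. Multiplying by $e^t$,
\begin{equation*}
\abs{\D^\l_{\ff\dots\ff}\Gamma}_{g_X}=O(e^{-t}),\qquad \abs{\D^\l_{\ff\dots\ff}\dot\Gamma}_{g_X}=O(e^{-t}),\qquad 0\le\l\le4.
\end{equation*}
For $\dot\Gamma = \Gamma + e^t(\dot\gamma_{1,k}+\dots)_{\ff\ff}$, the second part is also $O(e^{-t})$.

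\textbf{Conclusion for $\E_3$.} Apply the Leibniz rule to $\D^\l_{\ff\dots\ff}$ of a degree-$r$ product with $r\ge2$. Each resulting term is bounded by $C\prod_{i=1}^r\abs{\D^{\l_i}\Gamma}_{g_X}\le Ce^{-rt}\le Ce^{-2t}$. With the prefactor $e^{-t}$, this gives
\begin{equation*}
\abs{\D^\l_{\ff\dots\ff}\E_3}_{g_X}=O(e^{-3t})=o(e^{-2t}).
\end{equation*}

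\textbf{Conclusion for $\dot{\E}_3$.} Differentiating in $t$ produces three kinds of terms:
\begin{itemize}
\item $-\E_3$, which is already controlled;
\item $\dot\B$ times the same polynomial, which is $O(e^{-3t})$;
\item terms in which one factor $\Gamma$ is replaced by $\dot\Gamma$, each still $O(e^{-t})\cdot O(e^{-t})$ before the prefactor $e^{-t}$.
\end{itemize}
Hence $\dot\E_3$ is also $O(e^{-3t})$ in these fiber norms.

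\textbf{Main obstacle.} The only delicate point is that $\eta_{3,k}$ is controlled only for $\l\le4$ fiber derivatives, and $\dot\eta_{3,k}$ has a weaker decay rate. Lemma~\ref{lem_bound_D^l_eta_3} still gives $o(e^{-3t})\cdot e^t=o(e^{-2t})\subset O(e^{-t})$, which suffices since quadratic terms gain an extra $e^{-t}$. Because Lemma~\ref{lem_unif_schauder} needs exactly $\l\le4$, the range matches.
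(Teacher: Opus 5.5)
Your proposal is correct and follows essentially the same route as the paper. Both arguments bound $e^t(\gamma_{1,k}+\gamma_{2,k}+\gamma_{3,k}+\eta_{3,k})_{\ff\ff}$ and its $t$-derivative by $O(e^{-t})$ in up to four fiber derivatives via Lemmas~\ref{lem_bound_D^l_gamma_i_ff} and~\ref{lem_bound_D^l_eta_3}, use the product rule to get $O(e^{-2t})$ for each degree-$r\geq 2$ term, and then combine with \eqref{eq_B} and the prefactor $e^{-t}$ to reach $O(e^{-3t})$, so that Lemma~\ref{lem_unif_schauder} applies.
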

\begin{proof}
	By \Cref{lem_bound_D^l_gamma_i_ff,lem_bound_D^l_eta_3}, we see that $e^{t}  \br{\gamma_{1, k} + \gamma_{2, k} + \gamma_{3, k} + \eta_{3, k}}_{\ff \ff}$ and its $t$-derivative satisfy
	\begin{equation}
		\abs{\D^\l_{\ff \dots \ff} \br{\cdot}}_{g_X} = O(e^{-t}), \quad \forall 0 \leq \l \leq 4.
	\end{equation}
	We can use the product rule for $\D_{\ff}$  to see that for each $r \geq 2$, 
	\begin{equation}
		\frac{ \br{\omega_F}_{\ff \ff}^{n-r} \sqbr{e^{t}  \br{\gamma_{1, k} + \gamma_{2, k} + \gamma_{3, k} + \eta_{3, k}}_{\ff \ff}}^r}{\br{\omega_F}_{\ff \ff}^n}
	\end{equation}
	and its $t$-derivative satisfy
	\begin{equation}
		\abs{\D^\l_{\ff \dots \ff} \br{\cdot}}_{g_X} = O(e^{-2t}), \quad \forall 0 \leq \l \leq 4.
	\end{equation}	
	Combined with \eqref{eq_B},  we see that $\E_{3}$ and $\dot{\E}_{3}$ satisfy
	\begin{equation}
		\abs{\D^\l_{\ff \dots \ff} \br{\cdot}}_{g_X} = O(e^{-3t}), \quad \forall 0 \leq \l \leq 4,
	\end{equation}
	Thus $\E_{3}$ and $\dot{\E}_{3}$ satisfy \eqref{eq_o_-2t_l_leq_4}, and hence \eqref{eq_o_-2t_fffff}.
\end{proof}

Next, we have
\begin{equation}
	\T \eqref{eq_error_4} = e^{-2t} \br{\cS - \underline{\cS} + \tilde{\E}_4 - \underline{\tilde{\E}_4}},
\end{equation}
where
\begin{equation}
	\tilde{\E}_4 := \frac{m}{n+1} \frac{\beta_1 \wedge \omega_F^{n+1}}{\omega_{\can}^m \wedge \omega_F^n}, \quad \beta_1 := \beta^{m-1} - \omega_{\can}^{m-1}.
\end{equation}
Thus \eqref{eq_error_4} contributes to $\E$ by $\E_4 := e^{-2t} \br{\tilde{\E}_4 - \underline{\tilde{\E}_4}}$.

\begin{lemma}\label{lem_E_4}
	$\E_4$ and $\dot{\E}_4$ satisfy \eqref{eq_o_-2t_fffff}.
\end{lemma}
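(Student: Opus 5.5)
By Lemma~\ref{lem_unif_schauder}, it suffices to prove that $\E_4$ and $\dot{\E}_4$ satisfy \eqref{eq_o_-2t_l_leq_4}. Since $\E_4 = e^{-2t}\br{\tilde{\E}_4 - \underline{\tilde{\E}_4}}$ and $\D^\l_{\ff\dots\ff}$ kills the base function $\underline{\tilde{\E}_4}$ for $\l \geq 1$, this further reduces to showing that $\tilde{\E}_4$ and $\dot{\tilde{\E}}_4$ satisfy
\begin{equation*}
	\abs{\D^\l_{\ff\dots\ff}(\cdot)}_{g_X} = o(1), \quad 0 \leq \l \leq 4.
\end{equation*}
Indeed, $\dot{\E}_4 = -2\E_4 + e^{-2t}\br{\dot{\tilde{\E}}_4 - \underline{\dot{\tilde{\E}}_4}}$, and fiberwise averaging is bounded in $C^0$. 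The case $\l = 0$ of the averaged term is handled by this bound as well.

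The key structural point is that $\beta_1 = \beta^{m-1} - \ocan^{m-1}$ is a form pulled back from the base. Since $\beta = (1-e^{-t})\ocan + \gamma_0$ with $\gamma_0 = \ddbar \underline{\varphi}$, we can expand
\begin{equation*}
	\beta_1 = \sum_{q=1}^{m-1} \binom{m-1}{q} \ocan^{m-1-q} \wedge \br{\gamma_0 - e^{-t}\ocan}^{q}.
\end{equation*}
Each term contains at least one factor of $\gamma_0 - e^{-t}\ocan$. By \eqref{eq_expa_gamma_0_infty}, $\abs{\gamma_0} = o(1)$ and $\abs{\dot\gamma_0} \leq \abs{\fD^2\gamma_0} = o(1)$, since we applied Theorem~\ref{thm_asymp_expa} with $j \geq 1$. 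Hence $\beta_1$ and $\dot\beta_1$ are $o(1)$ as base forms.

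Now we write
\begin{equation*}
	\tilde{\E}_4 = \frac{m}{n+1}\, \frac{\beta_1 \wedge \omega_F^{n+1}}{\ocan^m \wedge \omega_F^n}.
\end{equation*}
Locally this is a finite sum of products $b_I(z,t)\, \Psi_I(z,y)$. Here the coefficients $b_I$ are the coefficients of $\beta_1$ relative to a $t$-independent base frame, divided by those of $\ocan^m$. The $\Psi_I$ are $t$-independent smooth functions on $B \times Y$, built from $\omega_F$ through the ratio of $\omega_F^{n+1}$-components to $\omega_F^n$. The fiber derivatives $\D^\l_{\ff\dots\ff}$ fall only on the $\Psi_I$, exactly as in the proof of Lemma~\ref{lem_E_1}. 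The resulting $t$-independent quantities have bounded $g_X$-norm, after shrinking $B$ if necessary. This gives
\begin{equation*}
	\abs{\D^\l_{\ff\dots\ff}\tilde{\E}_4}_{g_X} \leq C \sum_I \abs{b_I} = o(1),
\end{equation*}
and likewise for $\dot{\tilde{\E}}_4$, using $\dot b_I = o(1)$.

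The step needing the most care is confirming that $\dot\gamma_0 = o(1)$ uniformly, which is supplied by \eqref{eq_expa_gamma_0_infty} with $\iota = 2$. A second point to check is that the time derivative and the $e^{-t}\ocan$ correction introduce no growth; both are in fact bounded by $Ce^{-t}$. Everything else is bookkeeping, and the conclusion follows from \eqref{eq_o_-2t_l_leq_4} and Lemma~\ref{lem_unif_schauder}.
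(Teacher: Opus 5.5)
Your proposal is correct and follows essentially the same argument as the paper. In both, $\beta_1$ is a base form with $\abs{\beta_1}_{g_X}, \abs{\dot{\beta}_1}_{g_X} = o(1)$ by \eqref{eq_expa_gamma_0_infty}. The fiber derivatives $\D^\l_{\ff\dots\ff}$ fall only on the $t$-independent $\omega_F$-factors, so $\tilde{\E}_4$ and $\dot{\tilde{\E}}_4$ have $o(1)$ fiber derivatives, and Lemma~\ref{lem_unif_schauder} concludes. Your binomial expansion of $\beta_1$ and the $b_I\Psi_I$ decomposition just make explicit the bookkeeping the paper leaves to the product rule.
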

\begin{proof}
	Since $\beta_1$ is a form on the base, for any $\l \geq 0$,
	\begin{equation}
		\D^\l_{\ff \dots \ff} \br{\beta_1 \wedge \omega_F^{n+1}} = \beta_1 \wedge \D^\l_{\ff \dots \ff}  \omega_F^{n+1}.
	\end{equation}
	Also, $\abs{\beta_1}_{g_X} = o(1)$, $\abs{\dot{\beta}_1}_{g_X} = o(1)$ by \eqref{eq_expa_gamma_0_infty}. We can thus use the product rule for $\D_{\ff}$ to handle the quotient of forms in $\tilde{\E}_4$ and see that $\tilde{\E}_4$ and $\dot{\tilde{\E}}_4$ satisfy
	\begin{equation}
		\abs{\D^\l_{\ff \dots \ff} \br{\cdot}}_{g_X} = o(1), \quad \forall \l \geq 0.
	\end{equation}
	Hence ${\E}_4$ and $\dot{{\E}}_4$ satisfy
	\begin{equation}
		\abs{\D^\l_{\ff \dots \ff} \br{\cdot}}_{g_X} = o \br{e^{-2t}}, \quad \forall \l \geq 0,
	\end{equation}
	which implies \eqref{eq_o_-2t_fffff} by \Cref{lem_unif_schauder}.
\end{proof}

Next, we have
\begin{equation}
	\T \eqref{eq_error_5} = \sum_{r=1}^{n+1} e^{-2t} \br{\tilde{\E}_{5, r} - \underline{\tilde{\E}_{5, r}}} =: \E_5,  
\end{equation}
where
\begin{equation}
	\tilde{\E}_{5, r} := \frac{m}{n+1} \binom{n+1}{r} \frac{\beta^{m-1}  \omega_F^{n+1-r} \sqbr{e^t\br{\gamma_{1, k} + \gamma_{2, k} + \gamma_{3, k} + \eta_{3, k}}}^r}{\omega_{\can}^m \wedge \omega_F^n}.
\end{equation}
Thus \eqref{eq_error_5} contributes to $\E$ by $\E_5$.

\begin{lemma}\label{lem_E_5}
	$\E_5$ and $\dot{\E}_5$ satisfy \eqref{eq_o_-2t_fffff}.
\end{lemma}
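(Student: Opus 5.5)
By \Cref{lem_unif_schauder}, it suffices to show that $\E_5$ and $\dot{\E}_5$ satisfy \eqref{eq_o_-2t_l_leq_4}. Subtracting the fiberwise average does not affect $\D_{\ff}$-derivatives of positive order, and for order zero it at most doubles the $C^0$ norm. So the task reduces to showing that each $\tilde{\E}_{5,r}$ and $\dot{\tilde{\E}}_{5,r}$, for $1 \leq r \leq n+1$, satisfies
\begin{equation*}
	\abs{\D^\l_{\ff \dots \ff} \br{\cdot}}_{g_X} = o(1), \quad \forall 0 \leq \l \leq 4.
\end{equation*}
The prefactor $e^{-2t}$ then gives the required $o(e^{-2t})$.

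The plan is to treat $\Gamma := e^t\br{\gamma_{1, k} + \gamma_{2, k} + \gamma_{3, k} + \eta_{3, k}}$ as a full (not just fiber-fiber) real $(1,1)$-form, measured in $g_X$. By \Cref{lem_bound_D^l_gamma_i_full} and the second half of \Cref{lem_bound_D^l_eta_3}, all fiber-direction derivatives $\D^\l_{\ff\dots\ff}$ with $0 \leq \l \leq 4$ of $\gamma_{1,k}$, $\gamma_{2,k}$, $\gamma_{3,k}$, $\eta_{3,k}$ and of their $t$-derivatives are $o(e^{-t})$ in $g_X$. Hence
\begin{equation*}
	\abs{\D^\l_{\ff\dots\ff}\Gamma}_{g_X} = o(1), \qquad \abs{\D^\l_{\ff\dots\ff}\dot{\Gamma}}_{g_X} = o(1).
\end{equation*}
Here $\dot\Gamma = \Gamma + e^t(\dot\gamma_{1,k} + \cdots + \dot\eta_{3,k})$, and both pieces are $o(1)$.

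All other ingredients of $\tilde{\E}_{5,r}$ are controlled as follows. The base form $\beta^{m-1}$ is killed by $\D_{\ff}$; it satisfies $\abs{\beta^{m-1}}_{g_X} = O(1)$ and $\abs{\partial_t \beta^{m-1}}_{g_X} = o(1)$ by \eqref{eq_expa_gamma_0_infty}. The forms $\omega_F^{n+1-r}$ and the denominator $\omega_{\can}^m \wedge \omega_F^n$ are $t$-independent. The denominator is nonvanishing, so division is a smooth operation whose $\D_{\ff}$-derivatives are uniformly bounded in $g_X$ after shrinking $B$.

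Expanding $\D^\l_{\ff\dots\ff}\tilde{\E}_{5,r}$ by the product rule (as in \Cref{lem_E_4}) gives a sum of terms. Each term contains at least one factor $\D^{\l'}_{\ff\dots\ff}\Gamma$, since $r \geq 1$. That factor is $o(1)$, and the remaining factors are $O(1)$, using the bound $\abs{\D^{\l''}_{\ff\dots\ff}\Gamma}_{g_X} = o(1) \leq C$ for any additional copies of $\Gamma$. Thus $\tilde{\E}_{5,r}$ satisfies the desired $o(1)$ bound.

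For $\dot{\tilde{\E}}_{5,r}$, the $t$-derivative falls either on $\beta^{m-1}$, producing a factor that is $o(1)$ times $\Gamma^r$, or on one copy of $\Gamma$, producing $\dot\Gamma$, which is $o(1)$. Either way each term is $o(1)$. Finally, $\partial_t$ of the prefactor $e^{-2t}$ only contributes $-2\E_5$, which is already controlled.

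The one step needing care, and the main potential obstacle, is that $\Gamma$ has mixed base-fiber components. When $\Gamma^r$ is wedged with $\beta^{m-1}\wedge\omega_F^{n+1-r}$ to form a top-degree form, only the $\bb\bb$ component of $\Gamma$ or one $\bb\ff$/$\ff\bb$ pair can appear, and its size must be measured in $g_X$ rather than $g(t)$. This is exactly why the $g_X$ bounds of \Cref{lem_bound_D^l_gamma_i_full} are used for the full forms (rather than only the $\ff\ff$ bounds of \Cref{lem_bound_D^l_gamma_i_ff}). Since the quotient by $\omega_{\can}^m\wedge\omega_F^n$ is a $t$-independent algebraic operation bounded in $g_X$, the bounds transfer directly. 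I would double-check the claimed $o(e^{-t})$ rate for $\dot\gamma_{1,k}$ in \Cref{lem_bound_D^l_gamma_i_full}, since it is the marginal term; if it were only $O(e^{-t})$, the extra factor $e^{-2t}$ together with the $o(1)$ of the other factors would have to be used.
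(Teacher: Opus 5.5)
Your proposal is correct and is essentially the paper's proof: reduce via \Cref{lem_unif_schauder} to \eqref{eq_o_-2t_l_leq_4}, then get $o(1)$ bounds on $\D^\l_{\ff\dots\ff}$ of $\tilde{\E}_{5,r}$ and $\dot{\tilde{\E}}_{5,r}$ from \Cref{lem_bound_D^l_gamma_i_full,lem_bound_D^l_eta_3}, the product rule, and the fact that $\beta$ lives on the base. The $o(e^{-t})$ rate for $\dot{\gamma}_{1,k}$ that you flagged does hold, with room to spare, by \Cref{lem_interp_fD_A_1,lem_interp_fD_dot_A_new}; this matters, since your fallback would not work for $r=1$. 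Separately, $\Gamma_{\ff\ff}$ does enter the wedge (paired with $(\omega_F)_{\bb\bb}$), contrary to your side remark, but this is harmless because you bound all components of $\Gamma$ in $g_X$.
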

\begin{proof}
	Note that $\beta$ is a form on the base, and $\abs{\beta}_{g_X} =O(1)$, $\abs{\dot{\beta}}_{g_X} =o(1)$ by \eqref{eq_expa_gamma_0_infty}. By \Cref{lem_bound_D^l_gamma_i_full,lem_bound_D^l_eta_3}, we see that ${\beta^{m-1}  \omega_F^{n+1-r} \sqbr{e^t\br{\gamma_{1, k} + \gamma_{2, k} + \gamma_{3, k} + \eta_{3, k}}}^r}$ and its $t$-derivative satisfy, for each $r \geq 1$,
	\begin{equation}\label{eq_o_1}
		\abs{\D^\l_{\ff \dots \ff} \br{\cdot}}_{g_X} = o(1), \quad \forall 0 \leq \l \leq 4.
	\end{equation}
	We can thus use the product rule for $\D_{\ff}$ to see that $\tilde{\E}_{5, r}$ and $\dot{\tilde{\E}}_{5, r}$ satisfy \eqref{eq_o_1}. Therefore, $\E_5$ and $\dot{{\E}}_5$ satisfy \eqref{eq_o_-2t_l_leq_4}, and hence \eqref{eq_o_-2t_fffff}.
\end{proof}

Next, we have
\begin{equation}
	\T\eqref{eq_error_6} = \sum_{q=2}^{m} e^{-(q+1)t} \br{\tilde{\E}_{6, q} - \underline{\tilde{\E}_{6, q}}} =: \E_6,  
\end{equation}
where
\begin{equation}
	\tilde{\E}_{6, q} := \frac{m! n!}{(m-q)! (n+q)!}  \frac{\beta^{m-q}  \sqbr{\omega_F + e^t\br{\gamma_{1, k} + \gamma_{2, k} + \gamma_{3, k} + \eta_{3, k}}}^{n+q}}{\omega_{\can}^m \wedge \omega_F^n},
\end{equation}
Thus \eqref{eq_error_6} contributes to $\E$ by $\E_6$.

\begin{lemma}\label{lem_E_6}
	$\E_6$ and $\dot{\E}_6$ satisfy \eqref{eq_o_-2t_fffff}.
\end{lemma}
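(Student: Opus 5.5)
We follow the pattern of \Cref{lem_E_4,lem_E_5}. By \Cref{lem_unif_schauder}, it suffices to verify \eqref{eq_o_-2t_l_leq_4} for $\E_6$ and $\dot{\E}_6$. Since $\E_6 = \sum_{q=2}^m e^{-(q+1)t}\br{\tilde{\E}_{6,q} - \underline{\tilde{\E}_{6,q}}}$ and $q \geq 2$, each summand carries a prefactor at most $e^{-3t}$, which is already $o(e^{-2t})$. It is therefore enough to prove that $\tilde{\E}_{6,q}$ and $\dot{\tilde{\E}}_{6,q}$ satisfy
\begin{equation*}
	\abs{\D^\l_{\ff\dots\ff}\br{\cdot}}_{g_X} = O(1), \quad \forall 0 \leq \l \leq 4.
\end{equation*}
Subtracting the fiberwise average preserves this bound. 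Indeed, $\D_{\ff}$ kills base functions, and the average is bounded by the $C^0$ norm. Differentiating the prefactor in $t$ only produces a factor $-(q+1)$.

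To bound $\tilde{\E}_{6,q}$, expand the numerator $\beta^{m-q}\sqbr{\omega_F + e^t(\gamma_{1,k}+\gamma_{2,k}+\gamma_{3,k}+\eta_{3,k})}^{n+q}$ binomially.
\begin{enumerate}
	\item The form $\beta$ lives on the base, so $\D_{\ff}$ derivatives pass through it. Moreover $\abs{\beta}_{g_X} = O(1)$ and $\abs{\dot\beta}_{g_X} = o(1)$ by \eqref{eq_expa_gamma_0_infty}.
	\item The form $\omega_F$ is $t$-independent and smooth, so all its $\D_{\ff}$ derivatives are bounded in $g_X$.
	\item By \Cref{lem_bound_D^l_gamma_i_full,lem_bound_D^l_eta_3}, the form $e^t(\gamma_{1,k}+\gamma_{2,k}+\gamma_{3,k}+\eta_{3,k})$ and its $t$-derivative satisfy $\abs{\D^\l_{\ff\dots\ff}(\cdot)}_{g_X} = o(1)$ for $0\leq \l\leq 4$. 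Here one uses that $e^t\dot{}$ of the sum equals the sum plus $e^t$ times the dotted sum.
	\item The denominator $\omega_{\can}^m\wedge\omega_F^n$ is a $t$-independent nonvanishing top form. So dividing by it, as in \Cref{lem_E_4}, is a smooth operation whose $\D_{\ff}$ derivatives are controlled by the product and quotient rules.
\end{enumerate}
Combining these with the product rule for $\D_{\ff}$ gives the $O(1)$ bounds for $\tilde{\E}_{6,q}$ and $\dot{\tilde{\E}}_{6,q}$, up to order $4$.

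The only potential subtlety is bookkeeping. The full (not only fiber–fiber) components of $\gamma_{i,k}$ enter the wedge powers, so we must use the full-tensor bounds of \Cref{lem_bound_D^l_gamma_i_full} rather than the $\ff\ff$ ones. Those bounds are only $o(e^{-t})$, which exactly cancels the $e^t$ factor. This is acceptable here because the crude prefactor $e^{-(q+1)t}\leq e^{-3t}$ leaves a full margin of $e^{-t}$. Hence $\E_6$ and $\dot{\E}_6$ satisfy \eqref{eq_o_-2t_l_leq_4}, and thus \eqref{eq_o_-2t_fffff} by \Cref{lem_unif_schauder}.
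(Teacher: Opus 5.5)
Your proof is correct and is essentially the paper's argument. The paper also treats this as the analogue of \Cref{lem_E_5}: \Cref{lem_bound_D^l_gamma_i_full,lem_bound_D^l_eta_3} give $O(1)$ control of the fiber derivatives up to order $4$ of $\tilde{\E}_{6,q}$ and its $t$-derivative, and the prefactor $e^{-(q+1)t}\leq e^{-3t}$ for $q\geq 2$ then gives the $O(e^{-3t})$ version of \eqref{eq_o_-2t_l_leq_4}; you have simply written out the bookkeeping the paper leaves implicit.
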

\begin{proof}
	The proof is completely analogous to that of Lemma \ref{lem_E_5}. Since $q \geq 2$, we can in fact see that $\E_6$ and $\dot{\E}_6$ satisfy
	\begin{equation}
		\abs{\D^\l_{\ff \dots \ff} \br{\cdot}}_{g_X} = O \br{e^{-3t}}, \quad \forall 0 \leq \l \leq 4.
	\end{equation}
\end{proof}

We are left with the contribution $\E_7$ to $\E$ by \eqref{eq_error_left}:
\begin{equation}
	\E_7 := \T \br{e^{\varphi + \dot{\varphi}} \omega_{\can}^m \wedge \omega_F^n } = e^{-t} \br{e^{\varphi + \dot{\varphi}} - \underline{e^{\varphi + \dot{\varphi}}}}.
\end{equation}
\begin{lemma}\label{lem_E_7}
	$\E_7$ and $\dot{\E}_7$ satisfy \eqref{eq_o_-2t_fffff}.
\end{lemma}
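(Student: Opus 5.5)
By \Cref{lem_unif_schauder}, it suffices to verify \eqref{eq_o_-2t_l_leq_4} for $\E_7$ and $\dot{\E}_7$. Set $u := \varphi + \dot{\varphi}$. We have $\E_7 = e^{-t}(e^{u} - \underline{e^{u}})$, and since $e^{-t}$ and the fiberwise average live on the base and are killed by $\D_{\ff}$ derivatives, we must show two things:
\begin{equation*}
\abs{\D^\l_{\ff\dots\ff}\br{e^{u}-\underline{e^{u}}}}_{g_X} = o(e^{-t}), \qquad \abs{\D^\l_{\ff\dots\ff}\br{\dot{u}e^{u}-\underline{\dot u e^{u}}}}_{g_X} = o(e^{-t}),
\end{equation*}
for $0 \leq \l \leq 4$. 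The second bound comes from $\dot{\E}_7 = -\E_7 + e^{-t}(\dot u e^u - \underline{\dot u e^u})$, and the term $-\E_7$ is handled by the first bound.

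The key input is \Cref{lem_phi-dot_phi}. It gives that $v := u - \underline{u}$ and $\dot v = \dot u - \underline{\dot u}$ satisfy $\abs{\D^\l_{\ff\dots\ff}(\cdot)}_{g_X} = o(e^{-t})$ for $\l \leq 4$, because $\dot u = \dot\varphi + \ddot\varphi$. Next, write $e^{u} = e^{\underline{u}}e^{v}$ with $e^{\underline u}$ a base function, bounded by \Cref{lem_known_est}. Then $e^u - \underline{e^u} = e^{\underline u}\br{e^{v} - \underline{e^{v}}}$. We also have $e^{v} - 1 = v + v^2 h(v)$ with $h$ smooth. All $\D_{\ff}$-derivatives up to order $4$ of $v$ are $o(e^{-t})$ in $g_X$-norm, and in particular bounded. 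Hence, by the product and chain rule for $\D_{\ff}$, the quantity $\D^\l_{\ff\dots\ff}(e^v - 1)$ is $o(e^{-t})$. The average $\underline{e^v - 1}$ is a base function bounded by $\sup\abs{e^v-1} = o(e^{-t})$. This yields the first bound.

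For the second, decompose as in \eqref{eq_prod_avg_diff} with $f = \dot u$ and $g = e^u$:
\begin{equation*}
\dot u e^u - \underline{\dot u e^u} = \dot v\, e^u + \underline{\dot u}\br{e^u - \underline{e^u}} - \underline{\dot v\, e^u}.
\end{equation*}
The first term is handled by the product rule, using the bounds on $\dot v$ together with boundedness of all $\D_\ff^{\leq 4}$-derivatives of $e^u$; the latter follows from the first bound plus the fact that $\underline{e^u}$ lives on the base. The second term uses $\abs{\underline{\dot u}} \leq C$ from \Cref{lem_known_est} and the first bound. The third term is a base function bounded by $\sup\abs{\dot v}\,\sup e^u = o(e^{-t})$.

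The main obstacle is the regularity of $\ddot\varphi - \underline{\ddot\varphi}$ along fibers up to four derivatives, but this is already supplied by \Cref{lem_phi-dot_phi}. The remaining work is bookkeeping: the $\D_\ff$-derivatives of base functions vanish, and the $g_X$-norm of fiber derivatives is $t$-independent.
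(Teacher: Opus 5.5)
Your proof is correct and follows essentially the paper's argument. You reduce via \Cref{lem_unif_schauder} to fiber-derivative bounds, then handle $\E_7$ by \Cref{lem_phi-dot_phi} together with a Taylor expansion of the exponential and the product rule. For $\dot{\E}_7$ you use the decomposition \eqref{eq_prod_avg_diff} with $f=\dot\varphi+\ddot\varphi$ and $g=e^{\varphi+\dot\varphi}$. Your factoring $e^{u}=e^{\underline{u}}e^{v}$ and your explicit treatment of the $-\E_7$ term coming from differentiating $e^{-t}$ are only slightly more careful bookkeeping of steps the paper leaves implicit.
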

\begin{proof}
	We first consider $\E_7$. Thanks to \Cref{lem_known_est} and \Cref{lem_phi-dot_phi}, we can use Taylor expansion of exponential to estimate
	\begin{equation}\label{eq_exp_phi_avg}
		\abs{e^{\varphi + \dot{\varphi}} - \underline{e^{\varphi + \dot{\varphi}}}} = o\br{e^{-t}}.
	\end{equation}
	For each $1 \leq \l \leq 4$, we can decompose $\D^\l_{\ff \dots \ff} \br{e^{\varphi + \dot{\varphi}}}$ by product rule and use Lemma \ref{lem_phi-dot_phi} to see that
	\begin{equation}
		\abs{\D^\l_{\ff \dots \ff} \br{e^{\varphi + \dot{\varphi}} - \underline{e^{\varphi + \dot{\varphi}}}}}_{g_X} = o\br{e^{-t}}, \quad \forall 1 \leq \l \leq 4.
	\end{equation}
	Therefore, $\E_7$ satisfies 
	\eqref{eq_o_-2t_l_leq_4}, and hence \eqref{eq_o_-2t_fffff}.
	
	It remains to consider $\dot{\E}_7$. Since
	\begin{equation}
		\partial_t \br{e^{\varphi + \dot{\varphi}} - \underline{e^{\varphi + \dot{\varphi}}}}
		= {e^{\varphi + \dot{\varphi}} \br{\dot{\varphi} + \ddot{\varphi}} - \underline{e^{\varphi + \dot{\varphi}} \br{\dot{\varphi} + \ddot{\varphi}}}},
	\end{equation}
	we can apply the idea in \eqref{eq_prod_avg_diff} to get
	\begin{align*}
		\abs{\partial_t \br{e^{\varphi + \dot{\varphi}} - \underline{e^{\varphi + \dot{\varphi}}}}} = o \br{e^{-t}},
	\end{align*}
	where we use \eqref{eq_exp_phi_avg}, \Cref{lem_known_est},  \Cref{lem_phi-dot_phi}. For each $1 \leq \l \leq 4$, decompose 
	\begin{equation}
		\D^\l_{\ff \dots \ff} \partial_t \br{e^{\varphi + \dot{\varphi}} - \underline{e^{\varphi + \dot{\varphi}}}} = \D^\l_{\ff \dots \ff} \br{e^{\varphi + \dot{\varphi}} \br{\dot{\varphi} + \ddot{\varphi}}}
	\end{equation}
	by product rule and use Lemma \ref{lem_phi-dot_phi} to see that
	\begin{equation}
		\abs{\D^\l_{\ff \dots \ff} \partial_t \br{e^{\varphi + \dot{\varphi}} - \underline{e^{\varphi + \dot{\varphi}}}} }_{g_X} = o\br{e^{-t}}, \quad \forall 1 \leq \l \leq 4.
	\end{equation}
	Therefore, $\dot{\E}_7$ satisfies 
	\eqref{eq_o_-2t_l_leq_4}, and hence \eqref{eq_o_-2t_fffff}.
\end{proof}

In summary, according to definition \eqref{eq_error_full}, we can write
\begin{equation}
	\E = -\br{\E_1 + \dots + \E_6} + \E_7.
\end{equation}
Combining \Cref{lem_E_1,lem_E_2,lem_E_3,lem_E_4,lem_E_5,lem_E_6,lem_E_7}, we immediately have the following:
\begin{lemma}\label{lem_E}
	$\E$ and $\dot{\E}$ satisfy \eqref{eq_o_-2t_fffff}.
\end{lemma}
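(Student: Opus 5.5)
The plan is to assemble $\E$ from the seven pieces already isolated and then invoke the individual lemmas. The one substantive point is to confirm the bookkeeping identity $\E = -(\E_1+\dots+\E_6)+\E_7$. After that, Lemma \ref{lem_E} follows from linearity of the operator $\D^3_{\ff\ff\ff}\ddbarf\br{\Delta^{\omega_F|_{\{\cdot\}\times Y}}}^{-1}$ on fiberwise average-zero functions, together with the triangle inequality.

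\emph{Step 1: the identity.} Apply $\T$ to both sides of \eqref{eq_flow_MA}, using the expansion \eqref{eq_error_left}--\eqref{eq_error_6}. The left side gives $\E_7$. On the right, $\T\eqref{eq_error_1}=\E_1+\sum_p A_{1,p,k}G_{1,p,k}$. Here the term $\B\tr{\omega_F}{(\gamma_{1,k})_{\ff\ff}}$ splits, via \eqref{eq_gamma_i_k_ff} and \eqref{eq_Phi_00}, into the $(\iota,r)=(0,0)$ part $\B\sum_p A_{1,p,k}G_{1,p,k}$ and the rest $\E_{1,2}$. Writing $\B=1+(\B-1)$ produces $\E_{1,1}$. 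The $\beta^m(\omega_F)^n_{\ff\ff}$ term is removed by $\T$ because it is a base function after division. Next, $\T\eqref{eq_error_4}=e^{-2t}(\cS-\underline{\cS})+\E_4$, and the remaining lines give $\E_2,\E_3,\E_5,\E_6$. This yields $\E_7=\sum_p A_{1,p,k}G_{1,p,k}+e^{-2t}(\cS-\underline{\cS})+\E_1+\dots+\E_6$. Comparing with \eqref{eq_error_full} gives $\E=\E_7-(\E_1+\dots+\E_6)$. Differentiating in $t$ gives the same relation for $\dot\E$. I also need every $\E_i$ and $\dot\E_i$ to have fiberwise average zero, so that the operator is well defined on each. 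This holds by construction of $\T$, and for $\E_1,\E_2,\E_3$ it follows from $\partial\bar\partial$-exactness, as noted before the individual lemmas.

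\emph{Step 2: conclude.} By Lemmas \ref{lem_E_1}--\ref{lem_E_7}, each $\E_i$ and $\dot\E_i$ satisfies \eqref{eq_o_-2t_fffff}. Since there are finitely many terms and the operator is linear, the sum does as well.

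The main (mild) obstacle is Step 1: checking that the $(0,0)$ Green-operator term gives exactly $A_{1,p,k}G_{1,p,k}$ with coefficient $1$. This uses $\tr{\omega_F}\ddbarf\Delta^{-1}G=G$. I also have to check the sign and normalization of the $\cS$ term. Dividing $\frac{m}{n+1}\beta^{m-1}e^{-t}\omega_F^{n+1}$ by $e^t\ocan^m\wedge\omega_F^n$ gives exactly $e^{-2t}\cS$ plus the $\beta_1$ correction, and this confirms the normalization.
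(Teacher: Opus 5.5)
Your proposal is correct and follows the paper's argument. The paper likewise reads off $\E = -(\E_1+\dots+\E_6)+\E_7$ from \eqref{eq_error_full} and the line-by-line application of $\T$ to \eqref{eq_flow_MA}, then concludes from Lemmas \ref{lem_E_1}--\ref{lem_E_7} by linearity; your explicit checks of the $(\iota,r)=(0,0)$ Green term and of the normalization of the $\cS$ term spell out bookkeeping the paper leaves implicit.
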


We are now ready to prove Proposition \ref{prop_D3_gamma_1_Theta(1)}.

\begin{proof}[Proof of \Cref{prop_D3_gamma_1_Theta(1)}]
	Since $\cS$ is time-independent, \eqref{eq_error_full} implies
	\begin{equation}\label{eq_error_full_dot}
		\sum_{p=1}^{N_{1, k}} \br{A_{1, p, k} + \dot{A}_{1, p, k}} G_{1, p, k} = \E + \dot{\E} + e^{-2t} \br{\cS - \underline{\cS}}.
	\end{equation}
	Apply the operator $\D^3_{\ff \ff \ff} \partial_{\ff} \overline{\partial}_{ \ff} \br{\Delta^{\omega_F|_{\{\cdot\} \times Y}}}^{-1} $ to \eqref{eq_error_full_dot}. On one fiber $\{z\} \times Y$ where $\cS$ is not constant, we can pick some point $x$ such that 
	\begin{equation}\label{eq_non_zero_dom}
		\D^3_{\ff \ff \ff} \partial_{\ff} \overline{\partial}_{ \ff} \br{\Delta^{\omega_F|_{\{\cdot\} \times Y}}}^{-1} \br{\cS - \underline{\cS}}(x) \neq 0
	\end{equation}
	due to Lemma \ref{lem_max_principle}.
	Using Lemma \ref{lem_E} and time-independence of \eqref{eq_non_zero_dom}, we see that
	\begin{equation}
		e^{- \frac{t}{2}} \abs{\sum_{p=1}^{N_{1, k}} \br{A_{1, p, k} + \dot{A}_{1, p, k}} \D^3_{\ff \ff \ff} \ddbarf \br{\Delta^{\omega_F|_{\{\cdot\} \times Y}}}^{-1}  G_{1, p, k}}_{g(t)}(x, t) = \Theta(1).
	\end{equation}
	Combine this with Lemma \ref{lem_D^3_gamma_dot_gamma} to get
	\begin{equation}
		e^{- \frac{t}{2}} \abs{\D^3_{\ff \ff \ff} \br{\gamma_{1, k} + \dot{\gamma}_{1, k}}_{\ff \ff}}_{g(t)}(x, t) = \Theta(1).
	\end{equation}
	This completes the proof.
\end{proof}

To prove \Cref{prop_D3_Ric_gamma_dot}, we make the following preparations. 

\begin{lemma}\label{lem_D3_gamma_i_leq3}
	$\gamma_{1,k}$ and $\dgamma_{1,k}$ satisfy
	\begin{equation}
		\abs{\D^3 \br{\cdot}} = O\br{e^{\thalf}}.
	\end{equation}
	For both $i = 2, 3$, $\gamma_{i,k}$ and $\dgamma_{i,k}$ satisfy
	\begin{equation}
		\abs{\D^3 \br{\cdot}} = o\br{e^{\thalf}}.
	\end{equation}
\end{lemma}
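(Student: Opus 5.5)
The plan is to apply the explicit formula \eqref{eq_D^q_gamma_i_k} with $q=3$, together with its $t$-derivative, and bound each summand using the interpolation bounds on the $A_{i,p,k}$. This is the same scheme as in Propositions \ref{prop_bound_D_gamma_1} and \ref{prop_bound_D2_dot_gamma_1_2}. With $q=3$, a typical summand has the form
\[
e^{-rt}(\D^{4-s}J)\circledast\D^{i_1}\Phi_{\iota,r}(G_{i,p,k})\circledast\D^{i_2+\iota}A_{i,p,k},
\qquad i_1+i_2=s+1,\quad 0\le s\le 4 .
\]
For $\dgamma_{i,k}$ the factor $\D^{i_2+\iota}A_{i,p,k}$ is replaced by $\D^{i_2+\iota}A_{i,p,k}$ plus $\D^{i_2+\iota}\dot A_{i,p,k}$. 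As in Proposition \ref{prop_bound_D_dot_gamma_1_2}, we use $|\D^\l J|\le Ce^{\l\frac t2}$ and $|\D^\l\Phi_{\iota,r}|\le Ce^{\l\frac t2}$. Each summand is then bounded by
\[
C\,e^{\frac t2(-2r+4-i_2)}\br{|\fD^{i_2+\iota}A_{i,p,k}|+|\fD^{i_2+\iota}\dot A_{i,p,k}|},
\qquad 0\le i_2\le 5,\quad \lceil \iota/2\rceil\le r\le k .
\]
So it suffices to prove, for every admissible $(\iota,r,i_2)$, that $e^{\frac t2(-2r+4-i_2)}|\fD^{i_2+\iota}A|$ is $O(e^{\frac t2})$ when $i=1$ and $o(e^{\frac t2})$ when $i=2,3$. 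The same bounds are needed with $\dot A$ in place of $A$.

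For $i=1$, I would use Lemma \ref{lem_interp_fD_A_1} for $A_{1,p,k}$ and Lemma \ref{lem_interp_fD_dot_A_new} for $\dot A_{1,p,k}$. The latter gives the sharper bound $|\dot A_{1,p,k}|\le Ce^{-2t}$, with controlled growth of its derivatives. The worst term is $\iota=r=0$, $i_2=0$, where the bound is $e^{2t}\cdot e^{-2t}\cdot$ (stuff), which is of order $e^{\frac t2}$ after one accounts for the $e^{4\frac t2}$ prefactor. Indeed $e^{4\frac t2}|A_{1,p,k}|\le Ce^{2t-2t}$, and here I must recheck the exponent. Using \eqref{eq_bound_-rt_D_A_1}-type bookkeeping, the term with $i_2=0,\iota=0,r=0$ contributes $e^{2t}\cdot e^{-2t}=O(1)$. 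The genuinely borderline contributions come from $i_2=3$: $|\fD^{3}A_{1,p,k}|\le Ce^{-\frac t2(4-\frac{6}{4+\alpha})}$, times $e^{\frac t2}$, which is $O(e^{\frac t2})$ with room to spare. In general, for $\l=i_2+\iota\ge4$ the exponent $4-\frac{\l(\l-2)}{\l+\alpha}$ from Lemma \ref{lem_interp_fD_A_1} is compensated by $-2r\le-\iota$. I would check the finitely many cases $0\le i_2\le5$ and $r\in\{\lceil\iota/2\rceil,\dots\}$ by showing that the total exponent, $-2r+4-i_2-(\text{decay exponent})$, is at most $1$.

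For $i=2,3$, the extra decay of Lemma \ref{lem_interp_fD_A_2} and Lemma \ref{lem_interp_fD_dot_A_new} supplies the additional $e^{-\alpha\frac t2}$ or $e^{-\beta\frac t2}$ in the zeroth-order bounds. For example, $|A_{2,p,k}|\le Ce^{-(4+\alpha)\frac t2}$, $|\dot A_{2,p,k}|\le Ce^{-(4+\beta)\frac t2}$ and $|A_{3,p,k}|\le Ce^{-(6+\alpha)\frac t2}$. After the same bookkeeping, this gives a strictly smaller exponent than $1$ and hence $o(e^{\frac t2})$.

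The main obstacle is the exponent check for the high-derivative terms. There $i_2+\iota$ is large, and the interpolation bounds deteriorate quadratically in $\l$. We must verify that $-2r\le-\iota$ always absorbs this growth, and that for the $\dot A$ terms the range $\l\le 2k+10$ in Lemma \ref{lem_interp_fD_dot_A_new} covers $i_2+\iota\le 2k+5$. This is why $k\ge10$ was fixed. If some intermediate case is too tight with the second-case interpolation formula, I would instead use the first-case formula at $\l\le 8$ or $\l\le10$, which is linear in $\l$.
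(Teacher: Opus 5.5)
Your route is the paper's: expand $\D^3\gamma_{i,k}$ by \eqref{eq_D^q_gamma_i_k} with $q=3$, differentiate in $t$, use $|\D^\l J|,|\D^\l\Phi_{\iota,r}(G_{i,p,k})|\le Ce^{\l\frac t2}$, and insert \Cref{lem_interp_fD_A_1,lem_interp_fD_A_2,lem_interp_fD_dot_A_new}. However, your per-summand bound is off by one power of $e^{\frac t2}$, and this is exactly where the lemma lives. Since $i_1=s+1-i_2$, the prefactor is $e^{-rt}e^{(4-s)\frac t2}e^{i_1\frac t2}=e^{\frac t2(-2r+5-i_2)}$, not $e^{\frac t2(-2r+4-i_2)}$.

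As written, your bound is false. It leads you to two wrong claims: that the term $\iota=r=i_2=0$ is $O(1)$, and that $i_2=3$ is borderline. In fact $\iota=r=i_2=0$ is the only borderline term. It satisfies $e^{5\frac t2}\br{|A_{1,p,k}|+|\dot A_{1,p,k}|}\le Ce^{\frac t2}$, and this needs $|A_{1,p,k}|\le Ce^{-2t}$ together with the improved bound $|\dot A_{1,p,k}|\le Ce^{-2t}$ from \Cref{prop_bound_dot_A_i_leq2}. Bounding $|\dot A_{1,p,k}|$ only by $|\fD^2A_{1,p,k}|$ from \Cref{lem_interp_fD_A_1} would not suffice. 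This is precisely the term shown in \Cref{prop_D3_gamma_1_Theta(1)} to have exact size $e^{\frac t2}$ when $\cS-\underline{\cS}\not\equiv0$. So a uniform $O(1)$ bound on $\D^3(\gamma_{1,k}+\dgamma_{1,k})$ cannot hold in general.

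With the corrected prefactor, your absorption argument works as you intended. Write $\l=i_2+\iota\le 2k+5$ and use $-2r\le-\iota$. Then the total exponent, in units of $\frac t2$, for the $A_{1,p,k}$-terms is at most $1-\l\frac{2+\alpha}{4+\alpha}$ for $\l\le4$, and at most $1-\l\frac{2+\alpha}{\l+\alpha}$ for $4\le\l\le2k+6$. For the $\dot A_{1,p,k}$-terms, \eqref{eq_interp_fD_dot_A_1} gives $1-\l\frac{6+\alpha}{8+\alpha}$ and $1-\l\frac{6+\alpha}{\l+\alpha}$. All of these are $\le1$, with equality only at $\l=0$.

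For $i=2,3$, the margin below $1$ is exactly the extra zeroth-order decay: $e^{5\frac t2}|A_{2,p,k}|\le Ce^{(1-\alpha)\frac t2}$, $e^{5\frac t2}|\dot A_{2,p,k}|\le Ce^{(1-\beta)\frac t2}$, and $e^{5\frac t2}|\dot A_{3,p,k}|\le Ce^{-\beta\frac t2}$. The terms with $\l>0$ are better by the same computation. So your $o(e^{\frac t2})$ conclusion is right, but it holds because of this margin, not because the $i=1$ terms are already $O(1)$. Your observation that \eqref{eq_interp_fD_dot_A_1} is needed because $\l$ reaches $2k+5$, beyond the range of \Cref{lem_interp_fD_dot_A_1_2}, is correct.
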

\begin{proof}
	For all $1 \leq i \leq 3$, we have by \eqref{eq_D^q_gamma_i_k}
	\begin{equation}
		\begin{aligned}
			\abs{\D^3 \gamma_{i, k}} 
			& \leq \sum_{p=1}^{N_{i, k}} \sum_{\iota = 0}^{2k} \sum_{r = \ceil{\frac{\iota}{2}}}^k \sum_{s = 0}^{4} \sum_{i_1 + i_2 = s+1}
			e^{\thalf\br{-2r + 4-s + i_1}} \abs{\D^{i_2 + \iota} {A}_{i, p, k}} \\
			& \leq \sum_{p=1}^{N_{i, k}} \sum_{\iota = 0}^{2k} \sum_{r = \ceil{\frac{\iota}{2}}}^k \sum_{i_2 = 0}^{5} 
			e^{\thalf\br{-2r + 5-i_2}} \abs{\D^{i_2 + \iota} {A}_{i, p, k}},
		\end{aligned}
	\end{equation}
	and similarly
	\begin{equation}
		\abs{\D^3 \dgamma_{i, k}} 
		\leq \sum_{p=1}^{N_{i, k}} \sum_{\iota = 0}^{2k} \sum_{r = \ceil{\frac{\iota}{2}}}^k \sum_{i_2 = 0}^{5} 
		e^{\thalf\br{-2r + 5-i_2}} \br{\abs{\D^{i_2 + \iota} {A}_{i, p, k}} + \abs{\D^{i_2 + \iota} \dot{A}_{i, p, k}}}.
	\end{equation}
	We can then use \Cref{lem_interp_fD_A_1,lem_interp_fD_A_2,lem_interp_fD_dot_A_new} to complete the proof.
\end{proof}

\begin{proof}[Proof of \Cref{prop_D3_Ric_gamma_dot}]
	By the \KR flow \eqref{eq_krf} and Theorem \ref{thm_asymp_expa}, we can write
	\begin{equation}\label{eq_Ric_1_2}
		\Ric +\gamma_{1, k} +\dot{\gamma}_{1, k}= - \omega_{\can} - \gamma_0 -   \gamma_{2, k} - \eta_{2, k} - \dot{\gamma}_0  - \dot{\gamma}_{2, k} - \dot{\gamma}_{3, k} - \dot{\eta}_{3, k}.
	\end{equation}
	We estimate the $\D^3$-derivative of each term in the RHS of \eqref{eq_Ric_1_2}.
	\begin{enumerate}
		\item $\abs{\D^3 \omega_{\can}} \leq C$ since $\ocan$ lives on the base.
		
		\item $\abs{\D^3 \gamma_0} ,\abs{\D^3 \dot{\gamma}_0} = o(1)$ by \eqref{eq_expa_gamma_0_infty}.
		
		\item $\abs{\D^3 \gamma_{2, k}} ,\abs{\D^3 \dgamma_{2, k}} ,\abs{\D^3 \dgamma_{3, k}} =o \br{e^{\thalf}}$ by \Cref{lem_D3_gamma_i_leq3}. 
		
		\item $\abs{\D^3 \eta_{2, k}}, \abs{\D^3 \dot{\eta}_{3, k}} \leq C e^{-(1+\alpha) \frac{t}{2}}$ by \eqref{eq_expa_eta_infty}.
	\end{enumerate}
	Therefore, \eqref{eq_D3_Ric_gamma_dot} holds.	Together with \Cref{lem_D3_gamma_i_leq3}, the proof is complete.
\end{proof}

\begin{rmk}\label{rmk_moduli}
	We expect that $\cS - \underline{\cS}$ does not vanish in general. Below is a heuristic construction. From \cite[\S 5]{HT} we have the identity
	\begin{equation}\label{eq_S_A}
		\cS - \underline{\cS} = \br{\Delta^{\omega_F|_{\{\cdot\} \times Y}}}^{-1} \br{\gcan^{\mu \overline{\nu}} \br{\lan{A_\mu, A_{\overline{\nu}}} - \underline{\lan{A_\mu, A_{\overline{\nu}}}}} },
	\end{equation}
	where $A_\mu$ ($1 \leq \mu \leq m$) are the unique harmonic representatives of the Kodaira-Spencer classes $\kappa(\partial_\mu) \in H^{1}(X_z, T_{X_z})$ with respect to the Ricci-flat metric $\omega_F|_{X_z}$ on $X_z$, $\lan{\cdot, \cdot}$ is the $\omega_F|_{X_z}$-inner product on $T_{X_z} \otimes \Omega^{0,1}_{X_z}$, and $\partial_\mu$ are the standard coordinate vector fields on $B \subset \C^m$. If we set $m = 1$ and $n = 2$, we may start with a K3 surface $Y$ with a Ricci-flat metric $\omega_Y$ and a harmonic representative $A_1$ of some class $\kappa \in H^1(Y, T_Y)$ having non-constant $\omega_Y$-length (see e.g. \cite{CC,He1} for the asymptotically cylindrical gluing construction of K3 surfaces that admit such $A_1$). If $\omega_Y$ represents an ample line bundle $L$ on $Y$, which we fix as the polarization class, such that $\kappa$ respects $L$, then we may let the complex structure of $Y$ vary in the moduli space of K3 surfaces in the direction $\kappa$, to get a K3 fibration over a curve such that the total space $X$ is our desired compact \K manifold. One needs to check that the Iitaka fibration coincides with the construction above, and $X$ admits a \K metric $\omega_0$ whose restriction to the fiber $Y$ we started with is cohomologous to $\omega_Y$. If such construction can be made precise, it would follow immediately from \eqref{eq_S_A} that $\davg{\cS} \not\equiv 0$ on $Y$.
\end{rmk}

\section{Special Cases and Remarks}\label{section_special}
	In this section we consider, in the following special cases of the Iitaka fibration, higher-order curvature estimates over any $K \Subset X \setminus S$. Again we work locally on $B \times Y$.

	\subsection{Isotrivial Fibration}\label{section_isotrivial}
		Given the Iitaka fibration $f: X \setminus S \to B \setminus f(S)$ as in the Introduction, let us assume additionally that the fibers $X_z$ for $z \in B \setminus f(S)$ are pairwise biholomorphic (such $f$ is called \textit{isotrivial}). By the Fischer-Grauert theorem, $f$ is locally holomorphically trivial. As above, we can locally write  $f: B \times Y \to B$, and the complex structure $J$ on $B \times Y$ is now a product: $J = J_{\C^m} \oplus J_{Y}$. 
		
		In this case, Fong-Lee showed in \cite{FL} that up to shrinking $B$, for each $k \in \N$, there exists $C_k$ such that
		\begin{equation}\label{eq_nabla_gb}
			\sup_{B \times Y \times \infint} \abs{\nabla^{g(t), k} \gb(t)}_{g(t)} \leq C_k,
		\end{equation}
		where $g(t) = g_{\C^m} + e^{-t} g_{Y, 0}$ as in \eqref{eq_g_z(t)}. 
		Since $\gb(t)$ and $g(t)$ are uniformly equivalent by \Cref{lem_metric_equiv}, we have uniform bounds on the covariant derivatives of $\Ric(\gb)$ of every order:
		
		\begin{theorem}\label{thm_isotrivial}
			Under the assumptions above, for each $k \in \N$, there exists $C_k$ such that
			\begin{equation}
				\sup_{B \times Y \times \infint} \abs{\nabla^{\gb(t), k} \Ric \br{\gb(t)}}_{\gb(t)} \leq C_k.
			\end{equation}
		\end{theorem}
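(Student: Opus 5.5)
The plan is to deduce Theorem \ref{thm_isotrivial} directly from the Fong--Lee estimate \eqref{eq_nabla_gb}. We treat $g(t)$ as a fixed background, control all derivatives of the curvature of $\gb(t)$ in terms of $g(t)$-derivatives of $\gb(t)$, and then convert back to $\gb(t)$-norms via Lemma \ref{lem_metric_equiv}. The key observation is that $g(t) = g_{\C^m} + e^{-t} g_{Y,0}$ has geometry bounded uniformly in $t$: its curvature satisfies $\abs{\nabla^{g(t),l}\Rm(g(t))}_{g(t)} = e^{(l+2)\thalf}\cdot$ (scaled quantity)? Let us check this carefully. Since $\Rm(g(t))$ as a $(1,3)$-tensor is $t$-independent and lives on the fiber, its $g(t)$-norm is $e^{t}\abs{\Rm(g_{Y,0})}_{g_{Y,0}}$. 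So the curvature of $g(t)$ \emph{blows up}, and a naive background comparison would fail.

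I would therefore not compare against the curvature of $g(t)$. Instead, I would write the Christoffel difference $A = \Gamma^{\gb} - \Gamma^{g(t)} = (\gb)^{-1}\circledast\nabla^{g(t)}\gb$, which \eqref{eq_nabla_gb} bounds in all $\nabla^{g(t)}$-derivatives, together with the uniform bound on $(\gb)^{-1}$. I would then use
\[
\Rm(\gb) = \Rm(g(t)) + \nabla^{g(t)} A + A\circledast A .
\]
The term $\Rm(g(t))$ is problematic only through its size, so it must be handled another way. For the Ricci tensor the isotrivial structure helps. By the Kähler--Ricci flow, $\Ric(\ob) = -\ocan - \ddbar(\vp+\dot\vp)$. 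With $J = J_{\C^m}\oplus J_Y$ a product, and $g(t)$ Kähler for $J$ (both factors Kähler), $\nabla^{g(t)}J=0$. The cleanest route is then to express $\Ric(\gb)$ through the volume form: $\Ric(\ob) = -\ddbar\log(\ob^{m+n}/\Omega)$ with $\Omega = \omega_{\C^m}^m\wedge\omega_{Y,0}^n$. Here the reference volume form $\Omega$ is Ricci-flat on $B\times Y$ after choosing $\omega_{Y,0}=\omega_F|_{Y}$ Ricci-flat (the fibers are all biholomorphic to $Y$). Hence $\Ric(\ob) = -\ddbar \log\det\big((g(t))^{-1}\gb\big)$, up to the additive constant $nt$, which $\ddbar$ kills.

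Now $\ddbar u$ of a function $u$ is expressible as $(\nabla^{g(t)})^2 u \circledast J$, since $g(t)$ is Kähler for $J$. The function $u=\log\det(g(t)^{-1}\gb)$ has all $\nabla^{g(t)}$-derivatives bounded by \eqref{eq_nabla_gb}, together with the uniform equivalence (so the log and det are smooth functions of bounded entries). Hence $\abs{\nabla^{g(t),l}\Ric(\gb)}_{g(t)}\le C_l$ for all $l$.

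It remains to convert $\nabla^{g(t)}$-derivatives into $\nabla^{\gb}$-derivatives. Inductively, $\nabla^{\gb,k}\Ric = \sum \nabla^{g(t),a_0}\Ric\circledast\nabla^{g(t),a_1}A\circledast\cdots$, with total order bounded by $k$. Each factor is bounded in $g(t)$-norm by the previous step and \eqref{eq_nabla_gb}. Lemma \ref{lem_metric_equiv} then transfers the bound to $\gb(t)$-norms.

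The main obstacle is the middle step. It requires that the local trivialization can be chosen with $\omega_F|_{\{z\}\times Y}$ equal to a fixed Ricci-flat metric, or at least that $\Omega$ has $\ddbar\log$ of bounded $\nabla^{g(t)}$-derivatives. If the fiberwise Ricci-flat metrics $g_{Y,z}$ vary with $z$, I would instead take $\Omega=\omega_{\can}^m\wedge\omega_F^n$ (as in \eqref{eq_krf_MA}). Its Ricci form is $-\ocan+\owp$ plus fiber-flat terms living on the base, so all of its $\nabla^{g(t)}$-derivatives are bounded, because base tensors have $t$-independent norms. I would also check that \eqref{eq_nabla_gb} indeed controls the mixed base-fiber derivatives of $\log\det$.
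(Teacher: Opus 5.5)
Your proof is correct, but the middle step takes a Kähler route where the paper's is Riemannian. The paper keeps the identity $\Rm(\gb)-\Rm(g(t))=\nabla A+A\circledast A$ that you wrote down and simply traces it.

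\begin{itemize}
\item The Ricci contraction of a $(1,3)$-tensor involves no metric.
\item $\Ric(g(t))\equiv 0$, because $g(t)$ is the product of a flat metric with the Ricci-flat $e^{-t}g_{Y,0}$.
\item Hence $\Ric(\gb)$ is a contraction of $\nabla A$ and $A\otimes A$.
\item Expanding $\nabla^{\bullet}=\nabla^{g(t)}+A$ then expresses $\nabla^{\bullet,k}\Ric(\gb)$ through $A,\nabla A,\dots,\nabla^{k+1}A$, all of which are bounded by \eqref{eq_nabla_gb}.
\end{itemize}

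So the blow-up $\abs{\Rm(g(t))}_{g(t)}\sim e^{t}$ that made you abandon the comparison only obstructs the full curvature tensor, not the Ricci tensor.

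Your formula $\Ric(\ob)=-\ddbar\log\det\br{g(t)^{-1}\gb}$ (up to a constant factor) is the Kähler form of the same traced identity. The contracted difference $A^{l}_{lk}$ is, up to normalization, $\partial_k\log\det\br{g(t)^{-1}\gb}$. It uses exactly the same input, $\Ric(g_{Y,0})=0$, through the Ricci-flatness of $\Omega$.

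The two routes trade off as follows. Yours makes the estimate very transparent: $\Ric$ becomes $\nabla^{g(t),2}$ of a single scalar, contracted with a parallel $J$. But it needs $\nabla^{g(t)}J=0$, i.e.\ the product complex structure supplied by isotriviality. The paper's tracing argument uses nothing beyond \eqref{eq_nabla_gb} and the Ricci-flatness of $g(t)$.

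The ``main obstacle'' you flag is not actually there. $\Omega=\omega_{\C^m}^m\wedge\omega_{Y,0}^n$ involves only the single Ricci-flat metric $g_{Y,0}$. With $J=J_{\C^m}\oplus J_Y$, this already gives $\ddbar\log\Omega=0$ on all of $B\times Y$. No information about $g_{Y,z}$ for $z\neq 0$ is needed, and you can discard the fallback with $\ocan^m\wedge\omega_F^n$.

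In fact, once $J$ is a product, the classes $[\omega_0|_{\{z\}\times Y}]\in H^2(Y,\mathbb{R})$ are independent of $z$. Uniqueness of the Ricci-flat metric in a Kähler class then gives $g_{Y,z}=g_{Y,0}$ anyway.
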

		\begin{proof}
			To simplify notation, let $\nabla$ denote $\nabla^{g(t)}$, and $\nabla^\bullet$ denote $\nabla^{\gb(t)}$.
			Let $A$ denote the difference (1,2)-tensor between $\nabla^\bullet$ and $\nabla$. Standard calculation yields
			\begin{equation}\label{eq_Rm_diff}
				\Rm(\gb) - \Rm(g) = \nabla A + A \circledast A,
			\end{equation}
			where $\Rm$ denotes the full Riemann curvature (1,3)-tensor, and
			\begin{equation}
				A = \br{\gb}^{-1} \circledast \nabla \gb.
			\end{equation}
			
			By \Cref{lem_metric_equiv} and \eqref{eq_nabla_gb}, $\abs{\nabla^k A } \leq C_k$ for each $k \in \N$. Tracing \eqref{eq_Rm_diff} and using definition of $A$ and Ricci-flatness of $g(t)$, we see that $\nabla^{\bullet, k} \Ric(\gb)$ is a linear combination of contractions of tensors $A, \nabla A, \dots, \nabla^{k+1} A$. Thus $\abs{\nabla^{\bullet, k} \Ric(\gb)} \leq C_k$, and the proof is complete.
		\end{proof}

	\subsection{Torus Fibers}
		In this section let us assume instead that for some $z_0 \in B \setminus f(S)$, the fiber $X_{z_0}$ is biholomorphic to the quotient of a complex torus by a holomorphic free action of a finite group. Then locally we write $f: B \times Y \to B$, where $z_0 = 0 \in B \subset \C^m$. Up to shrinking $B$, we have uniform bounds on the covariant derivatives of $\Rm(\gb)$ of every order:
		\begin{theorem}\label{thm_torus}
			Under the assumptions above, for each $k \in \N$, there exists $C_k$ such that
			\begin{equation}\label{eq_torus_Rm}
				\sup_{B \times Y \times \infint} \abs{\nabla^{\gb(t), k} \Rm \br{\gb(t)}}_{\gb(t)} \leq C_k.
			\end{equation}
		\end{theorem}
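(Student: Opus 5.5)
The plan is to reduce the torus-fibered case to the isotrivial-type estimate \eqref{eq_nabla_gb}, i.e.\ uniform bounds on all $g(t)$-covariant derivatives of $\gb(t)$, measured with respect to a suitable reference family of flat shrinking metrics. Once that is in hand, the argument of Theorem \ref{thm_isotrivial} upgrades from Ricci to full Riemann curvature. Indeed, the reference metrics will be \emph{flat}, so \eqref{eq_Rm_diff} gives $\Rm(\gb) = \nabla A + A\circledast A$ with $A = (\gb)^{-1}\circledast\nabla\gb$. Then $\nabla^{\bullet,k}\Rm(\gb)$ is a polynomial contraction in $(\gb)^{-1}, A,\dots,\nabla^{k+1}A$, and is bounded by Lemma \ref{lem_metric_equiv}.

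First I will pass to a finite unramified cover. Since $X_{z_0}$ is a finite free quotient of a torus and $f$ is a submersion near $z_0$, after shrinking $B$ the fibration $B\times Y\to B$ is covered by a family of complex tori $B\times T^{2n}\to B$, with $T^{2n}=\mathbb{R}^{2n}/\Lambda$. The flow lifts, and uniform bounds are local and invariant under finite covers, so I may assume the fibers are tori. Next I will pass to the universal cover of the fibers, $B\times\mathbb{R}^{2n}$. There the complex structure is $J_z$, a constant complex structure on $\mathbb{R}^{2n}$ that depends smoothly on $z$. I will choose holomorphic coordinates $w = w(z,x)$ on $B\times\C^n$ that are affine-linear in $x$ and holomorphic in $z$, so that the fibration becomes $B\times\C^n$ with the product complex structure and with deck group the lattice $\Lambda_z\subset\C^n$ varying holomorphically in $z$. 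This is the standard trick, as in Gross--Tosatti--Zhang and Hein--Tosatti for collapsing torus fibrations. The Ricci-flat fiber metrics $g_{Y,z}$ are then flat constant-coefficient metrics. I will then pull back by the fiberwise stretching $\lambda_t(z,w)=(z,e^{t/2}w)$. This converts $g(t)$ into a fixed flat metric on $B\times\C^n$, uniformly equivalent to the Euclidean one, and turns the flow into a flow of metrics invariant under the dilated lattices $e^{t/2}\Lambda_z$.

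The core step is to show that the pulled-back metrics $\lambda_t^*\gb(t)$, together with their time-translates on parabolic cylinders $B_r\times B_{\C^n}(w,1)\times[t-1,t]$, have uniform $C^k$ bounds in the fixed Euclidean metric. By Lemma \ref{lem_metric_equiv} these metrics are uniformly equivalent to Euclidean. The rescaled flow is again a Kähler--Ricci flow (up to the harmless normalization), now for the \emph{product} complex structure of $\C^{m+n}$. In holomorphic coordinates the potential solves a parabolic complex \MA equation with bounded $\dot\varphi$ by Lemma \ref{lem_known_est}. I will then apply the local parabolic Evans--Krylov/Calabi $C^3$ estimate (e.g.\ Sherman--Weinkove local estimates), followed by Schauder bootstrapping, to get $|\nabla^{k}\lambda_t^*\gb|\le C_k$ on unit cylinders. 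Scaling back yields \eqref{eq_nabla_gb} with $\nabla = \nabla^{g(t)}$ in the new coordinates. This is exactly the point where the torus assumption is used: the fiber coordinates become globally flat, the complex structure becomes a product after a holomorphic change of coordinates, and the reference metric is flat in all directions. For general Calabi--Yau fibers, $\D^2 J$ obstructs this, as in the Remark after Theorem \ref{thm_Delta_ric}.

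I expect the main obstacle to be the uniform local higher-order estimate for the rescaled flow. The difficulty is to confirm that the rescaled \MA equation has uniformly controlled data. This means checking that the right-hand side volume form $e^{nt}\binom{m+n}{n}\ocan^m\wedge\omega_F^n$ pulls back to a smooth form with $t$-uniform bounds; this requires $\omega_F$ to be flat on fibers, which is automatic here. It also means checking that the reference form $\lambda_t^*\on(t)$ has uniformly bounded derivatives in the new coordinates, where the mixed base-fiber terms of $\omega_F$ must not blow up under stretching. If the direct approach via the Hein--Tosatti coordinate change runs into trouble, I will fall back on the expansion of Theorem \ref{thm_asymp_expa} instead. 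For flat fibers the fiberwise Laplacian commutes with $\D$, and the obstruction $\cS-\underline{\cS}$ vanishes because the harmonic Kodaira--Spencer forms are parallel, so all $\gamma_{i,k}$ should have bounded $\D$-derivatives of every order.
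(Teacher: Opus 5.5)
Your proposal follows essentially the same route as the paper: reduce to a torus family by a finite cover, lift to the holomorphically trivialized universal cover $B\times\C^n$, stretch the fibers by $\lambda_t$, get uniform Euclidean $C^k$ bounds for $\lambda_t^*p^*\gb(t)$ on compact sets, and conclude via $\Rm(\gb)=\nabla A+A\circledast A$ against a flat reference metric. This works because $\lambda_t^{-1}$ maps a fundamental domain into a fixed compact set; note that uniform equivalence with $g_E$ holds only on compact subsets, since the lattice varies with $z$. The only difference is that the paper simply cites \cite{To} for the uniform $C^k$ bounds, while you propose to derive them from local parabolic estimates on time-translated cylinders. The Sherman--Weinkove curvature estimate followed by Shi's local derivative estimates needs only the local uniform equivalence, so your worry about controlling the Monge--Amp\`ere data can be bypassed, and the fallback via \Cref{thm_asymp_expa} is unnecessary.
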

		\begin{proof}
			We build on \cite[Proof of Theorem 5.24]{To}. Suppose first that $X_{z_0}$ is biholomorphic to a complex torus. Up to shrinking $B$, we have a universal covering 
			\begin{equation}
				p: B \times \C^n \to B \times Y,
			\end{equation}
			which is $(J_E, J)$-holomorphic and satisfies $f \circ p(z, \cdot) = z$. Here $J_E$ denotes the Euclidean complex structure on $B \times \C^n$, and similarly let $g_E$ denote the Euclidean metric. Define $\lambda_t: B \times \C^n \to B \times \C^n$ by 
			\begin{equation}
				\lambda_t(z, y) = (z, e^{\thalf} y),
			\end{equation}
			which stretches the fibers. It was proved in \cite{To} that for each $K \Subset B \times \C^n$, there exists $C_K$  such that
			\begin{equation}
				C_K^{-1} g_E \leq \lambda_t^*p^*\gb(t) \leq C_K g_E, \quad \text{ on } K \times \infint,
			\end{equation}
			and for each $k \in \N$, there exists $C_{K, k}$ such that
			\begin{equation}
				\sup_{K \times [0, +\infty)} \abs{\nabla^{g_E, k} \lambda_t^*p^* \gb(t) }_{g_E} \leq C_{K, k}.
			\end{equation}
			It follows that
			\begin{equation}
				\sup_{K \times [0, +\infty)} \abs{\nabla^{\lambda_t^*p^*\gb(t), k} \Rm \br{\lambda_t^*p^*\gb(t)} }_{\lambda_t^*p^*\gb(t)} \leq C_{K, k},
			\end{equation}
			if we compare $\Rm\br{g_E}$ ($\equiv 0$) with $\Rm\br{\lambda_t^*p^*\gb(t)}$ by the idea in the proof of \Cref{thm_isotrivial}. 
			
			If now $K' \subset B \times Y$ is sufficiently small such that there exists $K \Subset B \times \C^n$ with $p: K \to K'$ a biholomorphism, then we have
			\begin{equation}
				\begin{aligned}
					\sup_{K'} \abs{\nabla^{\gb(t),k} \Rm(\gb(t))}_{\gb(t)}
					& = \sup_{K} \abs{\nabla^{p^*\gb(t),k} \Rm(p^*\gb(t))}_{p^*\gb(t)} \\
					& = \sup_{\lambda_{-t} \br{K}} \abs{\nabla^{\lambda_t^*p^*\gb(t),k} \Rm(\lambda_t^*p^*\gb(t))}_{\lambda_t^*p^*\gb(t)} \\
					& \leq C_{\tilde{K}, k}
				\end{aligned}
			\end{equation}
			as $\lambda_{-t}(K) \subset \tilde{K}$ for some $\tilde{K} \Subset B \times \C^n$, for all $t \geq 0$. Note that each point of $B \times Y$ admits such a neighborhood $K'$. Hence up to shrinking $B$, a covering argument shows \eqref{eq_torus_Rm}.
			
			When $X_{z_0}$ is only a finite quotient of a complex torus, we can reduce the problem to the complex torus case above using the argument in \cite{To}. In summary, construct a finite covering
			\begin{equation}
				p: B \times \tilde{Y} \to B \times Y,
			\end{equation}
			where $\tilde{Y}$ is a torus. The arguments above apply to the flow metric $p^* \gb(t)$ and fibration $f \circ p: B \times \tilde{Y} \to B$, and the estimates thus derived imply the desired ones on $B \times Y$ via the finite covering map $p$.
		\end{proof}
		
		\begin{rmk}
			We can also derive \Cref{thm_torus} from the local uniform bound on $\abs{\Rm(\gb(t))}_{\gb(t)}$  established in \cite[Theorem 5.24]{To}, using Shi's derivative estimates along Ricci flows (see \cite{Shi}).
		\end{rmk}

	\subsection{Trivial Iitaka Fibration}
		Suppose $(X, \omega_0)$ is a compact \K manifold with $K_X$ semiample and Kodaira dimension taking the extremal values 0 or $\dim X$. In both cases the Iitaka fibration is trivial (see \cite{To}): 
		\begin{enumerate}
			\item When $\kod(X) = 0$, we know that $c_1(X) = 0$, and $K_X^{ p} \cong \mathcal{O}_X$ is holomorphically trivial for some $p \geq 1$. Thus the base is a point and $X$ is the only Calabi-Yau fiber.
			
			\item When $\kod(X) = \dim X$, the generic fibers are connected and of dimension 0, so that $f: X \setminus S \to B \setminus f(S)$ is a biholomorphism. As $f^* \mathcal{O}_{\C \CP^N}(1) \cong K_X^{ p}$ for some $p \geq 1$, we know that $K_X$ is nef and big.
		\end{enumerate}
		
		These are extremal cases of the isotrivial fibration (where we assume intermediate Kodaira dimension) discussed in \Cref{section_isotrivial}, which motivates us to expect again uniform bounds on the covariant derivatives of $\Ric(\gb)$ of every order. We confirm this speculation now.
		
		Consider the case $\kod(X) = 0$. In \cite{Cao} Cao proved that the unnormalized \KR flow
		\begin{equation}
			\partial_t \omega(t) = -\Ric(\omega(t)), \quad \omega(0) = \omega_0,
		\end{equation}
		converges smoothly to the unique Ricci-flat \K metric $\omega_F$ in the class $[\omega_0]$. It follows from \cite{PS,TZ} that the convergence is exponentially fast in all $C^k$-norms. We can then compare $\Ric(g(t))$ with $\Ric(g_F)$ ($\equiv 0$) following the idea in the proof of \Cref{thm_isotrivial}, to see that there exists $\lambda > 0$ such that
		\begin{equation}
			\norm{\nabla^{g(t), k} \Ric(g(t))}_{C^0(X, g(t))} \leq C_k e^{-\lambda t}, 
		\end{equation}
		for each $k \in \N$. The normalized \KR flow writes
		\begin{equation}
			\ob(\tau) = e^{-\tau} \omega(e^\tau -1),
		\end{equation}
		from which we deduce that
		\begin{equation}
			\begin{aligned}
				\norm{\nabla^{\gb(\tau), k} \Ric(\gb(\tau))}_{C^0(X, \gb(\tau))} 
				& \leq C_k e^{-\lambda \br{e^\tau -1}} e^{(k+2)\frac{\tau}{2}} \\
				& \leq C_k' e^{-\lambda' e^\tau},
			\end{aligned}
		\end{equation}
		where $\lambda' > 0$. Therefore, in fact $\Ric(\gb)$ decays to zero fast in $C^k(X, \gb)$ for all $k$.
		
		Consider the other case $\kod(X) = \dim X$. By \cite{Ts,TiZ}, the normalized \KR flow $\ob(t)$ converges smoothly on $K \Subset X \setminus \mathrm{Null}(c_1(K_X))$ to some \KE metric $\omega_\infty$ satisfying $\Ric(\omega_\infty) = -\omega_\infty$ on $X \setminus \mathrm{Null}(c_1(K_X))$. Comparing $\Ric(\ob)$ with $\Ric(\omega_\infty) = -\omega_\infty$ following the idea in the proof of \Cref{thm_isotrivial}, we see that for any $K \Subset X \setminus \mathrm{Null}(c_1(K_X))$, $k \in \N$, 
		\begin{equation}
			\norm{\nabla^{\gb(t), k} \Ric(\gb(t))}_{C^0(K, \gb(t))} \leq C_{K, k}. 
		\end{equation}
		In fact, $\mathrm{Null}(c_1(K_X)) \subset S$. The discussions for these extremal cases are thus complete.

\end{document}